\DeclareMathAlphabet{\mathpzc}{OT1}{pzc}{m}{it}
\titlespacing\section{0pt}{12pt plus 4pt minus 2pt}{0pt plus 2pt minus 2pt}
\titlespacing\subsection{0pt}{12pt plus 4pt minus 2pt}{0pt plus 2pt minus 2pt}
\titlespacing\subsubsection{0pt}{12pt plus 4pt minus 2pt}{0pt plus 2pt minus 2pt}
\begin{document}

\newtheorem{theorem}{\bf Theorem}[section]
\newtheorem{definition}{\bf Definition}[section]
\newtheorem{corollary}[theorem]{\bf Corollary}
\newtheorem{lemma}[theorem]{\bf Lemma}
\newtheorem{assumption}{Assumption}
\newtheorem{condition}{\bf Condition}[section]
\newtheorem{proposition}[theorem]{\bf Proposition}
\newtheorem{definitions}{\bf Definition}[section]
\newtheorem{problem}{\bf Problem}

\theoremstyle{remark}
\newtheorem{example}{\bf Example}[section]
\newtheorem{remark}{\bf Remark}[section]

\newcounter{constant}
\setcounter{constant}{1}
\newcommand{\const}{{\arabic{constant}}}
\newcommand{\constplus}{{\stepcounter{constant}\const}}
\newcommand{\constminus}{{\addtocounter{constant}{-1}\const}}

\newcommand{\beginsec}{
\setcounter{lemma}{0} \setcounter{theorem}{0}
\setcounter{corollary}{0} \setcounter{definition}{0}
\setcounter{example}{0} \setcounter{proposition}{0}
\setcounter{condition}{0} \setcounter{assumption}{0}
\setcounter{remark}{0} }

\numberwithin{equation}{section}
\numberwithin{theorem}{section}

\newcommand{\Erdos}{Erd\H{o}s-R\'enyi }
\newcommand{\Meleard}{M\'el\'eard }
\newcommand{\Frechet}{Fr\'echet }

\newcommand{\one} {{\boldsymbol{1}}}
\newcommand{\half}{{\frac{1}{2}}}
\newcommand{\quarter}{{\frac{1}{4}}}
\newcommand{\skp}{\vspace{\baselineskip}}
\newcommand{\noi}{\noindent}
\newcommand{\lan}{\langle}
\newcommand{\ran}{\rangle}
\newcommand{\lfl}{\lfloor}
\newcommand{\rfl}{\rfloor}
\newcommand{\ti}{\tilde}
\newcommand{\eps}{{\varepsilon}}
\newcommand{\gr}{{\nabla}}  
\newcommand{\Rd}{{\Rmb^d}}
\newcommand{\intR}{\int_\Rmb}
\newcommand{\intRd}{\int_\Rd}

\newcommand{\Amb}{{\mathbb{A}}}
\newcommand{\Bmb}{{\mathbb{B}}}
\newcommand{\Cmb}{{\mathbb{C}}}
\newcommand{\Dmb}{{\mathbb{D}}}
\newcommand{\Emb}{{\mathbb{E}}}
\newcommand{\Fmb}{{\mathbb{F}}}
\newcommand{\Gmb}{{\mathbb{G}}}
\newcommand{\Hmb}{{\mathbb{H}}}
\newcommand{\Imb}{{\mathbb{I}}}
\newcommand{\Jmb}{{\mathbb{J}}}
\newcommand{\Kmb}{{\mathbb{K}}}
\newcommand{\Lmb}{{\mathbb{L}}}
\newcommand{\Mmb}{{\mathbb{M}}}
\newcommand{\Nmb}{{\mathbb{N}}}
\newcommand{\Omb}{{\mathbb{O}}}
\newcommand{\Pmb}{{\mathbb{P}}}
\newcommand{\Qmb}{{\mathbb{Q}}}
\newcommand{\Rmb}{{\mathbb{R}}}
\newcommand{\Smb}{{\mathbb{S}}}
\newcommand{\Tmb}{{\mathbb{T}}}
\newcommand{\Umb}{{\mathbb{U}}}
\newcommand{\Vmb}{{\mathbb{V}}}
\newcommand{\Wmb}{{\mathbb{W}}}
\newcommand{\Xmb}{{\mathbb{X}}}
\newcommand{\Ymb}{{\mathbb{Y}}}
\newcommand{\Zmb}{{\mathbb{Z}}}

\newcommand{\Amc}{{\mathcal{A}}}
\newcommand{\Bmc}{{\mathcal{B}}}
\newcommand{\Cmc}{{\mathcal{C}}}
\newcommand{\Dmc}{{\mathcal{D}}}
\newcommand{\Emc}{{\mathcal{E}}}
\newcommand{\Fmc}{{\mathcal{F}}}
\newcommand{\Gmc}{{\mathcal{G}}}
\newcommand{\Hmc}{{\mathcal{H}}}
\newcommand{\Imc}{{\mathcal{I}}}
\newcommand{\Jmc}{{\mathcal{J}}}
\newcommand{\Kmc}{{\mathcal{K}}}
\newcommand{\lmc}{{\mathcal{l}}}
\newcommand{\Lmc}{{\mathcal{L}}}
\newcommand{\Mmc}{{\mathcal{M}}}
\newcommand{\Nmc}{{\mathcal{N}}}
\newcommand{\Omc}{{\mathcal{O}}}
\newcommand{\Pmc}{{\mathcal{P}}}
\newcommand{\Qmc}{{\mathcal{Q}}}
\newcommand{\Rmc}{{\mathcal{R}}}
\newcommand{\Smc}{{\mathcal{S}}}
\newcommand{\Tmc}{{\mathcal{T}}}
\newcommand{\Umc}{{\mathcal{U}}}
\newcommand{\Vmc}{{\mathcal{V}}}
\newcommand{\Wmc}{{\mathcal{W}}}
\newcommand{\Xmc}{{\mathcal{X}}}
\newcommand{\Ymc}{{\mathcal{Y}}}
\newcommand{\Zmc}{{\mathcal{Z}}}

\newcommand{\abd}{{\boldsymbol{a}}}
\newcommand{\Abd}{{\boldsymbol{A}}}
\newcommand{\bbd}{{\boldsymbol{b}}}
\newcommand{\Bbd}{{\boldsymbol{B}}}
\newcommand{\cbd}{{\boldsymbol{c}}}
\newcommand{\Cbd}{{\boldsymbol{C}}}
\newcommand{\dbd}{{\boldsymbol{d}}}
\newcommand{\Dbd}{{\boldsymbol{D}}}
\newcommand{\ebd}{{\boldsymbol{e}}}
\newcommand{\Ebd}{{\boldsymbol{E}}}
\newcommand{\fbd}{{\boldsymbol{f}}}
\newcommand{\Fbd}{{\boldsymbol{F}}}
\newcommand{\gbd}{{\boldsymbol{g}}}
\newcommand{\Gbd}{{\boldsymbol{G}}}
\newcommand{\hbd}{{\boldsymbol{h}}}
\newcommand{\Hbd}{{\boldsymbol{H}}}
\newcommand{\ibd}{{\boldsymbol{i}}}
\newcommand{\Ibd}{{\boldsymbol{I}}}
\newcommand{\jbd}{{\boldsymbol{j}}}
\newcommand{\Jbd}{{\boldsymbol{J}}}
\newcommand{\kbd}{{\boldsymbol{k}}}
\newcommand{\Kbd}{{\boldsymbol{K}}}
\newcommand{\lbd}{{\boldsymbol{l}}}
\newcommand{\Lbd}{{\boldsymbol{L}}}
\newcommand{\mbd}{{\boldsymbol{m}}}
\newcommand{\Mbd}{{\boldsymbol{M}}}
\newcommand{\mubd}{{\boldsymbol{\mu}}}
\newcommand{\nbd}{{\boldsymbol{n}}}
\newcommand{\Nbd}{{\boldsymbol{N}}}
\newcommand{\nubd}{{\boldsymbol{\nu}}}
\newcommand{\Nalpha}{{\boldsymbol{N_\alpha}}}
\newcommand{\Nbeta}{{\boldsymbol{N_\beta}}}
\newcommand{\Ngamma}{{\boldsymbol{N_\gamma}}}
\newcommand{\obd}{{\boldsymbol{o}}}
\newcommand{\Obd}{{\boldsymbol{O}}}
\newcommand{\pbd}{{\boldsymbol{p}}}
\newcommand{\Pbd}{{\boldsymbol{P}}}
\newcommand{\phibd}{{\boldsymbol{\phi}}}
\newcommand{\phihatbd}{{\boldsymbol{\hat{\phi}}}}
\newcommand{\psibd}{{\boldsymbol{\psi}}}
\newcommand{\psihatbd}{{\boldsymbol{\hat{\psi}}}}
\newcommand{\qbd}{{\boldsymbol{q}}}
\newcommand{\Qbd}{{\boldsymbol{Q}}}
\newcommand{\rbd}{{\boldsymbol{r}}}
\newcommand{\Rbd}{{\boldsymbol{R}}}
\newcommand{\sbd}{{\boldsymbol{s}}}
\newcommand{\Sbd}{{\boldsymbol{S}}}
\newcommand{\tbd}{{\boldsymbol{t}}}
\newcommand{\Tbd}{{\boldsymbol{T}}}
\newcommand{\ubd}{{\boldsymbol{u}}}
\newcommand{\Ubd}{{\boldsymbol{U}}}
\newcommand{\vbd}{{\boldsymbol{v}}}
\newcommand{\Vbd}{{\boldsymbol{V}}}
\newcommand{\wbd}{{\boldsymbol{w}}}
\newcommand{\Wbd}{{\boldsymbol{W}}}
\newcommand{\xbd}{{\boldsymbol{x}}}
\newcommand{\Xbd}{{\boldsymbol{X}}}
\newcommand{\ybd}{{\boldsymbol{y}}}
\newcommand{\Ybd}{{\boldsymbol{Y}}}
\newcommand{\zbd}{{\boldsymbol{z}}}
\newcommand{\Zbd}{{\boldsymbol{Z}}}

\newcommand{\abar}{{\bar{a}}}
\newcommand{\Abar}{{\bar{A}}}
\newcommand{\Amcbar}{{\bar{\Amc}}}
\newcommand{\bbar}{{\bar{b}}}
\newcommand{\Bbar}{{\bar{B}}}
\newcommand{\cbar}{{\bar{c}}}
\newcommand{\Cbar}{{\bar{C}}}
\newcommand{\dbar}{{\bar{d}}}
\newcommand{\Dbar}{{\bar{D}}}
\newcommand{\ebar}{{\bar{e}}}
\newcommand{\Ebar}{{\bar{E}}}
\newcommand{\ebdbar}{{\bar{\ebd}}}
\newcommand{\Embbar}{{\bar{\Emb}}}
\newcommand{\fbar}{{\bar{f}}}
\newcommand{\Fbar}{{\bar{F}}}
\newcommand{\Fmcbar}{{\bar{\Fmc}}}
\newcommand{\gbar}{{\bar{g}}}
\newcommand{\Gbar}{{\bar{G}}}
\newcommand{\Gammabar}{{\bar{\Gamma}}}
\newcommand{\Gmcbar}{{\bar{\Gmc}}}
\newcommand{\Hbar}{{\bar{H}}}
\newcommand{\ibar}{{\bar{i}}}
\newcommand{\Ibar}{{\bar{I}}}
\newcommand{\jbar}{{\bar{j}}}
\newcommand{\Jbar}{{\bar{J}}}
\newcommand{\Jmcbar}{{\bar{\Jmc}}}
\newcommand{\kbar}{{\bar{k}}}
\newcommand{\Kbar}{{\bar{K}}}
\newcommand{\lbar}{{\bar{l}}}
\newcommand{\Lbar}{{\bar{L}}}
\newcommand{\mbar}{{\bar{m}}}
\newcommand{\Mbar}{{\bar{M}}}
\newcommand{\mubar}{{\bar{\mu}}}
\newcommand{\Mmbbar}{{\bar{\Mmb}}}
\newcommand{\nbar}{{\bar{n}}}
\newcommand{\Nbar}{{\bar{N}}}
\newcommand{\nubar}{{\bar{\nu}}}
\newcommand{\obar}{{\bar{o}}}
\newcommand{\Obar}{{\bar{O}}}
\newcommand{\omegabar}{{\bar{\omega}}}
\newcommand{\Omegabar}{{\bar{\Omega}}}
\newcommand{\pbar}{{\bar{p}}}
\newcommand{\Pbar}{{\bar{P}}}
\newcommand{\Phibar}{{\bar{\Phi}}}
\newcommand{\Pmbbar}{{\bar{\Pmb}}}
\newcommand{\Pmcbar}{{\bar{\Pmc}}}
\newcommand{\qbar}{{\bar{q}}}
\newcommand{\Qbar}{{\bar{Q}}}
\newcommand{\rbar}{{\bar{r}}}
\newcommand{\Rbar}{{\bar{R}}}
\newcommand{\Rmcbar}{{\bar{\Rmc}}}
\newcommand{\sbar}{{\bar{s}}}
\newcommand{\Sbar}{{\bar{S}}}
\newcommand{\sigmabar}{{\bar{\sigma}}}
\newcommand{\Smcbar}{{\bar{\Smc}}}
\newcommand{\tbar}{{\bar{t}}}
\newcommand{\Tbar}{{\bar{T}}}
\newcommand{\taubar}{{\bar{\tau}}}
\newcommand{\Thetabar}{{\bar{\Theta}}}
\newcommand{\ubar}{{\bar{u}}}
\newcommand{\Ubar}{{\bar{U}}}
\newcommand{\vbar}{{\bar{v}}}
\newcommand{\Vbar}{{\bar{V}}}
\newcommand{\wbar}{{\bar{w}}}
\newcommand{\Wbar}{{\bar{W}}}
\newcommand{\xbar}{{\bar{x}}}
\newcommand{\Xbar}{{\bar{X}}}
\newcommand{\Xitbar}{{\bar{X}_t^i}}
\newcommand{\Xjtbar}{{\bar{X}_t^j}}
\newcommand{\Xktbar}{{\bar{X}_t^k}}
\newcommand{\Xisbar}{{\bar{X}_s^i}}
\newcommand{\ybar}{{\bar{y}}}
\newcommand{\Ybar}{{\bar{Y}}}
\newcommand{\zbar}{{\bar{z}}}
\newcommand{\Zbar}{{\bar{Z}}}

\newcommand{\Ahat}{{\hat{A}}}
\newcommand{\bhat}{{\hat{b}}}
\newcommand{\etahat}{{\hat{\eta}}}
\newcommand{\fhat}{{\hat{f}}}
\newcommand{\ghat}{{\hat{g}}}
\newcommand{\hhat}{{\hat{h}}}
\newcommand{\Jhat}{{\hat{J}}}
\newcommand{\Jmchat}{{\hat{\Jmc}}}
\newcommand{\lambdahat}{{\hat{\lambda}}}
\newcommand{\lhat}{{\hat{l}}}
\newcommand{\muhat}{{\hat{\mu}}}
\newcommand{\nuhat}{{\hat{\nu}}}
\newcommand{\phihat}{{\hat{\phi}}}
\newcommand{\psihat}{{\hat{\psi}}}
\newcommand{\rhohat}{{\hat{\rho}}}
\newcommand{\Smchat}{{\hat{\Smc}}}
\newcommand{\Tmchat}{{\hat{\Tmc}}}
\newcommand{\Yhat}{{\hat{Y}}}

\newcommand{\atil}{{\tilde{a}}}
\newcommand{\Atil}{{\tilde{A}}}
\newcommand{\btil}{{\tilde{b}}}
\newcommand{\Btil}{{\tilde{B}}}
\newcommand{\ctil}{{\tilde{c}}}
\newcommand{\Ctil}{{\tilde{C}}}
\newcommand{\dtil}{{\tilde{d}}}
\newcommand{\Dtil}{{\tilde{D}}}
\newcommand{\etil}{{\tilde{e}}}
\newcommand{\Etil}{{\tilde{E}}}
\newcommand{\ebdtil}{{\tilde{\ebd}}}
\newcommand{\etatil}{{\tilde{\eta}}}
\newcommand{\Embtil}{{\tilde{\Emb}}}
\newcommand{\ftil}{{\tilde{f}}}
\newcommand{\Ftil}{{\tilde{F}}}
\newcommand{\Fmctil}{{\tilde{\Fmc}}}
\newcommand{\gtil}{{\tilde{g}}}
\newcommand{\Gtil}{{\tilde{G}}}
\newcommand{\gammatil}{{\tilde{\gamma}}}
\newcommand{\Gmctil}{{\tilde{\Gmc}}}
\newcommand{\htil}{{\tilde{h}}}
\newcommand{\Htil}{{\tilde{H}}}
\newcommand{\itil}{{\tilde{i}}}
\newcommand{\Itil}{{\tilde{I}}}
\newcommand{\jtil}{{\tilde{j}}}
\newcommand{\Jtil}{{\tilde{J}}}
\newcommand{\Jmctil}{{\tilde{\Jmc}}}
\newcommand{\ktil}{{\tilde{k}}}
\newcommand{\Ktil}{{\tilde{K}}}
\newcommand{\ltil}{{\tilde{l}}}
\newcommand{\Ltil}{{\tilde{L}}}
\newcommand{\mtil}{{\tilde{m}}}
\newcommand{\Mtil}{{\tilde{M}}}
\newcommand{\mutil}{{\tilde{\mu}}}
\newcommand{\ntil}{{\tilde{n}}}
\newcommand{\Ntil}{{\tilde{N}}}
\newcommand{\nutil}{{\tilde{\nu}}}
\newcommand{\nubdtil}{{\tilde{\nubd}}}
\newcommand{\otil}{{\tilde{o}}}
\newcommand{\Otil}{{\tilde{O}}}
\newcommand{\omegatil}{{\tilde{\omega}}}
\newcommand{\Omegatil}{{\tilde{\Omega}}}
\newcommand{\ptil}{{\tilde{p}}}
\newcommand{\Ptil}{{\tilde{P}}}
\newcommand{\Pmbtil}{{\tilde{\Pmb}}}
\newcommand{\qtil}{{\tilde{q}}}
\newcommand{\Qtil}{{\tilde{Q}}}
\newcommand{\rtil}{{\tilde{r}}}
\newcommand{\Rtil}{{\tilde{R}}}
\newcommand{\rbdtil}{{\tilde{\rbd}}}
\newcommand{\Rmctil}{{\tilde{\Rmc}}}
\newcommand{\stil}{{\tilde{s}}}
\newcommand{\Stil}{{\tilde{S}}}
\newcommand{\sigmatil}{{\tilde{\sigma}}}
\newcommand{\ttil}{{\tilde{t}}}
\newcommand{\Ttil}{{\tilde{T}}}
\newcommand{\Tmctil}{{\tilde{\Tmc}}}
\newcommand{\thetatil}{{\tilde{\theta}}}
\newcommand{\util}{{\tilde{u}}}
\newcommand{\Util}{{\tilde{U}}}
\newcommand{\vtil}{{\tilde{v}}}
\newcommand{\Vtil}{{\tilde{V}}}
\newcommand{\Vmctil}{{\tilde{\Vmc}}}
\newcommand{\wtil}{{\tilde{w}}}
\newcommand{\Wtil}{{\tilde{W}}}
\newcommand{\xtil}{{\tilde{x}}}
\newcommand{\Xtil}{{\tilde{X}}}
\newcommand{\xitil}{{\tilde{\xi}}}
\newcommand{\Xittil}{{\tilde{X}^{i}_t}}
\newcommand{\Xistil}{{\tilde{X}^{i}_s}}
\newcommand{\ytil}{{\tilde{y}}}
\newcommand{\Ytil}{{\tilde{Y}}}
\newcommand{\ztil}{{\tilde{z}}}
\newcommand{\Ztil}{{\tilde{Z}}}
\newcommand{\zetatil}{{\tilde{\zeta}}}
\newcommand{\Zittil}{{\tilde{Z}^{i,N}_t}}
\newcommand{\Zistil}{{\tilde{Z}^{i,N}_s}}

%Macro for MDP jumps
\newcommand{\Xt}{{\Xmb_t}}
\newcommand{\XT}{{\Xmb_T}}
\newcommand{\YT}{{\Ymb_T}}
\newcommand{\mumt}{{\mu^m(t)}}
\newcommand{\mums}{{\mu^m(s)}}
\newcommand{\Gammainf}{{\| \Gamma \|_\infty}}
\newcommand{\intXt}{{\int_\Xt}}
\newcommand{\intXT}{{\int_\XT}}

%Macro for CLT multiple populations
\newcommand{\EmbP}{{\Emb_{\Pmb^N}}}
\newcommand{\EmbPG}{{\Emb_{\Pmb^N,\Gmc}}}
\newcommand{\UiNt}{{U^{i,N}_t}}
\newcommand{\UiNs}{{U^{i,N}_s}}
\newcommand{\Wit}{{W^i_t}}
\newcommand{\Wis}{{W^i_s}}
\newcommand{\Xit}{{X^i_t}}
\newcommand{\Xis}{{X^i_s}}
\newcommand{\Xjt}{{X^j_t}}
\newcommand{\Xjs}{{X^j_s}}
\newcommand{\Xkt}{{X^k_t}}
\newcommand{\Xks}{{X^k_s}}
\newcommand{\XiNt}{{X^{i,N}_t}}
\newcommand{\XiNs}{{X^{i,N}_s}}
\newcommand{\XjNt}{{X^{j,N}_t}}
\newcommand{\XjNs}{{X^{j,N}_s}}
\newcommand{\XkNt}{{X^{k,N}_t}}
\newcommand{\XkNs}{{X^{k,N}_s}}
\newcommand{\YiNt}{{Y^{i,N}_t}}
\newcommand{\YiNs}{{Y^{i,N}_s}}
\newcommand{\Yit}{{Y_t^i}}
\newcommand{\Yis}{{Y_s^i}}
\newcommand{\Yjt}{{Y_t^j}}
\newcommand{\Ykt}{{Y_t^k}}
\newcommand{\ZiNt}{{Z_t^{i,N}}}
\newcommand{\ZjNt}{{Z_t^{j,N}}}
\newcommand{\ZiNs}{{Z_s^{i,N}}}
\newcommand{\ZjNs}{{Z_s^{j,N}}}

%Macro for MDP diffusions
\newcommand{\Xtilims}{{\Xtil_i^m(s)}}
\newcommand{\Xtiljms}{{\Xtil_j^m(s)}}
\newcommand{\Xtilimt}{{\Xtil_i^m(t)}}
\newcommand{\Xtiljmt}{{\Xtil_j^m(t)}}
\newcommand{\Xbaris}{{\Xbar_i(s)}}
\newcommand{\Xbarjs}{{\Xbar_j(s)}}
\newcommand{\Xbarit}{{\Xbar_i(t)}}
\newcommand{\Xbarjt}{{\Xbar_j(t)}}
\newcommand{\mutilms}{{\mutil^m(s)}}
\newcommand{\mutilmt}{{\mutil^m(t)}}
\newcommand{\mubarms}{{\mubar^m(s)}}
\newcommand{\mubarmt}{{\mubar^m(t)}}

\begin{frontmatter}
\title{Moderate Deviation Principles for Weakly Interacting Particle Systems\thanks{Research supported in part by the National Science Foundation (DMS-1016441, DMS-1305120) and the Army Research Office (W911NF-14-1-0331)}}

 \runtitle{MDP for Weakly Interacting Particle Systems}

\begin{aug}
\author{Amarjit Budhiraja and Ruoyu Wu\\ \ \\}
\end{aug}

\today

\skp

\begin{abstract}
Moderate deviation principles for empirical measure processes associated with weakly interacting Markov processes are established.
Two families of models are considered: the first corresponds to a system of interacting diffusions whereas the second describes a collection of pure jump Markov processes with a countable state space.
For both cases the moderate deviation principle is formulated in terms of a large deviation principle (LDP), with an appropriate speed function, for suitably centered and normalized empirical measure processes.
For the first family of models the LDP is established in the path space of an appropriate Schwartz distribution space whereas for the second family the LDP is proved in the space of $l_2$ (the Hilbert space of square summable sequences)-valued paths.
Proofs rely on certain variational representations for exponential functionals of Brownian motions and Poisson random measures.

\noi {\bf AMS 2000 subject classifications:} 60F10, 60K35, 60J75, 60J60.

\noi {\bf Keywords:} moderate deviations, large deviations, Laplace principle, variational representations, weakly interacting jump-diffusions, nonlinear Markov processes, mean field asymptotics, Schwartz distributions, Poisson random measures.
\end{abstract}

\end{frontmatter}

%----------------------------------------------------

\section{Introduction} \label{sec:intro}

For $m \in \Nmb$, consider a collection of stochastic processes $\{X_i^m\}_{i=1}^m$, representing trajectories of $m$ interacting particles, given as the solution of stochastic differential equations (SDE) driven by mutually independent Poisson random measures (PRM) or Brownian motions (BM).
The interaction between particles occurs through the coefficients of the SDE in that these coefficients depend, in addition to the particle's current state, on the empirical distribution of all particles in the collection.
The form of the interaction is such that the stochastic processes $(X_1^m,\dotsc,X_m^m)$ are exchangeable if the initial distribution of the $m$ particles is exchangeable.
Such stochastic systems, commonly referred to as {\it weakly interacting Markov processes}, date back to the classical works of Boltzmann, Vlasov, McKean and others (see~\cite{Sznitman1991,Kolokoltsov2010} and references therein).
Although originally motivated by problems in statistical physics, over the past few decades, such models have arisen in many different application areas, including communication networks, mathematical finance, chemical and biological systems, and social sciences.
For an extensive list of references to such applications see~\cite{BudhirajaDupuisFischerRamanan2015limits,BudhirajaWu2015}.

There have been many works that study law of large numbers (LLN) results, propagation of chaos (POC) property, and central limit theorems (CLT) for such models.
These include McKean~\cite{McKean1966class,McKean1967propagation}, Braun and Hepp~\cite{BraunHepp1977vlasov}, Dawson~\cite{Dawson1983critical}, Tanaka~\cite{Tanaka1984limit}, Oelschal\"{a}ger~\cite{Oelschlager1984martingale}, Sznitman~\cite{Sznitman1984,Sznitman1991}, Graham and M\'{e}l\'{e}ard~\cite{GrahamMeleard1997}, Shiga and Tanaka~\cite{ShigaTanaka1985}, M\'{e}l\'{e}ard~\cite{Meleard1998}.
Many variations of such systems have also been studied.
For example, in~\cite{KurtzXiong1999,KurtzXiong2004,BudhirajaSaha2014,BudhirajaWu2015} limit theorems of the above form are established for a setting where there is a common noise process that influences the dynamics of each particle.
LLN and POC in a setting with $K$-different subpopulations where particle evolution within each subpopulation is exchangeable, has been studied in~\cite{BaladronFaugeras2012} and a corresponding CLT has been established in~\cite{BudhirajaWu2015}.
Other works that have studied mean field results for such heterogeneous populations include~\cite{Collet2014macroscopic,ChongKluppelberg2015partial}.

Large deviation principles (LDP) for weakly interacting particle systems have also been well studied in many works.
A classical reference is~\cite{DawsonGartner1987large} which considers a collection of diffusing particles with non-degenerate diffusion coefficients that interact through the drift terms.
Proofs are based on discretization arguments together with careful exponential probability estimates.
Alternative methods using weak convergence and certain variational representation formulas have recently been introduced in~\cite{BudhirajaDupuisFischer2012}.
Large deviations for pure jump finite state weakly interacting particle systems have been studied in~\cite{Leonard1995large,BorkarSundaresan2012asymptotics,DupuisRamananWu2012sample}.

In the current work, our focus is on the study of deviations from the law of large numbers limit for such weakly interacting systems that are of smaller order than those captured by a large deviation principle.
Results that give asymptotics of such lower order deviations are usually referred to as moderate deviation principles (MDP).
The object of interest in this work is the empirical measure process $\mu^m(t) \doteq \frac{1}{m} \sum_{i=1}^m \delta_{X_i^m(t)}$.
Denoting the state space of particles by $\Smb$, $\mu^m(t)$ is a random measure, with values in $\Pmc(\Smb)$ (the space of probability measures on $\Smb$ equipped with the weak convergence topology).
In order to motivate the problem of interest, we consider as an illustration the setting where the particle distributions are i.i.d.
Let $\{Y_i\}_{i \in \Nmb}$ be an i.i.d.\ sequence of $\Rd$-valued random variables with distribution $\mu$.
Sanov's theorem that gives an LDP for $L_m \doteq \frac{1}{m} \sum_{i=1}^m \delta_{Y_i}$ formally says that for any measurable set $U$ in $\Pmc(\Rd)$,
\begin{equation*}
	\Pmb(L_m \in U) \approx \exp \{ -m \inf_{\nu \in U} I(\nu) \},
\end{equation*}
where for $\nu \in \Pmc(\Rd)$, $I(\nu) = R(\nu \| \mu) \doteq \int_{\Rd} \frac{d\nu}{d\mu} \big( \log \frac{d\nu}{d\mu} \big) d\mu$ is the relative entropy ($R(\nu\|\mu)$ is taken to be $\infty$ if $\nu$ is not absolutely continuous with respect to $\mu$).
Now let $\{a(m)\}_{m \in \Nmb}$ be a positive sequence such that as $m \to \infty$,
\begin{equation} \label{eqjump:a m}
	a(m) \to 0 \text{ and } a(m)\sqrt{m} \to \infty
\end{equation}
(e.g. $a(m) = m^{-\theta}$ for some $\theta \in (0,1/2)$).
A moderate deviation principle for $\{L_m\}$, associated with deviations of order $\frac{1}{a(m)\sqrt{m}}$, gives a result of the following form (see e.g.~\cite{BorovkovMogulskii1980probabilities}): for any measurable set $U$ in $\Pmc_0(\Rd)$ (the space of signed measures on $\Rd$ such that $\nu(\Rd)=0$),
\begin{equation*}
	\Pmb(a(m)\sqrt{m}(L_m-\mu) \in U) \approx \exp \left\{ -\frac{1}{a^2(m)} \inf_{\nu \in U} I^0(\nu) \right\},
\end{equation*}
where for each $\nu \in \Pmc_0(\Rd)$, $I^0(\nu) \doteq \half \int_\Rd \big(\frac{d\nu}{d\mu}\big)^2 \, d\mu$ (once again we take $I^0(\nu) \doteq \infty$ if $\nu$ is not absolutely continuous with respect to $\mu$).
Note that CLT and LDP provide asymptotics for the probabilities on the left side when $a(m) = m^{-\theta}$ with $\theta = 0$ and $\half$ respectively, whereas an MDP studies an asymptotic regime where $\theta \in (0,\half)$ (an MDP also treats more general scale functions $a(m)$).
There is an extensive literature on moderate deviation results in mathematical statistics, including results for i.i.d. sequences and arrays, empirical processes in general topological spaces, weakly dependent sequences, and occupation measures of Markov chains together with general additive functionals of Markov chains (see~\cite{BudhirajaDupuisGanguly2015moderate} for many such references).
MDP for small noise finite and infinite dimensional SDE with jumps have been studied in~\cite{BudhirajaDupuisGanguly2015moderate}.
References to other MDP results for SDE in the context of stochastic averaging and multi-scale systems can be found in~\cite{BudhirajaDupuisGanguly2015moderate}.
For weakly interacting particle systems in discrete time, MDP based on semigroup analysis and projective large deviation methods have been established in~\cite{DelHuWu2015moderate}.

In the current work we will study moderate deviation principles for continuous time weakly interacting Markov processes.
The models we consider will allow for both Brownian and Poisson type noises in the dynamics.
Our approach is based on certain variational representations for exponential functionals of such noise processes developed in~\cite{BoueDupuis1998variational,BudhirajaDupuis2000variational,BudhirajaDupuisMaroulas2008large,BudhirajaDupuisMaroulas2011variational}.
In order to keep the presentation simple we consider two types of models: one that corresponds to pure jump interacting Markov processes and the other that considers interacting Markov processes with continuous sample paths and Brownian noise.
Although not treated here, one can use similar techniques to develop moderate deviation results for settings that have both Brownian and Poisson type noises.

For the diffusion model considered here, we allow for state dependence in both drift and diffusion coefficients and the interaction through the empirical measure appears in both coefficients as well.
Coefficients are assumed to be bounded with suitable smoothness properties but non-degeneracy of the diffusion term is not required.
This is in contrast to the classical results on large deviation principles for such systems (e.g.~\cite{DawsonGartner1987large}) which only allow interaction in the drift and require the diffusion coefficient to be uniformly non-degenerate.
In order to highlight the main ideas we restrict attention to a one dimensional setting, i.e.\ the case where the state space of the particles is $\Rmb$.
The general multidimensional case can be treated in a similar manner.
Specifically, we consider a collection of one dimensional weakly interacting diffusions $\{X^m_i\}_{i=1}^m$ given by the system of equations:
\begin{equation}
	\label{eqdif:Xim}
	X^m_i(t) = x_0 + \int_0^t \sigma(X^m_i(s), \mu^m(s)) \, dW_i(s) + \int_0^t b(X^m_i(s), \mu^m(s)) \, ds, \quad i = 1, \dotsc, m,
\end{equation}
where $\{W_i\}$ is a sequence of i.i.d.\ one-dimensional standard $\{\Fmc_t\}$-Brownian motions given on some filtered probability space $(\Omega,\Fmc,\Pmb,\{\Fmc_t\})$ and $\mu^m(t) \doteq \frac{1}{m}\sum_{i=1}^m \delta_{X^m_i(t)}$.
Here for $\theta \in \Pmc(\Rmb)$, $\sigma(x,\theta) \doteq \int_{\Rmb} \alpha(x,y) \, \theta(dy)$ and $b(x,\theta) \doteq \int_{\Rmb} \beta(x,y) \, \theta(dy)$, where $\alpha$ and $\beta$ are bounded and Lipschitz.
Law of large numbers, large deviation principle and central limit results for $\mu^m$ have been well studied (see for example~\cite{Sznitman1991,DawsonGartner1987large,HitsudaMitoma1986,Meleard1998} and references therein).
A brief summary of these results is as follows.
As $m \to \infty$, $\mu^m$ converges in $\Cmb([0,T]:\Pmc(\Rmb))$ (the space of $\Pmc(\Rmb)$-valued continuous functions, equipped with the usual uniform topology and any metric on $\Pmc(\Rmb)$ metrizing the weak topology and making it a Polish space), in probability, to $\mu$ where $\mu(t)$ is the common probability distribution of the i.i.d.\ collection $\{\Xbarit\}$ governed by the equation
\begin{equation}
	\label{eqdif:Xbari}
	\bar X_i(t) = x_0 + \int_0^t \sigma(\bar X_i(s), \mu(s)) \, dW_i(s) + \int_0^t b(\bar X_i(s), \mu(s)) \, ds, \quad i \in \Nmb.
\end{equation}
Existence and uniqueness of solutions to \eqref{eqdif:Xim} and \eqref{eqdif:Xbari} are classical (see e.g.~\cite{Sznitman1991}).
The paper~\cite{DawsonGartner1987large} establishes an LDP for $\mu^m$ in $\Cmb([0,T]\!:\!\Pmc(\Rmb))$ with a suitable rate function under the condition that $\sigma(u,\theta) \equiv \sigma(u)$ and $\sigma$ is uniformly non-degenerate.
Some of these conditions on the coefficients can be relaxed (see e.g.~\cite{BudhirajaDupuisFischer2012}).
A central limit theorem studying the asymptotics of the fluctuation process of signed measures $S^m(t) \doteq \sqrt{m}(\mu^m(t) - \mu(t))$ has been established in~\cite{HitsudaMitoma1986,Meleard1998}.
As is well understood (cf.~\cite{HitsudaMitoma1986}), $S^m$ is very irregular as a signed measure-valued process as $m$ becomes large and one cannot expect the limit  in general to be a measure-valued process.
A common approach is to regard $S^m(t,\cdot)$ as an element of a suitable distribution space.
For example, it is shown in~\cite{HitsudaMitoma1986} that, under conditions, $S^m$ converges in distribution as a sequence of $\Cmb([0,T]:\Smc')$-valued random variables, where $\Smc'$ is the dual of the Schwartz space $\Smc$, namely the space of rapidly decaying infinitely smooth functions on $\Rmb$.
This space is equipped with the usual topology given in terms of a countable collection of Hilbertian seminorms $\{ \| \cdot \|_n \}_{n \in \Zmb}$ with associated Hilbert spaces $\{\Smc_n\}_{n \in \Zmb}$.
We refer the reader to Section~\ref{secdif:diffusion} for some basic background on the Schwartz space, but for now it suffices to note the following properties of the collection of Hilbert spaces $\{\Smc_n\}_{n \in \Zmb}$: $\Smc_w \subset \Smc_v$ for $w \ge v$, $\Smc_{-n}$ is the dual of $\Smc_n$, $\Smc'=\bigcup_{n \in \Zmb} \Smc_n$ and $\Smc = \bigcap_{n \in \Zmb} \Smc_n$.
The paper~\cite{HitsudaMitoma1986} shows that (see Theorem $1$ therein), under suitable conditions, for some $v \in \Nmb$, $S^m$ converges in $\Cmb([0,T]: \Smc_{-v})$, in distribution, as $m \to \infty$, to $S$ given as the solution of 
\begin{equation}
	\label{eqdif:S}
	S(t) = Z(t) + \int_0^t L^*(s) S(s) \, ds. 
\end{equation}
Here $Z$ is an $\Smc_{-(v+2)}$-valued Gaussian process with an explicit covariance operator (see~$(4.2)$ in~\cite{HitsudaMitoma1986}) and $L^*(s)$ is the adjoint of the operator $L(s)$ defined as (see also~$(1.4)$ in~\cite{HitsudaMitoma1986})
\begin{equation} \label{eqdif:Ls}
	\begin{aligned}
		(L(s)\phi)(x) & \doteq \phi'(x) b(x,\mu(s)) + \half \phi''(x) \sigma^2(x,\mu(s)) \\
		& \quad + \intR \phi'(y) \beta(y,x) \, \mu_s(dy) + \intR \phi''(y) \sigma(y,\mu(s)) \alpha(y,x) \, \mu_s(dy), \quad \phi \in \Smc.
	\end{aligned}
\end{equation}
Under suitable smoothness conditions on the coefficients, $L(s)$ can be regarded as a bounded linear operator from $\Smc_{v+2}$ to $\Smc_v$ and thus $L^*(s)$ is a bounded linear operator from $\Smc_{-v}$ to $\Smc_{-(v+2)}$.
The equation \eqref{eqdif:S} is interpreted as follows:
For all $\phi \in \Smc$,
\begin{equation*}
	\lan S(t), \phi \ran =  \lan Z(t), \phi \ran + \int_0^t \lan S(s), L(s)\phi \ran \, ds.
\end{equation*}
We remark that~\cite{HitsudaMitoma1986} actually considers a modified version of the Schwartz distribution space which allows one to use unbounded test functions as well, however their results hold for the classical Schwartz space as presented above.
Results of~\cite{HitsudaMitoma1986,Meleard1998} study deviations of $\mu^m$ from $\mu$ that are of order $\frac{1}{\sqrt{m}}$.
In this work we will be concerned with deviations of $\mu^m$ from $\mu$ that are of higher order than $\frac{1}{\sqrt{m}}$ (but of lower order than $\frac{1}{m}$).
Let $\{a(m)\}_{m \in \Nmb}$ be as in \eqref{eqjump:a m} and
\begin{equation} \label{eqdif:Ym}
	Y^m(t) \doteq a(m) S^m(t) = a(m)\sqrt{m}(\mu^m(t) - \mu(t)).
\end{equation}
We will show in Theorem~\ref{thmdif:main} that under Conditions~\ref{conddif:diffusion1} and~\ref{conddif:diffusion3}, $Y^m$ satisfies a large deviation principle with speed $a^2(m)$ (see Section~\ref{sec:notaion} for a precise definition) in $\Cmb([0,T]:\Smc_{-\rho})$ with a suitable value of $\rho > v$.
Roughly speaking, this result says that for any Borel set $U$ in $\Cmb([0,T]:\Smc_{-\rho})$ one has
\begin{equation*}
	\Pmb (Y^m \in U) \approx \exp \left\{ -\frac{1}{a^2(m)} \inf_{\eta \in U} I(\eta)\right\},
\end{equation*}
where $I$ is the associated rate function that will be introduced in \eqref{eqdif:rate function}.
Since it provides asymptotics for probabilities of deviations of $\mu^m$ from $\mu$ that are of order $\frac{1}{a(m)\sqrt{m}}$, this LDP for $Y^m$ can be viewed as a moderate deviation principle for the empirical measure process $\mu^m$.

The proof relies on certain variational formulas for exponential functionals of Brownian motions of the form first obtained in~\cite{BoueDupuis1998variational}.
For our purpose it will be convenient to use the somewhat more general form allowing for an arbitrary filtration, given in~\cite{BudhirajaDupuisMaroulas2008large}.
Using the Laplace formulation of the LDP (see Chapter $1$ of~\cite{DupuisEllis2011weak}) these variational formulas reduce the proof of the upper bound in the LDP to proving tightness and characterization of limit points of certain centered and normalized controlled versions of the original weakly interacting particle system.
These centered and normalized controlled empirical measure processes are regarded as random variables with values in a suitable Schwartz distribution path space, and the main work is in obtaining appropriate estimates for tightness in this path space.
%The main work for the proof of the upper bound is in obtaining suitable estimates for tightness of such centered and normalized controlled empirical measures in the path space of a suitable Schwartz distribution space and of the corresponding control processes.
For the proof of the lower bound we construct a sequence of \textit{asymptotically near optimal} controlled weakly interacting diffusions in which the controls are determined by the i.i.d.\ sequence of nonlinear Markov processes $\{\Xbar_i\}$ in \eqref{eqdif:Xbari}.
By asymptotic near optimality we mean that for each $\eps > 0$, controls can be chosen such that the costs associated with the controlled processes (given through the variational representation) are asymptotically at most $\eps$ greater  than  the Laplace upper bound.

The second family of models considered in this work corresponds to weakly interacting Markov processes with a countable state space.
Consider for $m \in \Nmb$, a pure jump Markov process $\{(X^m_1(t), \dotsc, X^m_m(t)), t \in [0,T]\}$ taking values in $\Nmb^m$ with $X^m_i(0) = x^m_i$. 
The evolution of the process is described through the jump intensities that are given as follows:\\
Given $(X^m_1(t-), \dotsc, X^m_m(t-))$ $= (x_1, \dotsc, x_m) \in \Nmb^m$, for $i \in \{1, \dotsc, m\}$ and $y \in \Nmb$, $y \ne x_i$,
\begin{equation} \label{eqjump:jump intensity}
	(x_1, \dotsc, x_{i-1}, x_i, x_{i+1}, \dotsc, x_m) \mapsto (x_1, \dotsc, x_{i-1}, y, x_{i+1}, \dotsc, x_m)
\end{equation}
at rate $\Gamma_{x_i,y}(\mu^m(t-))$, where $\mu^m(t-) =  \frac{1}{m} \sum_{i=1}^m \delta_{x_i} = \frac{1}{m} \sum_{i=1}^m \delta_{X^m_i(t-)} \in \Pmc(\Nmb)$.
All other forms of jump have rate $0$.
Here $\Gamma(q) \doteq (\Gamma_{ij}(q))_{i,j=1}^\infty$ is a rate matrix for each $q \in \Pmc(\Nmb)$, namely $\Gamma_{ij}(q) \ge 0$ for $i \ne j$ and  $\Gamma_{ii}(q) = - \sum_{j \ne i}^\infty \Gamma_{ij}(q) > -\infty$.
We identify $\Pmc(\Nmb)$ with the simplex 
\begin{equation*}
	\Smchat \doteq \left\{ q = (q_1, q_2, \dotsc) \in l_2 \biggm| \sum_{j=1}^\infty q_j = 1, q_j \ge 0 \ \forall j \in \Nmb \right\}
\end{equation*}
in $l_2$ (the Hilbert space of square-summable sequences, equipped with the usual inner product).
With suitable assumptions on the intensity function $\Gamma$ and the initial configuration of the particles, it can be proved (see Theorem~\ref{thmjump:LLN}) that for each $T > 0$, $\mu^m$ converges to $p$ in $\Dmb([0,T]:l_2)$ (the space of $l_2$-valued functions that are right continuous with left limits, equipped with the usual Skorokhod topology) for a continuous function $p$ characterized as the unique solution of an $l_2$-valued ODE (see \eqref{eqjump:p t}).
We are interested in the asymptotics of the centered and scaled quantity $Z^m \doteq a(m)\sqrt{m}(\mu^m-p)$ regarded as a random variable with values in $\Dmb([0,T]:l_2)$, where $a(m)$ is as defined in \eqref{eqjump:a m}. 
In Theorem~\ref{thmjump:MDP} we will establish a moderate deviation principle for $\mu^m$ which is formulated in terms of an LDP for $Z^m$ that formally says that for any Borel set $U$ in $\Dmb([0,T]:l_2)$
\begin{equation*}
	\Pmb (Z^m \in U) \approx \exp \left\{ -\frac{1}{a^2(m)} \inf_{\eta \in U} \Ibar(\eta)\right\},
\end{equation*}
where $\Ibar$ is the associated rate function introduced in \eqref{eqjump:rate function}.
We also give an alternative expression for the rate function in \eqref{eqjump:rate function alter} which is somewhat easier to interpret in terms of the model parameters.

Our proof of the MDP in this pure jump setting once more relies on certain variational representations.
This time the variational representations are for exponential functionals of Poisson random measures that have been studied in~\cite{BudhirajaDupuisMaroulas2011variational}.
It is easy to see that one can describe the evolution of the particle system using Poisson random measures on a suitable point space.
In particular, for the construction we use here it suffices to consider a PRM on $\Ymb_T \doteq [0,T] \times \Rmb_+^3$.
One can represent the associated empirical measure process $\{\mu^m(t), t \in [0,T]\}$ as a solution of an SDE where the driving noise is described in terms of this PRM.
Moderate deviation principles for SDE in finite and infinite dimensions, driven by a small Poisson noise, have recently been studied in~\cite{BudhirajaDupuisGanguly2015moderate}.
The SDE for the empirical measure $\{\mu^m(t), t \in [0,T]\}$  is indeed a small Poisson noise equation in the Hilbert space $l_2$, however, the coefficients in the equation fail to satisfy the sort of Lipschitz conditions that were crucial in the analysis of~\cite{BudhirajaDupuisGanguly2015moderate}, in fact the coefficients even fail to be continuous (see discussion below \eqref{eqjump:Gi}).
Thus we need new estimates to overcome this lack of regularity in the coefficients, and this is one of the challenges in the proof (see remarks above Lemma~\ref{lemjump:property of l} and above Lemma~\ref{lemjump:Lipschitz of G} and also the proof of Proposition~\ref{propjump:verifying condition part b} which is based on Lemmas~\ref{lemjump:moment bounds on G psi}-\ref{lemjump:h}).
The paper~\cite{BudhirajaDupuisGanguly2015moderate} provided a general sufficient condition for MDP to hold for small noise stochastic dynamical systems.
The proof of Theorem~\ref{thmjump:MDP} proceeds by verifying that this sufficient condition holds for the SDE governing the evolution of $\{\mu^m(t), t \in [0,T]\}$.
Verification of this condition requires establishing weak convergence of certain controlled processes, and establishing these convergence properties is the main content of the proof of Theorem~\ref{thmjump:MDP} which is given in Section~\ref{secjump:pf}.

The paper is organized as follows.
In Section~\ref{secdif:diffusion} we begin by describing our model of weakly interacting diffusions.
Centered and normalized empirical measures are regarded as elements of a suitable distribution space.
We introduce this space and note some of its basic properties.
We then introduce the two main conditions (Conditions~\ref{conddif:diffusion1} and~\ref{conddif:diffusion3}) for the MDP which is given in Theorem~\ref{thmdif:main}.
Proof of this theorem is provided in Section~\ref{secdif:pf}.
In Section~\ref{secjump:jump} we give a precise formulation of the weakly interacting pure jump Markov process studied in this work.
In Section~\ref{secjump:model} we give a convenient representation for the associated empirical measure process in terms of a Poisson random measure on a suitable point space.
This section also presents some basic well-posedness results and a law of large numbers result under a natural condition (Condition~\ref{condjump:cond1}).
The MDP for the empirical measure process in this setting is given in Section~\ref{secjump:MDP}.
The main result is Theorem~\ref{thmjump:MDP} which establishes an MDP for $\mu^m$ under Condition~\ref{condjump:cond2}.
Theorem~\ref{thmjump:alternative expression} gives an alternative expression for the rate function.
Proofs of Theorems~\ref{thmjump:MDP} and~\ref{thmjump:alternative expression} are given in Section~\ref{secjump:pf}.
Finally an Appendix collects some auxiliary results.

%----------------------------------------------------------------------

\subsection{Some notations and definitions} \label{sec:notaion}

The following notation will be used.
For a Polish space $\Smb$, denote the corresponding Borel $\sigma$-field by $\Bmc(\Smb)$, and let $\Pmc(\Smb)$ be the space of probability measures on $\Smb$, equipped with the topology of weak convergence.
A convenient metric for this topology is the bounded-Lipschitz metric $d_{BL}$ defined as
\begin{equation*}
d_{BL}(\nu_1,\nu_2) \doteq \sup_{\|f\|_{BL} \le 1} | \langle \nu_1 - \nu_2, f \rangle |, \quad \nu_1, \nu_2 \in \Pmc(\Smb),
\end{equation*}
where $\langle \mu,f \rangle \doteq \int f \, d\mu$ for a signed measure $\mu$ on $\Smb$ and $\mu$-integrable function $f \colon \Smb \to \Rmb$, and $\|\cdot\|_{BL}$ is the bounded Lipschitz norm, i.e.\ for a real bounded Lipschitz function $f$ on $\Smb$,
\begin{equation*}
	\|f\|_{BL} \doteq \max \{\|f\|_\infty, \|f\|_L\}, \quad \|f\|_\infty \doteq \sup_{x \in \Smb} |f(x)|, \quad \|f\|_L \doteq \sup_{x \ne y} \frac{|f(x)-f(y)|}{d(x,y)},
\end{equation*}
where $d$ is the metric on $\Smb$.
Denote by $\Mmb_b(\Smb)$ and $\Cmb_b(\Smb)$ the space of real bounded $\Bmc(\Smb)/\Bmc(\Rmb)$-measurable functions and real bounded and continuous functions, respectively.
For Banach spaces $B_1$ and $B_2$, $L(B_1,B_2)$ will denote the space of bounded linear operators from $B_1$ to $B_2$.
For a measure $\nu$ on $\Smb$ and a Hilbert space $H$, let $L^2(\Smb,\nu,H)$ denote the space of measurable functions $f$ from $\Smb$ to $H$ such that $\int_\Smb \| f(x) \|^2_H \, \nu(dx) < \infty$, where $\| \cdot \|_H$ is the norm on $H$.
When $H = \Rmb$ and $\Smb$ is clear from the context we write $L^2(\nu)$.
Let $\Cmb_b^k(\Rmb^d)$ be the space of functions on $\Rmb^d$, which have continuous and bounded partial derivatives up to the $k$-th order.

Fix $T < \infty$. 
All stochastic processes will be considered over the time horizon $[0,T]$. 
We will use the notations $\{X_t\}$ and $\{X(t)\}$ interchangeably for stochastic processes.
For a Polish space $\Smb$, denote by $\Cmb([0,T]:\Smb)$ and $\Dmb([0,T]:\Smb)$ the space of continuous functions and right continuous functions with left limits from $[0,T]$ to $\Smb$, endowed with the uniform and Skorokhod topology, respectively. 
When $\Smb$ is a normed space with norm $\| \cdot \|$, for a map $f \colon [0,T] \to \Smb$, let $\|f\|_{*,t} \doteq \sup_{0 \le s \le t} \|f(s)\|$, $t \in [0,T]$.
We say a collection $\{ X^m \}$ of $\Smb$-valued random variables is tight if the distributions of $X^m$ are tight in $\Pmc(\Smb)$.
We use the symbol ``$\Rightarrow$" to denote convergence in distribution.
 
%For $k \in \Nmb$, let $\Cmc_k$ denote $\Cmb_{\Rmb^k}[0,T]$.
%Probability law of an $\Smb$-valued random variable $\eta$ will be denoted as $\Lmc(\eta)$. 
%Expected value under some probability law $\Pmb$ will be denoted as $\Emb_{\Pmb}$.
%For a $\sigma$-finite measure $\nu$ on a Polish space $\Smb$, denote by $L^2_{\Rmb^k}(\Smb,\nu)$ as the Hilbert space of $\nu$-square integrable functions from $\Smb$ to $\Rmb^k$. 
%When $k$ and $\Smb$ are not ambiguous, we will merely write $L^2(\nu)$. 
%The norm in this Hilbert space will be denoted as $\| \cdot \|_{L^2(\Smb,\nu)}$.
%Given a sequence of random variables $X_n$ on some probability space $(\Omega_n, \Fmc_n, P_n)$, $n \ge 1$, we say $X_n$ converges to $0$ in $L^2(\Omega_n, P_n)$ if $\int X_n^2 \, dP_n \to 0$ as $n \to \infty$.
We will usually denote by $\kappa, \kappa_1, \kappa_2, \dotsc$, the constants that appear in various estimates within a proof. 
The value of these constants may change from one proof to another.
We use $\Nmb_0$ to denote the set of non-negative integers.
%Cardinality of a finite set $A$ will be denoted as $|A|$.
%$B_r^d$ denotes the closed ball of radius $r$ in $\Rmb^d$, centered at origin, i.e. $B_r^d = \{ x \in \Rmb^d : \| x \| \le r \}$.
%For $x,y \in \Rmb^d$, let $x \cdot y = \sum_{i=1}^d x_i y_i$.

A function $I \colon \Smb \to [0,\infty]$ is called a rate function on $\Smb$ if for each $M < \infty$, the level set $\{ x \in \Smb : I(x) \le M \}$ is a compact subset of $\Smb$.
Given a collection $\{\alpha(m)\}_{m \in \Nmb}$ of positive reals, a collection $\{ X^m \}$ of $\Smb$-valued random variables is said to satisfy the Laplace principle upper bound (respectively, lower bound) on $\Smb$ with speed $\alpha(m)$ and rate function $I$ if for all $h \in \Cmb_b(\Smb)$
\begin{equation*}
	\limsup_{m \to \infty} \alpha(m) \log \Emb \Big \{ \exp \Big[ -\frac{1}{\alpha(m)} h(X^m) \Big] \Big \} \le -\inf_{x \in \Smb} \{h(x) + I(x) \},
\end{equation*}
and, respectively,
\begin{equation*}
	\liminf_{m \to \infty} \alpha(m) \log \Emb \Big \{ \exp \Big[ -\frac{1}{\alpha(m)} h(X^m) \Big] \Big \} \ge -\inf_{x \in \Smb} \{h(x) + I(x) \}.
\end{equation*}
The Laplace principle is said to hold for $\{ X^m \}$ with speed $\alpha(m)$ and rate function $I$ if both the Laplace upper and lower bounds hold.
It is well known that the family $\{ X^m \}$ satisfies the Laplace principle upper (respectively lower) bound with a rate function $I$ on $\Smb$ if and only if $\{ X^m \}$ satisfies the large deviation upper (respectively lower) bound for all closed sets (respectively open sets) with the rate function $I$.
In particular, the Laplace principle holds with rate function $I$ iff the large deviation principle holds with the same rate function.
For proofs of these statements we refer to Section 1.2 of~\cite{DupuisEllis2011weak}.

%----------------------------------------------------

\section{The diffusion case} \label{secdif:diffusion}

In this section we consider the collection of weakly interacting diffusions $\{X_i^m\}_{i=1}^m$ described by \eqref{eqdif:Xim}.
We are interested in the asymptotic behavior of $Y^m$ defined by \eqref{eqdif:Ym}.
As noted in the introduction, we will regard $Y^m$ as a stochastic process with values in a suitable space of distributions.
The natural space to consider is the standard Schwartz distribution space that is described as follows.

Let $\Smc$ denote the space of functions $\phi \colon \Rmb \to \Rmb$ such that $\phi$ is infinitely differentiable and $|x|^m |\phi^{(k)}(x)| \to 0$ as $|x| \to \infty$ for every $m,k \in \Nmb_0$, where $\phi^{(k)}$ denotes the $k$-th derivative of $\phi$.
On $\Smc$, define a sequence of inner product $\lan \cdot,\cdot \ran_n$ and seminorms $\|\cdot\|_n$, $n \in \Nmb_0$, as
\begin{equation}
	\lan \phi,\psi \ran_n \doteq \sum_{0 \le k \le n} \int_\Rmb (1+x^2)^{2n} \phi^{(k)}(x) \psi^{(k)}(x) \, dx, \quad \|\phi\|_n^2 \doteq \lan \phi,\phi \ran_n, \quad \phi,\psi \in \Smc. \label{eqdif:norm}
\end{equation}
This sequence of seminorms introduces a nuclear \Frechet topology on $\Smc$ (see Gel'fand and Vilenkin \cite{GelfandVilenkin1964}).
Let $\Smc_n$ be the completion of $\Smc$ with respect to $\|\cdot\|_n$.
Let $\Smc'$ and $\Smc_n'$ be the dual space of $\Smc$ and $\Smc_n$, respectively.
Then $\Smc' = \bigcup_{n \in \Nmb_0} \Smc_n'$.
Denote by $\|\cdot\|_{-n}$ the dual norm on $\Smc_{-n} \doteq \Smc_n'$, with corresponding inner product $\lan \cdot,\cdot \ran_{-n}$.
The collection $\{\Smc_n\}_{n \in \Zmb}$ defines a sequence of nested Hilbert spaces with $\Smc_w \subset \Smc_v$ if $w \ge v$.
The main result of this section shows that for a suitable $\rho \in \Nmb$, $\{Y^m\}$ satisfies an LDP in $\Cmb([0,T]: \Smc_{-\rho})$ with speed $a^2(m)$ as introduced in \eqref{eqjump:a m}, namely, for all $F \in C_b(\Cmb([0,T]:\Smc_{-\rho}))$
\begin{equation}
	\label{eqdif:lapasy}
	\lim_{m\to \infty}-a^2(m) \log \Emb \exp \left\{-\frac{1}{a^2(m)} F(Y^m)\right\} = \inf_{\zeta \in \Cmb([0,T]: \Smc_{-\rho}) }\{I(\zeta) + F(\zeta)\}
\end{equation}
for a suitable rate function $I$.
The form of the rate function will be identified in \eqref{eqdif:rate function}.

We make the following assumption on the coefficients $\alpha$ and $\beta$.

\begin{condition} \label{conddif:diffusion1}
%	$\alpha(\cdot,\cdot)$ and $\beta(\cdot,\cdot)$ have continuous and bounded partial derivatives up to the second order.
	$\alpha, \beta \in \Cmb_b^2(\Rmb^2)$.
\end{condition}

It is easy to show that under Condition~\ref{conddif:diffusion1} there is a unique pathwise solution to \eqref{eqdif:Xim}.
In fact under this condition one also has unique solvability of certain controlled analogues of \eqref{eqdif:Xim} that will be used in our proofs.
We now introduce these controlled processes.

Let for each $m \in \Nmb$, $\{u_i^m; i=1,\dotsc,m\}$ be a collection of real-valued $\{\Fmc_t\}$-progressively measurable processes such that
\begin{equation*} 
%\label{eqdif:controlbd on u}
	\Emb \sum_{i=1}^m \int_0^T |u_i^m(s)|^2 \, ds < \infty.
\end{equation*}
We will refer to $\{u^m_i\}$ as control processes.
Define for $t \in [0,T]$,
\begin{equation} \label{eqdif:mutilm}
	\mutilmt \doteq \frac{1}{m}\sum_{i=1}^m \delta_{\Xtilimt},
\end{equation}
where
\begin{align} \label{eqdif:Xtilim}
	\begin{aligned}
		\Xtilimt & = x_0 + \int_0^t \sigma(\Xtilims, \mutilms) \, dW_i(s) + \int_0^t b(\Xtilims, \mutilms) \, ds\\
		&\quad + \int_0^t \sigma(\Xtilims, \mutilms) u_i^m(s) \, ds, \quad i = 1, \dotsc, m. 
	\end{aligned}
\end{align}
It is easy to check that under Condition~\ref{conddif:diffusion1} there is a unique pathwise solution to the system of equations in \eqref{eqdif:Xtilim}.
Define for $t \in [0,T]$,
\begin{equation} \label{eqdif:Ytilm}
	\Ytil^m(t) \doteq a(m)\sqrt{m}(\mutilmt - \mu(t)).
\end{equation}
In Section~\ref{secdif:pf} (see Theorem~\ref{thmdif:tightness}), we will show that under Condition~\ref{conddif:diffusion1}, for every control sequence $\{u_i^m; i=1,\dotsc,m\}_{m\in\Nmb}$ such that
\begin{equation*}
	\sup_{m \in \Nmb} a^2(m) \Emb \sum_{i=1}^m \int_0^T |u_i^m(s)|^2 \, ds < \infty,
\end{equation*}
$\{\Ytil^m\}_{m\in\Nmb}$ is tight in $\Cmb([0,T]:\Smc_{-v})$ for some $v>4$.
Specifically, one can take any $v > 4$ for which there is an $r \in \Nmb, 4 < r < v$, such that $\sum_{j=1}^\infty \|\phi_j^v\|_r^2 < \infty$ and $\sum_{j=1}^\infty \|\phi_j^r\|_4^2 < \infty$, where for $n \in \Zmb$, $\{\phi_j^n\}$ is a complete orthonormal system of $\Smc_n$ (see proof of Theorem~\ref{thmdif:tightness}, above \eqref{eqdif:g}).
We remark that the convergence of the above two series is equivalent to the property that the embedding maps $\Smc_{-4} \to \Smc_{-r}$ and $\Smc_{-r} \to \Smc_{-v}$ are Hilbert-Schmidt.
Let $w \doteq v + 2$.

It will be convenient to introduce another system of seminorms
$|\cdot|_n$ on $\Smc$ as 
$$|\phi|_n \doteq \sum_{0 \le k \le n} \sup_{x} |\phi^{(k)}(x)|.$$
It is easy to check that, for each $n \in \Nmb_0$, there is a $\gamma_0(n) \in (0,\infty)$ such that 
\begin{equation} \label{eqdif:twonorms}
	|\phi|_n \le \gamma_0(n) \|\phi\|_{n+1}.
\end{equation}
We make the following additional assumption on the coefficients $\alpha$ and $\beta$.

\begin{condition} \label{conddif:diffusion2}
	$\alpha$, $\beta$ are $w$-times continuously differentiable and,
	
	(a) $\sup_y |\alpha(\cdot,y)|_w < \infty$ and $\sup_y |\beta(\cdot,y)|_w < \infty$.
	
	(b) $\sup_x \|\alpha(x,\cdot)\|_w < \infty$ and $\sup_x \|\beta(x,\cdot)\|_w < \infty$.
\end{condition}

\begin{remark}
	Conditions~\ref{conddif:diffusion1} and~\ref{conddif:diffusion2} are satisfied for $w$-times continuously differentiable functions $\alpha$ and $\beta$ if the functions along with their derivatives decay rapidly at $\infty$.
\end{remark}

We can now state our main MDP result of this section.
We begin by introducing the associated rate function.

Let $\Mmc_T(\Rmb^d \times [0,T])$ be the space of all measures $\nu$ on $(\Rmb^d \times [0,T], \Bmc(\Rmb^d \times [0,T]))$ such that $\nu(\Rmb^d \times [0,t]) = t$ for all $t \in [0,T]$, equipped with the usual weak convergence topology.
%Let
%\begin{equation*}
%	\Mmc_T(\Rmb^2 \times [0,T]) = \{ \nu \in \Mmc(\Rmb^2 \times [0,T]) : \nu(\Rmb^2 \times [0,T]) = T\}.
%\end{equation*}
With $\mu_s \equiv \mu(s)$ as in \eqref{eqdif:Xbari}, define $\nubar \in \Mmc_T(\Rmb \times [0,T])$ as
\begin{equation} \label{eqdif:nubar}
	\nubar(A \times [0,t]) \doteq \int_0^t \mu_s(A) \, ds, \quad A \in \Bmc(\Rmb), \quad t \in [0,T].
\end{equation}
For a measure $\theta \in \Mmc_T(\Rmb^d \times [0,T])$, denote by $\theta_{(i)}$ [resp.\ $\theta_{(i,j)}$] the $i$-th [resp. $(i,j)$-th joint] marginal.
Let
\begin{equation} \label{eqdif:Pinfty}
	\Pmc_{\infty} \doteq \left\{ \nu \in \Mmc_T(\Rmb^2 \times [0,T]) \, \bigg| \, \nu_{(2,3)} = \nubar, \int_{\Rmb^2 \times [0,T]} y^2 \, \nu(dy\, dx\, ds) < \infty \right\}.
\end{equation}
The space $\Pmc_\infty$ will be used to formulate the rate function and will play a key role in our weak convergence analysis.
Roughly speaking, for a $\nu \in \Pmc_\infty$, the first marginal $\nu_{(1)}$ corresponds to the ``control variable", $\nu_{(2)}$ to the ``state variable" and $\nu_{(3)}$ to the ``time variable" (see \eqref{eqdif:char}).

Given $\eta \in \Cmb([0,T]: \Smc_{-v})$, let $\Tmc(\eta)$ be the collection of all $\nu \in \Pmc_{\infty}$
such that, for all $\phi \in \Smc$ and $t \in [0,T]$
\begin{equation}
	\label{eqdif:char}
	\lan \eta(t), \phi \ran = \int_0^t \lan \eta(s), L(s)\phi \ran \, ds + \int_{\Rmb^2 \times [0,t]} \phi'(x) \sigma(x,\mu(s)) y \, \nu(dy\, dx\, ds).
\end{equation}
Note that since $L(s)$ maps $\Smc_w$ to $\Smc_v$ (see Lemma~\ref{lemdif:Ls}) and $\Smc \subset \Smc_n$ for all $n \in \Nmb$, $\lan \eta(s),L(s)\phi \ran$ is well defined for all $s \in [0,T]$.
In Section~\ref{secdif:laplowerbd} (see Lemma~\ref{lemdif:unique}), we will show that under Conditions~\ref{conddif:diffusion1} and~\ref{conddif:diffusion2}, for every $\nu \in \Pmc_\infty$, there exists a unique $\eta \in \Cmb([0,T]: \Smc_{-v})$ that solves \eqref{eqdif:char}.
Define $I \colon \Cmb([0,T]: \Smc_{-v}) \to [0,\infty]$ as
\begin{equation} \label{eqdif:rate function}
	I(\eta) \doteq \inf_{\nu \in \Tmc(\eta)} \left\{ \half \int_{\Rmb^2 \times [0,T]} y^2 \, \nu(dy\, dx\, ds)\right\},
\end{equation}
where the infimum over an empty set is taken to be $\infty$.
In Sections~\ref{secdif:Lapupperbd} and~\ref{secdif:laplowerbd} we will see that under Conditions~\ref{conddif:diffusion1} and~\ref{conddif:diffusion2} for every $\tau \ge v$ Laplace upper and lower bounds (see \eqref{eqdif:Laplaceupperbound} and \eqref{eqdif:Laplacelowerbound}) hold for every $F \in \Cmb_b(\Cmb([0,T]: \Smc_{-\tau}))$ with $I$ defined as above.
Although the Laplace upper and lower bound hold in particular with $\tau=v$, the function $I$ need not have relatively compact level sets in $\Cmb([0,T]: \Smc_{-v})$ 
(see comments in Section \ref{secdif:rate function} below \eqref{eqdif:rate bound}) and one needs to strengthen Condition~\ref{conddif:diffusion2} and enlarge the space in order to obtain the compactness property of $I$.
Specifically, we take $\rho > w$ such that $\sum_{j=1}^\infty \|\phi_j^\rho\|_w^2 < \infty$ and we strengthen Condition~\ref{conddif:diffusion2} as follows.

\begin{condition} \label{conddif:diffusion3}
	$\alpha$, $\beta$ are $(\rho+2)$-times continuously differentiable and,
	
	(a) $\sup_y |\alpha(\cdot,y)|_{\rho+2} < \infty$ and $\sup_y |\beta(\cdot,y)|_{\rho+2} < \infty$.
	
	(b) $\sup_x \|\alpha(x,\cdot)\|_{\rho+2} < \infty$ and $\sup_x \|\beta(x,\cdot)\|_{\rho+2} < \infty$.
\end{condition}

Under Conditions~\ref{conddif:diffusion1} and~\ref{conddif:diffusion3} we will establish an LDP for $Y^m$ in $\Cmb([0,T]: \Smc_{-\rho})$ with rate function $I$.
%This $\rho$ identifies the space $\Smc_{-\rho}$ in which the LDP will be established.
We thus regard $I$ as a function from $\Cmb([0,T]: \Smc_{-\rho})$ to $[0,\infty]$, with the convention that
%\begin{equation} \label{eqdif:rate infinite}
	$I(\eta) \doteq \infty$ for all $\eta \in \Cmb([0,T]: \Smc_{-\rho}) \setminus \Cmb([0,T]: \Smc_{-v})$.
%\end{equation}

The following is the main result of this section.
The proof will be given in Section~\ref{secdif:pf}.
\begin{theorem}
	\label{thmdif:main}
	Under Conditions~\ref{conddif:diffusion1} and~\ref{conddif:diffusion3}, $\{Y^m\}$ satisfies an LDP in $\Cmb([0,T]:\Smc_{-\rho})$ with speed $a^2(m)$ and rate function $I$, where $\rho \in \Nmb$ is as introduced above.
\end{theorem}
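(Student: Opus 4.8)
The plan is to establish the LDP via the Laplace principle formulation, using the Boué--Dupuis type variational representation for exponential functionals of Brownian motions in the form given in~\cite{BudhirajaDupuisMaroulas2008large}. Fix $F \in \Cmb_b(\Cmb([0,T]:\Smc_{-\rho}))$. Applying the variational representation to $\exp\{-\frac{1}{a^2(m)} F(Y^m)\}$ rewrites $-a^2(m) \log \Emb \exp\{-\frac{1}{a^2(m)} F(Y^m)\}$ as an infimum over control processes $\{u_i^m\}$ of the expectation of $F(\Ytil^m) + \frac{a^2(m)}{2}\sum_{i=1}^m \int_0^T |u_i^m(s)|^2\,ds$, where $\Ytil^m$ is the controlled, centered and normalized empirical measure defined in \eqref{eqdif:Ytilm}--\eqref{eqdif:Xtilim}. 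The strategy is then standard: prove the Laplace upper bound by a tightness/weak-convergence argument on near-optimal controlled processes, and prove the Laplace lower bound by constructing an explicit asymptotically near-optimal sequence of controls. By the equivalence of the Laplace principle and the LDP (Section 1.2 of~\cite{DupuisEllis2011weak}), this yields the theorem once the rate function $I$ of \eqref{eqdif:rate function} is shown to have compact level sets in $\Cmb([0,T]:\Smc_{-\rho})$.

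For the upper bound, I would first restrict to controls satisfying $\sup_m a^2(m)\Emb\sum_{i=1}^m\int_0^T|u_i^m(s)|^2\,ds < \infty$ (controls with larger cost can be discarded since $F$ is bounded). By Theorem~\ref{thmdif:tightness}, the controlled processes $\{\Ytil^m\}$ are then tight in $\Cmb([0,T]:\Smc_{-v})$, and since the embedding $\Smc_{-v}\to\Smc_{-\rho}$ is Hilbert--Schmidt for our choice of $\rho$, they are tight in $\Cmb([0,T]:\Smc_{-\rho})$ as well. One also forms the occupation measures $\nu^m(dy\,dx\,ds)\doteq \frac{1}{m}\sum_{i=1}^m \delta_{(a(m)\sqrt{m}\,u_i^m(s)/\sqrt{m}\,\dots)}\dots$ — more precisely, the natural normalized control-state-time occupation measures living in $\Mmc_T(\Rmb^2\times[0,T])$ — and show they are tight in $\Pmc_\infty$, using the uniform second-moment bound on the controls. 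Passing to a subsequential limit $(\eta,\nu)$, I would identify the limit by passing to the limit in the (controlled, scaled) evolution equation for $\lan\Ytil^m(t),\phi\ran$, $\phi\in\Smc$: the drift and diffusion terms converge to the operator term $\int_0^t\lan\eta(s),L(s)\phi\ran\,ds$ (this is where the propagation-of-chaos replacement of $\mutilms$ by $\mu(s)$ and linearization of $\sigma,b$ around $\mu$ enters, using Condition~\ref{conddif:diffusion3} for the needed smoothness), the martingale/stochastic-integral term vanishes because it carries a factor $a(m)\to 0$, and the control term converges to $\int_{\Rmb^2\times[0,t]}\phi'(x)\sigma(x,\mu(s))y\,\nu(dy\,dx\,ds)$. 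Hence $\nu\in\Tmc(\eta)$, and by lower semicontinuity of the cost and Fatou, $\liminf$ of the variational quantities is bounded below by $\Emb[F(\eta)+\frac12\int y^2\,d\nu] \ge \inf_\zeta\{F(\zeta)+I(\zeta)\}$.

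For the lower bound, given $\eps>0$ choose $\eta^*$ with $F(\eta^*)+I(\eta^*)\le \inf_\zeta\{F(\zeta)+I(\zeta)\}+\eps$ and, by definition of $I$, a $\nu^*\in\Tmc(\eta^*)$ with $\frac12\int y^2\,d\nu^* \le I(\eta^*)+\eps$. One disintegrates $\nu^*$ with respect to its state-time marginal $\nubar$ to obtain a measurable selection of controls; since the state marginal of $\nu^*$ is exactly $\nubar$, the law of the i.i.d.\ nonlinear Markov processes $\{\Xbar_i\}$ of \eqref{eqdif:Xbari}, one can feed these controls (as functions of $\Xbar_i(s)$ and an auxiliary randomization) into the controlled system \eqref{eqdif:Xtilim}. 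A law of large numbers / propagation-of-chaos argument shows that the resulting $\Ytil^m$ converges to $\eta^*$ in $\Cmb([0,T]:\Smc_{-\rho})$ and the control cost converges to $\frac12\int y^2\,d\nu^*$, so the variational quantity is asymptotically at most $F(\eta^*)+I(\eta^*)+\eps \le \inf_\zeta\{F(\zeta)+I(\zeta)\}+2\eps$; letting $\eps\downarrow 0$ gives the lower bound. Finally, compactness of the level sets of $I$ in $\Cmb([0,T]:\Smc_{-\rho})$ follows by the same tightness estimates applied to deterministic controls with bounded cost together with the uniqueness result Lemma~\ref{lemdif:unique}, which makes $\eta\mapsto I(\eta)$ well-behaved.

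The main obstacle I expect is the tightness and limit characterization step in the upper bound (Theorem~\ref{thmdif:tightness} and its use): the diffusion coefficient is allowed to be degenerate and the interaction enters through both coefficients, so one cannot appeal to parabolic smoothing and must instead obtain the right Sobolev-type (negative-norm) moment estimates on $\Ytil^m$ by testing against the orthonormal bases $\{\phi_j^n\}$ and carefully tracking how many derivatives are lost in the operator $L(s)$ and in the control term — this is exactly why the Hilbert--Schmidt embedding conditions on $v,r,\rho$ and the higher-smoothness Condition~\ref{conddif:diffusion3} are imposed. Controlling the martingale term's quadratic variation uniformly in $m$ in the appropriate $\Smc_{-v}$-norm, while only using boundedness (not nondegeneracy) of $\sigma$, is the technical heart of the argument.
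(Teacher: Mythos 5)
Your proposal is correct and follows essentially the same route as the paper: Bou\'e--Dupuis representation, tightness and weak-limit characterization of the controlled processes $(\Ytil^m,\nutil^m)$ for the upper bound, an asymptotically near-optimal control built from the conditional mean $u(s,x)=\int y\,\vartheta(s,x,dy)$ evaluated along the i.i.d.\ nonlinear processes $\Xbar_i$ for the lower bound, and compactness of level sets of $I$ for the LDP/Laplace equivalence.

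One remark on the last step, which your outline compresses: the level-set compactness is the place where the enlargement from $\Smc_{-v}$ to $\Smc_{-\rho}$ and the upgrade from Condition~\ref{conddif:diffusion2} to Condition~\ref{conddif:diffusion3} are genuinely forced. The Laplace upper and lower bounds already hold in $\Cmb([0,T]:\Smc_{-v})$, but given a sequence $\eta^m$ with $I(\eta^m)\le K$ and associated $\nu^m\in\Tmc(\eta^m)$ with bounded cost, the equicontinuity estimate one gets from the characterization equation, Lemma~\ref{lemdif:Ls}, and the Gronwall bound $\sup_m\sup_t\|\eta^m(t)\|_{-w}^2<\infty$ only yields modulus control against $\|\phi\|_{w+2}$, and the resulting precompactness is obtained in $\Smc_{-\rho}$ via the Hilbert--Schmidt embedding $\Smc_{-w}\hookrightarrow\Smc_{-\rho}$, not in $\Smc_{-v}$ (the paper explicitly notes that convergence in $\Smc_{-v}$ ``is not immediately obvious''). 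Identifying the limit then requires that $L(s)\phi\in\Smc_\rho$ for $\phi\in\Smc$, which is what Condition~\ref{conddif:diffusion3} provides; uniqueness in $\Smc_{-\rho}$ is the second statement of Lemma~\ref{lemdif:unique}. Also, two small imprecisions in your plan: the lower-bound control needs no auxiliary randomization (the conditional-mean control already works, and by Jensen its cost is $\le\frac12\int y^2\,d\nu^*$, not equal to it), and the occupation-measure tightness argument for the controlled system uses the tightness function $g(\nu)=\int(x^2+y^2)\,d\nu$ on $\Mmc_T(\Rmb^2\times[0,T])$ together with the moment bound of Lemma~\ref{lemdif:XtilXbar}, rather than anything specific to $\Pmc_\infty$ a priori.
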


\textbf{Outline of the proof:} The proof of Theorem~\ref{thmdif:main} will be completed in three steps.
\begin{itemize}
	\item 
		Laplace principle upper bound: In Section~\ref{secdif:Lapupperbd} we show that under Conditions~\ref{conddif:diffusion1} and~\ref{conddif:diffusion2}, for all $\tau \ge v$ and $F \in \Cmb_b(\Cmb([0,T]: \Smc_{-\tau}))$,
		\begin{equation} \label{eqdif:Laplaceupperbound}
			\liminf_{m\to \infty}-a^2(m) \log \Emb \exp \left\{-\frac{1}{a^2(m)} F(Y^m)\right\} \ge \inf_{\zeta \in \Cmb([0,T]: \Smc_{-v}) }\{I(\zeta) + F(\zeta)\}.
		\end{equation}
	\item
		Laplace principle lower bound: In Section~\ref{secdif:laplowerbd} we show that under Conditions~\ref{conddif:diffusion1} and~\ref{conddif:diffusion2}, for all $\tau \ge v$ and $F \in \Cmb_b(\Cmb([0,T]: \Smc_{-\tau}))$,
		\begin{equation} \label{eqdif:Laplacelowerbound}
			\limsup_{m\to \infty}-a^2(m) \log \Emb \exp \left\{-\frac{1}{a^2(m)} F(Y^m)\right\} \le \inf_{\zeta \in \Cmb([0,T]: \Smc_{-v}) }\{I(\zeta) + F(\zeta)\}.
		\end{equation}
	\item
		$I$ is a rate function on $\Cmb([0,T]: \Smc_{-\rho})$: In Section~\ref{secdif:rate function} we show that under Conditions~\ref{conddif:diffusion1} and~\ref{conddif:diffusion3}, for each $K < \infty$, $\{\eta \in \Cmb([0,T]: \Smc_{-\rho}):I(\eta) \le K\}$ is a compact subset of $\Cmb([0,T]: \Smc_{-\rho})$.
\end{itemize} 
Note that since $I(\eta)=\infty$ for $\eta \notin \Cmb([0,T]:\Smc_{-v})$, we can replace $v$ by any $\tau \ge v$ on the right sides of \eqref{eqdif:Laplaceupperbound} and \eqref{eqdif:Laplacelowerbound}.
%The Laplace principle upper and lower bounds will be proved in Sections~\ref{secdif:Lapupperbd} and~\ref{secdif:laplowerbd}, respectively.
%We remark that for the proof of the Laplace upper and lower bounds one can replace Condition~\ref{conddif:diffusion3} with the weaker Condition~\ref{conddif:diffusion2}.
%In Section~\ref{secdif:rate function}, we will show that $I$ is a rate function.
Theorem~\ref{thmdif:main} follows on combining these results.

\begin{remark}
	The rate function $I$ has the following alternative representation.
Given $\eta \in \Cmb([0,T]:\Smc_{-v})$, let $\Tmc^*(\eta)$ be the collection of  $g \in L^2(\nubar)$ such that for all $\phi \in \Smc$ and $t \in [0,T]$,
	\begin{equation} \label{eqdif:charalter}
		\lan \eta(t), \phi \ran = \int_0^t \lan \eta(s), L(s)\phi \ran \, ds + \int_{\Rmb^2 \times [0,t]} \phi'(x) \sigma(x,\mu(s)) g(x,s) \, \nu(dy\, dx\, ds).
	\end{equation}
	As for \eqref{eqdif:char}, under Conditions~\ref{conddif:diffusion1} and~\ref{conddif:diffusion2}, for every $g \in L^2(\nubar)$, there is a unique $\eta \in \Cmb([0,T]:\Smc_{-v})$ that solves \eqref{eqdif:charalter}.
	We take $\Tmc^*(\eta)$ to be the empty set if $\eta \in \Cmb([0,T]:\Smc_{-\rho}) \setminus \Cmb([0,T]:\Smc_{-v})$.
	Define $I^* \colon \Cmb([0,T]:\Smc_{-\rho}) \to [0,\infty]$ as
	\begin{equation*}
		I^*(\eta) \doteq \inf_{g \in \Tmc^*(\eta)} \left\{ \half \int_{\Rmb \times [0,T]} g^2(x,s) \, \mu_s(dx) \, ds \right\}.
	\end{equation*}
	It is easy to check that $I^* = I$.
	Indeed every $g \in \Tmc^*(\eta)$ corresponds to a $\nu_g \in \Pmc_\infty$ given as $\nu_g(dy\,dx\,ds) \doteq \delta_{g(x,s)}(dy) \, \nubar(dx\,ds)$ and every $\nu \in \Pmc_\infty$ corresponds to a $g_\nu \in L^2(\nubar)$ given as $g_\nu(x,s) \doteq \int_\Rmb y \, \vartheta(x,s,dy)$, where $\vartheta(x,s,dy)$ is obtained by disintegrating $\nu$ as $\nu(dy\,dx\,ds) = \vartheta(x,s,dy)\,\nubar(dx\,ds)$. 
\end{remark}

%----------------------------------------------------

\section{The pure jump case} \label{secjump:jump}

%\subsection{Preliminaries} \label{secjump:preliminaries}

In this section we study the weakly interacting pure jump Markov processes $\{(X_1^m,\dotsc,X_m^m)\}$ taking values in $\Nmb^m$ that were introduced in Section~\ref{sec:intro}.
We begin in Section~\ref{secjump:model} with a precise model description and a law of large numbers result.
Proof of the LLN result follows from standard arguments, however for completeness we provide a sketch in the Appendix.
We then present our main result (Theorem~\ref{thmjump:MDP}) on moderate deviations for the associated empirical measure processes in Section~\ref{secjump:MDP}.
Proof of Theorem~\ref{thmjump:MDP} is given in Section~\ref{secjump:pf}.

%----------------------------------------------------

\subsection{Model and law of large numbers} \label{secjump:model}

Recall the pure jump Markov process $\{(X_1^m(t),\dotsc,X_m^m(t)), t \in [0,T]\}$ governed by intensity function $\Gamma$ that was introduced in Section~\ref{sec:intro} (see above \eqref{eqjump:jump intensity}).
It will be convenient to describe the evolution of the associated empirical measure process $\{\mu^m(t)\}$ through an SDE driven by a Poisson random measure.
We now introduce some notation that will be needed to formulate this evolution equation.

%Let $l_1$ and $l_2$ denote the space of sequences that are absolutely summable and square-summable, respectively, equipped with the usual norm $\| \cdot \|_1$ and $\| \cdot \|_2$.
%Especially, we let $\| \cdot \| \equiv \| \cdot \|_2$ since it is most commonly used in the jump cases.
For a locally compact Polish space $\Smb$, let $\Mmc_{FC}(\Smb)$ be the space of all measures $\nu$ on $(\Smb,\Bmc(\Smb))$ such that $\nu(K) < \infty$ for every compact $K \subset \Smb$.
We equip $\Mmc_{FC}(\Smb)$ with the usual vague topology.
This topology can be metrized such that $\Mmc_{FC}(\Smb)$ is a Polish space (see for example~\cite{BudhirajaDupuisMaroulas2011variational}).
A Poisson random measure (PRM) $\nbd$ on $\Smb$ with mean measure (or intensity measure) $\nu$ is a $\Mmc_{FC}(\Smb)$-valued random variable such that for each $B \in \Bmc(\Smb)$ with $\nu(B) < \infty$, $\nbd(B)$ is Poisson distributed with mean $\nu(B)$ and for disjoint $B_1,\dotsc,B_k \in \Bmc(\Smb)$, $\nbd(B_1),\dotsc,\nbd(B_k)$ are mutually independent random variables (cf.~\cite{IkedaWatanabe1990SDE}).

Let $l_2$ be the Hilbert space of square-summable sequences, equipped with the usual inner product and norm denoted by $\lan \cdot,\cdot \ran$ and $\| \cdot \|$, respectively.
For each $i \in \Nmb$ let $\ebd_i$ be the unit vector in $l_2$ with $1$ for the $i$-th coordinate and $0$ otherwise.

We are interested in characterizing the limit of the empirical measure process $\{ \mumt \}$ in the space $\Dmb([0,T]:l_2)$, and to establish a moderate deviation principle for $\{ \mumt \}$.
We begin by giving an equivalent in law representation of this empirical measure process using a PRM on a suitable point space.
We will follow the notation in~\cite{BudhirajaDupuisMaroulas2011variational}.

Let $\Xmb \doteq \Rmb^2_+$, $\Ymb \doteq \Xmb \times \Rmb_+ = \Rmb_+^3$, $\XT \doteq [0,T] \times \Xmb$ and $\YT \doteq [0,T] \times \Ymb$.
Let $\lambda_T$, $\lambda_\Xmb$ and $\lambda_\infty$ be the Lebesgue measures on $[0,T]$, $\Xmb$ and $\Rmb_+$, respectively. 
Let $N$ be a PRM on $\YT$ with intensity $\lambda_\YT \doteq \lambda_T \otimes \lambda_\Xmb \otimes \lambda_\infty$, defined on some filtered probability space $(\Omega, \Fmc, \Pmb, \{\Fmc_t\})$ with a $\Pmb$-complete right-continuous filtration.
We assume that $N([0,a] \times A)$ is $\Fmc_a$-measurable and $N((a,b] \times A)$ is independent of $\Fmc_a$ for all $0 \le a < b \le T$ and $A \in \Bmc(\Ymb)$.
Given $m \ge 1$, let $N^m$ be a counting process on $\XT$ defined as
\begin{equation*}
	N^m([0,t] \times A) \doteq \int_{[0,t] \times A} \one_{[0,m]}(r) \, N(ds \, dy \, dr), \quad t \in [0,T], A \in \Bmc(\Xmb).
\end{equation*}
We will make the following assumption on $\Gamma$ (this assumption will be restated in Condition~\ref{condjump:cond1})
\begin{equation} \label{eqjump:Gamma norm finite}
	\Gammainf \doteq \sup_{q \in \Smchat} \sup_{i \in \Nmb} | \Gamma_{ii}(q)| < \infty.
\end{equation}
Given $i$, $j \in \Nmb$ with $i \ne j$ and $q \in \Smchat$, let
\begin{equation*}
	A_{ij}(q) \doteq \{ y \in \Xmb : i-1 < y_1 \le i, \: (j-1) \Gammainf < y_2 \le (j-1) \Gammainf + q_i \Gamma_{ij}(q) \}.
\end{equation*} 
Note that for every $q \in \Smchat$,
\begin{equation} \label{eqjump:Aijdisjoint}
	A_{ij}(q) \cap A_{i'j'}(q) = \emptyset, \text{ if } (i,j) \ne (i',j').
\end{equation}
For $q \in \Smchat$ and $y \in \Xmb$, let
\begin{equation} \label{eqjump:G}
	G(q,y) \doteq \sum_{i=1}^\infty \sum_{j=1, j \ne i}^\infty (\ebd_j - \ebd_i) \one_{A_{ij}(q)}(y) = \sum_{i=1}^\infty G_i(q,y) \ebd_i, 
\end{equation}
%	G(q,y) & = \sum_{i=1}^\infty \sum_{j \ne i}^\infty (\ebd_j - \ebd_i) \one_{A_{ij}(q)}(y) = \sum_{i=1}^\infty \Big( \sum_{j \ne i}^\infty \big( \one_{A_{ji}(q)}(y) - \one_{A_{ij}(q)}(y) \big) \Big) \ebd_i = \sum_{i=1}^\infty G_i(q,y) \ebd_i, \label{eqjump:G} \\
where
\begin{equation} \label{eqjump:Gi}
	G_i(q,y) \doteq \sum_{j=1, j \ne i}^\infty \big( \one_{A_{ji}(q)}(y) - \one_{A_{ij}(q)}(y) \big), \quad i \in \Nmb. 
\end{equation}
Note that $\| G(q,y) \| \le \sqrt{2}$ for all $q \in \Smchat$ and $y \in \Xmb$, in particular $G$ is a well defined map from $l_2 \times \Xmb$ to $l_2$.
Define the stochastic process $\{ \mumt, t \in [0,T] \}$ as
\begin{equation} \label{eqjump:mu m t N}
	\mumt = \mu^m(0) + \frac{1}{m} \intXt G(\mu^m(s-),y) \, N^m(ds \, dy),
\end{equation}
where $\mu^m(0) \doteq \frac{1}{m} \sum_{i=1}^m \delta_{x_i^m}$ and $\Xt \doteq [0,t] \times \Xmb$ for each $t \in [0,T]$.
This  describes a pure jump Markov process for which jump at time instant $t$ is $\frac{1}{m} (\ebd_j - \ebd_i)$ at rate $m \lambda_\Xmb(A_{ij}(q)) = m q_i \Gamma_{ij}(q)$ with $q = \mu^m(t-)$ for $i,j \in \Nmb$ with $i \ne j$.
Thus $\{ \mumt \}$ defined by \eqref{eqjump:mu m t N} has the same law as the empirical process $\{ \frac{1}{m} \sum_{i=1}^m \delta_{X^m_i(t)} \}$ introduced in Section~\ref{sec:intro} with jump intensities \eqref{eqjump:jump intensity}.
Throughout this work we will use the representation for $\{\mu^m(t)\}$ given by \eqref{eqjump:mu m t N}.
With this representation $\{\mumt\}$ can be viewed as an Hilbert space ($l_2$)-valued small noise stochastic dynamical system driven by a PRM.
Moderate deviation principles for such small noise processes have been studied in~\cite{BudhirajaDupuisGanguly2015moderate}.
However one key difference between the models in~\cite{BudhirajaDupuisGanguly2015moderate} from that considered here is that unlike in~\cite{BudhirajaDupuisGanguly2015moderate} the map $x \mapsto G(x,y)$ is not Lipschitz (in fact not even continuous).
This lack of regularity is one of the challenges in the large deviation analysis.

Let $\Ntil^m(ds \, dy) \doteq N^m(ds \, dy) - m \lambda(ds \, dy)$, where $\lambda \doteq \lambda_T \otimes \lambda_\Xmb$ is the Lebesgue measure on $\XT$.
Then \eqref{eqjump:mu m t N} can be written as:
\begin{equation} \label{eqjump:mu m t N tilde}
	\mumt = \mu^m(0) + \int_0^t b(\mums) \, ds + \frac{1}{m} \intXt G(\mu^m(s-),y) \, \Ntil^m(ds \, dy),
\end{equation}
where $b \colon \Smchat \to l_2$ is defined as 
\begin{equation} \label{eqjump:b}
	b(q) \doteq \sum_{i=1}^\infty \big( \sum_{j=1}^\infty q_j \Gamma_{ji}(q) \big) \ebd_i, \quad q = (q_1, q_2, \dotsc) \in \Smchat.
\end{equation}
To see that $b(q)$ defined by \eqref{eqjump:b} is in $l_2$, note that for $q = (q_1, q_2, \dotsc) \in \Smchat$,
\begin{gather*}
	\Big| \sum_{j=1}^\infty q_j \Gamma_{ji}(q) \Big| \le \Gammainf < \infty, \\
	\| b(q) \|^2 = \sum_{i=1}^\infty \Big| \sum_{j=1}^\infty q_j \Gamma_{ji}(q) \Big|^2 \le \sum_{i=1}^\infty \sum_{j=1}^\infty q_j \Gamma_{ji}^2(q) \le 2 \|\Gamma\|_\infty^2.
\end{gather*}
Note also that $b(q) = \int_\Xmb G(q,y) \, \lambda_\Xmb(dy)$.

We now introduce an assumption under which a law of large numbers result holds.
Note that part (a) was previously stated in \eqref{eqjump:Gamma norm finite}.

\begin{condition} \label{condjump:cond1}
	(a) $\Gammainf < \infty$.
	
	(b) The map $b \colon \Smchat \to l_2$ defined in \eqref{eqjump:b} is Lipschitz, namely there exists $L_b \in (0,\infty)$ such that for all $q$, $\qtil \in \Smchat$,
	\begin{equation*}
		\| b(q) - b(\qtil) \| \le L_b \| q - \qtil \|.
	\end{equation*}

	(c) $\| \mu^m(0)-p(0) \| \to 0$ as $m \to \infty$ for some probability measure $p(0) \in \Pmc(\Nmb)$.
\end{condition}

\begin{remark} \label{rmkjump:rmk for cond1}
	Condition~\ref{condjump:cond1}(c) is trivially satisfied if $p(0) = \delta_x$ and $x^m_i = x$ for all $m,i \in \Nmb$, for some $x \in \Nmb$.
	An elementary application of Scheff\'e's lemma and the strong law of large numbers shows that it is also satisfied for a.e. $\omega$ if $x^m_i = \xi_i(\omega)$ where $\xi_i$ are i.i.d.\ with common distribution $p(0)$.
\end{remark}

Let $\Mmb \doteq \Mmc_{FC}(\XT)$, namely the space of all measures $\nu$ on $(\XT,\Bmc(\XT))$ such that $\nu(K) < \infty$ for every compact $K \subset \XT$.
%and $\Pmb$ be the unique probability measure on $(\Mmb,\Bmc(\Mmb))$ under which the canonical map, $N_* : \Mmb \to \Mmb, N_*(\mtil) := \mtil$, is a Poisson random measure with intensity measure $\lambda_\XT$.
%The corresponding expectation operator will be denoted by $\Emb$.
The proof of the following result giving unique solvability and law of large numbers is standard.
We provide a sketch in Appendix~\ref{Appendix:proof of theorem of LLN} for completeness.

\begin{theorem} \label{thmjump:LLN}
	Under Condition~\ref{condjump:cond1}, the following conclusions hold.
	
	(a) For each $m \in \Nmb$ there is a measurable map $\Gmcbar^m \colon \Mmb \to \Dmb([0,T]:l_2)$ such that for any probability space $(\Omegatil,\Fmctil,\Pmbtil)$ on which is given a Poisson random measure $\nbd_m$ on $\XT$ with intensity measure $m \lambda$, $\mutil^m \doteq \Gmcbar^m\left(\frac{1}{m} \nbd_m\right)$ is an $\Fmctil_t \doteq \sigma \{ \nbd_m([0,s] \times A), s \le t, A \in \Bmc(\Xmb) \}$-adapted RCLL process that is the unique adapted solution of the stochastic integral equation
	\begin{equation*}
		\mutil^m(t) = \mutil^m(0) + \frac{1}{m} \intXt G(\mutil^m(s-),y) \, \nbd_m(ds \, dy), \quad t \in [0,T].
	\end{equation*}
	In particular $\mu^m \doteq \Gmcbar^m(\frac{1}{m} N^m)$ is the unique $\{\Fmc_t\}$-adapted solution of \eqref{eqjump:mu m t N}.
	
	(b) The process $\mu^m$ converges in probability to $p$ in $\Dmb([0,T]:l_2)$, where $p$ is given as the unique solution of the following integral equation in $l_2$:
	\begin{equation} \label{eqjump:p t}
		p(t) = p(0) + \int_0^t b(p(s)) \, ds, \quad t \in [0,T].
	\end{equation}
\end{theorem}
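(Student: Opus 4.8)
The plan is to establish (a) by an explicit interlacing (piecing-together) construction of the pure jump process, and (b) by a Gronwall estimate built on an $L^2$ bound for the compensated stochastic integral. Both are standard once one isolates the uniform bound that tames the (infinite) ambient intensity $m\lambda$.

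\textbf{Part (a).} The point to exploit is that although $\lambda_\Xmb$ is an infinite measure on $\Xmb$, for every fixed $q \in \Smchat$ the ``active'' set carries finite mass: by the disjointness \eqref{eqjump:Aijdisjoint} and \eqref{eqjump:Gamma norm finite},
\[
  \lambda_\Xmb\big(\{y \in \Xmb : G(q,y) \ne 0\}\big) = \sum_{i=1}^\infty \sum_{j \ne i} q_i \Gamma_{ij}(q) = \sum_{i=1}^\infty q_i |\Gamma_{ii}(q)| \le \Gammainf < \infty .
\]
Given a PRM $\nbd_m$ on $\XT$ with intensity $m\lambda$, I would construct $\mutil^m$ recursively: put $\tau_0 = 0$ and $q_0 = \mutil^m(0)$; having $(\tau_n, q_n)$, let $\tau_{n+1}$ be the first time in $(\tau_n, T]$ at which $\nbd_m$ has a point $(s,y)$ with $G(q_n, y) \ne 0$ (and $\tau_{n+1} = T$ if there is none), set $\mutil^m(t) = q_n$ on $[\tau_n, \tau_{n+1})$, and put $q_{n+1} = q_n + \tfrac1m G(q_n, y)$ for the corresponding $y$. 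Since points of $\nbd_m$ in disjoint time strips are independent, conditionally on $\Fmctil_{\tau_n}$ the increment $\tau_{n+1} - \tau_n$ stochastically dominates $(T-\tau_n) \wedge E$ with $E \sim \mathrm{Exp}(m\Gammainf)$; hence the number of jump times in $[0,T]$ is stochastically dominated by a $\mathrm{Poisson}(mT\Gammainf)$ variable, is a.s.\ finite, the recursion terminates, and $\mutil^m \in \Dmb([0,T]:l_2)$ is well defined. Because $G(\mutil^m(s-), y)$ vanishes off the active set, the sum of these jumps equals $\tfrac1m \intXt G(\mutil^m(s-), y)\, \nbd_m(ds\,dy)$, so $\mutil^m$ solves the stated equation; conversely, any $\{\Fmctil_t\}$-adapted solution is piecewise constant with exactly these jump times and jump sizes, giving pathwise uniqueness. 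The map $\Gmcbar^m$ is the composition of the countably many measurable steps of this recursion, hence Borel measurable from $\Mmb$ to $\Dmb([0,T]:l_2)$; taking $\nbd_m = N^m$ yields the final claim of (a).

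\textbf{Part (b).} Existence and uniqueness of $p$ solving \eqref{eqjump:p t} in $\Cmb([0,T]:l_2)$ follow from the Picard--Lindel\"of theorem, since $b$ is bounded ($\|b(q)\| \le \sqrt{2}\,\Gammainf$) and Lipschitz on $\Smchat$ by Condition~\ref{condjump:cond1}(b); that $p$ stays in $\Smchat = \Pmc(\Nmb)$ uses conservation of total mass ($\sum_i b_i(q) = 0$, as $\sum_i \Gamma_{ji}(q) = 0$) and preservation of positivity ($b_i(q) = \sum_{j} q_j \Gamma_{ji}(q) \ge 0$ whenever $q_i = 0$). From \eqref{eqjump:mu m t N tilde} and \eqref{eqjump:p t},
\[
  \mu^m(t) - p(t) = \big(\mu^m(0) - p(0)\big) + \int_0^t \big(b(\mu^m(s)) - b(p(s))\big)\, ds + M^m(t), \quad M^m(t) \doteq \tfrac1m \intXt G(\mu^m(s-), y)\, \Ntil^m(ds\,dy),
\]
where $M^m$ is an $l_2$-valued $\{\Fmc_t\}$-martingale. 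Since $\int_\Xmb \|G(q,y)\|^2\, \lambda_\Xmb(dy) = 2\sum_{i \ne j} q_i \Gamma_{ij}(q) \le 2\Gammainf$, Doob's maximal inequality (applied to the submartingale $\|M^m(\cdot)\|$) and the PRM isometry give
\[
  \Emb \|M^m\|_{*,T}^2 \le 4\, \Emb \|M^m(T)\|^2 = \tfrac{4}{m}\, \Emb \int_0^T \!\!\int_\Xmb \|G(\mu^m(s), y)\|^2\, \lambda_\Xmb(dy)\, ds \le \frac{8\Gammainf T}{m} \longrightarrow 0 .
\]
Setting $\eps_m \doteq \|\mu^m(0) - p(0)\| + \|M^m\|_{*,T}$, which tends to $0$ in probability by Condition~\ref{condjump:cond1}(c) and the previous display, then taking suprema over $[0,u]$ and using the Lipschitz bound on $b$ yields $\|\mu^m - p\|_{*,u} \le \eps_m + L_b \int_0^u \|\mu^m - p\|_{*,s}\, ds$, so Gronwall's lemma gives $\|\mu^m - p\|_{*,T} \le \eps_m e^{L_b T} \to 0$ in probability. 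Uniform convergence to the continuous limit $p$ implies $\mu^m \to p$ in probability in $\Dmb([0,T]:l_2)$.

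The Gronwall/martingale estimate in (b) is entirely routine; the one place that genuinely needs care is the construction in (a), where the ambient intensity $m\lambda$ is infinite and the support of $y \mapsto G(q,y)$ moves with the state, so one must use the uniform bound $\sup_{q \in \Smchat} \lambda_\Xmb(\{y: G(q,y) \ne 0\}) \le \Gammainf$ to guarantee that only finitely many points of $\nbd_m$ are ever active and that $\Gmcbar^m$ is measurable.
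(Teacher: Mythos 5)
Your proposal is correct and follows essentially the same route as the paper's (sketched) proof: part (a) via the recursive jump-by-jump construction, justified by the uniform bound $\sup_{q\in\Smchat}\lambda_\Xmb\big(\{y : G(q,y)\neq 0\}\big) \le \Gammainf$, and part (b) via Doob's maximal inequality for the compensated Poisson integral combined with the Lipschitz bound on $b$ and Gronwall's lemma. Your extra observation that $p$ remains in $\Smchat$ (mass conservation via $\sum_i \Gamma_{ji}(q)=0$ and preservation of nonnegativity via $b_i(q)\ge 0$ when $q_i=0$) is a worthwhile detail the paper leaves implicit.
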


%----------------------------------------------------

\subsection{Moderate deviation principle} \label{secjump:MDP}

Let $a(m)$ be as in \eqref{eqjump:a m}.
We are interested in the asymptotics of the probabilities of deviations of $\mu^m$ from $p$ that are of order $\frac{1}{a(m)\sqrt{m}}$.
For this we will establish a large deviation principle for 
\begin{equation} \label{eqjump:Z m}
	Z^m \doteq a(m)\sqrt{m}(\mu^m - p).
\end{equation}
We make the following stronger assumption in place of Condition~\ref{condjump:cond1}.
\begin{condition} \label{condjump:cond2}
%	(a) The operator $\Gammabar : \Smchat \to L(l_1,l_1)$ defined in \eqref{eqjump:Gamma} is Lipschitz, namely there exists $L_\Gammabar \in (0,\infty)$ such that for all $q, \qtil \in \Smchat$,
%	\begin{equation*}
%		\| \Gammabar(\qtil) - \Gammabar(q) \|_{op} \le L_\Gammabar \|\qtil-q\|.
%	\end{equation*}
	(a) $\Gammainf < \infty$.
	
	(b) There exist $c_\Gamma$, $L_\Gamma \in (0,\infty)$ such that
	\begin{equation*}
		\sup_{q \in \Smchat} \sup_{j \in \Nmb} \sum_{i=1}^\infty |\Gamma_{ij}(q)| \le c_\Gamma
	\end{equation*} 
	and for all $\qtil$, $q \in \Smchat$,
	\begin{equation*}
		\sup_{i \in \Nmb} \sum_{j = 1, j \ne i}^\infty |\Gamma_{ij}(\qtil) - \Gamma_{ij}(q)| \le L_\Gamma \|\qtil - q\|.
	\end{equation*}

	(c) For the map $b \colon \Smchat \to l_2$ defined in \eqref{eqjump:b}, there exist $c_b \in (0,\infty)$; a map $Db \colon \Smchat \to L(l_2,l_2)$; and $\theta_b \colon \Smchat \times \Smchat \to l_2$ such that for all $q$, $\qtil \in \Smchat$,	
	\begin{equation*}
		b(\qtil) - b(q) = Db(q)[\qtil-q] + \theta_b(q,\qtil)
	\end{equation*}
	and
	\begin{equation*}
		\| \theta_b(q,\qtil) \| \le c_b \|\qtil-q\|^2, \quad \| Db(q) \|_{L(l_2,l_2)} \le c_b.
	\end{equation*}

	(d) $a(m) \sqrt{m} \| \mu^m(0)-p(0) \| \to 0$ as $m \to \infty$ for some probability measure $p(0) \in \Pmc(\Nmb)$.
\end{condition}

\begin{remark} \label{rmkjump:rmk for cond2}
	(i) Condition~\ref{condjump:cond1}(b) is implied by Conditions~\ref{condjump:cond2}(c) while Condition~\ref{condjump:cond1}(c) is implied by Conditions~\ref{condjump:cond2}(d).
	
	(ii) Condition~\ref{condjump:cond2}(b) is satisfied in particular for finite range jump models of the following form: There exists some $K \in (0,\infty)$ such that for all $q \in \Smchat$, $\Gamma_{ij}(q) = 0$ for $|i-j|>K$ and $q \mapsto \Gamma_{ij}(q)$ is Lipschitz continuous with $\|\Gamma_{ij}\|_L \le K$ for $|i-j| \le K$.
	
	(iii) Condition~\ref{condjump:cond2}(c) is an assumption on smoothness of $b$ which says that $b$ is differentiable and the derivative is bounded.
	
	(iv) Condition~\ref{condjump:cond2}(d) is trivially satisfied if $p(0) = \delta_x$ and $x^m_i = x$ for all $m,i \in \Nmb$, for some $x \in \Nmb$.
	It is also satisfied for a.e. $\omega$ if $x^m_i = \xi_i(\omega)$ where $\xi_i$ are i.i.d.\ with common distribution $p(0)$ and $\sum_{m=1}^\infty [a(m)]^{2n} < \infty$ for some $n \in \Nmb$ (for a proof see Appendix~\ref{Appendix:proof of rmk}).
	This summability property is satisfied quite generally, e.g. when $a(m) = O(m^{-\theta})$ for some $\theta \in (0, 1/2)$ or $a(m) = O((\log m)^k m^{-\theta})$ for some $\theta \in (0,1/2]$ and $k > 0$.
\end{remark}

The following theorem is the main result of this section.
The proof will be given in Section~\ref{secjump:pfMDP}.

\begin{theorem} \label{thmjump:MDP}
	Under Condition~\ref{condjump:cond2}, $\{ Z^m \}$ satisfies a large deviation principle in $\Dmb([0,T]:l_2)$ with speed $a^2(m)$ and the rate function given by
	\begin{equation} \label{eqjump:rate function}
		\Ibar(\eta) \doteq \inf_\psi \left\{ \frac{1}{2} \| \psi \|_{L^2(\lambda)}^2 \right\}, \quad \eta \in \Dmb([0,T]:l_2),
	\end{equation}
	where the infimum is taken over all $\psi \in L^2(\lambda)$ such that
	\begin{equation} \label{eqjump:eta and psi}
		\eta(t) = \int_0^t Db(p(s))[\eta(s)] \, ds + \intXt G(p(s),y) \psi(s,y) \, \lambda(ds \, dy), \quad t \in [0,T].
	\end{equation}
%	and $\| \cdot \|_\XT$ denotes the norm on $L^2(\lambda)$.
\end{theorem}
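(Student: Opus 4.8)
\textbf{Proof proposal for Theorem \ref{thmjump:MDP}.}

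The plan is to use the weak-convergence (``stochastic control'') approach to large deviations, based on the variational representation for exponential functionals of a Poisson random measure from \cite{BudhirajaDupuisMaroulas2011variational}, and to verify the general sufficient condition for moderate deviation principles of small-noise jump systems established in \cite{BudhirajaDupuisGanguly2015moderate}. By Theorem \ref{thmjump:LLN}(a), $\mu^m = \Gmcbar^m(\tfrac1m N^m)$, so $Z^m$ is a measurable functional of the driving PRM $N$. Given a nonnegative $\{\Fmc_t\}$-predictable control $\varphi^m$ on $\YT$, write $\bar\mu^m$ for the associated controlled empirical measure --- the solution of the analogue of \eqref{eqjump:mu m t N tilde} in which $N^m$ is replaced by the point process with (conditional) intensity $m\varphi^m\,\lambda_\YT$ --- and set $\bar Z^m \doteq a(m)\sqrt m(\bar\mu^m - p)$. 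The variational formula then yields, for $F \in \Cmb_b(\Dmb([0,T]:l_2))$,
\[
	-a^2(m)\log\Emb\exp\Big\{-\tfrac1{a^2(m)}F(Z^m)\Big\} = \inf_{\varphi^m}\Emb\Big[a^2(m)m\int_0^T\!\!\int_\Xmb\ell(\varphi^m(s,y))\,\lambda_\Xmb(dy)\,ds + F(\bar Z^m)\Big],
\]
with $\ell(r)=r\log r - r+1$. Following \cite{BudhirajaDupuisGanguly2015moderate} one parametrizes the controls in the moderate-deviation scaling $\varphi^m = 1 + \psi^m/(a(m)\sqrt m)$; standard truncation arguments show one may restrict to $\psi^m$ with $\|\psi^m\|_{L^2(\lambda)} \le M$ (and taking values in a fixed bounded set), in which case a Taylor expansion of $\ell$ about $1$ turns the cost into $\tfrac12\Emb\|\psi^m\|_{L^2(\lambda)}^2$ up to asymptotically negligible terms. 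It then suffices to verify the two conditions of \cite{BudhirajaDupuisGanguly2015moderate}: a compactness/lower-semicontinuity property of the limiting (skeleton) map, and a weak-convergence property of the controlled processes.

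For the first condition, fix $\psi\in L^2(\lambda)$. The map $(s,y)\mapsto G(p(s),y)$ lies in $L^2(\lambda;l_2)$ since $\int_0^T\!\int_\Xmb\|G(p(s),y)\|^2\,\lambda_\Xmb(dy)\,ds \le 2\Gammainf T$ (the sets $\{A_{ij}(p(s))\}_{i\ne j}$ are disjoint with total $\lambda_\Xmb$-mass $\sum_i p_i(s)|\Gamma_{ii}(p(s))| \le \Gammainf$, and $\|G\|\le\sqrt2$). Together with $\|Db(p(s))\|_{L(l_2,l_2)}\le c_b$ from Condition \ref{condjump:cond2}(c), Gronwall's inequality then gives a unique solution $\eta = \Gmc^0(\psi) \in \Cmb([0,T]:l_2)$ of \eqref{eqjump:eta and psi}. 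Moreover, if $\psi^n \rightharpoonup \psi$ weakly in $L^2(\lambda)$ with $\sup_n\|\psi^n\|_{L^2(\lambda)}^2 \le 2K$, then, since the integrand $G(p(\cdot),\cdot)\one_{[0,t]}(\cdot)$ is a fixed element of $L^2(\lambda;l_2)$, $\int_0^t\!\int_\Xmb G(p(s),y)\psi^n(s,y)\,\lambda_\Xmb(dy)\,ds \to \int_0^t\!\int_\Xmb G(p(s),y)\psi(s,y)\,\lambda_\Xmb(dy)\,ds$ weakly in $l_2$, uniformly in $t$; a Gronwall argument promotes this to $\Gmc^0(\psi^n)\to\Gmc^0(\psi)$ in $\Cmb([0,T]:l_2)$. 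This shows that $\{\eta : \Ibar(\eta)\le K\}$, being the continuous image of a weakly compact ball, is compact, so $\Ibar$ is a rate function, and it simultaneously supplies the Laplace lower bound.

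For the second condition, consider controls $\psi^m$ with $\|\psi^m\|_{L^2(\lambda)}\le M$ converging to $\psi$ (a.s.\ along a subsequence, weakly in $L^2(\lambda)$, after a change of measure and an appeal to Skorokhod's representation, jointly with the convergences below). Subtracting \eqref{eqjump:p t} from the controlled equation, using $b(q)=\int_\Xmb G(q,y)\,\lambda_\Xmb(dy)$ and Condition \ref{condjump:cond2}(c),
\[
	\bar Z^m(t) = a(m)\sqrt m\,(\mu^m(0)-p(0)) + \int_0^t Db(p(s))[\bar Z^m(s)]\,ds + \int_0^t\!\!\int_\Xmb G(\bar\mu^m(s-),y)\,\psi^m(s,y)\,\lambda_\Xmb(dy)\,ds + R^m(t),
\]
where $R^m$ collects $a(m)\sqrt m\,\theta_b(p(s),\bar\mu^m(s))$ and the rescaled compensated Poisson integral $M^m$. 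Using $\|G\|\le\sqrt2$, $\Gammainf<\infty$, Condition \ref{condjump:cond2}(b) (Lipschitz continuity of $b$) and the cost bound, a Gronwall estimate at the level of $\bar\mu^m-p$ (then rescaled by $a(m)\sqrt m$, invoking Condition \ref{condjump:cond2}(d)) gives $\bar\mu^m\to p$ in probability in $\Dmb([0,T]:l_2)$ and $\sup_m\Emb\|\bar Z^m\|_{*,T}^2 < \infty$. Consequently $\Emb\int_0^T a(m)\sqrt m\,\|\theta_b(p(s),\bar\mu^m(s))\|\,ds \le \tfrac{c_b T}{a(m)\sqrt m}\sup_m\Emb\|\bar Z^m\|_{*,T}^2 \to 0$ and $\Emb\|M^m\|_{*,T}^2 \le \kappa\big(a^2(m) + a(m)m^{-1/2}\big) \to 0$, so $R^m\to 0$. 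The remaining, and crucial, step is the limit in the control term. Here the obstruction, flagged in the introduction, is that $q\mapsto G(q,y)$ is not continuous, so $\bar\mu^m(s-)\to p(s)$ does not give pointwise convergence of the integrand. The remedy is the $L^2$-in-$y$ estimate
\[
	\int_\Xmb\|G(q,y) - G(\qtil,y)\|^2\,\lambda_\Xmb(dy) \le \kappa\,\|q-\qtil\|, \qquad q,\qtil\in\Smchat,
\]
which follows from Condition \ref{condjump:cond2}(b): $A_{ij}(q)$ and $A_{ij}(\qtil)$ differ by a $\lambda_\Xmb$-set of measure $|q_i\Gamma_{ij}(q)-\qtil_i\Gamma_{ij}(\qtil)|$, and these quantities sum suitably over $i,j$ (this is the H\"older-$\tfrac12$-type substitute for Lipschitz continuity, and is the content of Lemmas \ref{lemjump:property of l} and \ref{lemjump:Lipschitz of G}). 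Combined with $\bar\mu^m\to p$ and dominated convergence, this gives $G(\bar\mu^m(\cdot-),\cdot)\one_{[0,t]} \to G(p(\cdot),\cdot)\one_{[0,t]}$ strongly in $L^2(\lambda;l_2)$; pairing this strong convergence of the integrands with the weak convergence $\psi^m\rightharpoonup\psi$ identifies $\lim_m\int_0^t\!\int_\Xmb G(\bar\mu^m(s-),y)\psi^m(s,y)\,\lambda_\Xmb(dy)\,ds = \int_0^t\!\int_\Xmb G(p(s),y)\psi(s,y)\,\lambda_\Xmb(dy)\,ds$ in $l_2$ for each $t$. Tightness of $\{\bar Z^m\}$ in $\Dmb([0,T]:l_2)$ (from the a priori bound and the integral structure of the equation) together with uniqueness of solutions to \eqref{eqjump:eta and psi} yields $\bar Z^m \Rightarrow \Gmc^0(\psi)$, hence the Laplace upper bound. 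The main obstacle throughout is precisely this lack of regularity of $G$ and the resulting need to handle products of weakly convergent controls against integrands that converge only in $L^2(\lambda_\Xmb)$ rather than pointwise --- the source of the delicate moment estimates in the proof of Proposition \ref{propjump:verifying condition part b} (Lemmas \ref{lemjump:moment bounds on G psi}--\ref{lemjump:h}). Combining the two verified conditions with the equivalence of the Laplace principle and the large deviation principle completes the proof.
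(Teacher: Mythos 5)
Your overall strategy --- verifying the two-part sufficient condition from~\cite{BudhirajaDupuisGanguly2015moderate} --- matches the paper, and your argument for part (a) (Proposition~\ref{propjump:verifying condition part a}) is essentially correct. However, your treatment of part (b) contains a genuine gap at its central step, the convergence of the control integral $\int_0^t\!\int_\Xmb G(\bar\mu^m(s-),y)\psi^m(s,y)\,\lambda_\Xmb(dy)\,ds$.

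The claimed estimate
\[
	\int_\Xmb \| G(q,y) - G(\qtil,y) \|^2 \, \lambda_\Xmb(dy) \le \kappa\, \| q - \qtil \|
\]
does not follow from Condition~\ref{condjump:cond2}. The integrand is bounded by $8$ and supported on $\bigcup_{i \ne j}\big(A_{ij}(q)\bigtriangleup A_{ij}(\qtil)\big)$, so the left side is bounded by $8\sum_{i\ne j}|q_i\Gamma_{ij}(q) - \qtil_i\Gamma_{ij}(\qtil)|$. Expanding this produces the term $\sum_i |q_i-\qtil_i|\,|\Gamma_{ii}(q)| \le \Gammainf\,\|q-\qtil\|_{l_1}$, and on the simplex $\Smchat$ the $l_1$ distance is not controlled by the $l_2$ distance (e.g.\ $q^n = \tfrac1n\sum_{i=1}^n\ebd_i \to 0$ in $l_2$ but $\|q^n\|_{l_1}\equiv 1$). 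Consequently the strong $L^2(\lambda;l_2)$ convergence $G(\bar\mu^m(\cdot-),\cdot)\to G(p(\cdot),\cdot)$ that you rely on to pair with the weak convergence of $\psi^m$ is not justified by $\bar\mu^m\to p$ in $l_2$. You also attribute this ``H\"older-$\tfrac12$'' estimate to Lemmas~\ref{lemjump:property of l} and~\ref{lemjump:Lipschitz of G}, but neither asserts it: Lemma~\ref{lemjump:property of l} is a refinement of the constant $\gammatil_1(\beta)\le 4/\beta$ for the function $\ell$, and Lemma~\ref{lemjump:Lipschitz of G} gives a bound of the form $\|\int_\Xmb(G(\qtil,y)-G(q,y))\,g(y)\,\lambda_\Xmb(dy)\| \le \gammatil_5\|g\|_\infty\|\qtil-q\|$, which is genuinely Lipschitz and in the $l_2$ metric --- but its right side involves $\|g\|_\infty$, not $\|g\|_{L^2}$.

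The paper resolves exactly this difficulty in Proposition~\ref{propjump:verifying condition part b} by splitting the control term $C^{m,\varphi^m}$ into four pieces with an \emph{additional intermediate truncation level} $\delta_m = (a(m)\sqrt m)^{1/2}$: $\Ctil^{m,\varphi^m}$ (the target limit, with $G(p,\cdot)$ and $\psi^m\one_{\{|\psi^m|\le\beta a(m)\sqrt m\}}$), $\Emc_2$ (large $\psi^m$ paired with $G(p,\cdot)$, killed by Lemma~\ref{lemjump:moment bounds on G psi}), $\Emc_3$ ($G(\bar\mu^m)-G(p)$ paired with $\psi^m\one_{\{|\psi^m|>\delta_m\}}$, killed using Lemma~\ref{lemjump:moment bounds of varphi and psi}(a) and the sharp bound $\gammatil_1(\beta)\le 4/\beta$), and $\Emc_4$ ($G(\bar\mu^m)-G(p)$ paired with $\psi^m\one_{\{|\psi^m|\le\delta_m\}}$, killed by Lemma~\ref{lemjump:Lipschitz of G} with $\|g\|_\infty\le\delta_m$ together with $\|\bar\mu^m-p\|\lesssim(a(m)\sqrt m)^{-1}$). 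This two-level truncation and the $\|g\|_\infty$-form of the Lipschitz lemma are the indispensable ideas that make the argument work without any pointwise or $L^2$-in-$y$ continuity of $q\mapsto G(q,\cdot)$; your proposal replaces them with an estimate that fails.
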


Along the lines of the proof of Theorem~\ref{thmjump:LLN}(b), it is easy to check that under Condition~\ref{condjump:cond2}(c), \eqref{eqjump:eta and psi} has a unique solution in $\Cmb([0,T]:l_2)$ for each $\psi \in L^2(\lambda)$.
In particular, $\Ibar(\eta) \doteq \infty$ for all $\eta \in \Dmb([0,T]:l_2) \backslash \Cmb([0,T]:l_2)$.

The rate function $\Ibar$ introduced in \eqref{eqjump:rate function} is somewhat indirect in that its definition involves the extraneous function $G$ that was introduced for the convenience of representation of $\mu^m$ as a small noise stochastic dynamical system.
The following result gives an alternative representation that is more intrinsic.
For $\eta \in \Dmb([0,T]:l_2)$ let
\begin{equation} \label{eqjump:rate function alter}
	I(\eta) \doteq \inf_{u} \left\{ \half \int_0^T \sum_{i=1}^\infty \sum_{j=1,j \ne i}^\infty u_{ij}^2(s) \, ds \right\},
\end{equation}
where the infimum is taken over all $u \doteq \{u_{ij}\}_{i,j=1}^\infty$ with each $u_{ij} \in L^2([0,T]:\Rmb)$ such that
\begin{equation} \label{eqjump:eta and u}
	\eta(t) = \int_0^t Db(p(s))[\eta(s)] \, ds + \int_0^t \sum_{i=1}^\infty \sum_{j=1,j \ne i}^\infty (\ebd_j-\ebd_i) \sqrt{p_i(s) \Gamma_{ij}(p(s))} u_{ij}(s) \, ds.
\end{equation}
The proof of the following result will be given in Section~\ref{secjump:pfalter}.

\begin{theorem} \label{thmjump:alternative expression}
	Under the conditions of Theorem~\ref{thmjump:MDP}, $I = \Ibar$.
\end{theorem}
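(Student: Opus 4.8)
The plan is to show the two inequalities $I \le \Ibar$ and $\Ibar \le I$ by transporting admissible controls back and forth between the two variational problems \eqref{eqjump:rate function} and \eqref{eqjump:rate function alter}, with no increase in cost in either direction (in fact with equal cost for the matched pair). The structural fact that drives everything is the block form of $G$ recorded in \eqref{eqjump:G}: $G(q,y) = \sum_i \sum_{j \ne i} (\ebd_j - \ebd_i) \one_{A_{ij}(q)}(y)$, where by \eqref{eqjump:Aijdisjoint} the sets $\{A_{ij}(q)\}_{i \ne j}$ are pairwise disjoint, $\lambda_\Xmb(A_{ij}(q)) = q_i \Gamma_{ij}(q)$, and (summing $\lambda_\Xmb(A_{ij}(q)) = q_i\Gamma_{ij}(q)$ and using $\sum_j\Gamma_{ij}(q) = -\Gamma_{ii}(q)$) $\lambda_\Xmb\big(\bigcup_{i\ne j} A_{ij}(q)\big) \le \Gammainf < \infty$. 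Consequently, for any $s$ and any $\psi(s,\cdot) \in L^2(\lambda_\Xmb)$, dominated convergence (using $\|G\| \le \sqrt2$ and the finite total mass of the $A_{ij}(p(s))$) gives the identity
\[
	\int_\Xmb G(p(s),y)\,\psi(s,y)\,\lambda_\Xmb(dy) = \sum_i \sum_{j \ne i} (\ebd_j - \ebd_i) \int_{A_{ij}(p(s))} \psi(s,y)\,\lambda_\Xmb(dy),
\]
which is the bridge between the two forms of the forcing term in \eqref{eqjump:eta and psi} and \eqref{eqjump:eta and u}.

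For $I \le \Ibar$: given $\psi \in L^2(\lambda)$ satisfying \eqref{eqjump:eta and psi}, define $u_{ij}(s) \doteq \big(p_i(s)\Gamma_{ij}(p(s))\big)^{-1/2} \int_{A_{ij}(p(s))} \psi(s,y)\,\lambda_\Xmb(dy)$ when $p_i(s)\Gamma_{ij}(p(s)) > 0$ and $u_{ij}(s) \doteq 0$ otherwise. The displayed identity, together with $\lambda_\Xmb(A_{ij}(p(s))) = p_i(s)\Gamma_{ij}(p(s))$, shows that $u = \{u_{ij}\}$ satisfies \eqref{eqjump:eta and u}, and by Cauchy--Schwarz,
\[
	u_{ij}^2(s) = \frac{1}{\lambda_\Xmb(A_{ij}(p(s)))}\Big(\int_{A_{ij}(p(s))} \psi(s,y)\,\lambda_\Xmb(dy)\Big)^2 \le \int_{A_{ij}(p(s))} \psi^2(s,y)\,\lambda_\Xmb(dy).
\]
Summing over $i \ne j$ (disjointness of the $A_{ij}(p(s))$) and integrating in $s$ yields $\int_0^T \sum_i \sum_{j\ne i} u_{ij}^2(s)\,ds \le \|\psi\|_{L^2(\lambda)}^2$; in particular each $u_{ij} \in L^2([0,T]:\Rmb)$, and taking the infimum over $\psi \in \Tmc$-admissible controls gives $I(\eta) \le \Ibar(\eta)$.

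For $\Ibar \le I$: given $u = \{u_{ij}\}$ with finite cost satisfying \eqref{eqjump:eta and u} (we may assume $u_{ij}(s) = 0$ whenever $p_i(s)\Gamma_{ij}(p(s)) = 0$, since such coordinates leave the constraint \eqref{eqjump:eta and u} unchanged while only increasing the cost), set $\psi(s,y) \doteq \sum_{(i,j):\, p_i(s)\Gamma_{ij}(p(s)) > 0} \big(p_i(s)\Gamma_{ij}(p(s))\big)^{-1/2} u_{ij}(s)\,\one_{A_{ij}(p(s))}(y)$. Since $\psi(s,\cdot)$ is a sum of constants on the disjoint sets $A_{ij}(p(s))$, one gets the exact identity $\int_\Xmb \psi^2(s,y)\,\lambda_\Xmb(dy) = \sum_i \sum_{j\ne i} u_{ij}^2(s)$, so $\psi \in L^2(\lambda)$ with $\|\psi\|_{L^2(\lambda)}^2 = \int_0^T \sum_i\sum_{j\ne i} u_{ij}^2(s)\,ds$; and the displayed identity shows $\psi$ satisfies \eqref{eqjump:eta and psi}. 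Taking the infimum over $u$ gives $\Ibar(\eta) \le I(\eta)$, and combining the two bounds yields $I = \Ibar$.

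The only non-structural points to verify are routine: joint measurability of $(s,y) \mapsto \one_{A_{ij}(p(s))}(y)$ and measurability of $s \mapsto u_{ij}(s)$, which follow from continuity of $p$ and of each $\Gamma_{ij}$ (the latter from the Lipschitz bound in Condition~\ref{condjump:cond2}(b)), and the sum--integral interchange in the displayed identity, handled by dominated convergence as indicated. There is no genuine obstacle here: the entire content is the correspondence $\psi \leftrightarrow u$ induced by the block structure of $G$, and since the Cauchy--Schwarz step in the first direction is saturated exactly by the $\psi$ constructed from a given $u$ in the second direction, the two infima necessarily coincide.
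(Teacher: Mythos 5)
Your proof is correct and follows essentially the same route as the paper: you build the same correspondence $u_{ij}(s) \leftrightarrow \psi(s,y)$ via the disjoint block structure of $\{A_{ij}(p(s))\}$, use Cauchy--Schwarz in the $\psi \to u$ direction and the exact cost identity in the $u \to \psi$ direction, and conclude by symmetry of the two bounds. Your WLOG remark that $u_{ij}$ may be taken to vanish when $p_i(s)\Gamma_{ij}(p(s)) = 0$ is equivalent to the paper's device of simply dropping those terms from $\|\psi\|^2_{L^2(\lambda)}$.
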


%----------------------------------------------------

\section{Proofs for the diffusion case} \label{secdif:pf}

The main ingredient in the proof of Theorem~\ref{thmdif:main} is the following variational representation.
Such a representation was first obtained in~\cite{BoueDupuis1998variational}.
For our purpose it is convenient to use the form of the representation given in~\cite{BudhirajaDupuisMaroulas2008large} that allows for an arbitrary filtration.
Let $(\Omegatil,\Fmctil,\Pmbtil)$ be a probability space with an increasing family of right continuous $\Pmbtil$-complete $\sigma$-fields $\{\Fmctil_t\}$.
Let for $m \in \Nmb$, $\{B(t) \doteq (B_1(t),\dotsc,B_m(t)), 0 \le t \le T \}$ be an $m$-dimensional standard $\{\Fmctil_t\}$-Brownian motions on this probability space.
Let $\ti\Amc$ be the collection of $\Rmb^m$-valued $\{\Fmctil_t\}$-progressively measurable processes $\{u(t)=(u_1(t),\dotsc,u_m(t)), 0 \le t \le T\}$ such that $\Pmbtil \left(\int_0^T \|u(s)\|^2 \, ds < \infty\right)=1$.
Let $f \in \Mmb_b(\Cmb([0,T]:\Rmb^m))$.
The following representation follows from~\cite{BoueDupuis1998variational,BudhirajaDupuisMaroulas2008large}
\begin{equation} \label{eqdif:variationalrep}
	-\log \Embtil \exp \{ f(B) \} = \inf_{u \in \ti\Amc} \Embtil \left( \half \int_0^T \|u(s)\|^2 \, ds + f\left(B + \int_0^\cdot u(s) \, ds \right) \right).
\end{equation}

The proof of the MDP in Theorem~\ref{thmdif:main} will proceed by first establishing the Laplace upper bound \eqref{eqdif:lapupp} and then the Laplace lower bound \eqref{eqdif:laplow}.
The above representation will be a key ingredient in both proofs.
Rest of this section is organized as follows.
We first analyze certain controlled  process in Sections~\ref{secdif:varrep} and~\ref{secdif:tightness}.
Proof of the Laplace upper bound is given in Section~\ref{secdif:Lapupperbd} whereas the lower bound is established in Section~\ref{secdif:laplowerbd}.
In order to argue that $\{Y^m\}$ satisfies an LDP it then remains to establish that $I$ defined in \eqref{eqdif:rate function} is a rate function.
This is proved in Section~\ref{secdif:rate function}.

\subsection{Controlled processes} \label{secdif:varrep}

Throughout this section we assume Condition~\ref{conddif:diffusion1}.
Let $v,w,\rho$ be as introduced in Section~\ref{secdif:diffusion}.
Fix $\tau \ge v$ and let $F \in \Cmb_b(\Cmb([0,T]:\Smc_{-\tau}))$.
Using the variational representation in \eqref{eqdif:variationalrep} on the filtered probability space introduced below \eqref{eqdif:Xim}, we have
\begin{equation*}
%	\label{eqdif:repn1}
	-a^2(m) \log \Emb \exp \left\{-\frac{1}{a^2(m)} F(Y^m)\right\}  = \inf_{u^m = \{u_i^m\}_{i=1}^m} \Emb\left\{ \half a^2(m)\sum_{i=1}^m\int_0^T |u_i^m(s)|^2 \, ds	+ F(\Ytil^m)\right\},
\end{equation*}
where the infimum is taken over all $\{\Fmc_t\}$-progressively measurable $u^m$ such that
\begin{equation*}
	\Emb \sum_{i=1}^m\int_0^T |u_i^m(s)|^2 \, ds < \infty,
\end{equation*}
and $\Ytil^m$ is as in \eqref{eqdif:Ytilm} with $\Xtil^m$ given by \eqref{eqdif:Xtilim}.
We will view $\{u_i^m\}_{i=1}^m$ as a sequence of controls and $\{\Xtil_i^m\}_{i=1}^m$ as the controlled analogue of the original interacting particle system \eqref{eqdif:Xim}.
Letting $\util_i^m \doteq a(m)\sqrt{m} u_i^m$, one can write
\begin{equation}
	\label{eqdif:repn1b}
	-a^2(m) \log \Emb \exp \left\{-\frac{1}{a^2(m)} F(Y^m)\right\} = \inf_{\util^m = \{\util_i^m\}_{i=1}^m} \Emb\left\{ \half \frac{1}{m}\sum_{i=1}^m\int_0^T |\util_i^m(s)|^2 ds	+ F(\Ytil^m)\right\}
\end{equation}
and
\begin{align} \label{eqdif:Xtilim util}
	\begin{aligned}
		\Xtilimt &= x_0 + \int_0^t \sigma(\Xtilims, \mutilms) \, dW_i(s) + \int_0^t b(\Xtilims, \mutilms) \, ds\\
		&\quad + \frac{1}{a(m)\sqrt{m}} \int_0^t \sigma(\Xtilims, \mutilms) \util_i^m(s) \, ds, \quad i = 1, \dotsc, m. 
	\end{aligned}
\end{align}

The following lemma gives an important moment bound that will be used to argue tightness and convergence of controlled processes.
Recall that the processes $\{X_i^m\}$, $\{\Xtil_i^m\}$ and $\{\Xbar_i\}$ are defined on the same probability space with the same sequence of Brownian motions $\{W_i\}$.
We let for $m \in \Nmb$, $\mubar_t^m \doteq \frac{1}{m} \sum_{i=1}^m \delta_{\Xbar_i(t)}$.

\begin{lemma} \label{lemdif:XtilXbar}
	Suppose the control sequence $\{\util_i^m\}_{i=1}^m$ satisfies
	\begin{equation} \label{eqdif:controlbdgeneral}
		\sup_{m \in \Nmb} \Emb \frac{1}{m} \sum_{i=1}^m \int_0^T |\util_i^m(s)|^2 \, ds < \infty.
	\end{equation}
	Then there exists $\gamma_1 \in (0,\infty)$ such that for all $m \in \Nmb$,
	\begin{equation} \label{eqdif:XtilXbar}
		\Emb \frac{1}{m} \sum_{i=1}^m | \Xtil_i^m - \Xbar_i|_{*,T}^2 \le \frac{\gamma_1}{a^2(m)m}.
	\end{equation}
	In particular,
	\begin{equation} \label{eqdif:Xtilbd}
		\sup_{m \in \Nmb} \Emb \frac{1}{m} \sum_{i=1}^m | \Xtil_i^m |_{*,T}^2 < \infty.
	\end{equation}
\end{lemma}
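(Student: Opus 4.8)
The plan is to derive the estimate \eqref{eqdif:XtilXbar} by a Gronwall argument applied to the difference $\Delta_i^m(t) \doteq \Xtil_i^m(t) - \Xbar_i(t)$, exploiting the Lipschitz continuity of $\alpha,\beta$ (hence of $\sigma,b$ in both arguments, with the measure arguments compared in $d_{BL}$) together with the control bound \eqref{eqdif:controlbdgeneral}. Subtracting \eqref{eqdif:Xbari} from \eqref{eqdif:Xtilim util}, one writes $\Delta_i^m(t)$ as a sum of three integral terms: a stochastic integral $\int_0^t [\sigma(\Xtilims,\mutilms) - \sigma(\Xbaris,\mu(s))]\,dW_i(s)$, a drift integral $\int_0^t [b(\Xtilims,\mutilms) - b(\Xbaris,\mu(s))]\,ds$, and the control term $\frac{1}{a(m)\sqrt m}\int_0^t \sigma(\Xtilims,\mutilms)\util_i^m(s)\,ds$. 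First I would take suprema over $[0,t]$, square, take expectations, and apply the Burkholder–Davis–Gundy inequality to the martingale term and Cauchy–Schwarz to the two Lebesgue integrals, landing on a bound of the form
\begin{equation*}
	\Emb |\Delta_i^m|_{*,t}^2 \le \kappa \int_0^t \Emb\Big( |\Xtilims - \Xbaris|^2 + d_{BL}^2(\mutilms,\mu(s)) \Big)\,ds + \frac{\kappa}{a^2(m)m}\,\Emb\int_0^T |\util_i^m(s)|^2\,ds,
\end{equation*}
using $\|\sigma\|_\infty \le \|\alpha\|_\infty < \infty$ to control $\sigma$ multiplying $\util_i^m$ in the last term.

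Next I would average over $i = 1,\dots,m$. The key observation is that $d_{BL}(\mutilms,\mu(s)) \le d_{BL}(\mutilms,\mubar_s^m) + d_{BL}(\mubar_s^m,\mu(s))$, and that $d_{BL}(\mutilms,\mubar_s^m) \le \frac{1}{m}\sum_{j=1}^m |\Xtiljms - \Xbarjs| \le \big(\frac1m\sum_{j=1}^m |\Delta_j^m|_{*,s}^2\big)^{1/2}$ by the Lipschitz-1 bound on test functions and Jensen. Thus, setting $\Phi^m(t) \doteq \Emb \frac1m\sum_{i=1}^m |\Delta_i^m|_{*,t}^2$, the averaged inequality closes on itself up to the term $\Emb\, d_{BL}^2(\mubar_s^m,\mu(s))$, which is the empirical-measure fluctuation of the genuinely i.i.d.\ system $\{\Xbar_j\}$ (the $\Xbar_j$ are i.i.d.\ with marginal $\mu(s)$), hence is $O(1/m)$ by a standard variance computation — this is classical (cf.~\cite{Sznitman1991}) and uniform in $s \in [0,T]$. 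Combining, one gets
\begin{equation*}
	\Phi^m(t) \le \kappa_1 \int_0^t \Phi^m(s)\,ds + \frac{\kappa_2}{a^2(m)m} + \frac{\kappa_3}{m},
\end{equation*}
where the $\kappa_2$ term comes from \eqref{eqdif:controlbdgeneral} and the $\kappa_3$ term from the i.i.d.\ fluctuation bound; since $a(m) \to 0$, the $1/m$ term is dominated by $1/(a^2(m)m)$. Gronwall's lemma then yields $\Phi^m(T) \le \gamma_1/(a^2(m)m)$, which is exactly \eqref{eqdif:XtilXbar}.

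Finally, \eqref{eqdif:Xtilbd} follows immediately: $\Emb \frac1m\sum_i |\Xtil_i^m|_{*,T}^2 \le 2\Emb\frac1m\sum_i |\Delta_i^m|_{*,T}^2 + 2\Emb\frac1m\sum_i |\Xbar_i|_{*,T}^2$, the first term is $\le 2\gamma_1/(a^2(m)m) \to 0$ (in particular bounded in $m$), and the second is the common value $\Emb |\Xbar_1|_{*,T}^2$, which is finite by standard moment estimates for the SDE \eqref{eqdif:Xbari} under the boundedness of $\sigma,b$. The main obstacle — really the only non-routine point — is the bookkeeping in passing from the per-particle bound to the averaged one: one must be careful that the empirical-measure distance $d_{BL}(\mutilms,\mu(s))$ is split so that the "self-interaction" part is absorbed into $\Phi^m$ while the residual i.i.d.\ fluctuation $\Emb d_{BL}^2(\mubar_s^m,\mu(s)) = O(1/m)$ is handled separately and, crucially, uniformly over $[0,T]$; everything else is a direct application of BDG, Cauchy–Schwarz, Lipschitz bounds, and Gronwall.
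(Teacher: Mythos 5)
Your proposal is correct and arrives at the same Gronwall inequality, but it takes a genuinely different route for the crucial measure-fluctuation term. You metrize the measure argument with $d_{BL}$ and split $d_{BL}(\mutilms,\mu(s)) \le d_{BL}(\mutilms,\mubar_s^m) + d_{BL}(\mubar_s^m,\mu(s))$, reducing the problem to the uniform-in-$s$ bound $\Emb\, d_{BL}^2(\mubar_s^m,\mu(s)) = O(1/m)$ for empirical measures of $m$ i.i.d.\ $\Rmb$-valued variables with finite second moment. That estimate is true (e.g.\ in $d=1$ via a weighted Cauchy--Schwarz bound on $\Emb W_1^2$, or via McDiarmid plus a Fournier--Guillin-type bound on $\Emb W_1$), but it is \emph{not} ``a standard variance computation'': because $d_{BL}$ involves a supremum over an infinite-dimensional class of test functions, a per-function variance bound does not suffice and you are implicitly invoking an empirical-process result. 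The paper avoids this entirely by exploiting the specific integral form $\sigma(x,\theta)=\int\alpha(x,y)\,\theta(dy)$, $b(x,\theta)=\int\beta(x,y)\,\theta(dy)$: the difference $\sigma(\Xbaris,\mubarms)-\sigma(\Xbaris,\mu(s))=\frac1m\sum_j\bigl[\alpha(\Xbaris,\Xbarjs)-\int\alpha(\Xbaris,y)\,\mu_s(dy)\bigr]$ is, conditional on $\Xbaris$, a normalized sum of centered bounded i.i.d.\ terms, so its second moment is $O(1/m)$ by an actual elementary variance computation — no supremum over test functions appears. So your approach is more general (it would work if $\sigma,b$ were merely Lipschitz in $d_{BL}$ without the kernel form) but at the cost of importing a nontrivial empirical-measure concentration estimate, which you should either cite precisely or prove (the attribution to~\cite{Sznitman1991} is misleading since Sznitman's coupling arguments, like the paper's, work with fixed kernels and do not need the $d_{BL}$ empirical-process bound). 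Everything else — the BDG/Cauchy--Schwarz decomposition, the coupling bound $d_{BL}(\mutilms,\mubar_s^m)\le(\frac1m\sum_j|\Delta_j^m|_{*,s}^2)^{1/2}$, Gronwall, and the deduction of \eqref{eqdif:Xtilbd} — is sound and matches the paper's structure.
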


\begin{proof}
	Using the bounded Lipschitz property of the coefficients $\alpha$ and $\beta$,
	\begin{align*}
		& \Emb | \Xtil_i^m - \Xbar_i|_{*,t}^2 \\
		& \quad \le \kappa_1 \int_0^t \Emb \Big( |\sigma(\Xtilims, \mutilms) - \sigma(\Xbaris, \mubarms)|^2 + |\sigma(\Xbaris, \mubarms) - \sigma(\Xbaris, \mu(s))|^2 \\
		& \qquad + |b(\Xtilims, \mutilms) - b(\Xbaris, \mubarms)|^2 + |b(\Xbaris, \mubarms) - b(\Xbaris, \mu(s))|^2 \\
		& \qquad + \frac{1}{a^2(m)m} |\sigma(\Xtilims, \mutilms) \util_i^m(s)|^2 \Big) \, ds \\
		& \quad \le \kappa_2 \int_0^t \Emb \left( |\Xtilims - \Xbaris|^2 + \frac{1}{m} \sum_{j=1}^m |\Xtiljms - \Xbarjs|^2 + \frac{1}{a^2(m)m} |\util_i^m(s)|^2 + \frac{1}{m} \right) \, ds,
	\end{align*}
	where the contribution of $\frac{1}{m}$ is obtained from the second and the fourth terms on the right side upon using the independence of $\Xbar_i$ and $\Xbar_j$ for $i \ne j$.
	Taking the average over $i=1,\dotsc,m$ on both sides of above inequality and using \eqref{eqdif:controlbdgeneral}
	\begin{equation*}
		\Emb \frac{1}{m} \sum_{i=1}^m | \Xtil_i^m - \Xbar_i|_{*,t}^2 \le \kappa_3 \int_0^t \Emb \frac{1}{m} \sum_{j=1}^m | \Xtil_i^m - \Xbar_i|_{*,s}^2 \, ds + \frac{\kappa_3}{a^2(m)m} + \frac{\kappa_3}{m}.
	\end{equation*}
	The estimate in \eqref{eqdif:XtilXbar} is now immediate by Gronwall's lemma.
	The estimate in \eqref{eqdif:Xtilbd} is a consequence of \eqref{eqdif:XtilXbar} and the fact that
	\begin{equation} \label{eqdif:Xbarbd}
		\sup_{m \in \Nmb} \Emb \frac{1}{m} \sum_{i=1}^m | \Xbar_i |_{*,T}^2 = \Emb |\Xbar_1|_{*,T}^2 < \infty. \qedhere
	\end{equation}
\end{proof}

Our main result of this section is the following representation for the controlled processes $\Ytil^m$.
Recall the operator $L(s)$ defined in \eqref{eqdif:Ls}.

\begin{proposition} \label{propdif:representation}
	Suppose that the control sequence $\{\util_i^m\}_{i=1}^m$ satisfies \eqref{eqdif:controlbdgeneral}.
	Then, for every $t \in [0,T]$ and $\phi \in \Smc$,
	\begin{equation}
		\label{eqdif:mainprelim}
		\lan \Ytil^m(t), \phi \ran = \int_0^t \lan \Ytil^m(s), L(s)\phi \ran \, ds + \int_{\Rmb^2 \times [0,t]} \phi'(x) \sigma(x,  \mu(s)) y \, \nutil^m(dy\, dx\, ds) + \Rmc^m(t),
	\end{equation}
	where
	\begin{equation} \label{eqdif:R5}
		\Emb |\Rmc^m|_{*,T} \le \gamma(m) \|\phi\|_4
	\end{equation}
	and $\gamma(m) \to 0$ as $m \to \infty$.
\end{proposition}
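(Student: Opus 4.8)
The plan is to derive \eqref{eqdif:mainprelim} from Itô's formula. Applying Itô's formula to $\phi(\Xtilimt)$ with the controlled dynamics \eqref{eqdif:Xtilim util}, averaging over $i=1,\dotsc,m$, and subtracting the corresponding identity $\lan\mu(t),\phi\ran=\phi(x_0)+\int_0^t\lan\mu(s),\Lmc_{\mu(s)}\phi\ran\,ds$ (which comes from applying Itô's formula to $\phi(\Xbar_1(t))$, taking expectations, and using that $\mu(t)$ is the law of $\Xbar_1(t)$), where $\Lmc_\theta\phi(x)\doteq\phi'(x)b(x,\theta)+\half\phi''(x)\sigma^2(x,\theta)$, and then multiplying through by $a(m)\sqrt m$, one obtains
\begin{equation*}
	\lan\Ytil^m(t),\phi\ran = M^m(t) + D^m(t) + C^m(t),
\end{equation*}
where $M^m(t)\doteq\tfrac{a(m)}{\sqrt m}\sum_{i=1}^m\int_0^t\phi'(\Xtilims)\sigma(\Xtilims,\mutilms)\,dW_i(s)$, $D^m(t)\doteq a(m)\sqrt m\int_0^t\big(\lan\mutilms,\Lmc_{\mutilms}\phi\ran-\lan\mu(s),\Lmc_{\mu(s)}\phi\ran\big)\,ds$, and $C^m(t)\doteq\tfrac1m\sum_{i=1}^m\int_0^t\phi'(\Xtilims)\sigma(\Xtilims,\mutilms)\util_i^m(s)\,ds$. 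It then suffices to show that $M^m$, and appropriate ``error'' pieces of $D^m$ and $C^m$, are bounded in $L^1$ uniformly on $[0,T]$ by $\gamma(m)\|\phi\|_4$ with $\gamma(m)\to0$, while the main parts of $D^m$ and $C^m$ reproduce the two integral terms on the right of \eqref{eqdif:mainprelim}.

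Since $\lan M^m\ran_T\le a^2(m)\|\alpha\|_\infty^2\|\phi'\|_\infty^2 T$, Burkholder--Davis--Gundy together with the seminorm comparison \eqref{eqdif:twonorms} gives $\Emb|M^m|_{*,T}\le\kappa\,a(m)\|\phi\|_4\to0$. For $C^m$, define the empirical control measure $\nutil^m(dy\,dx\,ds)\doteq\tfrac1m\sum_{i=1}^m\delta_{(\util_i^m(s),\Xtilims)}(dy\,dx)\,ds$; by \eqref{eqdif:controlbdgeneral} it lies in $\Mmc_T(\Rmb^2\times[0,T])$ and has integrable $y^2$, and $\int_{\Rmb^2\times[0,t]}\phi'(x)\sigma(x,\mu(s))y\,\nutil^m(dy\,dx\,ds)=\tfrac1m\sum_i\int_0^t\phi'(\Xtilims)\sigma(\Xtilims,\mu(s))\util_i^m(s)\,ds$, so $C^m(t)$ differs from this only by $E_2^m(t)\doteq\tfrac1m\sum_i\int_0^t\phi'(\Xtilims)\big(\sigma(\Xtilims,\mutilms)-\sigma(\Xtilims,\mu(s))\big)\util_i^m(s)\,ds$. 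Bounding $\|\sigma(\cdot,\mutilms)-\sigma(\cdot,\mu(s))\|_\infty\le\kappa\,d_{BL}(\mutilms,\mu(s))$ and using Cauchy--Schwarz with \eqref{eqdif:controlbdgeneral}, $\Emb|E_2^m|_{*,T}\le\kappa\|\phi'\|_\infty\big(\Emb\sup_{s\le T}d_{BL}^2(\mutilms,\mu(s))\big)^{1/2}$.

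The heart of the argument is the treatment of $D^m$. Write $\lan\mutilms,\Lmc_{\mutilms}\phi\ran-\lan\mu(s),\Lmc_{\mu(s)}\phi\ran=\lan\mutilms-\mu(s),\Lmc_{\mu(s)}\phi\ran+\lan\mutilms,\Lmc_{\mutilms}\phi-\Lmc_{\mu(s)}\phi\ran$. After multiplication by $a(m)\sqrt m$ the first summand equals $\lan\Ytil^m(s),\Lmc_{\mu(s)}\phi\ran$, which accounts for the first two terms of $L(s)\phi$ in \eqref{eqdif:Ls}. For the second summand, use $b(x,\mutilms)-b(x,\mu(s))=\int\beta(x,y)(\mutilms-\mu(s))(dy)$ and $\sigma^2(x,\mutilms)-\sigma^2(x,\mu(s))=\big(\sigma(x,\mutilms)+\sigma(x,\mu(s))\big)\int\alpha(x,y)(\mutilms-\mu(s))(dy)$; multiplication by $a(m)\sqrt m$ turns $(\mutilms-\mu(s))$ into $\Ytil^m(s)$, and then, after Fubini and after the two replacements of $\mutilms$ by $\mu(s)$ in the outer integrating measure and of $\half\big(\sigma(\cdot,\mutilms)+\sigma(\cdot,\mu(s))\big)$ by $\sigma(\cdot,\mu(s))$, one recovers exactly the remaining two terms of $L(s)\phi$, i.e.\ those integrated against $\mu_s$. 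Hence $D^m(t)=\int_0^t\lan\Ytil^m(s),L(s)\phi\ran\,ds+E_1^m(t)$, where $E_1^m$ collects the two replacement errors. Each such error is, pointwise in $s$, a bounded-Lipschitz-norm factor times $d_{BL}(\mutilms,\mu(s))$, with that factor itself of order $|\phi|_3\,a(m)\sqrt m\,d_{BL}(\mutilms,\mu(s))$ (the $\Cmb_b^2$-regularity of $\alpha$ and $\beta$ supplying the needed Lipschitz bounds), so $|E_1^m|_{*,T}\le\kappa\,|\phi|_3\,a(m)\sqrt m\int_0^T d_{BL}^2(\mutilms,\mu(s))\,ds$.

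To close, bound $d_{BL}(\mutilms,\mu(s))\le d_{BL}(\mutilms,\mubarms)+d_{BL}(\mubarms,\mu(s))$ with $\mubarms\doteq\tfrac1m\sum_i\delta_{\Xbar_i(s)}$: Lemma~\ref{lemdif:XtilXbar} gives $\Emb\sup_{s\le T}d_{BL}^2(\mutilms,\mubarms)\le\gamma_1/(a^2(m)m)$, and a standard moment estimate for the empirical measure of the i.i.d.\ processes $\{\Xbar_i\}$ gives $\Emb\sup_{s\le T}d_{BL}^2(\mubarms,\mu(s))\le\kappa/m$. Hence $\Emb|E_1^m|_{*,T}\le\kappa|\phi|_3\big(\tfrac{1}{a(m)\sqrt m}+\tfrac{a(m)}{\sqrt m}\big)\to0$, and the same two estimates make $\Emb|E_2^m|_{*,T}\le\kappa\|\phi'\|_\infty\big(\tfrac{1}{a(m)\sqrt m}+\tfrac1{\sqrt m}\big)\to0$; replacing $|\phi|_3$ and $\|\phi'\|_\infty$ by multiples of $\|\phi\|_4$ via \eqref{eqdif:twonorms} and setting $\Rmc^m\doteq M^m+E_1^m+E_2^m$ yields \eqref{eqdif:mainprelim} and \eqref{eqdif:R5}. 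I expect $E_1^m$ to be the main obstacle: the linearization of the empirical-measure nonlinearity introduces an extra factor $a(m)\sqrt m$, so the replacement errors carry only a favorable $d_{BL}^2$, and the argument genuinely needs both the sharp rate $d_{BL}^2(\mubarms,\mu(s))=O(1/m)$ uniformly in $s$ and the rate $d_{BL}^2(\mutilms,\mubarms)=O(1/(a^2(m)m))$ from Lemma~\ref{lemdif:XtilXbar} to absorb that factor; keeping track of which seminorm $\|\cdot\|_n$ dominates $\sup_x|\phi^{(k)}(x)|$ for $k\le3$ is what fixes the exponent in \eqref{eqdif:R5} at $4$.
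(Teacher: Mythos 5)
Your proposal is correct and uses the same overall skeleton as the paper's proof — apply It\^o's formula to the controlled particles and to the nonlinear Markov processes $\{\Xbar_i\}$, subtract, and isolate a remainder that is estimated in $L^1(\Pmb)$ uniformly over $[0,T]$ via Lemma~\ref{lemdif:XtilXbar} — but it handles the error terms by a noticeably different technique. You fold $\Tmc_2^m$ and $\Tmc_4^m$ into a single drift term $D^m$ and linearize $\lan\mutilms,\phi'(\cdot)b(\cdot,\mutilms)+\half\phi''(\cdot)\sigma^2(\cdot,\mutilms)\ran-\lan\mu(s),\cdots\ran$ abstractly, organizing all replacement errors as quantities of the form (bounded--Lipschitz norm)$\times d_{BL}(\mutilms,\mu(s))$, which lands you on $a(m)\sqrt m\,\Emb\,d_{BL}^2(\mutilms,\mu(s))$. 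The paper instead treats $\Rmc_2^m$ and $\Rmc_4^m$ separately, passes through the explicit three-stage chain $\Rmctil_{21}^m\to\Rmctil_{22}^m\to\Rmctil_{23}^m$ (replace $\mutilms$ by $\mubarms$, then $\Xtil^m_i$ by $\Xbar_i$), and uses second-order Taylor expansions of $\alpha,\beta$ together with Cauchy--Schwarz and the independence of $\{\Xbar_i\}$ to bound each piece (Lemmas~\ref{lemdif:R2}, \ref{lemdif:R4}). Your route is shorter, but it buys that compactness by invoking as ``standard'' the rate $\Emb\,d_{BL}^2(\mubar^m(s),\mu(s))=O(1/m)$; this is true for one-dimensional empirical measures with uniformly bounded moments (e.g.\ via the Wasserstein-$1$ rate and Cauchy--Schwarz against $(1+x^2)^{-1}$), but it is a less elementary input than the paper needs, which only ever evaluates the empirical fluctuation against the concrete test functions $\alpha(x,\cdot)$, $\beta(x,\cdot)$, where $O(1/m)$ is a one-line variance computation. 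Two small inaccuracies worth flagging, neither affecting the conclusion: you assert the uniform-in-time bound $\Emb\sup_{s\le T}d_{BL}^2(\mubar^m(s),\mu(s))\le\kappa/m$, which is stronger and harder to justify than what you actually use — since $E_1^m(t),E_2^m(t)$ are time integrals, $|E_j^m|_{*,T}$ is controlled by $\int_0^T d_{BL}^2(\mutilms,\mu(s))\,ds$, and the fixed-$s$ bound plus Fubini suffices; correspondingly your Cauchy--Schwarz bound for $E_2^m$ should carry $\big(\Emb\int_0^T d_{BL}^2(\mutilms,\mu(s))\,ds\big)^{1/2}$ rather than $\big(\Emb\sup_{s\le T}d_{BL}^2\big)^{1/2}$. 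Your resulting rate, $\|\phi\|_4\big(\tfrac{1}{a(m)\sqrt m}+\tfrac{a(m)}{\sqrt m}\big)$, is slightly weaker than the paper's $\|\phi\|_4/(a(m)\sqrt m)$ (the paper exploits that the $O(1/m)$ fluctuation term only appears in $\Rmctil_{23}^m$, without an $a(m)\sqrt m$ multiplier), but both tend to $0$, so \eqref{eqdif:R5} holds.
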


Rest of this section is devoted to the proof of Proposition~\ref{propdif:representation} and so we will assume throughout the remaining section that \eqref{eqdif:controlbdgeneral} holds.
Note that by an application of Ito's formula, for $\phi \in \Smc$,
\begin{align*}
	\phi(\Xtilimt) &= \phi(x_0) + \int_0^t \phi'(\Xtilims) \sigma(\Xtilims, \mutilms) \, dW_i(s) \\
	& \quad + \int_0^t \phi'(\Xtilims) b(\Xtilims, \mutilms) \, ds\\
	&\quad + \frac{1}{a(m)\sqrt{m}}\int_0^t \phi'(\Xtilims) \sigma(\Xtilims, \mutilms) \util_i^m(s) \, ds \\
	&\quad + \half \int_0^t \phi{''}(\Xtilims) \sigma^2(\Xtilims, \mutilms) \, ds.
\end{align*}
Similarly
\begin{align*}
	\phi(\bar X_i(t)) &= \phi(x_0) + \int_0^t \phi'(\bar X_i(s)) \sigma(\bar X_i(s),\mu(s)) \, dW_i(s) \\
	&\quad + \int_0^t \phi'(\bar X_i(s)) b(\bar X_i(s),  \mu(s)) \, ds + \half \int_0^t \phi{''}(\bar X_i(s)) \sigma^2(\bar X_i(s),  \mu(s)) \, ds,
\end{align*}
and so taking expectations
\begin{equation*}
	\lan \mu(t), \phi \ran = \phi(x_0) + \int_0^t \lan \mu(s), \phi'(\cdot) b(\cdot,  \mu(s))\ran \, ds + \half \int_0^t \lan \mu(s), \phi{''}(\cdot) \sigma^2(\cdot,  \mu(s))\ran \, ds.
\end{equation*}
Combining the above observations
\begin{align}
	\lan \Ytil^m(t), \phi \ran &= a(m)\sqrt{m} \left(\lan \mutilmt, \phi \ran - \lan \mu(t), \phi \ran \right) \notag \\
	&=  \frac{a(m)}{\sqrt{m}}\sum_{i=1}^m\int_0^t \phi'(\Xtilims) \sigma(\Xtilims, \mutilms) \, dW_i(s) \notag \\
	&\quad + \frac{a(m)}{\sqrt{m}}\sum_{i=1}^m \int_0^t \left(\phi'(\Xtilims) b(\Xtilims, \mutilms) - \lan \mu(s), \phi'(\cdot) b(\cdot,  \mu(s))\ran \right) \, ds \notag \\
	&\quad + \frac{1}{m} \sum_{i=1}^m\int_0^t \phi'(\Xtilims) \sigma(\Xtilims, \mutilms) \util_i^m(s) \, ds \notag \\
	&\quad + \half \frac{a(m)}{\sqrt{m}} \sum_{i=1}^m \int_0^t \left(\phi''(\Xtilims) \sigma^2(\Xtilims, \mutilms) - \lan \mu(s), \phi{''}(\cdot) \sigma^2(\cdot,  \mu(s))\ran\right) \, ds \notag \\
	& \equiv \sum_{k=1}^4 \Tmc_k^m(t). \label{eqdif:splitforprop}
\end{align}

We will now separately estimate each $\Tmc_k^m$, $k=1,2,3,4$.
We begin with $\Tmc_1^m$.

\begin{lemma} \label{lemdif:R1}
	There exists $\gamma_2 \in (0,\infty)$ such that
	\begin{equation*}
		\Emb |\Tmc_1^m|_{*,T} \le \gamma_2 a(m) \|\phi\|_2.
	\end{equation*}
\end{lemma}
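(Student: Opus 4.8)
The term $\Tmc_1^m(t) = \frac{a(m)}{\sqrt m}\sum_{i=1}^m \int_0^t \phi'(\Xtilims)\sigma(\Xtilims,\mutilms)\,dW_i(s)$ is a sum of $m$ stochastic integrals driven by \emph{independent} Brownian motions $W_i$, hence a martingale in $t$. The plan is to apply Doob's $L^2$ maximal inequality followed by the Itô isometry, using the independence of the $W_i$ to see that the quadratic variation is a sum of $m$ pieces and thus carries a factor $m$ that cancels the $1/m$ coming from $(a(m)/\sqrt m)^2$.

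Concretely: by Doob's inequality applied to the martingale $t\mapsto\Tmc_1^m(t)$,
\begin{equation*}
	\Emb |\Tmc_1^m|_{*,T}^2 \le 4\,\Emb |\Tmc_1^m(T)|^2 = 4\,\frac{a^2(m)}{m}\sum_{i=1}^m \Emb\int_0^T \big(\phi'(\Xtilims)\big)^2\sigma^2(\Xtilims,\mutilms)\,ds,
\end{equation*}
where the cross terms vanish by independence of the $W_i$. Now $\sigma(x,\theta)=\int\alpha(x,y)\,\theta(dy)$ with $\alpha$ bounded (Condition~\ref{conddif:diffusion1}), so $|\sigma|\le\|\alpha\|_\infty\doteq\kappa$, and $\sup_x|\phi'(x)|\le|\phi|_1\le\gamma_0(1)\|\phi\|_2$ by \eqref{eqdif:twonorms}. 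Hence each integrand is bounded by $\kappa^2\gamma_0(1)^2\|\phi\|_2^2$, and
\begin{equation*}
	\Emb |\Tmc_1^m|_{*,T}^2 \le 4\,\frac{a^2(m)}{m}\cdot m\cdot T\,\kappa^2\gamma_0(1)^2\|\phi\|_2^2 = 4T\kappa^2\gamma_0(1)^2\, a^2(m)\|\phi\|_2^2.
\end{equation*}
Taking square roots and using $\Emb|\Tmc_1^m|_{*,T}\le (\Emb|\Tmc_1^m|_{*,T}^2)^{1/2}$ by Jensen gives $\Emb|\Tmc_1^m|_{*,T}\le\gamma_2\,a(m)\|\phi\|_2$ with $\gamma_2\doteq 2\sqrt{T}\,\kappa\,\gamma_0(1)$, as claimed. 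Note that the control $\util_i^m$ does not enter this term at all, so the bound \eqref{eqdif:controlbdgeneral} is not needed here (though the well-posedness of $\Xtil_i^m$ under it is).

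There is essentially no obstacle in this lemma; the only point requiring a little care is confirming that $\Tmc_1^m$ is a genuine (square-integrable) martingale so that Doob's inequality and the isometry apply — this follows since the integrand is bounded and $\Emb\frac1m\sum_i|\Xtil_i^m|_{*,T}^2<\infty$ by \eqref{eqdif:Xtilbd} is not even needed given boundedness of $\alpha$. The mild subtlety worth flagging is the conversion of $\sup_x|\phi'(x)|$ into a Hilbertian seminorm $\|\phi\|_2$ via \eqref{eqdif:twonorms}, which is why a $\|\cdot\|_2$ (rather than $\|\cdot\|_1$) norm appears on the right-hand side; analogous $|\cdot|_n\le\gamma_0(n)\|\cdot\|_{n+1}$ bounds will be the recurring mechanism in the estimates for $\Tmc_2^m,\Tmc_3^m,\Tmc_4^m$ and in the overall Proposition~\ref{propdif:representation}.
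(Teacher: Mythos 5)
Your argument is correct and is essentially identical to the paper's: Doob's $L^2$ maximal inequality, vanishing of cross-terms by independence of the $W_i$ and the Itô isometry, the uniform bound $|\sigma|\le\|\alpha\|_\infty$ from Condition~\ref{conddif:diffusion1}, the seminorm comparison $|\phi|_1\le\gamma_0(1)\|\phi\|_2$ from \eqref{eqdif:twonorms}, and Jensen to pass from the second moment back to the first. The added remarks (that the control $\util_i^m$ does not appear in $\Tmc_1^m$, and that boundedness of the integrand already ensures the martingale property) are accurate observations the paper leaves implicit.
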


\begin{proof}
	Using Doob's maximal inequality, the boundedness of $\alpha$ and \eqref{eqdif:twonorms},
	\begin{align*}
		\Emb |\Tmc_1^m|_{*,T}^2 & \le\frac{4a^2(m)}{m} \sum_{i=1}^m \Emb \int_0^T [\phi'(\Xtilims) \sigma(\Xtilims, \mutilms)]^2 \, ds \\
		& \le \kappa_1 a^2(m) |\phi|_1^2 \le \kappa_2 a^2(m) \|\phi\|_2^2.
	\end{align*}
	The result follows.
\end{proof}

Next we estimate the term $\Tmc_2^m$.
Note that for $t \in [0,T]$
\begin{align}
	\Tmc_2^m(t) & = \frac{a(m)}{\sqrt{m}}\sum_{i=1}^m \int_0^t \left(\phi'(\Xtilims) b(\Xtilims, \mutilms) - \lan \mu(s), \phi'(\cdot) b(\cdot,  \mu(s))\ran \right) \, ds \notag \\
	&= \int_0^t \lan \Ytil^m(s), \phi'(\cdot) b(\cdot,  \mu(s)) \ran \, ds \notag \\
	&\quad + \frac{a(m)}{\sqrt{m}} \sum_{i=1}^m \int_0^t \left(\phi'(\Xtilims) b(\Xtilims, \mutilms) - \phi'(\Xtilims) b(\Xtilims, \mu(s))\right) \, ds \notag \\
	& = \int_0^t \lan \Ytil^m(s), \phi'(\cdot) b(\cdot,\mu(s)) \ran \, ds + \int_0^t \lan \Ytil^m(s), \int_{\Rmb}\phi'(x) \beta(x,  \cdot) \mu_s(dx) \ran \, ds +\Rmc_2^m(t), \label{eqdif:T2}
\end{align}
where $\Rmc_2^m(t) \doteq \int_0^t \Rmctil_{21}^m(s) \, ds$ and
\begin{align*}
	\Rmctil_{21}^m(s) & \doteq \frac{a(m)}{\sqrt{m}} \sum_{i=1}^m \phi'(\Xtilims) [b(\Xtil_i^m(s),\mutil^m(s)) - b(\Xtil_i^m(s),\mu(s))] \\
	& \qquad - a(m)\sqrt{m} \int_\Rmb \phi'(x) [b(x,\mutil^m(s)) - b(x,\mu(s))] \, \mu_s(dx).
\end{align*}
In the following lemma we estimate the remainder term $\Rmc_2^m$.

\begin{lemma} \label{lemdif:R2}
	There exists $\gamma_3 \in (0,\infty)$ such that
	\begin{equation*}
		\Emb |\Rmc_2^m|_{*,T} \le \frac{\gamma_3 \|\phi\|_3}{a(m)\sqrt{m}}.
	\end{equation*}
\end{lemma}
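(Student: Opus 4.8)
\textbf{Proof proposal for Lemma~\ref{lemdif:R2}.}
The plan is to collapse the two sums in the definition of $\Rmctil_{21}^m(s)$ into a single bilinear expression in the fluctuation $\mutilms-\mu(s)$, and then use that this fluctuation is $O\big((a(m)\sqrt m)^{-1}\big)$ in the bounded‑Lipschitz metric — so that one such factor cancels the prefactor $a(m)\sqrt m$ and the other factor produces the claimed rate.

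First I would observe that, since $b(x,\theta)=\langle\theta,\beta(x,\cdot)\rangle$ and $\mutilms=\frac1m\sum_i\delta_{\Xtilims}$, the definition of $\Rmctil_{21}^m(s)$ rewrites as
\begin{equation*}
	\Rmctil_{21}^m(s)=a(m)\sqrt{m}\,\big\langle\mutilms-\mu(s),\,g_s\big\rangle,\qquad g_s(x)\doteq\phi'(x)\big[b(x,\mutilms)-b(x,\mu(s))\big].
\end{equation*}
By the definition of $d_{BL}$, $|\langle\mutilms-\mu(s),g_s\rangle|\le\|g_s\|_{BL}\,d_{BL}(\mutilms,\mu(s))$, so the elementary core of the argument is the bound $\|g_s\|_{BL}\le\kappa\,\|\phi\|_3\,d_{BL}(\mutilms,\mu(s))$. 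This follows because, under Condition~\ref{conddif:diffusion1}, $\sup_x\|\beta(x,\cdot)\|_{BL}$, $\sup_x\|\partial_1\beta(x,\cdot)\|_\infty$ and $\|\partial_1\partial_2\beta\|_\infty$ are finite: hence $|b(x,\mutilms)-b(x,\mu(s))|=|\langle\mutilms-\mu(s),\beta(x,\cdot)\rangle|\le\kappa\,d_{BL}(\mutilms,\mu(s))$, and $x\mapsto b(x,\mutilms)-b(x,\mu(s))$ is Lipschitz with constant $\le\kappa\,d_{BL}(\mutilms,\mu(s))$; multiplying by $\phi'$, controlling $\|\phi'\|_\infty$ and $\|\phi''\|_\infty$ by $|\phi|_2\le\gamma_0(2)\|\phi\|_3$ via \eqref{eqdif:twonorms} gives the claim. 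Combining, $|\Rmctil_{21}^m(s)|\le\kappa\,a(m)\sqrt m\,\|\phi\|_3\,d_{BL}(\mutilms,\mu(s))^2$.

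Next I would bound $\Emb\big[d_{BL}(\mutilms,\mu(s))^2\big]$ uniformly in $s\in[0,T]$. Writing $\mubarms\doteq\frac1m\sum_j\delta_{\Xbar_j(s)}$ and using the triangle inequality, on the one hand $d_{BL}(\mutilms,\mubarms)^2\le\frac1m\sum_j|\Xtiljms-\Xbarjs|^2$ (since $\|f\|_L\le1$ whenever $\|f\|_{BL}\le1$), whose expectation is at most $\gamma_1/(a^2(m)m)$ by Lemma~\ref{lemdif:XtilXbar}; on the other hand $\Emb[d_{BL}(\mubarms,\mu(s))^2]\le\kappa/m$ by the standard empirical‑measure estimate for $m$ i.i.d.\ samples from $\mu(s)$ on $\Rmb$ — this is uniform in $s$ because the bounded coefficients in \eqref{eqdif:Xbari} yield $\sup_{s\le T}\Emb|\Xbar_1(s)|^p<\infty$ for every $p$. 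Since $a(m)\le1$ for large $m$, these combine to $\Emb[d_{BL}(\mutilms,\mu(s))^2]\le\kappa/(a^2(m)m)$ uniformly in $s$, whence $\Emb|\Rmctil_{21}^m(s)|\le\kappa\,\|\phi\|_3/(a(m)\sqrt m)$. Finally, because $\Rmc_2^m(t)=\int_0^t\Rmctil_{21}^m(s)\,ds$ we have $|\Rmc_2^m|_{*,T}\le\int_0^T|\Rmctil_{21}^m(s)|\,ds$, so taking expectations gives $\Emb|\Rmc_2^m|_{*,T}\le\kappa T\,\|\phi\|_3/(a(m)\sqrt m)$, i.e.\ the assertion with $\gamma_3\doteq\kappa T$.

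The routine algebraic rearrangement and the application of Lemma~\ref{lemdif:XtilXbar} are straightforward; the only delicate points are (i) the bookkeeping in the Lipschitz‑norm estimate for $g_s$, so that exactly $\|\phi\|_3$ and no higher seminorm appears — this forces the use of the second mixed derivative of $\beta$ and of $|\phi|_2$ — and (ii) having the i.i.d.\ bound $\Emb[d_{BL}(\mubarms,\mu(s))^2]=O(1/m)$ available uniformly in $s$, which I would establish from the uniform moment bounds for $\Xbar_1$.
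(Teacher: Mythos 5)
Your proof is correct but takes a genuinely different route from the paper's, and the two approaches are worth comparing.

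The paper's proof proceeds by a two-stage surgery on $\Rmctil_{21}^m(s)$: it defines $\Rmctil_{22}^m$ (replacing $\mutilms$ by $\mubarms$) and $\Rmctil_{23}^m$ (further replacing $\Xtil_i^m$ by $\Xbar_i$), bounds each successive difference by an explicit Taylor expansion of $\beta$ (this is where $\beta\in\Cmb_b^2$ is used) together with Cauchy--Schwarz and Lemma~\ref{lemdif:XtilXbar}, and then handles the fully ``barred'' term $\Rmctil_{23}^m$ by exploiting the exact independence of $\{\Xbar_i\}$, yielding $\Emb|\Rmctil_{23}^m(s)|\le\kappa a(m)|\phi|_1/\sqrt m$. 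The only probabilistic inputs are therefore Lemma~\ref{lemdif:XtilXbar} and elementary variance bounds for i.i.d.\ sums. Your proof instead collapses the whole expression into the single bilinear form $a(m)\sqrt m\,\langle\mutilms-\mu(s),g_s\rangle$ with $g_s$ itself of size $O(d_{BL}(\mutilms,\mu(s)))$ in BL norm, so the whole remainder is $O\big(a(m)\sqrt m\,d_{BL}^2\big)$; this is conceptually tighter and shorter, and your bookkeeping of the BL norm of $g_s$ (using $\|\partial_x\partial_y\beta\|_\infty$ to get $\|g_s\|_L\lesssim |\phi|_2\,d_{BL}$, hence $\|\phi\|_3$ via \eqref{eqdif:twonorms}) is correct.

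The one place your route relies on an input the paper avoids is the second-moment estimate $\Emb\big[d_{BL}(\mubarms,\mu(s))^2\big]\le\kappa/m$ uniformly in $s$. This is true, but I would not label it ``standard'': the commonly quoted result is the first-moment rate $\Emb\,W_1(\mubarms,\mu(s))=O(m^{-1/2})$. The second-moment bound does hold here, via the 1D formula $W_1=\int|\bar F_m-F|\,dt$ together with weighted Cauchy--Schwarz, $W_1^2\le\big(\int (1+|t|)^{-2}dt\big)\big(\int|\bar F_m-F|^2(1+|t|)^2dt\big)$, and $\Emb|\bar F_m(t)-F(t)|^2=F(t)(1-F(t))/m$, which leads to a constant controlled by $\sup_{s\le T}\Emb|\Xbar_1(s)|^3<\infty$ (finite by the bounded coefficients); finally $d_{BL}\le W_1$. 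Your flagging of this step as a ``delicate point'' is apt: an argument along the lines above should be supplied. By contrast, the paper's explicit Taylor/independence argument gets a comparable $O(1/m)$ effect for the centered barred quantities without ever invoking an empirical-measure Wasserstein rate, which is why it is somewhat longer but more self-contained.
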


\begin{proof}
	Define $\Rmctil_{22}^m(s)$ by replacing in the definition of $\Rmctil_{21}^m(s)$ the term $\mutil^m(s)$ with $\mubar^m(s)$, namely
	\begin{align}
		\begin{aligned} \label{eqdif:R22}
			\Rmctil_{22}^m(s) & \doteq \frac{a(m)}{\sqrt{m}} \sum_{i=1}^m \phi'(\Xtilims) [b(\Xtil_i^m(s),\mubar^m(s)) - b(\Xtil_i^m(s),\mu(s))] \\
			& \qquad - a(m)\sqrt{m} \int_\Rmb \phi'(x) [b(x,\mubar^m(s)) - b(x,\mu(s))] \, \mu_s(dx).		
		\end{aligned}
	\end{align}
	Using the representation of $b$ in terms of $\beta$ and suitably adding and subtracting terms we see that
	\begin{align}
		& \Rmctil_{21}^m(s) - \Rmctil_{22}^m(s) \notag \\
		& \quad = \frac{a(m)\sqrt{m}}{m^2} \sum_{i=1}^m \sum_{j=1}^m \Bigg\{ [\phi'(\Xtil_i^m(s)) - \phi'(\Xbar_i(s))][\beta(\Xtilims,\Xtiljms) - \beta(\Xtilims,\Xbarjs)] \notag \\
		& \qquad + \phi'(\Xbaris) [\beta(\Xtilims,\Xtiljms) - \beta(\Xtilims,\Xbarjs) - (\Xtiljms - \Xbarjs) \beta_y(\Xtilims,\Xbarjs)] \notag \\
		& \qquad + \phi'(\Xbaris) [\Xtiljms - \Xbarjs] [\beta_y(\Xtilims,\Xbarjs) - \beta_y(\Xbaris,\Xbarjs)] \notag \\
		& \qquad - \intR \phi'(x)[\beta(x,\Xtiljms) - \beta(x,\Xbarjs) - (\Xtiljms - \Xbarjs) \beta_y(x,\Xbarjs)] \, \mu_s(dx) \notag \\
		& \qquad + [\Xtiljms - \Xbarjs] [\phi'(\Xbaris) \beta_y(\Xbaris,\Xbarjs) - \intR \phi'(x) \beta_y(x,\Xbarjs) \, \mu_s(dx)] \Bigg\}. \label{eqdif:R21-R22}
	\end{align}
	We will now compute square of the first absolute moments of the various terms on the right side.
	Using Cauchy-Schwarz inequality and Lipschitz estimates on $\beta$ and $\phi'$, square of the first absolute moment of the first term on the right side of \eqref{eqdif:R21-R22} can be bounded by
	\begin{equation*}
		\kappa_1 a^2(m)m |\phi|_2^2 \left(\Emb \frac{1}{m} \sum_{i=1}^m |\Xtilims - \Xbaris|^2\right) \left(\Emb \frac{1}{m} \sum_{j=1}^m |\Xtiljms - \Xbarjs|^2\right).
	\end{equation*}
	For the second term of the right side, one has the following upper bound on the square of the first absolute moment	
	\begin{equation*}
		\kappa_1 a^2(m)m |\phi|_1^2 \left(\Emb \frac{1}{m} \sum_{j=1}^m |\Xtiljms - \Xbarjs|^2\right)^2.
	\end{equation*}
	This estimate uses Taylor's formula and the fact that $\beta \in \Cmb_b^2(\Rmb^2)$.
	For the third term, we again use Cauchy-Schwarz inequality yielding the bound
	\begin{equation*}
		\kappa_1 a^2(m)m |\phi|_1^2 \left(\Emb \frac{1}{m} \sum_{i=1}^m |\Xtilims - \Xbaris|^2\right) \left(\Emb \frac{1}{m} \sum_{j=1}^m |\Xtiljms - \Xbarjs|^2\right).
	\end{equation*}
	The square of the first absolute moment of the fourth term can be bounded using Taylor's formula as for the second term by the following expression
	\begin{equation*}
		\kappa_1 a^2(m)m |\phi|_1^2 \left(\Emb \frac{1}{m} \sum_{j=1}^m |\Xtiljms - \Xbarjs|^2\right)^2.		
	\end{equation*}
	Finally using Cauchy-Schwarz inequality and the independence of the sequence $\{\Xbar_i\}$ one can estimate the square of the first absolute moment of the last term as
	\begin{align*}
%		& \kappa_1 a^2(m)m \left(\Emb \frac{1}{m} \sum_{j=1}^m |\Xtiljms - \Xbarjs|^2\right) \\
%		& \qquad \cdot \Emb \left( \frac{1}{m^2} \sum_{i=1}^m \sum_{j=1}^m [\phi'(\Xbaris) \beta_y(\Xbaris,\Xbarjs) - \intR \phi'(x) \beta_y(x,\Xbarjs) \, \mu_s(dx)] \right)^2 \\
		\kappa_1 a^2(m)m \left(\Emb \frac{1}{m} \sum_{j=1}^m |\Xtiljms - \Xbarjs|^2\right) \frac{|\phi|_1^2}{m}.
	\end{align*}
	Combing these estimates with Lemma~\ref{lemdif:XtilXbar} we now have
	\begin{equation} \label{eqdif:R21-R22bd}
		(\Emb |\Rmctil_{21}^m(s) - \Rmctil_{22}^m(s)|)^2 \le \kappa_2 |\phi|_2^2 \left\{ \frac{1}{a^2(m)m} + \frac{1}{m} \right\}.
	\end{equation}
	The above estimate allows approximation of $\Rmctil_{21}^m(s)$ by $\Rmctil_{22}^m(s)$.
	Next we will approximate $\Rmctil_{22}^m(s)$ by the term $\Rmctil_{23}^m(s)$ that is obtained by replacing $\Xtil_i^m$ in \eqref{eqdif:R22} with $\Xbar_i$, namely
	\begin{align*}
		\Rmctil_{23}^m(s) & \doteq \frac{a(m)}{\sqrt{m}} \sum_{i=1}^m \phi'(\Xbaris) [b(\Xbaris,\mubarms) - b(\Xbaris,\mu(s))] \\
		& \qquad - a(m)\sqrt{m} \int_\Rmb \phi'(x) [b(x,\mubarms) - b(x,\mu(s))] \, \mu_s(dx).
	\end{align*}
	By similar addition and subtraction of terms as for \eqref{eqdif:R21-R22},
	\begin{align*}
		\Rmctil_{22}^m(s) - \Rmctil_{23}^m(s) & = \frac{a(m)\sqrt{m}}{m^2} \sum_{i=1}^m \sum_{j=1}^m \Bigg( \phi'(\Xtilims) \bigg\{ [\beta(\Xtilims,\Xbarjs) - \beta(\Xbaris,\Xbarjs) \\
		& \quad - (\Xtilims - \Xbaris) \beta_x(\Xbaris,\Xbarjs)] \\
		& \quad - \intR [\beta(\Xtilims,y) - \beta(\Xbaris,y) - (\Xtilims - \Xbaris) \beta_x(\Xbaris,y)] \, \mu_s(dy) \\
		& \quad + [\Xtilims - \Xbaris] [\beta_x(\Xbaris,\Xbarjs) - \intR \beta_x(\Xbaris,y) \, \mu_s(dy)] \bigg\} \\
		& \quad + [\phi'(\Xtilims) - \phi'(\Xbaris)] [\beta(\Xbaris,\Xbarjs) - \intR \beta(\Xbaris,y) \, \mu_s(dy)] \Bigg).
	\end{align*}
	As for the proof of \eqref{eqdif:R21-R22bd}, we have, once more using Cauchy-Schwarz inequality, Taylor series expansion, Lemma~\ref{lemdif:XtilXbar} and the independence of $\{\Xbar_i\}$,
	\begin{equation*}
		(\Emb |\Rmctil_{22}^m(s) - \Rmctil_{23}^m(s)|)^2 \le \kappa_3 |\phi|_2^2 \left(\frac{1}{a^2(m)m} + \frac{1}{m}\right).
	\end{equation*}
	We omit the details.
	Finally writing
	\begin{align*}
		\Rmctil_{23}^m(s) & = \frac{a(m)\sqrt{m}}{m^2} \sum_{i=1}^m \sum_{j=1}^m \Big( \phi'(\Xbaris) [\beta(\Xbaris,\Xbarjs) - b(\Xbaris,\mu(s))] \\
		& \quad - \int_\Rmb \phi'(x) [\beta(x,\Xbarjs) - b(x,\mu(s))] \, \mu_s(dx) \Big)
	\end{align*}	
	and using independence of $\{\Xbar_i\}$, we have
	\begin{align*}
		\Emb |\Rmctil_{23}^m(s)| \le \frac{\kappa_4 a(m) |\phi|_1}{\sqrt{m}}.
	\end{align*}
	Combining the above estimates and using \eqref{eqjump:a m}, \eqref{eqdif:twonorms} gives
	\begin{equation*}
		\Emb |\Rmc_2^m|_{*,T} \le \Emb \int_0^T |\Rmctil_{21}^m(s)| \, ds \le \frac{\kappa_5 |\phi|_2}{a(m)\sqrt{m}} \le \frac{\kappa_6 \|\phi\|_3}{a(m)\sqrt{m}}.
	\end{equation*}
	This completes the proof of the lemma.
\end{proof}

We will now estimate the term $\Tmc_3^m$.
Define $\nutil^m \in \Mmc_T(\Rmb^2 \times [0,T])$ as
\begin{equation}
	\nutil^m(A \times B \times [0,t]) \doteq \frac{1}{m} \sum_{i=1}^m \int_0^t \delta_{(\util_i^m(s), \Xtilims)}(A \times B) \, ds, \quad A, B \in \Bmc(\Rmb), \quad t \in [0,T].
	\label{eqdif:occmzr}
\end{equation}
Then
\begin{align} \label{eqdif:T3}
	\begin{aligned}
		\Tmc^m_3(t) & = \int_{\Rmb^2 \times [0,t]} \phi'(x) \sigma(x, \mutilms) y \, \nutil^m(dy\, dx\, ds)\\
		&= \int_{\Rmb^2 \times [0,t]} \phi'(x) \sigma(x,  \mu(s)) y \, \nutil^m(dy\, dx\, ds) + \Rmc^m_3(t),
	\end{aligned}
\end{align}
where
\begin{equation*}
	\Rmc_3^m(t) \doteq \frac{1}{m} \sum_{i=1}^m \int_0^t \phi'(\Xtilims) [\sigma(\Xtilims,\mutilms) - \sigma(\Xtilims,\mu(s))] \util_i^m(s) \, ds.
\end{equation*}

\begin{lemma} \label{lemdif:R3}
	There exists $\gamma_4 \in (0,\infty)$ such that
	\begin{equation*}
		\Emb |\Rmc_3^m|_{*,T} \le \frac{\gamma_4 \|\phi\|_2}{\big(a^2(m)m\big)^\quarter}.
	\end{equation*}
\end{lemma}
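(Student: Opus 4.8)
The plan is to estimate $\Rmc_3^m$ by exploiting that the controlled empirical measure $\mutilms$ is close to $\mu(s)$, the closeness being quantified, via the coupling with the i.i.d.\ nonlinear processes $\{\Xbar_i\}$, by Lemma~\ref{lemdif:XtilXbar}. First I would record the elementary bound: since $\sigma(x,\theta)=\lan\theta,\alpha(x,\cdot)\ran$, Condition~\ref{conddif:diffusion1} gives
\[
	|\sigma(x,\mutilms)-\sigma(x,\mu(s))| = |\lan\mutilms-\mu(s),\alpha(x,\cdot)\ran| \le \Big(\sup_x\|\alpha(x,\cdot)\|_{BL}\Big)\, d_{BL}(\mutilms,\mu(s)) =: \kappa_1\, d_{BL}(\mutilms,\mu(s)).
\]
Substituting this into the definition of $\Rmc_3^m(t)$, bounding $|\phi'(\cdot)|$ by $|\phi|_1$, extending the time integral to $[0,T]$, and applying the Cauchy--Schwarz inequality jointly in the particle index $i$ and in time (note $d_{BL}(\mutilms,\mu(s))$ does not depend on $i$, so the average over $i$ of it is itself) yields, uniformly in $t\in[0,T]$,
\[
	|\Rmc_3^m(t)| \le \kappa_1|\phi|_1 \left(\int_0^T d_{BL}^2(\mutilms,\mu(s))\,ds\right)^{1/2} \left(\frac{1}{m}\sum_{i=1}^m\int_0^T|\util_i^m(s)|^2\,ds\right)^{1/2}.
\]
Taking expectations, applying Cauchy--Schwarz once more, and using \eqref{eqdif:controlbdgeneral} to bound the second factor by a constant, the problem reduces to showing $\Emb\int_0^T d_{BL}^2(\mutilms,\mu(s))\,ds = O\big((a^2(m)m)^{-1}\big)$.

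For this I would use the triangle inequality $d_{BL}(\mutilms,\mu(s)) \le d_{BL}(\mutilms,\mubarms) + d_{BL}(\mubarms,\mu(s))$, with $\mubarms\doteq\frac{1}{m}\sum_{i=1}^m\delta_{\Xbar_i(s)}$. The first term is at most $\frac{1}{m}\sum_{i=1}^m|\Xtilims-\Xbaris| \le \frac{1}{m}\sum_{i=1}^m|\Xtil_i^m-\Xbar_i|_{*,T}$, so by Jensen and Lemma~\ref{lemdif:XtilXbar}, $\Emb d_{BL}^2(\mutilms,\mubarms) \le \Emb\frac{1}{m}\sum_{i=1}^m|\Xtil_i^m-\Xbar_i|_{*,T}^2 \le \gamma_1/(a^2(m)m)$, uniformly in $s$. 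For the second term, $\mubarms$ is the empirical measure of the i.i.d.\ sample $\{\Xbar_i(s)\}_{i=1}^m$ with common law $\mu(s)$; using $d_{BL}\le W_1$ and the one-dimensional identity $W_1(\mubarms,\mu(s))=\int_\Rmb|\widehat F^s_m(x)-F_s(x)|\,dx$ (with $\widehat F^s_m$, $F_s$ the empirical and true c.d.f.'s) together with the binomial variance $\Emb|\widehat F^s_m(x)-F_s(x)|^2=\frac{1}{m}F_s(x)(1-F_s(x))$, one gets $\Emb d_{BL}^2(\mubarms,\mu(s)) \le \frac{1}{m}\big(\int_\Rmb\sqrt{F_s(x)(1-F_s(x))}\,dx\big)^2$. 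The boundedness of $\alpha,\beta$ (hence of $\sigma,b$) gives the Gaussian-type tail bound $\sup_{s\le T}\Pmb(|\Xbar_1(s)|\ge R)\le\kappa_2 e^{-\kappa_3 R^2}$, whence $\sup_{s\le T}\int_\Rmb\sqrt{F_s(x)(1-F_s(x))}\,dx<\infty$ and $\Emb d_{BL}^2(\mubarms,\mu(s))=O(1/m)$, uniformly in $s$. Since $a^2(m)\le1$ for large $m$, the $1/m$ contribution is absorbed, so $\Emb\int_0^T d_{BL}^2(\mutilms,\mu(s))\,ds=O\big((a^2(m)m)^{-1}\big)$; combined with the reduction above and $|\phi|_1\le\gamma_0(1)\|\phi\|_2$ from \eqref{eqdif:twonorms}, this in fact gives the slightly stronger bound $\Emb|\Rmc_3^m|_{*,T}\le\kappa_4\|\phi\|_2(a^2(m)m)^{-1/2}$, which a fortiori implies the asserted estimate because $a^2(m)m\to\infty$; setting $\gamma_4=\kappa_4$ (enlarged if needed to cover small $m$) finishes it.

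The routine parts are the Cauchy--Schwarz bookkeeping and the invocation of Lemma~\ref{lemdif:XtilXbar}; I expect the main (though still standard) obstacle to be the uniform-in-$s$ tail control of $\mu(s)$ used in the i.i.d.\ empirical-measure estimate. If one prefers to avoid even that and use only an $L^1$ bound on $d_{BL}(\mubarms,\mu(s))$, one can instead interpolate $|\sigma(x,\mutilms)-\sigma(x,\mu(s))|\le\kappa_1'\sqrt{d_{BL}(\mutilms,\mu(s))}$ (using also $|\sigma|\le\|\alpha\|_\infty$ and $\min(a,b)\le\sqrt{ab}$); carrying this through produces precisely the stated exponent $1/4$ from the weaker input $\Emb\int_0^T d_{BL}(\mutilms,\mu(s))\,ds=O\big((a^2(m)m)^{-1/2}\big)$.
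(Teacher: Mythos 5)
Your argument is correct, and the final bound you obtain with exponent $-1/2$ is in fact stronger than the exponent $-1/4$ asserted in the lemma, so the claim follows a fortiori. However, your route differs from the paper's in an interesting way. The paper applies the generalized H\"older inequality with exponents $(4,2,4)$ to the triple product $\phi'(\Xtilims)\cdot\util_i^m(s)\cdot[\sigma(\Xtilims,\mutilms)-\sigma(\Xtilims,\mu(s))]$, reduces the fourth power of the $\sigma$-difference to a second power by boundedness of $\alpha$ (which is why the final exponent is $1/4$), and then controls $\Emb\frac1m\sum_i\int_0^T[\sigma(\Xtilims,\mutilms)-\sigma(\Xtilims,\mu(s))]^2\,ds$ by decomposing the difference through the coupled $\Xbaris$ and $\mubarms$: the terms involving $\Xtil_i^m-\Xbar_i$ are handled by Lemma~\ref{lemdif:XtilXbar}, while the term $\Emb\frac1m\sum_i\int_0^T[\sigma(\Xbaris,\mubarms)-\sigma(\Xbaris,\mu(s))]^2\,ds$ is bounded by $O(1/m)$ via a direct variance computation that conditions on $\Xbar_i(s)$ and uses independence of the remaining $\Xbar_j(s)$. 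You instead first pass from the $\sigma$-difference to the metric $d_{BL}(\mutilms,\mu(s))$ via the Lipschitz-in-measure bound $|\sigma(x,\theta)-\sigma(x,\theta')|\le\sup_x\|\alpha(x,\cdot)\|_{BL}\,d_{BL}(\theta,\theta')$, apply H\"older with exponents $(\infty,2,2)$, and are then left with $\Emb\int_0^T d_{BL}^2(\mubarms,\mu(s))\,ds$; because this sup over bounded-Lipschitz test functions sits inside the expectation, controlling it requires a quantitative empirical-measure estimate, which you supply via the one-dimensional $W_1$ quantile identity plus sub-Gaussian tails of $\mu(s)$ (a valid consequence of boundedness of $\sigma,b$ via a time-changed-Brownian argument, uniform in $s\le T$). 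So each approach pays a different price: the paper's direct variance computation stays with the fixed random test function $\alpha(\Xbaris,\cdot)$ and avoids any tail/moment assumption on $\mu(s)$, while your route is more conceptually modular and yields the sharper $(a^2(m)m)^{-1/2}$ rate, at the cost of an extra tail estimate. One small imprecision in your closing remark: the interpolation alternative using the $L^1$ bound on $d_{BL}(\mubarms,\mu(s))$ does not actually dispense with the tail control, since the bound $\Emb W_1(\mubarms,\mu(s))\le m^{-1/2}\int\sqrt{F_s(1-F_s)}\,dx$ still requires that integral to be finite (and uniformly so in $s$).
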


\begin{proof}
	Recall that we assume that \eqref{eqdif:controlbdgeneral} is satisfied.
	Using Cauchy-Schwarz inequality and boundedness of $\alpha$, we have
	\begin{align*}
		\Emb |\Rmc_3^m|_{*,T} & \le \left( \Emb \frac{1}{m} \sum_{i=1}^m \int_0^T \left[\phi'(\Xtilims)\right]^4 ds \right)^\quarter \left( \Emb \frac{1}{m} \sum_{i=1}^m \int_0^T |\util_i^m(s)|^2 \, ds \right)^\half \\
		& \quad \cdot \left( \Emb \frac{1}{m} \sum_{i=1}^m \int_0^T \left[\sigma(\Xtilims,\mutilms) - \sigma(\Xtilims,\mu(s))\right]^4 ds \right)^\quarter \\
		& \le \kappa_1 |\phi|_1 \left( \Emb \frac{1}{m} \sum_{i=1}^m \int_0^T \left[\sigma(\Xtilims,\mutilms) - \sigma(\Xtilims,\mu(s))\right]^2 ds \right)^\quarter \\
		& \le \kappa_2 \|\phi\|_2 \left( \Rmc_{31}^m + \Rmc_{32}^m + \Rmc_{33}^m \right)^\quarter,
	\end{align*}
	where
	\begin{align*}
		\Rmc_{31}^m & \doteq \Emb \frac{1}{m} \sum_{i=1}^m \int_0^T \left[\sigma(\Xtilims,\mutilms) - \sigma(\Xbaris,\mubarms)\right]^2 ds \\
		\Rmc_{32}^m & \doteq \Emb \frac{1}{m} \sum_{i=1}^m \int_0^T \left[\sigma(\Xbaris,\mubarms) - \sigma(\Xbaris,\mu(s))\right]^2 ds \\
		\Rmc_{33}^m & \doteq \Emb \frac{1}{m} \sum_{i=1}^m \int_0^T \left[\sigma(\Xbaris,\mu(s)) - \sigma(\Xtilims,\mu(s))\right]^2 ds.
	\end{align*}
	Using the Lipschitz property of $\alpha$ and Lemma~\ref{lemdif:XtilXbar},
	\begin{equation*}
		\Rmc_{31}^m \le \kappa_3 \Emb \frac{1}{m} \sum_{i=1}^m \int_0^T |\Xtilims - \Xbaris|^2 ds \le \frac{\kappa_4}{a^2(m)m}.
	\end{equation*}
	Similarly,
	\begin{equation*}
		\Rmc_{33}^m \le \kappa_5 \Emb \frac{1}{m} \sum_{i=1}^m \int_0^T |\Xtilims - \Xbaris|^2 \, ds \le \frac{\kappa_6}{a^2(m)m}.
	\end{equation*}
	Finally using independence of $\{\Xbar_i\}$, we have $\Rmc_{32}^m \le \frac{\kappa_7}{m}$. 
	Combining above estimates completes the proof.
\end{proof}

Finally we consider $\Tmc_4^m$.
\begin{align} \label{eqdif:T4}
	\begin{aligned}
		\Tmc^m_4(t) & = \half \frac{a(m)}{\sqrt{m}} \sum_{i=1}^m \int_0^t \left(\phi''(\Xtilims) \sigma^2(\Xtilims, \mutilms) - \lan \mu(s), \phi{''}(\cdot) \sigma^2(\cdot,  \mu(s))\ran\right) \, ds\\
		&= \int_0^t \lan \Ytil^m(s), \half \phi''(\cdot) \sigma^2(\cdot, \mu(s)) \ran \, ds \\
		&\quad + \half \frac{a(m)}{\sqrt{m}} \sum_{i=1}^m \int_0^t \phi''(\Xtilims) \left[\sigma^2(\Xtilims, \mutilms) - \sigma^2(\Xtilims, \mu(s)) \right] \, ds\\
		&= \int_0^t \left\lan \Ytil^m(s), \half \phi''(\cdot) \sigma^2(\cdot, \mu(s)) + \int_{\Rmb} \phi''(x) \sigma(x, \mu(s)) \alpha(x, \cdot)\mu_s(dx) \right\ran \, ds + \Rmc^m_4(t),
	\end{aligned}
\end{align}
where $\Rmc_4^m(t) \doteq \int_0^t \Rmctil_{41}^m(s) \, ds$ and
\begin{align*}
	\Rmctil_{41}^m(s) & \doteq \half \frac{a(m)}{\sqrt{m}} \sum_{i=1}^m \phi''(\Xtilims) \left[\sigma^2(\Xtilims, \mutilms) - \sigma^2(\Xtilims, \mu(s)) \right] \\
	& \qquad - a(m)\sqrt{m} \int_\Rmb \phi''(x) \sigma(x,\mu(s)) \left[\sigma(x,\mutilms) - \sigma(x,\mu(s))\right] \, \mu_s(dx).
\end{align*}
The proof of the following lemma is similar to that of Lemma~\ref{lemdif:R2}, the only difference being that one needs to estimate $\phi''$ rather than $\phi'$.
As a result the bound on the right side contains $\|\phi\|_4$ instead of $\|\phi\|_3$ as in Lemma~\ref{lemdif:R2}.
We omit the proof.

\begin{lemma} \label{lemdif:R4}
	There exists $\gamma_5 \in (0,\infty)$ such that	
	\begin{equation*}
		\Emb |\Rmc_4^m|_{*,T} \le \frac{\gamma_5 \|\phi\|_4}{a(m)\sqrt{m}}.
	\end{equation*}	
\end{lemma}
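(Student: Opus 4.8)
\emph{Proof proposal.} The plan is to split $\Rmctil_{41}^m(s)$ into a ``linear'' piece that has exactly the algebraic structure of the quantity $\Rmctil_{21}^m(s)$ already treated in Lemma~\ref{lemdif:R2}, plus a small quadratic remainder that is handled directly by the $L^2$-estimates developed for Lemma~\ref{lemdif:R3}. Using the identity
\[
	\half\big[\sigma^2(\Xtilims,\mutilms) - \sigma^2(\Xtilims,\mu(s))\big] = \sigma(\Xtilims,\mu(s))\big[\sigma(\Xtilims,\mutilms) - \sigma(\Xtilims,\mu(s))\big] + \half\big[\sigma(\Xtilims,\mutilms) - \sigma(\Xtilims,\mu(s))\big]^2
\]
in the first term of $\Rmctil_{41}^m(s)$, and recalling $\sigma(x,\theta) = \int_\Rmb \alpha(x,y)\,\theta(dy)$, one obtains $\Rmctil_{41}^m(s) = \Rmctil_{42}^m(s) + \Rmctil_{43}^m(s)$, where
\begin{align*}
	\Rmctil_{42}^m(s) & \doteq \frac{a(m)}{\sqrt{m}} \sum_{i=1}^m \phi''(\Xtilims) \sigma(\Xtilims,\mu(s)) \big[ \sigma(\Xtilims,\mutilms) - \sigma(\Xtilims,\mu(s)) \big] \\
	& \qquad - a(m)\sqrt{m} \int_\Rmb \phi''(x) \sigma(x,\mu(s)) \big[ \sigma(x,\mutilms) - \sigma(x,\mu(s)) \big] \, \mu_s(dx), \\
	\Rmctil_{43}^m(s) & \doteq \half \frac{a(m)}{\sqrt{m}} \sum_{i=1}^m \phi''(\Xtilims) \big[ \sigma(\Xtilims,\mutilms) - \sigma(\Xtilims,\mu(s)) \big]^2 .
\end{align*}
The point is that $\Rmctil_{42}^m(s)$ is of exactly the form of $\Rmctil_{21}^m(s)$ with the test function $\phi'$ replaced by $g_s(\cdot) \doteq \phi''(\cdot)\sigma(\cdot,\mu(s))$ and $(b,\beta)$ replaced by $(\sigma,\alpha)$.

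For the term $\Rmctil_{42}^m$, I would repeat the three-step approximation used for $\Rmctil_{21}^m$ in Lemma~\ref{lemdif:R2}: introduce the analogues of $\Rmctil_{22}^m$ and $\Rmctil_{23}^m$ obtained by first replacing $\mutilms$ with $\mubarms$ and then $\Xtil_i^m$ with $\Xbar_i$; bound the two successive differences exactly as in \eqref{eqdif:R21-R22}--\eqref{eqdif:R21-R22bd} using the Cauchy--Schwarz inequality, second-order Taylor expansions of $\alpha$ in each of its arguments (legitimate since $\alpha \in \Cmb_b^2(\Rmb^2)$ by Condition~\ref{conddif:diffusion1}), Lemma~\ref{lemdif:XtilXbar} and the independence of $\{\Xbar_i\}$; and finally estimate the fully replaced term $\frac{a(m)\sqrt{m}}{m^2}\sum_{i,j} H_s(\Xbaris,\Xbarjs)$, where $H_s$ is the doubly $\mu_s$-centred kernel built from $g_s$ and $\alpha$, using that only the diagonal covariances survive. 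The only change in the bookkeeping is that each occurrence of $\|\phi'\|_\infty$ or of the Lipschitz constant $\|\phi''\|_\infty$ of $\phi'$ in the proof of Lemma~\ref{lemdif:R2} is now replaced by $\sup_{s}\|g_s\|_\infty$ or by the Lipschitz constant $\sup_{s}\|g_s'\|_\infty$ of $g_s$; since $g_s' = \phi'''\sigma(\cdot,\mu(s)) + \phi''\partial_x\sigma(\cdot,\mu(s))$ and $\alpha \in \Cmb_b^2$, one has $\sup_{s \in [0,T]}\big(\|g_s\|_\infty + \|g_s'\|_\infty\big) \le \kappa\,|\phi|_3 \le \kappa\,\gamma_0(3)\,\|\phi\|_4$ by \eqref{eqdif:twonorms}. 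This gives $\Emb \int_0^T |\Rmctil_{42}^m(s)| \, ds \le \kappa\,\|\phi\|_4 / (a(m)\sqrt{m})$ after using \eqref{eqjump:a m}, and this is precisely where the bound worsens from $\|\phi\|_3$ in Lemma~\ref{lemdif:R2} to $\|\phi\|_4$, since differentiating $g_s$ once costs a third derivative of $\phi$.

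For $\Rmctil_{43}^m$, I would bound crudely, using $|\phi''(\Xtilims)| \le |\phi|_2$ and the boundedness of $\alpha$,
\[
	\Emb \int_0^T |\Rmctil_{43}^m(s)| \, ds \le \half\,a(m)\sqrt{m}\,|\phi|_2\, \Emb \int_0^T \frac{1}{m}\sum_{i=1}^m \big[ \sigma(\Xtilims,\mutilms) - \sigma(\Xtilims,\mu(s)) \big]^2 \, ds ,
\]
and then control the last expectation by $3(\Rmc_{31}^m + \Rmc_{32}^m + \Rmc_{33}^m)$, appealing to the estimates from the proof of Lemma~\ref{lemdif:R3} (Lipschitz property of $\alpha$ together with Lemma~\ref{lemdif:XtilXbar} for $\Rmc_{31}^m$, $\Rmc_{33}^m$; independence of $\{\Xbar_i\}$ for $\Rmc_{32}^m$), which give this expectation a bound $\kappa\big(1/(a^2(m)m) + 1/m\big) \le \kappa/(a^2(m)m)$. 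Hence $\Emb \int_0^T |\Rmctil_{43}^m(s)| \, ds \le \kappa\,|\phi|_2/(a(m)\sqrt{m}) \le \kappa\,\|\phi\|_4/(a(m)\sqrt{m})$. Since $|\Rmc_4^m|_{*,T} \le \int_0^T |\Rmctil_{41}^m(s)|\,ds \le \int_0^T |\Rmctil_{42}^m(s)|\,ds + \int_0^T |\Rmctil_{43}^m(s)|\,ds$, combining the two bounds yields the lemma with a suitable $\gamma_5$.

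The only genuine work here is the $\Rmctil_{42}^m$ step: one must verify that each of the five groups of terms in the analogue of \eqref{eqdif:R21-R22}, and in the analogous expansion of $\Rmctil_{22}^m - \Rmctil_{23}^m$, survives the replacement of $(\phi',\beta)$ by $(g_s,\alpha)$, and one must keep track of the fact that the highest derivative of $\phi$ entering the estimates is the third (this being the source of the upgrade from $\|\phi\|_3$ to $\|\phi\|_4$). Everything else reuses Lemmas~\ref{lemdif:XtilXbar} and~\ref{lemdif:R3} essentially verbatim.
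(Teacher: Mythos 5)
Your proposal is correct, and it follows the approach the paper gestures at (``similar to Lemma~\ref{lemdif:R2}, with $\phi''$ in place of $\phi'$''); the paper omits the details, and you have supplied them. One genuine contribution of your argument worth noting: the paper's one-sentence sketch does not mention that $\Rmctil_{41}^m$ is not \emph{literally} of the same form as $\Rmctil_{21}^m$, because $\sigma^2$ is quadratic in the measure argument whereas $b$ is linear, so there is an extra quadratic remainder. Your decomposition $\half(a^2-b^2)=b(a-b)+\half(a-b)^2$ makes this precise, isolating $\Rmctil_{42}^m$ (which is an exact copy of $\Rmctil_{21}^m$ under the substitution $\phi'\rightsquigarrow g_s=\phi''\,\sigma(\cdot,\mu(s))$, $\beta\rightsquigarrow\alpha$) and a quadratic piece $\Rmctil_{43}^m$ that you correctly dispatch with the $L^2$-estimates from Lemma~\ref{lemdif:R3}. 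The bookkeeping (one extra derivative of $\phi$ entering through $g_s'$, hence $|\phi|_3$ in place of $|\phi|_2$, hence $\|\phi\|_4$ in place of $\|\phi\|_3$ via \eqref{eqdif:twonorms}) is right, and the bound on $\Rmctil_{43}^m$ only costs $|\phi|_2$ so it does not worsen the final estimate.
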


%\begin{proof}
%	Let
%	\begin{align*}
%		\Rmctil_{42}^m(s) & = \half \frac{a(m)}{\sqrt{m}} \sum_{i=1}^m \phi''(\Xtilims) \left[\sigma^2(\Xtilims, \mubarms) - \sigma^2(\Xtilims, \mu(s)) \right] \\
%		& \qquad - a(m)\sqrt{m} \int_\Rmb \phi''(x) \sigma(x,\mu(s)) \left[\sigma(x,\mubarms) - \sigma(x,\mu(s))\right] \, \mu_s(dx) \\
%		\Rmctil_{43}^m(s) & = \half \frac{a(m)}{\sqrt{m}} \sum_{i=1}^m \phi''(\Xbaris) \left[\sigma^2(\Xbaris,\mubarms) - \sigma^2(\Xbaris,\mu(s)) \right] \\
%		& \qquad - a(m)\sqrt{m} \int_\Rmb \phi''(x) \sigma(x,\mu(s)) \left[\sigma(x,\mubarms) - \sigma(x,\mu(s))\right] \, \mu_s(dx).		
%	\end{align*}
%%	Next note that
%%	\begin{align*}
%%		\Rmc_{41}^m(s) - \Rmc_{42}^m(s) = \frac{a(m)\sqrt{m}}{2m^2} \sum_{i=1}^m \sum_{j=1}^m \Bigg( \left[\phi''(\Xtilims) - \phi'(\Xbaris) \right] \Bigg)		
%%	\end{align*}
%	Similar to the proof of Lemma~\ref{lemdif:R2} but noting that one has to estimate $\phi''$ rather than $\phi'$, we have
%	\begin{align*}
%		\Emb |\Rmctil_{41}^m(s) - \Rmctil_{42}^m(s)| & \le \frac{\kappa_1|\phi|_3}{a(m)\sqrt{m}} \\
%		\Emb |\Rmctil_{42}^m(s) - \Rmctil_{43}^m(s)| & \le \frac{\kappa_2|\phi|_3}{a(m)\sqrt{m}} \\
%		\Emb |\Rmctil_{43}^m(s)| & \le \frac{\kappa_3 a(m) |\phi|_2}{\sqrt{m}}.
%	\end{align*}
%	Hence we have
%	\begin{equation*}
%		\Emb |\Rmc_4^m|_{*,T} \le \Emb \int_0^T |\Rmctil_{41}^m(s)| \, ds \le \frac{\kappa_4 |\phi|_3}{a(m)\sqrt{m}} \le \frac{\kappa_5 \|\phi\|_4}{a(m)\sqrt{m}}.
%	\end{equation*}
%	Proof of the second result is similar and hence omitted.
%\end{proof}

We can now complete the proof of Proposition~\ref{propdif:representation}.

\noindent \textbf{Proof of Proposition~\ref{propdif:representation}:}
Using \eqref{eqdif:splitforprop}, \eqref{eqdif:T2}, \eqref{eqdif:T3} and \eqref{eqdif:T4},
\begin{equation*}
	\lan \Ytil^m(t), \phi \ran = \int_0^t \lan \Ytil^m(s), L(s)\phi \ran \, ds + \int_{\Rmb^2 \times [0,t]} \phi'(x) \sigma(x,  \mu(s)) y \, \nutil^m(dy\, dx\, ds) + \Rmc^m(t),
\end{equation*}
where for $t \in [0,T]$,
\begin{equation*}
	\Rmc^m(t) \doteq \Tmc_1^m(t) + \Rmc_2^m(t) + \Rmc_3^m(t) + \Rmc_4^m(t).
\end{equation*}
The result now follows from Lemmas~\ref{lemdif:R1}-\ref{lemdif:R4}. \qed

%-----------------------------------------------------------------

\subsection{Tightness of $\Ytil^m$.} \label{secdif:tightness}

In this section we will argue the tightness of $\Ytil^m$ in $\Cmb([0,T]:\Smc_{-v})$ and identify the value of $v$.
Note that this tightness implies tightness in $\Cmb([0,T]:\Smc_{-\tau})$ for all $\tau \ge v$.

\begin{theorem}
	\label{thmdif:tightness}
	Suppose that Condition~\ref{conddif:diffusion1} holds.
	Also suppose that the control sequence $\{\util_i^m\}_{i=1}^m$ satisfies \eqref{eqdif:controlbdgeneral}.
	Then the sequence $\{(\Ytil^m, \nutil^m)\}$ is tight in 
	$\Cmb([0,T]:\Smc_{-v}) \times \Mmc_T(\Rmb^2\times [0,T])$ for some $v > 4$.  
\end{theorem}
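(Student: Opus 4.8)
\medskip
\noindent\textbf{Proof proposal.} The plan is to prove tightness of the two coordinates separately and then combine. Tightness of $\{\nutil^m\}$ in $\Mmc_T(\Rmb^2\times[0,T])$ is the easy part: every realization of $\nutil^m$ has total mass $T$ and Lebesgue time marginal, so it suffices to control the mass in the $y$ and $x$ directions, and by \eqref{eqdif:controlbdgeneral} and \eqref{eqdif:Xtilbd},
\begin{equation*}
	\sup_{m}\Emb\int_{\Rmb^2\times[0,T]}(y^2+x^2)\,\nutil^m(dy\,dx\,ds) = \sup_m\Emb\frac1m\sum_{i=1}^m\int_0^T\big(|\util_i^m(s)|^2+|\Xtilims|^2\big)\,ds < \infty .
\end{equation*}
A Chebyshev estimate then produces, for each $\eps>0$, a ball $B_R\subset\Rmb^2$ with $\sup_m\Pmb\big(\nutil^m(B_R^c\times[0,T])>\eps\big)<\eps$ for $R$ large, which is the tightness criterion on $\Mmc_T(\Rmb^2\times[0,T])$.

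\smallskip
The substantial step is tightness of $\{\Ytil^m\}$ in $\Cmb([0,T]:\Smc_{-v})$. First I would establish the uniform coordinatewise bound
\begin{equation*}
	\sup_{m\in\Nmb}\Emb\sup_{t\le T}|\lan\Ytil^m(t),\phi\ran|^2 \le \kappa\,\|\phi\|_4^2,\qquad \phi\in\Smc,
\end{equation*}
with $\kappa$ independent of $\phi$. This follows from the decomposition \eqref{eqdif:splitforprop}, $\lan\Ytil^m(t),\phi\ran=\sum_{k=1}^4\Tmc_k^m(t)$, by bounding each $\Tmc_k^m$ \emph{directly}, i.e.\ \emph{not} through the self-referential forms \eqref{eqdif:T2} and \eqref{eqdif:T4}. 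For $\Tmc_1^m$ one uses Doob's inequality and boundedness of $\alpha$ (as in Lemma~\ref{lemdif:R1}), which in fact produces an extra factor $a^2(m)\to 0$; for $\Tmc_3^m$ one uses Cauchy--Schwarz and \eqref{eqdif:controlbdgeneral}. For $\Tmc_2^m$ and $\Tmc_4^m$ one couples $\Xtilims$ to the independent nonlinear process $\Xbaris$: inserting $\pm\,\phi'(\Xbaris)\,b(\Xbaris,\mu(s))$ (respectively $\pm\,\phi''(\Xbaris)\,\sigma^2(\Xbaris,\mu(s))$) splits the integrand into a coupling error controlled by $|\Xtilims-\Xbaris|$ and $d_{BL}(\mutilms,\mu(s))\le\frac1m\sum_j|\Xtiljms-\Xbarjs|+d_{BL}(\mubarms,\mu(s))$, and a centered fluctuation of the i.i.d.\ family $\{\Xbaris\}$. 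The factor $a(m)\sqrt m$ cancels exactly against Lemma~\ref{lemdif:XtilXbar} (which gives $\Emb\frac1m\sum_i|\Xtil_i^m-\Xbar_i|_{*,T}^2\le\gamma_1/(a^2(m)m)$) and against the standard estimate $\Emb d_{BL}(\mubarms,\mu(s))^2\le\kappa/m$ for i.i.d.\ empirical measures on $\Rmb$, so all these contributions are $O(1)$ in $m$. The binding norm is $\|\phi\|_4$ because controlling $\Tmc_4^m$ requires the Lipschitz norm of $\phi''$, whence $|\phi|_3\le\gamma_0(3)\|\phi\|_4$ via \eqref{eqdif:twonorms}. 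The very same estimates show moreover that, for fixed $\phi$, each of $\Tmc_2^m,\Tmc_3^m,\Tmc_4^m$ is absolutely continuous in $t$ with a density bounded in $L^2([0,T]\times\Omega)$ uniformly in $m$, while $\Tmc_1^m\to 0$ in $\Cmb([0,T]:\Rmb)$ in probability; hence $\{\lan\Ytil^m(\cdot),\phi\ran\}$ is equicontinuous in probability and tight in $\Cmb([0,T]:\Rmb)$ for every $\phi\in\Smc$.

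\smallskip
Next I would fix $v>r>4$, $r\in\Nmb$, so that the embeddings $\Smc_{-4}\hookrightarrow\Smc_{-r}$ and $\Smc_{-r}\hookrightarrow\Smc_{-v}$ are Hilbert--Schmidt, equivalently $\sum_j\|\phi_j^r\|_4^2<\infty$ and $\sum_j\|\phi_j^v\|_r^2<\infty$, where $\{\phi_j^r\}$ and $\{\phi_j^v\}$ are complete orthonormal systems of $\Smc_r$ and $\Smc_v$. Since $\Ytil^m(t)$ is a scaled finite signed measure, hence lies in $\Smc_{-1}\subset\Smc_{-r}$, one has $\|\Ytil^m(t)\|_{-r}^2=\sum_j\lan\Ytil^m(t),\phi_j^r\ran^2$; summing the coordinatewise bound over $j$ (using $\sup_t\sum_j\le\sum_j\sup_t$ and Tonelli) gives the uniform moment estimate $\sup_m\Emb\sup_{t\le T}\|\Ytil^m(t)\|_{-r}^2 \le \kappa\sum_j\|\phi_j^r\|_4^2 <\infty$, which is the compact-containment input. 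Combining this with the coordinatewise tightness of $\{\lan\Ytil^m(\cdot),\phi\ran\}$ for all $\phi\in\Smc$ and the compactness of the embedding $\Smc_{-r}\hookrightarrow\Smc_{-v}$, the standard Mitoma-type tightness criterion for nuclear-space-valued processes yields that $\{\Ytil^m\}$ is tight in $\Cmb([0,T]:\Smc_{-v})$. Joint tightness of $\{(\Ytil^m,\nutil^m)\}$ in the product space is then automatic, since each marginal sequence is tight.

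\smallskip
The point I expect to be most delicate is the coordinatewise $L^2$ bound for $\Tmc_2^m$ and $\Tmc_4^m$. The ``clean'' representations \eqref{eqdif:T2}, \eqref{eqdif:T4} express these as $\int_0^t\lan\Ytil^m(s),L(s)\phi\ran\,ds$, which is ideal for identifying the limit but useless here: the operator $L(s)$ loses two derivatives and, under Condition~\ref{conddif:diffusion1} alone, need not map $\Smc_q$ into itself, so $\|L(s)\phi_j^r\|$ is uncontrolled when $\phi_j^r$ ranges over a complete orthonormal system and the estimate cannot be summed. The resolution is to estimate $\Tmc_2^m,\Tmc_4^m$ directly through the coupling with $\{\Xbar_i\}$, which never reintroduces $\Ytil^m$ on the right-hand side and keeps the test-function dependence at the level of $\|\phi\|_4$ --- exactly what the Hilbert--Schmidt summation requires.
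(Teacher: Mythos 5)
Your proposal is correct and is essentially the paper's proof: the same tightness function $g(\nu)=\int(x^2+y^2)\,d\nu$ handles $\nutil^m$, the same coupling with $\{\Xbar_i\}$ via Lemma~\ref{lemdif:XtilXbar} together with i.i.d.\ fluctuation estimates produce the uniform $\|\phi\|_4$ bound for $\sup_t|\lan\Ytil^m(t),\phi\ran|$ and the equicontinuity of the coordinate processes, and the Mitoma criterion (via the Hilbert--Schmidt embeddings $\Smc_{-4}\hookrightarrow\Smc_{-r}\hookrightarrow\Smc_{-v}$) then gives tightness in $\Cmb([0,T]:\Smc_{-v})$. One small correction to your last paragraph: the forms \eqref{eqdif:T2} and \eqref{eqdif:T4} are not actually useless for tightness --- the paper does retain the $L(s)\phi$ expression in its $A^m$ term, but pairs $L(s)\phi$ with $\Ytil^m(s)=a(m)\sqrt{m}\bigl[(\mutilms-\mubarms)+(\mubarms-\mu(s))\bigr]$, a scaled finite signed measure, so only the uniform bound $\sup_s|L(s)\phi|_1\le\kappa|\phi|_3$ (which holds under Condition~\ref{conddif:diffusion1} alone, without $L(s)$ mapping any $\Smc_q$ to itself) is ever used, which is precisely the ``direct'' estimate you carry out.
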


\begin{proof}
	We first argue the tightness of $\Ytil^m$.
	For this, we will make use of Proposition~\ref{propdif:representation}.
	Let for $t \in [0,T]$ and $\phi \in \Smc$, 
	\begin{align*}
		A^m(t) & \doteq \int_0^t \left| \lan \Ytil^m(s), L(s)\phi \ran \right| ds \\
		& = \int_0^t a(m)\sqrt{m} \left| \lan \mutilms - \mubarms, L(s)\phi \ran + \lan \mubarms - \mu(s), L(s)\phi \ran \right| ds, \\
		B^m(t) & \doteq \int_{\Rmb^2 \times [0,t]} \left| \phi'(x) \sigma(x,  \mu(s)) y \right| \nutil^m(dy\, dx\, ds) \\
		& = \frac{1}{m} \sum_{i=1}^m \int_0^t \left| \phi'(\Xtilims) \sigma(\Xtilims,\mu(s)) \util_i^m(s) \right| ds.
	\end{align*}
	Then for $t_1, t_2 \in [0,T]$,
	\begin{align*}
		& |A^m(t_2) - A^m(t_1)|^2 \\
		& \quad \le 2 a^2(m)m |t_2 - t_1| \int_0^T \left( \lan \mutilms - \mubarms, L(s)\phi \ran^2 + \lan \mubarms - \mu(s), L(s)\phi \ran^2 \right) ds.
	\end{align*}
	Note that
	\begin{align*}
%		& \Emb |A^m(t_2) - A^m(t_1)|^2 \\
		& a^2(m)m \Emb \int_0^T \left( \lan \mutilms - \mubarms, L(s)\phi \ran^2 + \lan \mubarms - \mu(s), L(s)\phi \ran^2 \right) ds \\
		& \quad \le \kappa_1 a^2(m)m \int_0^T \left( |L(s)\phi|_1^2 \Emb \frac{1}{m} \sum_{i=1}^m |\Xtilims - \Xbaris|^2 + \frac{|L(s)\phi|_0^2}{m} \right) ds \\
		& \quad \le \kappa_2 |\phi|_3^2 \left( 1 + a^2(m) \right) \\
		& \quad \le \kappa_3 \|\phi\|_4^2,
	\end{align*}
	where the second inequality uses Lemma~\ref{lemdif:XtilXbar} and the inequality $\sup_{0 \le s \le T} |L(s)\phi|_1 \le \kappa_4 |\phi|_3$.
	This proves the tightness of $A^m$ in $\Cmb([0,T]:\Rmb)$.
	Also for $t_1, t_2 \in [0,T]$
	\begin{equation*}
		|B^m(t_2) - B^m(t_1)|^2 \le \kappa_5 |t_2 - t_1| |\phi|_1^2 \frac{1}{m} \sum_{i=1}^m \int_0^T |\util_i^m(s)|^2 \, ds.
	\end{equation*}
	Combining this with \eqref{eqdif:controlbdgeneral} we now have the tightness of $B^m$ in $\Cmb([0,T]:\Rmb)$.
%	\begin{equation*}
%		\Emb |B^m(t_2) - B^m(t_1)|^2 \le \kappa_5 |t_2 - t_1| |\phi|_1^2 \Emb \frac{1}{m} \sum_{i=1}^m \int_0^T |\util_i^m(s)|^2 \, ds \le \kappa_6 |t_2 - t_1| \|\phi\|_2^2.
%	\end{equation*}
%	From above fluctuation estimates we see that $A^m$ and $B^m$ are tight in $\Cmb([0,T]:\Rmb)$.
	Also from Proposition~\ref{propdif:representation} we have that $\Rmc^m \Rightarrow 0$ in $\Cmb([0,T]:\Rmb)$.
	The tightness of $t \mapsto \lan \Ytil^m(t),\phi \ran$ in $\Cmb([0,T]:\Rmb)$, for each $\phi \in \Smc$, is now immediate.
	From the above estimates on $A^m$, $B^m$ and Proposition~\ref{propdif:representation} we have that, for all $\phi \in \Smc$,
	\begin{equation*}
		\sup_{m \in \Nmb} \Emb \sup_{0 \le t \le T} \left| \lan \Ytil^m(t), \phi \ran \right| \le \kappa_6 \|\phi\|_4.
	\end{equation*}
	This shows that for any $\eps_1 > 0$ and $\eps_2 > 0$, there is a $\delta > 0$ such that
	\begin{equation*}
		\Pmb \left( \sup_{0 \le t \le T} \left| \lan \Ytil^m(t), \phi \ran \right| > \eps_1 \right) \le \eps_2 \quad \text{ if } \|\phi\|_4 \le \delta, \quad m \in \Nmb.
	\end{equation*}
	Thus the induced measures $\Pmb \circ (\Ytil^m)^{-1}$ on $\Cmb([0,T]:\Smc')$ are uniformly $4$-continuous in the sense of~\cite{Mitoma1983}; see Remark (R.$1$) on page $997$ there.
	It then follows from the same remark that the sequence $\{\Ytil^m\}$ is tight in $\Cmb([0,T]:\Smc_{-v})$ for some $v > 4$.
	Specifically, one can take any $v > 4$ for which there is an $r \in \Nmb, 4 < r < v$, such that $\sum_{j=1}^\infty \|\phi_j^v\|_r^2 < \infty$ and $\sum_{j=1}^\infty \|\phi_j^r\|_4^2 < \infty$.
	
	Now we argue tightness of $\nutil^m$.
	We claim that 
	\begin{equation} \label{eqdif:g}
		g(\nu) \doteq \int_{\Rmb^2 \times [0,T]} (x^2 + y^2) \, \nu(dy \, dx \, ds), \quad \nu \in \Mmc_T(\Rmb^2 \times [0,T]) 
	\end{equation}
	is a tightness function on $\Mmc_T(\Rmb^2 \times [0,T])$, namely $g$ is bounded from below and has compact level sets.
	The first property is clear.
	To verify the second property take $c \in (0,\infty)$ and let $M_c \doteq \{ \nu \in \Mmc_T(\Rmb^2 \times [0,T]) : g(\nu) \le c \}$.
	By Markov's inequality, for all $M > 0$,
	\begin{equation*}
		\sup_{\nu \in M_c} \nu \left( \left\{ (y,x,t) \in \Rmb^2 \times [0,T] : x^2 + y^2 > M, t \in [0,T] \right\} \right) \le \frac{c}{M}.
	\end{equation*}
	Hence $M_c$ is relatively compact as a subset of $\Mmc_T(\Rmb^2 \times [0,T])$.
	It remains to show that $M_c$ is closed.
	Let $\{\nu_m\} \subset M_c$ be such that $\nu_m \to \nu$ weakly for some $\nu \in \Mmc_T(\Rmb^2 \times [0,T])$.
	Then by Fatou's lemma,
	\begin{equation*}
		g(\nu) \le \liminf_{m \to \infty} g(\nu_m) \le c,
	\end{equation*}
	and consequently $\nu \in M_c$.
	This proves that $M_c$ is compact for every $c > 0$ and thus it follows that $g$ is a tightness function on $\Mmc_T(\Rmb^2 \times [0,T])$.
	Next, from \eqref{eqdif:Xtilbd} and the assumption that \eqref{eqdif:controlbdgeneral} holds,
	\begin{equation*}
		\sup_{m} \Emb |g(\nutil^m)| = \sup_m \Emb \frac{1}{m} \sum_{i=1}^m \int_0^T \left( |\Xtilims|^2 + |\util_i^m(s)|^2 \right) \, ds < \infty.
	\end{equation*}
	Since $g$ is a tightness function, it follows that $\{\nutil^m\}$ is tight.
\end{proof}

%-----------------------------------------------------------------

\subsection{Laplace Upper Bound.} \label{secdif:Lapupperbd}

In this section we will establish under Conditions \ref{conddif:diffusion1} and \ref{conddif:diffusion2} the following Laplace upper bound
\begin{equation}
	\label{eqdif:lapupp}
	\liminf_{m\to \infty}-a^2(m) \log \Emb \exp \left\{-\frac{1}{a^2(m)} F(Y^m)\right\} \ge \inf_{\zeta \in \Cmb([0,T]: \Smc_{-v}) }\{I(\zeta) + F(\zeta)\},
\end{equation}
where $I(\cdot)$ is as defined in \eqref{eqdif:rate function}, $F \in \Cmb_b(\Cmb([0,T]:\Smc_{-\tau}))$, and $\tau \ge v$.
Fix $\eps \in (0,1)$ and using \eqref{eqdif:repn1b} choose for each $m \in \Nmb$ a sequence $\util^m \doteq \{\util^m_i\}_{i=1}^m$ of controls such that
\begin{equation} \label{eqdif:lapupprep}
	-a^2(m) \log \Emb \exp \left\{-\frac{1}{a^2(m)} F(Y^m)\right\} \ge \Emb \left\{ \half \frac{1}{m}\sum_{i=1}^m\int_0^T |\util_i^m(s)|^2 \, ds	+ F(\Ytil^m)\right\} - \eps,
\end{equation}
where $\Ytil^m$ is as introduced in \eqref{eqdif:Ytilm}, with $\Xtil_i^m$ defined in \eqref{eqdif:Xtilim util} and above choice of $\util^m$.  
Since the left side of \eqref{eqdif:lapupprep} is bounded between $-\|F\|_\infty$ and $\|F\|_\infty$, we can assume $\util^m$ are such that
\begin{equation*}
	\Emb \frac{1}{m}\sum_{i=1}^m\int_0^T |\util_i^m(s)|^2 \, ds \le 4 \|F\|_\infty + 2 \doteq C_F.
\end{equation*}
Let $\nutil^m$ be as introduced in \eqref{eqdif:occmzr}.
Then
\begin{equation}
	\label{eqdif:costbd}
	\Emb \int_{\Rmb^2 \times [0,T]} y^2 \ti\nu^m(dy\, dx\, ds) = \Emb \frac{1}{m} \sum_{i=1}^m \int_0^T |\util^m_i(s)|^2 ds \le C_F.
\end{equation}

From Theorem~\ref{thmdif:tightness} it follows that $\{(\Ytil^m,\nutil^m)\}$ is tight in $\Cmb([0,T]:\Smc_{-v}) \times \Mmc_T(\Rmb^2\times [0,T])$.
The following lemma will enable us to characterize its weak limit points.
Recall that we denote $w \doteq v+2$.
%, where $v$ is as in Theorem~\ref{thmdif:tightness}.

\begin{lemma} \label{lemdif:Ls}
	Suppose Condition~\ref{conddif:diffusion2} holds.
	Then for each $n=1,2,\dotsc,w$, there exists $c_n \in (0,\infty)$ such that for all $s \in [0,T]$ and $\phi \in \Smc_{n+2}$, $\|L(s)\phi\|_{n} \le c_n \|\phi\|_{n+2}$.
	If Condition~\ref{conddif:diffusion3} holds then $w$ can be replaced by $\rho+2$.
\end{lemma}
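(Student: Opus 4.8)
The plan is to bound the $\|\cdot\|_n$-norm of each of the four terms in the definition \eqref{eqdif:Ls} of $L(s)\phi$ separately and then combine them by the triangle inequality. Since $\Smc$ is dense in $\Smc_{n+2}$ and all computations are carried out with $\phi\in\Smc$, the resulting inequality $\|L(s)\phi\|_n\le c_n\|\phi\|_{n+2}$ shows at the same time that $L(s)$ extends uniquely to a bounded operator from $\Smc_{n+2}$ to $\Smc_n$, which is the content of the lemma; in particular this justifies the assertion, used after \eqref{eqdif:char}, that $L(s)$ maps $\Smc_w$ into $\Smc_v$. As a preliminary step I would record the elementary consequence of Condition~\ref{conddif:diffusion2}(a) that the functions $x\mapsto b(x,\mu(s))$, $x\mapsto\sigma(x,\mu(s))$ and $x\mapsto\sigma^2(x,\mu(s))$ have $x$-derivatives up to order $w$ that are bounded uniformly in $s\in[0,T]$: differentiating $\sigma(x,\mu(s))=\intR\alpha(x,y)\,\mu_s(dy)$ and $b(x,\mu(s))=\intR\beta(x,y)\,\mu_s(dy)$ under the integral sign gives $|b(\cdot,\mu(s))|_w\le\sup_y|\beta(\cdot,y)|_w$ and $|\sigma(\cdot,\mu(s))|_w\le\sup_y|\alpha(\cdot,y)|_w$, and $\sigma^2$ inherits the same bound since $\sigma$ is itself bounded.

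For the two local terms $\phi'(x)\,b(x,\mu(s))$ and $\half\phi''(x)\,\sigma^2(x,\mu(s))$ I would apply the Leibniz rule to the $k$-th derivative for $k\le n\le w$: this produces finitely many terms of the form $\phi^{(\ell)}(x)\,g^{(k-j)}(x)$ with $\ell\le n+2$, $k-j\le n\le w$ and $g\in\{b(\cdot,\mu(s)),\sigma^2(\cdot,\mu(s))\}$, and since $|g|_w<\infty$ and $\intR(1+x^2)^{2n}\big(\phi^{(\ell)}(x)\big)^2\,dx\le\|\phi\|_{n+2}^2$ whenever $\ell\le n+2$, each such term has $\|\cdot\|_n$-norm at most a constant multiple of $\|\phi\|_{n+2}$. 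This is exactly where the loss of two derivatives occurs (from the $\phi''$ in the second term).

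For the two nonlocal terms $\intR\phi'(y)\beta(y,x)\,\mu_s(dy)$ and $\intR\phi''(y)\sigma(y,\mu(s))\alpha(y,x)\,\mu_s(dy)$, which is where Condition~\ref{conddif:diffusion2}(b) enters, I would differentiate $k\le n$ times in $x$, move the derivative inside the integral, and use $\|\phi'\|_\infty\le|\phi|_1\le\gamma_0(1)\|\phi\|_2$, $\|\phi''\|_\infty\le|\phi|_2\le\gamma_0(2)\|\phi\|_3$ (via \eqref{eqdif:twonorms}) together with Jensen's inequality in the probability measure $\mu_s$ to bound the $k$-th $x$-derivative pointwise by a constant times $\big(\intR|\partial_x^k\beta(y,x)|^2\,\mu_s(dy)\big)^{1/2}$, respectively the analogue with $\alpha$ (also using $\|\sigma(\cdot,\mu(s))\|_\infty\le\|\alpha\|_\infty$). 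Multiplying by $(1+x^2)^{2n}$, integrating in $x$ and applying Fubini then yields $\intR\big(\intR(1+x^2)^{2n}|\partial_x^k\beta(y,x)|^2\,dx\big)\,\mu_s(dy)\le\sup_y\|\beta(y,\cdot)\|_w^2$ since $n\le w$, and similarly for the $\alpha$-term; hence both nonlocal terms have $\|\cdot\|_n$-norm at most a constant multiple of $\|\phi\|_3\le\|\phi\|_{n+2}$ (this last comparison is what forces $n\ge1$, consistent with the range stated in the lemma). Summing the four estimates gives the claim with $c_n$ depending only on the seminorm constants in Condition~\ref{conddif:diffusion2} and on $\gamma_0(1),\gamma_0(2)$. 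The final assertion is then immediate, since Condition~\ref{conddif:diffusion3} is precisely Condition~\ref{conddif:diffusion2} with $w$ replaced by $\rho+2$, so the same argument applies verbatim for every $n\in\{1,\dots,\rho+2\}$. I expect the only real bookkeeping to be the careful matching of the polynomial weights $(1+x^2)^{2n}$ across the norms $\|\cdot\|_n$, $\|\cdot\|_{n+2}$ and $\|\cdot\|_w$ when invoking Leibniz and Fubini; there is no analytic subtlety beyond differentiation under the integral sign, Jensen's inequality, and the monotonicity $\|\phi\|_j\le\|\phi\|_{j'}$ for $j\le j'$.
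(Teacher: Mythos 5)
Your proposal is correct and follows essentially the same route as the paper's proof: both bound each of the four terms in \eqref{eqdif:Ls} separately, using the Leibniz rule together with Condition~\ref{conddif:diffusion2}(a) for the two local terms (yielding bounds in terms of $\|\phi\|_{n+1}$ and $\|\phi\|_{n+2}$), and differentiation under the integral sign, Jensen's inequality in $\mu_s$, and Condition~\ref{conddif:diffusion2}(b) together with $|\phi|_1\le\gamma_0(1)\|\phi\|_2$, $|\phi|_2\le\gamma_0(2)\|\phi\|_3$ for the two nonlocal terms (yielding $\|\phi\|_2$ and $\|\phi\|_3$). The final observation that the argument applies verbatim with $w$ replaced by $\rho+2$ under Condition~\ref{conddif:diffusion3} is also exactly how the paper handles the second assertion.
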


\begin{proof}
	Note that from Condition~\ref{conddif:diffusion2}(a), for $\phi \in \Smc$,
	\begin{align*}
		\left\|\phi'(\cdot) b(\cdot,\mu(s))\right\|_n^2 & = \sum_{0 \le k \le n} \intR (1+x^2)^{2n} \left( \left[ \phi'(x)b(x,\mu(s)) \right]^{(k)} \right)^2 \, dx \\
		& \le \kappa_1 \sum_{0 \le k \le n} \intR (1+x^2)^{2n+2} |\phi^{(k+1)}(x)|^2 \, dx \\
		& \le \kappa_2 \|\phi\|_{n+1}^2.	
	\end{align*}
	Similarly,
	\begin{equation*}
		\left\|\half \phi''(\cdot) \sigma^2(\cdot,\mu(s))\right\|_n^2 \le \kappa_3 \|\phi\|_{n+2}^2.
	\end{equation*}
	Also using Condition~\ref{conddif:diffusion2}(b),
	\begin{align*}
		\left\|\intR \phi'(y) \beta(y,\cdot) \, \mu_s(dy)\right\|_n^2 & = \sum_{0 \le k \le n} \intR (1+x^2)^{2n} \left( \left[ \intR \phi'(y) \beta(y,x) \, \mu_s(dy) \right]^{(k)} \right)^2 \, dx \\
		& \le \intR \|\beta(y,\cdot)\|_n^2 |\phi|_1^2 \, \mu_s(dy) \\
		& \le \kappa_4 \|\phi\|_2^2,
	\end{align*}
	and similarly
	\begin{equation*}
		\left\|\intR \phi''(y) a(y,\mu(s)) \alpha(y,\cdot) \, \mu_s(dy)\right\|_n^2 \le \kappa_5 \|\phi\|_3^2.
	\end{equation*}
	The result follows on combining the above estimates.
	Proof of the second statement in the lemma is similar and hence omitted.
\end{proof}

We can now establish the following characterization of the weak limit points of $\{(\Ytil^m,\nutil^m)\}$.

\begin{theorem}
	\label{thmdif:maincharac}
	Suppose Condition~\ref{conddif:diffusion2} holds, the sequence of controls satisfies \eqref{eqdif:controlbdgeneral},	and $\{(\Ytil^m, \nutil^m)\}$ converges weakly along a subsequence to $(\Ytil, \nutil)$ in $\Cmb([0,T]:\Smc_{-v}) \times \Mmc_T(\Rmb^2\times [0,T])$.
	Then $\nutil \in \Pmc_{\infty}$ a.s. and $\Ytil$ solves the following equation a.s.:
	For all $\phi \in \Smc$,
	\begin{equation} \label{eqdif:maincharac}
		\lan \Ytil(t), \phi \ran = \int_0^t \lan \Ytil(s), L(s)\phi \ran \, ds	
		+ \int_{\Rmb^2 \times [0,t]} \phi'(x) \sigma(x,\mu(s)) y \, \nutil(dy\, dx\, ds).
	\end{equation}
\end{theorem}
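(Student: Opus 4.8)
The plan is to pass to the weak limit, along the given subsequence, in the a priori identity of Proposition~\ref{propdif:representation}, using the tightness from Theorem~\ref{thmdif:tightness} together with the uniform cost bound furnished by \eqref{eqdif:controlbdgeneral}: by \eqref{eqdif:occmzr}, $C \doteq \sup_m \Emb \int_{\Rmb^2\times[0,T]} y^2 \, \nutil^m(dy\,dx\,ds) = \sup_m \Emb \frac{1}{m}\sum_{i=1}^m\int_0^T |\util_i^m(s)|^2\,ds < \infty$. Throughout write $g(y,x,s) \doteq \phi'(x)\sigma(x,\mu(s))y$; since $\alpha$ is bounded, $|g(y,x,s)| \le c_\phi |y|$ with $c_\phi \doteq \|\phi'\|_\infty\|\alpha\|_\infty \le \kappa\|\phi\|_w$ by \eqref{eqdif:twonorms}. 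Two things must be shown: that $\nutil\in\Pmc_\infty$ a.s.\ and that $\Ytil$ solves \eqref{eqdif:maincharac} a.s.

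First I would show $\nutil\in\Pmc_\infty$ a.s. For the moment condition, $\nu\mapsto\int(y^2\wedge K)\,d\nu$ is bounded and continuous on $\Mmc_T(\Rmb^2\times[0,T])$, so $\Emb\int(y^2\wedge K)\,\nutil = \lim_m\Emb\int(y^2\wedge K)\,\nutil^m \le C$; letting $K\to\infty$ gives $\Emb\int y^2\,\nutil<\infty$, hence $\int y^2\,\nutil<\infty$ a.s. For the marginal condition $\nutil_{(2,3)}=\nubar$, note $\nutil^m_{(2,3)}(A\times[0,t]) = \int_0^t\mutilms(A)\,ds$; by Lemma~\ref{lemdif:XtilXbar}, $\Emb\sup_{s\le T}d_{BL}(\mutilms,\mubarms)^2 \le \Emb\frac1m\sum_i|\Xtil_i^m-\Xbar_i|_{*,T}^2 \to 0$, and by the law of large numbers for the i.i.d.\ system \eqref{eqdif:Xbari}, $\mubar^m\to\mu$ in $\Cmb([0,T]:\Pmc(\Rmb))$ a.s., so $\mutil^m\to\mu$ in $\Cmb([0,T]:\Pmc(\Rmb))$ in probability. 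Hence for bounded continuous $\psi$ on $\Rmb\times[0,T]$, with $\tilde\psi(y,x,s)\doteq\psi(x,s)$, one has $\int\tilde\psi\,d\nutil^m = \int_0^T\langle\mutilms,\psi(\cdot,s)\rangle\,ds \to \int_0^T\langle\mu(s),\psi(\cdot,s)\rangle\,ds = \int\tilde\psi\,d\nubar$ in probability, while $\int\tilde\psi\,d\nutil^m\Rightarrow\int\tilde\psi\,d\nutil$; so $\int\tilde\psi\,d\nutil = \int\tilde\psi\,d\nubar$ a.s., and running $\psi$ over a countable determining class gives $\nutil_{(2,3)}=\nubar$ a.s.

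Next I would fix $\phi\in\Smc$ and $t\in[0,T]$ and pass to the limit in \eqref{eqdif:mainprelim}. Define $\Phi(\zeta,\nu)\doteq\langle\zeta(t),\phi\rangle - \int_0^t\langle\zeta(s),L(s)\phi\rangle\,ds - \int_{\Rmb^2\times[0,t]}g\,d\nu$ on $\Cmb([0,T]:\Smc_{-v})\times\Mmc_T(\Rmb^2\times[0,T])$; by Proposition~\ref{propdif:representation}, $\Phi(\Ytil^m,\nutil^m) = \Rmc^m(t)$ with $\Emb|\Rmc^m(t)|\to0$. Decompose $\Phi=\Phi^K+\Psi^K$, where $\Phi^K$ replaces $g$ by its truncation $g_K(y,x,s)\doteq\phi'(x)\sigma(x,\mu(s))\big((-K)\vee y\wedge K\big)$ and $\Psi^K(\zeta,\nu)\doteq-\int_{\Rmb^2\times[0,t]}(g-g_K)\,d\nu$. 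Since $\alpha\in\Cmb_b^2$, the map $(x,\theta)\mapsto\sigma(x,\theta)=\int\alpha(x,\cdot)\,d\theta$ is jointly continuous, so $g_K$ is bounded continuous; as every $\nu\in\Mmc_T(\Rmb^2\times[0,T])$ assigns mass $0$ to $\Rmb^2\times\{t\}$, and (Lemma~\ref{lemdif:Ls} with $n=v$) $\|L(s)\phi\|_v\le c_v\|\phi\|_w$ uniformly in $s$, the functional $\Phi^K$ is continuous, whence $\Phi^K(\Ytil^m,\nutil^m)\Rightarrow\Phi^K(\Ytil,\nutil)$. Moreover $|\Psi^K(\zeta,\nu)|\le c_\phi K^{-1}\int y^2\,d\nu$, so $\Emb|\Psi^K(\Ytil^m,\nutil^m)|\le c_\phi C/K$ for all $m$ and, by the previous step, $\Emb|\Psi^K(\Ytil,\nutil)|\le c_\phi C/K$. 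For bounded Lipschitz $f$ this yields $|\Emb f(\Phi(\Ytil^m,\nutil^m))-\Emb f(\Phi(\Ytil,\nutil))| \le 2\|f\|_L c_\phi C/K + o_m(1)$, and sending $m\to\infty$ then $K\to\infty$ gives $\Phi(\Ytil^m,\nutil^m)\Rightarrow\Phi(\Ytil,\nutil)$; since also $\Rmc^m(t)\to0$ in probability, $\Phi(\Ytil,\nutil)=0$ a.s., i.e.\ \eqref{eqdif:maincharac} holds for this fixed $\phi,t$. Finally, both sides of \eqref{eqdif:maincharac} are continuous in $t$ (the left side as $\Ytil\in\Cmb([0,T]:\Smc_{-v})$, the right side by dominated convergence using $\int y^2\,\nutil<\infty$ and $\nutil(\Rmb^2\times\{t\})=0$), so the identity holds a.s.\ for all $t$; and all three terms depend continuously on $\phi$ in $\|\cdot\|_w$ (the first via $|\langle\Ytil(t),\phi\rangle|\le\|\Ytil(t)\|_{-v}\|\phi\|_w$, the second via Lemma~\ref{lemdif:Ls}, the third via $|\int g\,d\nutil|\le c_\phi\int|y|\,\nutil\le\kappa\|\phi\|_w\int|y|\,\nutil$), so by separability of $\Smc$ it extends from a countable $\|\cdot\|_w$-dense subset to all of $\Smc$.

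The main obstacle is the last term of \eqref{eqdif:mainprelim}: its integrand $g$ is unbounded (linear in the control variable $y$), so weak convergence of $\nutil^m$ does not by itself control $\int g\,d\nutil^m$, and the truncation argument above — in particular the uniform integrability coming from the cost bound \eqref{eqdif:controlbdgeneral} — is exactly what makes the limit passage go through. A secondary technical point, the joint continuity of $(x,\theta)\mapsto\sigma(x,\theta)$ and the handling of the time-restricted integral $\int_{\Rmb^2\times[0,t]}$, is routine given that $\alpha,\beta$ are bounded Lipschitz and that $\nutil\in\Mmc_T$.
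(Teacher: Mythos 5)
Your proposal is correct and follows essentially the same route as the paper's: identify the $(2,3)$-marginal $\nutil_{(2,3)}=\nubar$ via Lemma~\ref{lemdif:XtilXbar} and the LLN for $\{\Xbar_i\}$, control $\int y^2\,d\nutil$ from the cost bound \eqref{eqdif:controlbdgeneral}, and pass to the limit in \eqref{eqdif:mainprelim} by truncating the linear-in-$y$ integrand and using the cost bound for uniform integrability of the tail. The only real stylistic differences are that the paper applies Skorokhod representation and handles the three terms separately whereas you encode the passage to the limit via joint continuity of the truncated functional $\Phi^K$ (both work), and that your assertion that $\mubar^m\to\mu$ a.s.\ in $\Cmb([0,T]:\Pmc(\Rmb))$ is a stronger (though true) statement than what is actually used --- the paper avoids it by testing the marginal against a fixed $f\in\Cmb_b$ and using the elementary $L^2$ bound $\Emb\bigl|\lan\nubar^m-\nubar,f\ran\bigr|^2\le\kappa/m$.
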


\begin{proof}
	We assume without loss of generality that $(\Ytil^m, \nutil^m) \to (\Ytil, \nutil)$ weakly along the full sequence.
	We first verify $\nutil \in \Pmc_\infty$.
	Let $\nubar^m \in \Mmc_T(\Rmb \times [0,T])$ be defined as
	\begin{equation*}
		\nubar^m(B \times [0,t]) \doteq \int_0^t \frac{1}{m} \sum_{i=1}^m \delta_{\Xbaris}(B) \, ds, \quad B \in \Bmc(\Rmb), t \in [0,T].
	\end{equation*}
	It follows from \eqref{eqdif:XtilXbar} that
	\begin{align*}
		\Emb d_{BL}^2(\nutil_{(2,3)}^m,\nubar^m) & = \Emb \sup_{\|f\|_{BL} \le 1} \left| \lan \nutil_{(2,3)}^m,f \ran - \lan \nubar^m,f \ran \right|^2 \\
		& = \Emb \sup_{\|f\|_{BL} \le 1} \left| \int_0^T \frac{1}{m} \sum_{i=1}^m f(\Xtilims,s) \, ds - \int_0^T \frac{1}{m} \sum_{i=1}^m f(\Xbaris,s) \, ds \right|^2 \\
		& \le \Emb \frac{T}{m} \sum_{i=1}^m \int_0^T |\Xtilims - \Xbaris|^2 \, ds \\
		& \le \frac{\kappa_1}{a^2(m)m} \to 0.
	\end{align*}
	Hence $d_{BL}(\nutil_{(2,3)}^m,\nubar^m) \to 0$ in probability.
	Also for each $f \in \Cmb_b(\Rmb \times [0,T])$, we have, with $\nubar$ as in \eqref{eqdif:nubar},
	\begin{align*}
		\Emb \left| \lan \nubar^m,f \ran - \lan \nubar,f \ran \right|^2 & = \Emb \left| \int_0^T \frac{1}{m} \sum_{i=1}^m f(\Xbaris,s) \, ds - \int_0^T \lan f(\cdot,s),\mu(s) \ran \, ds \right|^2 \\
		& \le \int_0^T \Emb \left( \frac{1}{m} \sum_{i=1}^m \left( f(\Xbaris,s) - \lan f(\cdot,s),\mu(s) \ran \right) \right)^2 ds \to 0.
	\end{align*}
	Combining the above two convergence properties with the fact that $\nutil_{(2,3)}^m \to \nutil_{(2,3)}$ weakly implies that $\nutil_{(2,3)} = \nubar$ a.s.
	Furthermore, it follows from Fatou's lemma and \eqref{eqdif:costbd} that
	\begin{equation*}
		\Emb \int_{\Rmb^2 \times [0,T]} y^2 \, \nutil(dy\,dx\,ds) \le \liminf_{m \to \infty} \Emb \int_{\Rmb^2 \times [0,T]} y^2 \, \nutil^m(dy\,dx\,ds) \le C_F.
%		& = \liminf_{m \to \infty} \Emb \frac{1}{m} \sum_{i=1}^m \int_0^T |\util_i^m(s)|^2 \, ds < \infty.
	\end{equation*}
	Thus we have shown that $\nutil \in \Pmc_\infty$ a.s.
		
	Now we argue that $\Ytil$ solves \eqref{eqdif:maincharac} a.s.
	Using Skorokhod's representation theorem, we can assume that $(\Ytil^m, \nutil^m) \to (\Ytil, \nutil)$ a.s.\ in $\Cmb([0,T]:\Smc_{-v}) \times \Mmc_T(\Rmb^2\times [0,T])$.
%	Since $\Ytil^m \to \Ytil$ in $\Cmb([0,T]:\Smc_{-v})$, we have 
	Then for each $\phi \in \Smc_v$,
	\begin{equation} \label{eqdif:limcvg1}
		\lan \Ytil^m(t),\phi \ran \to \lan \Ytil(t),\phi \ran, \quad \forall t \in [0,T].
	\end{equation}
	It follows from Lemma~\ref{lemdif:Ls} that for each $\phi \in \Smc_w$, 
	\begin{align*}
		\sup_{m \in \Nmb} \sup_{s \in [0,T]} |\lan \Ytil^m(s),L(s)\phi \ran| & \le \sup_{m \in \Nmb} \sup_{s \in [0,T]} \| \Ytil^m(s) \|_{-v} \| L(s)\phi \|_v \\
		& \le \kappa_2 \sup_{m \in \Nmb} \sup_{s \in [0,T]} \| \Ytil^m(s) \|_{-v} \| \phi \|_w < \infty,
	\end{align*}
	and hence by bounded convergence theorem, for every $t \in [0,T]$,
	\begin{equation} \label{eqdif:limcvg2}
		\int_0^t \lan \Ytil^m(s),L(s)\phi \ran \, ds \to \int_0^t \lan \Ytil(s),L(s)\phi \ran \, ds.
	\end{equation}
	In view of \eqref{eqdif:limcvg1}, \eqref{eqdif:limcvg2} and Proposition~\ref{propdif:representation}, to finish the proof, it suffices to show for each $\phi \in \Smc$,
	\begin{equation} \label{eqdif:limcvg3}
		\int_{\Rmb^2 \times [0,t]} \phi'(x) \sigma(x,\mu(s)) y \, \nutil^m(dy\, dx\, ds) \to \int_{\Rmb^2 \times [0,t]} \phi'(x) \sigma(x,\mu(s)) y \, \nutil(dy\, dx\, ds)
	\end{equation}
	in probability.
	To verify above convergence, first note that by convergence of $\nutil^m$ to $\nutil$, for each $K \in (0,\infty)$,
	\begin{equation*}
		\int_{\Rmb^2 \times [0,t]} \phi'(x) \sigma(x,\mu(s)) h_K(y) \, \nutil^m(dy\, dx\, ds) \to \int_{\Rmb^2 \times [0,t]} \phi'(x) \sigma(x,\mu(s)) h_K(y) \, \nutil(dy\, dx\, ds)
	\end{equation*}
	a.s., as $m \to \infty$, where $h_K(y) \doteq y \one_{\{|y| \le K \}} + K \one_{\{y > K \}} -K \one_{\{y < -K \}}$. 
	Also it follows from \eqref{eqdif:costbd} that
	\begin{align*}
		& \sup_m \Emb \left| \int_{\Rmb^2 \times [0,t]} \phi'(x) \sigma(x,\mu(s)) (y-h_K(y)) \, \nutil^m(dy\, dx\, ds) \right| \\
		& \qquad \le |\phi|_1 \|\alpha\|_\infty \sup_m \Emb \int_{\Rmb^2 \times [0,T]} \frac{y^2}{K} \, \nutil^m(dy\, dx\, ds) \\
		& \qquad \le \frac{|\phi|_1 \|\alpha\|_\infty C_F}{K} \to 0
	\end{align*}
	as $K \to \infty$. 
	Similarly, using Fatou's lemma,
	\begin{equation*}
		\Emb \left| \int_{\Rmb^2 \times [0,t]} \phi'(x) \sigma(x,\mu(s)) (y-h_K(y)) \, \nutil(dy\, dx\, ds) \right| \to 0 \quad \text{ as } K \to \infty.
	\end{equation*}
	Combining the above convergence properties we have \eqref{eqdif:limcvg3}, which completes the proof.
\end{proof}

We can now complete the proof of the Laplace upper bound under  Conditions \ref{conddif:diffusion1} and \ref{conddif:diffusion2}.

\noindent \textbf{Proof of the Laplace upper bound:}
Recall that $\{(\Ytil^m,\nutil^m)\}$ is tight.
By a standard subsequential argument we can assume without loss of generality that $(\Ytil^m, \nutil^m)$ converges in distribution to a limit $(\Ytil, \nutil)$ in $\Cmb([0,T]:\Smc_{-v}) \times \Mmc_T(\Rmb^2 \times [0,T])$.  
It follows from Theorem~\ref{thmdif:maincharac} that $\nutil \in \Tmc(\Ytil)$ a.s. 
Also, from \eqref{eqdif:lapupprep}
\begin{align*}
	& \liminf_{m\to \infty}-a^2(m) \log \Emb \exp \left\{-\frac{1}{a^2(m)} F(Y^m)\right\} \\
	& \quad \ge \liminf_{m\to \infty} \Emb\left[\half\int_{\Rmb^2 \times [0,T]} y^2 \, \nutil^m(dy\, dx\, ds) + F(\Ytil^m) \right] - \eps \\
	& \quad \ge \Emb\left [\half\int_{\Rmb^2 \times [0,T]} y^2 \, \nutil(dy\, dx\, ds) + F(\Ytil) \right] - \eps\\
	& \quad \ge \inf_{\zeta \in \Cmb([0,T]: \Smc_{-v})} \left[\inf_{\nu \in \Tmc(\zeta)} \left\{ \half\int_{\Rmb^2 \times [0,T]} y^2 \, \nu(dy\, dx\, ds) \right\} + F(\zeta) \right] - \eps\\
	& \quad = \inf_{\zeta \in \Cmb([0,T]: \Smc_{-v})}\{I(\zeta) + F(\zeta)\} -\eps,
\end{align*}
where the second inequality uses Fatou's lemma and weak convergence of $(\Ytil^m,\nutil^m)$ to $(\Ytil,\nutil)$ in $\Cmb([0,T]:\Smc_{-v}) \times \Mmc_T(\Rmb^2 \times [0,T])$.
Since $\eps>0$ is arbitrary, the desired Laplace upper bound follows.
\qed

%----------------------------------------------------------------------

\subsection{Laplace Lower bound.} \label{secdif:laplowerbd}

In this section we show that under Conditions \ref{conddif:diffusion1} and \ref{conddif:diffusion2}, for every $\tau \ge v$ and $F \in \Cmb_b(\Cmb([0,T]:\Smc_{-\tau}))$,
\begin{equation}
	\label{eqdif:laplow}
	\limsup_{m\to \infty}-a^2(m) \log \Emb \exp \left\{-\frac{1}{a^2(m)} F(Y^m)\right\} \le \inf_{\zeta \in \Cmb([0,T]: \Smc_{-v}) }\{I(\zeta) + F(\zeta)\}.
\end{equation}
Fix $\eps \in (0,1)$. Let $\zeta^*\in \Cmb([0,T]: \Smc_{-v})$ be such that
\begin{equation*}
	I(\zeta^*) + F(\zeta^*) \le \inf_{\zeta \in \Cmb([0,T]: \Smc_{-v}) }\{I(\zeta) + F(\zeta)\} + \eps.
%	 = \inf_{\zeta \in \Cmb([0,T]: \Smc_{-\tau}) }\{I(\zeta) + F(\zeta)\} + \eps.
\end{equation*}
Recalling the definition of $I$ in \eqref{eqdif:rate function}, choose $\nu^* \in \Tmc(\zeta^*)$ such that
\begin{equation*}
	\half\int_{\Rmb^2 \times [0,T]} y^2 \, \nu^*(dy\, dx\, ds) \le I(\zeta^*) + \eps.
\end{equation*}
Recalling that $\nu_{(2,3)}^* = \nubar$, we can disintegrate $\nu^*$ as
\begin{equation*} 
%\label{eqdif:nu*}
	\nu^*(A \times B \times [0,t]) = \int_{B \times [0,t]} \vartheta(s,x,A) \, \mu_s(dx) \, ds, \quad A, B \in \Bmc(\Rmb), \quad t \in [0,T],
\end{equation*}
for some $\vartheta : [0,T] \times \Rmb \times \Bmc(\Rmb) \to [0,1]$ such that for each $A \in \Bmc(\Rmb)$, $\vartheta(\cdot,\cdot,A)$ is a measurable map and for each $(s,x) \in [0,T] \times \Rmb$, $\vartheta(s,x,\cdot) \in \Pmc(\Rmb)$.
Define 
\begin{equation*}
	u(s,x) \doteq \int_{\Rmb} y \, \vartheta(s,x,dy), \quad (s,x) \in [0,T]\times\Rmb.
\end{equation*}
Note that this is finite ($\mu_s(dx)ds$-a.e.) since 
\begin{equation} \label{eqdif:ubdlower}
	\int_{\Rmb\times [0,T]} \left(\int_{\Rmb} |y| \, \vartheta(s,x, dy)\right)^2 \, \mu_s(dx) \, ds \le \int_{\Rmb^2\times [0,T]} y^2 \, \nu^*(dy\, dx\, ds) < \infty.
\end{equation}
Recall the sequence $\{\Xbar_i\}$ defined through \eqref{eqdif:Xbari} in terms of an i.i.d.\ sequence of real Brownian motions $\{W_i\}$ on the filtered probability space $(\Omega,\Fmc,\Pmb,\{\Fmc_t\})$.
Using the same sequence of Brownian motions, let $\{\ti X^m_i\}$ be the solution of the system of SDE in \eqref{eqdif:Xtilim util},
%\begin{align} \label{eqdif:Xtillower}
%	\begin{aligned}
%		\Xtilimt &= x_0 + \int_0^t \sigma(\Xtilims, \mutilms) \, dW_i(s) + \int_0^t b(\Xtilims, \mutilms) \, ds\\
%		&\quad + \frac{1}{a(m)\sqrt{m}}\int_0^t \sigma(\Xtilims, \mutilms) \util^m_i(s) \, ds, \quad i = 1, \dotsc, m,
%	\end{aligned}
%\end{align}
where
\begin{equation*}
	\mutilmt \doteq \frac{1}{m}\sum_{i=1}^m \delta_{\Xtilimt} , \quad \util^m_i(s) \doteq u(s, \Xbaris).
\end{equation*}
It follows from \eqref{eqdif:ubdlower} that
\begin{equation} \label{eqdif:controlbdlower}
	\Emb \frac{1}{m} \sum_{i=1}^m\int_0^T |\util^m_i(s)|^2 \, ds =  \int_{\Rmb\times [0,T]} |u(s,x)|^2 \, \mu_s(dx) \, ds \le \int_{\Rmb^2\times [0,T]} y^2 \, \nu^*(dy\, dx\, ds) < \infty.
\end{equation}
We note that the controls $\util^m$ are defined using the given processes $\{\Xbar_i\}$ and hence $\{\Fmc_t\}$-progressively measurable.
In particular, $\{\util_i^m\}_{i=1}^m$ is a controlled sequence of the form on which infimum is taken in \eqref{eqdif:repn1b}.
Consequently,
\begin{equation} \label{eqdif:repnbdlower}
	-a^2(m) \log \Emb \exp \left\{-\frac{1}{a^2(m)} F(Y^m)\right\}  \le  \Emb\left\{ \half \frac{1}{m}\sum_{i=1}^m\int_0^T |\util_i^m(s)|^2 \, ds	+ F(\Ytil^m)\right\},
\end{equation}
where $\Ytil^m$ is defined as in \eqref{eqdif:Ytilm} with $\Xtil_i^m$ given through \eqref{eqdif:Xtilim util}.

%Let $\Ytil^m$ and $\nutil^m$ be defined using the $\ti X^m_i$ above.
%Note that $|\util^m_i(s)| \le (a(m)\sqrt{m})^\half$.
%Then analogous to the proof of \eqref{eqdif:XtilXbar}, one can have
%\begin{align*}
%	& \Emb | \Xtil_i^m - \Xbar_i|_{*,t}^2 \\
%	& \quad \le \kappa_1 \int_0^t \Emb \Big( |\Xtilims - \Xbaris|^2 + \frac{1}{m} \sum_{j=1}^m |\Xtiljms - \Xbarjs|^2 + \frac{1}{a^2(m)m} |\util_i^m(s)|^2 + \frac{1}{m} \Big) \, ds \\
%	& \quad \le \kappa_2 \int_0^t \Emb | \Xtil_i^m - \Xbar_i|_{*,s}^2 \, ds + \frac{\kappa_2}{(a(m)\sqrt{m})^{\frac{3}{2}}} + \frac{\kappa_2}{m},
%\end{align*}
%where the last inequality follows from exchangeability of $\{\Xtil_i^m\}$.
%Hence
%\begin{equation} \label{eqdif:XtilXbarlower}
%	\Emb |\Xtil_i^m - \Xbar_i|_{*,T}^2 \le \frac{\kappa_3}{(a(m)\sqrt{m})^{\frac{3}{2}}}.
%\end{equation}
We now claim that as $m \to \infty$, for each $\phi \in \Smc$ and $t \in [0,T]$
%\begin{equation} \label{eqdif:claimupper}
%	\begin{aligned}
%		& \int_{\Rmb^2\times [0,t]} \phi'(x) \sigma(x, \mu(s)) y \, \nutil^m(dy\, dx\, ds) = \frac{1}{m} \sum_{i=1}^m \int_0^t \phi'(\Xtilims) \sigma(\Xtilims, \mu(s)) \util^m_i(s) \, ds \\
%		& \to \int_{\Rmb\times [0,t]} \phi'(x) \sigma(x, \mu(s)) u(s,x) \, \mu_s(dx) \, ds = \int_{\Rmb^2\times [0,t]} \phi'(x) \sigma(x, \mu(s)) y \, \nu^*(dy\, dx\, ds).
%	\end{aligned}
%\end{equation}
\begin{equation} \label{eqdif:claimlower}
	\int_{\Rmb^2\times [0,t]} \phi'(x) \sigma(x, \mu(s)) y \, \nutil^m(dy\, dx\, ds) \to \int_{\Rmb^2\times [0,t]} \phi'(x) \sigma(x, \mu(s)) y \, \nu^*(dy\, dx\, ds)
\end{equation}
in probability.
To verify this convergence, let
\begin{align*}
	\ti A^m(t) & \doteq \frac{1}{m} \sum_{i=1}^m\int_0^t \phi'(\Xtilims) \sigma(\Xtilims, \mu(s)) \util^m_i(s) \, ds \\
	\ti B^m(t) & \doteq \frac{1}{m} \sum_{i=1}^m\int_0^t \phi'(\Xbaris) \sigma(\Xbaris, \mu(s)) \util^m_i(s) \, ds \\
%	\ti C^m(t) & = \int_{\Rmb\times [0,t]} \phi'(x) \sigma(x, \mu(s)) u(s,x) \one_{\left\{|u(s,x)| \le \left(a(m)\sqrt{m}\right)^\half\right\}} \, \mu_s(dx) \, ds \\
	\ti C(t) & \doteq \int_{\Rmb\times [0,t]} \phi'(x) \sigma(x, \mu(s)) u(s,x) \, \mu_s(dx) \, ds.
\end{align*}
Using Cauchy-Schwarz inequality and the boundedness and Lipschiz property of $\phi'$ and $\sigma$, we have
\begin{equation*}
	\Emb |\ti A^m - \ti B^m|_{*,T}^2 \le \kappa_1 \left( \Emb \frac{1}{m} \sum_{i=1}^m |\Xtil_i^m - \Xbar_i|_{*,T}^2 \right) \left( \Emb \frac{1}{m} \sum_{i=1}^m \int_0^T |\util_i^m(s)|^2 \, ds \right) \le \frac{\kappa_2}{a^2(m)m} \to 0,
\end{equation*}
where the second inequality follows from \eqref{eqdif:controlbdlower} and Lemma~\ref{lemdif:XtilXbar}.
Also, since $\{\Xbar_i\}$ are i.i.d., 
\begin{equation*}
	\Emb |\ti B^m - \ti C|_{*,T}^2 \le \frac{\kappa_3}{m} \int_{\Rmb \times [0,T]} |u(s,x)|^2 \, \mu_s(dx) \, ds \to 0.
\end{equation*} 
%Finally an application of dominated convergence theorem shows that $|\ti C^m - \ti D|_{*,T}^2 \to 0$.
Thus we have shown that $\Atil^m(t) \to \Ctil(t)$ in probability for each $t \in [0,T]$, which proves the claim \eqref{eqdif:claimlower}.

Next, from Theorem~\ref{thmdif:tightness} and \eqref{eqdif:controlbdlower} we have that $\{\Ytil^m\}$ is tight in $\Cmb([0,T]:\Smc_{-v})$.
Finally using Proposition~\ref{propdif:representation}, the convergence in \eqref{eqdif:claimlower}, and an analogous application of bounded convergence theorem used below \eqref{eqdif:limcvg1}, we see that any limit point $\Ytil$ of $\Ytil^m$ solves the equation
\begin{equation*}
	\lan \Ytil(t), \phi \ran = \int_0^t \lan \Ytil(s), L(s)\phi \ran \, ds + \int_{\Rmb^2 \times [0,t]} \phi'(x) \sigma(x,\mu(s)) y \, \nu^*(dy\, dx\, ds), \quad \phi \in \Smc.
\end{equation*}
In particular, since $\{\Ytil^m\}$ is tight the above equation admits at least one solution.
The following lemma shows that the equation admits only one solution, in particular, since $\nu^* \in \Tmc(\zeta^*)$, any limit point $\Ytil$ satisfies $\Ytil = \zeta^*$ a.s.

\begin{lemma} \label{lemdif:unique}
	Suppose Conditions~\ref{conddif:diffusion1} and~\ref{conddif:diffusion2} hold.
	Then for each $\nu \in \Pmc_\infty$, there exists a unique solution of \eqref{eqdif:char}  in $\Cmb([0,T]:\Smc_{-v})$.  If in addition Condition \ref{conddif:diffusion3} is satisfied, uniqueness holds in
	$\Cmb([0,T]:\Smc_{-\rho})$.
	
\end{lemma}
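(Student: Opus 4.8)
The plan is to rewrite \eqref{eqdif:char} as a mild integral equation driven by the solution semigroup of the (possibly degenerate) diffusion part of $L(s)$, and then to solve this mild equation by a contraction argument in the single fixed Hilbert space $\Smc_{-v}$ (resp. $\Smc_{-\rho}$). Concretely, I would split the operator in \eqref{eqdif:Ls} as $L(s) = A(s) + B(s)$, where
\begin{equation*}
	(A(s)\phi)(x) \doteq \phi'(x) b(x,\mu(s)) + \half \phi''(x) \sigma^2(x,\mu(s))
\end{equation*}
is the generator of the nonlinear limit diffusion \eqref{eqdif:Xbari} frozen at time $s$, and $B(s)\phi(x) \doteq \intR \phi'(y)\beta(y,x)\,\mu_s(dy) + \intR \phi''(y)\sigma(y,\mu(s))\alpha(y,x)\,\mu_s(dy)$ is the nonlocal part. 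The key point of this decomposition is that, in the estimates behind Lemma~\ref{lemdif:Ls}, the whole $x$-decay of $B(s)\phi$ comes from $\alpha,\beta$ through their second argument; hence under Condition~\ref{conddif:diffusion2}(b) one gets $\|B(s)\phi\|_w \le \kappa\|\phi\|_3$ uniformly in $s$, so $B(s)$ is bounded from $\Smc_3$ into $\Smc_w$ and, dually, $B^*(s)$ is a bounded operator on $\Smc_{-v}$. Let $\{P_{s,t}\}_{0\le s\le t\le T}$ denote the backward solution operators of $\partial_s u + A(s)u = 0$, $u(t,\cdot)=\phi$, with probabilistic representation $P_{s,t}\phi(x) = \Emb[\phi(\Xbar^{s,x}(t))]$, where $\Xbar^{s,x}$ is the diffusion with coefficients $b(\cdot,\mu(\cdot)),\sigma(\cdot,\mu(\cdot))$ started at $x$ at time $s$. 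Under Condition~\ref{conddif:diffusion2}(a) these coefficients lie in $\Cmb_b^w(\Rmb)$ uniformly in time, so a standard estimate on the flow (differentiating $x\mapsto\Xbar^{s,x}$ and using that boundedness of the coefficients makes the moments of $\Xbar^{s,x}(t)-x$ bounded uniformly in $x,s,t$) shows $P_{s,t}$ maps each $\Smc_n$, $1\le n\le w$, boundedly into itself, with operator norm uniform in $(s,t)$. In particular $A(s)$ maps $\Smc_w$ boundedly into $\Smc_v$ (recall $w=v+2$) and $\partial_s P_{s,t}\phi = -A(s)P_{s,t}\phi$.

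Next I would put \eqref{eqdif:char} into mild form. Since $\nu_{(2,3)}=\nubar$ and $\nubar$ has Lebesgue time-marginal, $\nu$ disintegrates as $\nu(dy\,dx\,ds)=\nu_s(dy\,dx)\,ds$, and $\langle\dot\xi(s),\phi\rangle \doteq \int_{\Rmb^2}\phi'(x)\sigma(x,\mu(s))y\,\nu_s(dy\,dx)$ defines an element of $\Smc_{-2}$ with $\int_0^T\|\dot\xi(s)\|_{-2}\,ds<\infty$ (by \eqref{eqdif:twonorms}, Cauchy--Schwarz and $\int y^2\,\nu<\infty$). I claim that $\eta\in\Cmb([0,T]:\Smc_{-v})$ solves \eqref{eqdif:char} if and only if it solves
\begin{equation*}
	\lan \eta(t),\phi\ran = \int_0^t \lan \dot\xi(s), P_{s,t}\phi \ran\,ds + \int_0^t \lan \eta(s), B(s)P_{s,t}\phi \ran\,ds, \quad \phi\in\Smc,\ t\in[0,T].
\end{equation*}
For the forward implication, extend \eqref{eqdif:char} from $\phi\in\Smc$ to $\phi\in\Smc_w$ by density and Lemma~\ref{lemdif:Ls} (and \eqref{eqdif:twonorms} for the $\dot\xi$-term), so that for each fixed $\psi\in\Smc_w$ the map $s\mapsto\lan\eta(s),\psi\ran$ is absolutely continuous with derivative $\lan\eta(s),L(s)\psi\ran+\lan\dot\xi(s),\psi\ran$; from \eqref{eqdif:char} at $t=0$ one also gets $\eta(0)=0$. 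Fixing $\phi\in\Smc$ and $t$, differentiate $s\mapsto\lan\eta(s),P_{s,t}\phi\ran$ on $[0,t]$ by a Riemann-sum argument (legitimate since $\eta$ is continuous into $\Smc_{-v}$ while $s\mapsto P_{s,t}\phi$ is continuous into $\Smc_w$ and $C^1$ into $\Smc_v$): the two contributions involving $A(s)$ cancel, and integrating from $0$ to $t$ yields the displayed equation. The converse is obtained by reversing this computation.

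Finally, on $\Cmb([0,T]:\Smc_{-v})$ the right-hand side of the mild equation defines a map $\Phi$; using $\|P_{s,t}\phi\|_2\le\kappa\|\phi\|_2$ and $\|B(s)P_{s,t}\phi\|_v\le\kappa\|P_{s,t}\phi\|_3\le\kappa\|\phi\|_3\le\kappa\|\phi\|_v$ (uniformly in $s\le t$), one sees that $\Phi$ maps $\Cmb([0,T]:\Smc_{-v})$ into itself (in fact into $\Cmb([0,T]:\Smc_{-3})$) and that $\|\Phi(\eta^1)(t)-\Phi(\eta^2)(t)\|_{-v}\le\kappa\int_0^t\|\eta^1(s)-\eta^2(s)\|_{-v}\,ds$. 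Hence $\Phi$ is a contraction on $\Cmb([0,\delta]:\Smc_{-v})$ for $\delta$ small and, patching over $[0,T]$, has a unique fixed point, which by the equivalence above is the unique solution of \eqref{eqdif:char} in $\Cmb([0,T]:\Smc_{-v})$. For the last assertion, under Condition~\ref{conddif:diffusion3} Lemma~\ref{lemdif:Ls} holds with $w$ replaced by $\rho+2$ and $P_{s,t}$ is bounded on each $\Smc_n$, $n\le\rho+2$; repeating the argument verbatim with $\rho$ in place of $v$ and $\rho+2$ in place of $w$ gives uniqueness in $\Cmb([0,T]:\Smc_{-\rho})$. The main obstacle is the uniform boundedness of $P_{s,t}$ on the polynomially weighted Sobolev spaces $\Smc_n$: since $\sigma$ may be degenerate one cannot appeal to parabolic smoothing and must control the SDE flow and the weights $(1+x^2)^{2n}$ by hand; a secondary technical point is the Riemann-sum justification of the Leibniz rule for $s\mapsto\lan\eta(s),P_{s,t}\phi\ran$, which rests on the two-norm structure $\eta(s)\in\Smc_{-v}$, $P_{s,t}\phi\in\Smc_w$, $\partial_s P_{s,t}\phi\in\Smc_v$.
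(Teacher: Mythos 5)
Your proof is, as far as I can tell, correct, but it takes a genuinely different route from the paper's. The paper proves uniqueness by a direct energy estimate: set $\xi \doteq \eta - \etatil$, so that $\lan\xi(t),\phi\ran = \int_0^t\lan\xi(s),L(s)\phi\ran\,ds$, and then differentiate $\|\xi(t)\|_{-w}^2 = \sum_j\lan\xi(t),\phi_j^w\ran^2$ to get $\|\xi(t)\|_{-w}^2 = 2\int_0^t\lan\xi(s),L^*(s)\xi(s)\ran_{-w}\,ds$, which together with the coercivity-type bound $\lan f,L^*(s)f\ran_{-w}\le\kappa\|f\|_{-w}^2$ (cited as an adaptation of Lemma~A.6 of Kurtz--Xiong~\cite{KurtzXiong2004}, using Condition~\ref{conddif:diffusion2}) and Gronwall gives $\xi\equiv 0$; existence is not reproved but deferred to the tightness argument of Section~\ref{secdif:laplowerbd}. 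Your proof instead splits $L(s)=A(s)+B(s)$, passes to the mild formulation driven by the backward solution operators $P_{s,t}$ of the local part $A(s)$, and runs a Banach fixed-point argument. What your route buys is self-contained existence and uniqueness in one stroke, without having to appeal to a variational/quadratic-form estimate on $L^*$; what it costs is the uniform boundedness of $P_{s,t}$ on the weighted Sobolev spaces $\Smc_n$, which is a nontrivial flow estimate (controlling all $x$-derivatives up to order $w$ of the flow, the Jacobian and its inverse, and the polynomial weights via a change-of-variables $y=\Xbar^{s,x}(t)$ with exponential-martingale moment bounds) that the paper entirely avoids. You correctly flag this as the main obstacle, and it is indeed where most of the work in your approach would concentrate; the remaining steps (boundedness of $B(s):\Smc_3\to\Smc_w$ from Condition~\ref{conddif:diffusion2}(b), integrability of the forcing via $\int y^2\,\nu<\infty$, equivalence of strong and mild forms through the two-norm Leibniz argument, contraction on short intervals, and the verbatim upgrade $v\to\rho$, $w\to\rho+2$ under Condition~\ref{conddif:diffusion3}) are all sound. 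One minor imprecision: from $B(s):\Smc_3\to\Smc_w$ bounded the dual statement is $B^*(s):\Smc_{-w}\to\Smc_{-3}$; the assertion ``$B^*(s)$ bounded on $\Smc_{-v}$'' then follows only via the inclusions $\Smc_{-v}\subset\Smc_{-w}$ and $\Smc_{-3}\subset\Smc_{-v}$, though this plays no real role in your argument since the contraction estimate is set up directly through the bilinear pairing.
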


\begin{proof}
	We only prove the first statement in the lemma; the second statement is proved in a similar manner.   Existence of solutions was argued above; 
	 we now argue uniqueness.
	Suppose $\eta$ and $\etatil$ are two solutions of \eqref{eqdif:char} in $\Cmb([0,T]:\Smc_{-v})$.
	Let $\xi \doteq \eta - \etatil$.
	Then $\xi$ satisfies
	\begin{equation} \label{eqdif:unique}
		\lan \xi(t),\phi \ran = \int_0^t \lan \xi(s),L(s)\phi \ran \, ds.
	\end{equation}
	It suffices to show $\xi = 0$.
	We adapt arguments of Kurtz and Xiong (see Lemma $4.2$ and Appendix in~\cite{KurtzXiong2004}).
	By an analogous argument to Lemma A.$6$ in~\cite{KurtzXiong2004}, using Condition~\ref{conddif:diffusion2}, we have for all $f \in \Smc_{-v}$,
	\begin{equation} \label{eqdif:adjointbd}
		\sup_{0 \le s \le T} \lan f,L^*(s)f \ran_{-w} \le \kappa_1 \|f\|_{-w}^2 \, ,
	\end{equation}
	where $L^*(s) \colon \Smc_{-v} \to \Smc_{-w}$ is the adjoint of $L(s) \colon \Smc_{w} \to \Smc_v$.
	Recall that $\{ \phi_j^w \}$ is an orthonormal basis for $\Smc_w$.
	We can choose this basis such that for each $j \in \Nmb$, $\phi_j^w \in \Smc$.
	It follows from \eqref{eqdif:unique} that
	\begin{equation*}
		\lan \xi(t),\phi_j^w \ran^2 = 2 \int_0^t \lan \xi(s),\phi_j^w \ran \, d\lan \xi(s),\phi_j^w \ran = 2 \int_0^t \lan \xi(s),\phi_j^w \ran \lan \xi(s),L(s)\phi_j^w \ran \, ds.
	\end{equation*}
	Therefore,
	\begin{equation} \label{eqdif:uniquebd}
		\|\xi(t)\|_{-w}^2 = 2 \int_0^t \lan \xi(s),L^*(s)\xi(s) \ran_{-w} \, ds \le \kappa_2 \int_0^t \|\xi(s)\|_{-w}^2 \, ds,
	\end{equation}
	where the last inequality follows from \eqref{eqdif:adjointbd}.
	Thus  $\xi(t) = 0 \:\forall\; t \in [0,T]$ and uniqueness follows.
\end{proof}

We can now complete the proof of the Laplace lower bound.

\noindent \textbf{Proof of the Laplace lower bound:}
The above lemma shows that $\Ytil^m \Rightarrow \zeta^*$ in $\Cmb([0,T]:\Smc_{-v})$.
Combining this with \eqref{eqdif:repnbdlower} and \eqref{eqdif:controlbdlower} gives us
\begin{align*}
	\limsup_{m\to \infty} -a^2(m) \log \Emb \exp \left\{-\frac{1}{a^2(m)} F(Y^m)\right\} & \le
	 \limsup_{m\to \infty} \Emb\left\{ \half \frac{1}{m}\sum_{i=1}^m\int_0^T |\util_i^m(s)|^2 \, ds + F(\Ytil^m)\right\}\\
	& \quad \le \half \int_{\Rmb^2 \times [0,T]} y^2 \, \nu^*(dy\, dx\, ds) + F(\zeta^*)\\
	& \quad \le I(\zeta^*) + F(\zeta^*) + \eps\\
	& \quad \le \inf_{\zeta \in \Cmb([0,T]: \Smc_{-v}) }\{I(\zeta) + F(\zeta)\} + 2\eps.
\end{align*}
%where the second inequality uses Fatou's lemma and the convergence of $\Ytil^m \Rightarrow \zeta^*$ in $\Cmb([0,T]:\Smc_{-v})$.
Since $\eps > 0$ is arbitrary, we have the desired lower bound.
\qed

%-----------------------------------------------------------------

\subsection{$I$ is a rate function.} \label{secdif:rate function}

In this section we prove that under Conditions~\ref{conddif:diffusion1} and~\ref{conddif:diffusion3}, $I$ defined in \eqref{eqdif:rate function} regarded as a map from $\Cmb([0,T]:\Smc_{-\rho})$ to $[0,\infty]$ has compact level sets and is therefore a rate function on $\Cmb([0,T]:\Smc_{-\rho})$.

Fix $K \in (0,\infty)$ and let $\Theta_K \doteq \{ \eta \in \Cmb([0,T]:\Smc_{-\rho}) : I(\eta) \le K \}$.
Let $\{ \eta^m \}_{m \in \Nmb} \subset \Theta_K$.
Then for each $m \in \Nmb$ there exists $\nu^m \in \Tmc(\eta^m)$ such that
\begin{equation} \label{eqdif:rate bound}
	\half \int_{\Rmb^2 \times [0,T]} y^2 \, \nu^m(dy\,dx\,ds) \le K + \frac{1}{m}.
\end{equation}
It follows from \eqref{eqdif:Pinfty} and \eqref{eqdif:Xbarbd} that
\begin{equation*}
	\sup_{m \in \Nmb} \int_{\Rmb^2 \times [0,T]} x^2 \, \nu^m(dy\,dx\,ds) = \int_{\Rmb \times [0,T]} x^2 \, \mu_s(dx) \, ds < \infty.
\end{equation*}
So we have $\sup_{m \in \Nmb} |g(\nu^m))| < \infty$, where $g$ is the tightness function on $\Mmc_T(\Rmb^2 \times [0,T])$ defined in \eqref{eqdif:g}.
Hence $\{ \nu^m \}$ is pre-compact.
Let $\nu^m$ converge along a subsequence (labeled once more as $\{m\}$) to $\nuhat$.
It follows from Fatou's lemma and \eqref{eqdif:rate bound} that $\nuhat \in \Pmc_\infty$.
%Analogous to the argument in Section~\ref{secdif:laplowerbd}, define $u, \{\Xtil_i^m\}, \mutil^m, \{\util_i^m\}, \Ytil^m, \nutil^m$ by replacing $\nu^*$ there with $\nuhat$, we have $\Ytil^m \to \Ytil$ in $\Cmb([0,T]:\Smc_{-v})$, where $\Ytil$ solves the equation
%\begin{equation*}
%	\lan \Ytil(t), \phi \ran =   \int_0^t \lan \Ytil(s), L(s)\phi \ran \, ds + \int_{\Rmb^2 \times [0,t]} \phi'(x) \sigma(x,\mu(s)) y \, \nuhat(dy\, dx\, ds), \quad \phi \in \Smc.
%\end{equation*}
%This together with Lemma~\ref{lemdif:unique} shows the existence and uniqueness of solution to \eqref{eqdif:char}.
Now let $\etahat$ be defined as in \eqref{eqdif:char} with $\nu$ replaced by $\nuhat$.
Note that from Lemma~\ref{lemdif:unique} there is a unique such $\etahat \in \Cmb([0,T]:\Smc_{-v})$.
We claim that $\eta^m \to \etahat$ in $\Cmb([0,T]:\Smc_{-\rho})$.
Once the claim is verified, it will follow from \eqref{eqdif:rate bound} and Fatou's lemma that $I(\etahat) \le K$, which will prove the desired compact level set property. 
Note that both $\eta^m$ and $\etahat$ are in $\Cmb([0,T]:\Smc_{-v})$ and if one could show that $\eta^m \to \etahat$ in $\Cmb([0,T]:\Smc_{-v})$, we would have that $I$ is a rate function on $\Cmb([0,T]:\Smc_{-v})$.
However, that convergence is not immediately obvious.

Now we prove the above claim. 
Disintegrate $\nu^m$ as
\begin{equation*}
	\nu^m(A \times B \times [0,t]) = \int_0^t \nu_s^m(A \times B) \, ds, \quad A,B \in \Bmc(\Rmb), t \in [0,T].
%	\quad \nuhat(A \times B \times [0,t]) = \int_0^t \nuhat_s(A \times B) \, ds
\end{equation*}
%for $A,B \in \Bmc(\Rmb)$ and $t \in [0,T]$.
Since $\nu^m \in \Pmc_\infty$, we have $\nu_s^m \in \Pmc(\Rmb^2)$ for a.e.\ $s \in [0,T]$.
Define for $s \in [0,T]$, the function $J^m(s) \colon \Smc_w \to \Rmb$ as follows: 
\begin{equation*}
	\lan J^m(s),\phi \ran \doteq \int_{\Rmb^2} \phi'(x) \sigma(x,\mu(s)) y \, \nu_s^m(dy\,dx), \quad \phi \in \Smc_w.
%	\quad \lan \Jhat(s),\phi \ran = \int_{\Rmb^2} \phi'(x) \sigma(x,\mu(s)) y \, \nuhat_s(dy\,dx).
\end{equation*}
%
%\begin{align*}
%	\sup_{\|\phi\|_v \le 1}
%\end{align*}
%
It is easy to see that $J^m(s) \in \Smc_{-w}$ for a.e.\ $s \in [0,T]$, in fact it follows from \eqref{eqdif:twonorms} and \eqref{eqdif:rate bound} that
\begin{align}
	\begin{aligned} \label{eqdif:Jmbd}
		\sup_{m \in \Nmb} \int_0^T \| J^m(s) \|_{-w}^2 \, ds & = \sup_{m \in \Nmb} \int_0^T \sup_{\| \phi \|_w = 1} \left( \int_{\Rmb^2} \phi'(x) \sigma(x,\mu(s)) y \, \nu_s^m(dy\,dx) \right)^2 \, ds \\
		& \le \kappa_1 \sup_{m \in \Nmb} \int_{\Rmb^2 \times [0,T]} y^2 \, \nu^m(dy\,dx\,ds) < \infty.
	\end{aligned}
\end{align}
Since $\nu^m \in \Tmc(\eta^m)$, it follows that
\begin{equation*}
	\lan \eta^m(t),\phi \ran = \int_0^t \lan \eta^m(s),L(s)\phi \ran \, ds + \int_0^t \lan J^m(s),\phi \ran \, ds, \quad \forall\; \phi \in \Smc.
\end{equation*}
Analogous to the proof of Lemma~\ref{lemdif:unique} we have
\begin{equation*}
	\lan \eta^m(t),\phi \ran^2 = 2 \int_0^t \lan \eta^m(s),\phi \ran \lan \eta^m(s),L(s)\phi \ran \, ds + 2 \int_0^t \lan \eta^m(s),\phi \ran \lan J^m(s),\phi \ran \, ds.
\end{equation*}
Also note that since $I(\eta^m) \le K < \infty$, we must have that $\eta^m \in \Cmb([0,T]:\Smc_{-v}) \subset \Cmb([0,T]:\Smc_{-w})$.
Thus, as for the proof of \eqref{eqdif:uniquebd},
\begin{align*}
	\| \eta^m(t) \|_{-w}^2 & = 2 \int_0^t \lan \eta^m(s),L^*(s) \eta^m(s) \ran_{-w} \, ds + 2 \int_0^t \lan \eta^m(s), J^m(s) \ran_{-w} \, ds \\
	& \le \kappa_2 \int_0^t \| \eta^m(s) \|_{-w}^2 \, ds + \kappa_2 \int_0^t \| \eta^m(s) \|_{-w} \| J^m(s) \|_{-w} \, ds \\
	& \le \kappa_3 \int_0^t \| \eta^m(s) \|_{-w}^2 \, ds + \kappa_3 \int_0^t \| J^m(s) \|_{-w}^2 \, ds.
\end{align*}
Applying Gronwall's lemma and using \eqref{eqdif:Jmbd}, we have
\begin{equation} \label{eqdif:etambd}
	\sup_{m \in \Nmb} \sup_{t \in [0,T]} \| \eta^m(t) \|_{-w}^2 < \infty.
\end{equation}
Next note that for $t_1,t_2 \in [0,T]$ and $\phi \in \Smc$, by Cauchy-Schwarz inequality
\begin{align*}
	& | \lan \eta^m(t_2),\phi \ran - \lan \eta^m(t_1),\phi \ran |^2 \\
	& \quad \le 2|t_2-t_1| \left( \int_0^T | \lan \eta^m(s),L(s)\phi \ran |^2 \, ds + \int_0^T | \lan J^m(s),\phi \ran |^2 \, ds \right) \\
	& \quad \le 2 |t_2-t_1| \left( \int_0^T \| \eta^m(s) \|_{-w}^2 \|L(s)\phi\|_w^2 \, ds + \int_0^T \|J^m(s)\|_{-w}^2 \|\phi\|_w^2 \, ds \right) \\
	& \quad \le \kappa_4 |t_2-t_1| \|\phi\|_{w+2}^2,
\end{align*}
where the last inequality follows from \eqref{eqdif:etambd}, Lemma~\ref{lemdif:Ls} and \eqref{eqdif:Jmbd}.
%So similar to the tightness argument above \eqref{eqdif:g} in the proof of Theorem~\ref{thmdif:tightness}, $\{\eta^m\}$ is pre-compact in $\Cmb([0,T]:\Smc_{-\rho})$, where $\rho$ is as introduced below \eqref{eqdif:rate function}.
This together with \eqref{eqdif:etambd} implies that $\{\eta^m\}$ is pre-compact in $\Cmb([0,T]:\Smc_{-\rho})$, where $\rho$ is as introduced below \eqref{eqdif:rate function} (see e.g.\ Theorem $2.5.2$ in~\cite{KallianpurXiong1995stochastic}).
Suppose now that $\eta^m$ converges in $\Cmb([0,T]:\Smc_{-\rho})$ along a subsequence (labeled once more as $\{m\}$) to $\etatil$.
Under Condition~\ref{conddif:diffusion3}, for every $\phi \in \Smc$ and $s \in [0,T]$, $L(s)\phi \in \Smc_\rho$.
Thus $\lan \eta^m(s), L(s)\phi \ran \to \lan \etatil(s), L(s)\phi \ran \; \forall \; s \in [0,T]$.
Finally, using \eqref{eqdif:rate bound}, \eqref{eqdif:etambd}, the convergence of $\nu^m$ to $\nuhat$, and an estimate analogous to the one below \eqref{eqdif:limcvg1} (with $v$ replaced by $\rho$ and $w$ by $\rho+2$), we see that
%Finally, one can use \eqref{eqdif:etambd}, $\nu^m \to \nuhat$, \eqref{eqdif:rate bound} and analogous argument below \eqref{eqdif:limcvg1} (with $v$ replaced by $\rho$ and $w$ by $\rho+2$) to show that 
any limit point $\etatil$ of $\{\eta^m\}$ solves \eqref{eqdif:char} with $\nu$ replaced by $\nuhat$.
From the second statement in Lemma~\ref{lemdif:unique} this equation has a unique solution in $\Cmb([0,T]:\Smc_{-\rho})$ and so we must have that $\etatil = \etahat$.
This proves the desired compactness of $\Theta_K$.
\qed

%---------------------------------------------------------

\section{Proofs for the pure jump case} \label{secjump:pf}

In this section we will prove Theorems~\ref{thmjump:MDP} and~\ref{thmjump:alternative expression}.
Throughout the section we assume that Condition~\ref{condjump:cond2} holds.

\subsection{Proof of Theorem~\ref{thmjump:MDP}} \label{secjump:pfMDP}

The basic idea is to make use of a sufficient condition for MDP presented in~\cite{BudhirajaDupuisGanguly2015moderate}.
We begin with some notation.

%Let $\Mmbbar = \Mmc_{FC}(\YT)$ and let $\Pmbbar$ be the unique probability measure on $(\Mmbbar,\Bmc(\Mmbbar))$ under which the canonical map, $\Nbar_* : \Mmbbar \to \Mmbbar$, $\Nbar(\mtil) := \mtil$, is a Poisson random measure with intensity measure $\lambda_\YT$.
%The corresponding expectation operator will be denoted by $\Embbar$.
%Let $\Fmc_t = \sigma \{ \Nbar((0,s] \times A) : 0 \le s \le t, A \in \Bmc(\Ymb) \}$ be the $\sigma$-algebra generated by $\Nbar$, and let $\Fmcbar_t$ denote the completion under $\Pmbbar$.
%We denote by $\Pmcbar$ the prediction $\sigma$-field on $[0,T] \times \Mmbbar$ with the filtration $\{ \Fmcbar_t : 0 \le t \le T \}$ on $(\Mmbbar, \Bmc(\Mmbbar))$.
Recall the PRM $N$ introduced in Section~\ref{secjump:model}.
We denote by $\Pmcbar$ the $\{\Fmc_t\}$-predictable $\sigma$-field on $\Omega \times [0,T]$.
Let $\Amcbar_+$ [resp. $\Amcbar$] be the class of all $(\Pmcbar \otimes \Bmc(\Xmb)) / \Bmc([0,\infty))$ [resp. $(\Pmcbar \otimes \Bmc(\Xmb)) / \Bmc(\Rmb)$]-measurable maps from $\Omega \times \XT$ to $[0,\infty)$ [resp. $\Rmb$].
%Let $\Amcbar_+$ be the class of all $(\Pmcbar \otimes \Bmc(\Xmb)) / \Bmc([0,\infty))$-measurable maps from $\Omega \times \XT$ to $[0,\infty)$.
For $\varphi \in \Amcbar_+$, define a counting process $N^\varphi$ on $\XT$ by
\begin{equation} \label{eqjump:Nphi}
	N^\varphi([0,t] \times U ) \doteq \int_{[0,t] \times U \times [0,\infty)} \one_{[0,\varphi(s,y)]}(r) \, N(ds \, dy \, dr), \quad t \in [0,T], U \in \Bmc(\Xmb).
\end{equation}
We think of $N^\varphi$ as a controlled random measure, with $\varphi$ the control process that produces a thinning of the point process $N$ in a random but non-anticipative manner.

Define $\ell \colon [0,\infty) \to [0,\infty)$ by
\begin{equation*}
	\ell(r) \doteq r \log r -r + 1, \quad r \in [0,\infty).
\end{equation*}
For any $\varphi \in \Amcbar_+$ and $t \in [0,T]$ the quantity
\begin{equation*}
	L_t(\varphi) \doteq \intXt \ell(\varphi(s,y)) \, \lambda(ds \, dy)
\end{equation*}
is well defined as a $[0,\infty]$-valued random variable.
This quantity will appear as a cost term in the representation presented below.
It will be convenient to restrict to the following smaller collection of controls.
For each $n \in \Nmb$ let
\begin{align*}
	\Amcbar_{b,n} \doteq \{ \varphi \in \Amcbar_+ : \text{ for all } (\omega,t) \in \Omega \times [0,T], \frac{1}{n} \le \varphi(\omega,t,y) \le n & \text{ if } y \in [0,n]^2 \\
	\text{and } \varphi(\omega,t,y) = 1 & \text{ if } y \notin [0,n]^2 \}
\end{align*}
and let $\Amcbar_b \doteq \bigcup_{n=1}^\infty \Amcbar_{b,n}$. 

For $m \in \Nmb$ and $M \in (0,\infty)$, consider the spaces
\begin{align}
	\Smc^M_{+,m} & \doteq \left\{ g : \XT \to \Rmb_+ \Bigm| L_T(g) \le \frac{M}{a^2(m)m} \right\}, \label{eqjump:Smc+}\\
	\Smc^M_m & \doteq \left\{ f : \XT \to \Rmb \Bigm| 1 + \frac{1}{a(m)\sqrt{m}}f \doteq g \in \Smc^M_{+,m} \right\}, \label{eqjump:Smc} \\
	\Umc^M_{+,m} & \doteq \left\{ \varphi \in \Amcbar_b \mid \varphi(\omega,\cdot,\cdot) \in \Smc^M_{+,m}, \Pmb \text{-a.s.} \right\}. \label{eqjump:Umc}
\end{align}
%We also let
%\begin{align} \label{eqjump:Umc}
%	\begin{aligned}
%		\Umc^M_{+,m} & = \{ \varphi \in \Amcbar_b : \varphi(\omega,\cdot,\cdot) \in \Smc^M_{+,m}, \Pmb \text{-a.s.} \} \\
%		\Umc^M_m & = \{ \psi \in \Amcbar : \psi(\omega,\cdot,\cdot) \in \Smc^M_m, \Pmb \text{-a.s.} \}.
%	\end{aligned}
%\end{align}
Given $M \in (0,\infty)$, denote by $B_2(M)$ the ball of radius $M$ in $L^2(\lambda)$.
A set $\{ \psi^m \} \subset \Amcbar$ with the property that $\sup_{m \in \Nmb} \| \psi^m \|_{L^2(\lambda)} \le M$ a.s. for some $M < \infty$ will be regarded as a collection of $B_2(M)$-valued random variables, where $B_2(M)$ is equipped with the weak topology on the Hilbert space $L^2(\lambda)$.
Since $B_2(M)$ is weakly compact, such a collection of random variables is automatically tight.
Throughout this section $B_2(M)$ will be regarded as the compact metric space obtained by equipping it with the weak topology on $L^2(\lambda)$.

It follows from~\cite{BudhirajaDupuisGanguly2015moderate} (see Lemma~\ref{lemjump:moment bounds of varphi and psi} below) that if $g \in \Smc_{+,m}^M$ then, with $f \doteq a(m)\sqrt{m}(g-1)$, $f \one_{\{|f| \le a(m)\sqrt{m}\}} \in B_2(C_M)$, where $C_M \doteq \sqrt{M\gammatil_2(1)}$ and $\gammatil_2(1) \in (0,\infty)$ is as in Lemma~\ref{lemjump:property of l} below.
Let $\Smb$ be a Polish space.
The following condition on a sequence $\{\Gmc^m\}$ of measurable maps from $\Mmb$ to $\Smb$ and a measurable map $\Gmc_0 \colon L^2(\lambda) \to \Smb$ was introduced in~\cite{BudhirajaDupuisGanguly2015moderate} (see Condition $2.2$ therein).

\begin{condition} \label{condjump:cond3 to be checked}
	(a) Given $M \in (0,\infty)$, suppose that $g^m$, $g \in B_2(M)$ and $g^m \to g$.
	Then 
	\begin{equation*}
		\Gmc_0(g^m) \to \Gmc_0(g).
	\end{equation*}
	(b) Given $M \in (0,\infty)$, let $\{ \varphi^m \}_{m \in \Nmb}$ be such that for every $m \in \Nmb$, $\varphi^m \in \Umc^M_{+,m}$ and for some $\beta \in (0,1]$, $\psi^m \one_{\{ | \psi^m | \le \beta a(m)\sqrt{m} \}} \Rightarrow \psi$ in $B_2(C_M)$ where $\psi^m \doteq a(m)\sqrt{m}(\varphi^m-1)$.
	Then
	\begin{equation*}
		\Gmc^m\left(\frac{1}{m} N^{m\varphi^m}\right) \Rightarrow \Gmc_0(\psi).
	\end{equation*}
\end{condition}

Theorem $2.3$ of~\cite{BudhirajaDupuisGanguly2015moderate} says that if a collection of $\Gmc^m$ satisfies Condition~\ref{condjump:cond3 to be checked}, then $\{ \Gmc^m(\frac{1}{m} N^{m\varphi^m}) \}_{m \in \Nmb}$ satisfies an LDP on $\Smb$ with speed $a^2(m)$ and rate function $\Imc$ given by
%\begin{equation*}
%	\Imc(\eta) = \inf_{\psi \in \Smb_\eta[\Gmc_0]} \left\{ \half \|\psi\|_{L^2(\lambda)}^2 \right\}, \quad \Smb_\eta[\Gmc_0] = \left\{ \psi \in L^2(\lambda) : \eta = \Gmc_0(\psi) \right\}, \quad \eta \in \Dmb([0,T]:l_2).
%\end{equation*}
\begin{equation} \label{eqjump:rate reference}
	\Imc(\eta) \doteq \inf_{\psi \in L^2(\lambda) : \eta = \Gmc_0(\psi)} \left\{ \half \|\psi\|_{L^2(\lambda)}^2 \right\}, \quad \eta \in \Smb.
\end{equation}
We will now use this theorem to establish an MDP for $\mu^m$.

From Theorem~\ref{thmjump:LLN} we have that there exists a measurable map $\Gmcbar^m \colon \Mmb \to \Dmb([0,T]:l_2)$ such that $\mu^m = \Gmcbar^m(\frac{1}{m} N^m)$, and hence there is a map $\Gmc^m \colon \Mmb \to \Dmb([0,T] : l_2)$ such that with $Z^m$ defined as in \eqref{eqjump:Z m}, $Z^m = \Gmc^m(\frac{1}{m} N^m)$.
Define $\Gmc_0 \colon L^2(\lambda) \to \Dmb([0,T]:l_2)$ by 
\begin{equation} \label{eqjump:G 0}
	\Gmc_0(\psi) \doteq \eta \text{ if for } \psi \in L^2(\lambda), \eta \text{ solves } \eqref{eqjump:eta and psi}.
\end{equation}
Note that the map is well defined since for each $\psi \in L^2(\lambda)$ there is a unique $\eta \in \Cmb([0,T]:l_2)$ solving \eqref{eqjump:eta and psi}.
%In order to prove the theorem we will verify that the following condition from~\cite{BudhirajaDupuisGanguly2015moderate} (see Condition $2.2$ therein) holds with these choices of $\Gmc^m$ and $\Gmc_0$.
It is easy to check that with the above choice of $\Gmc_0$, $\Imc$ defined in \eqref{eqjump:rate reference} (with $\Smb = \Dmb([0,T]:l_2)$) is same as the function $\Ibar$ introduced in \eqref{eqjump:rate function}.
Thus in order to prove Theorem~\ref{thmjump:MDP} it suffices to check that Condition~\ref{condjump:cond3 to be checked} holds with $\Smb = \Dmb([0,T]:l_2)$ and the above choice of $\{\Gmc^m\}$ and $\Gmc_0$.
Rest of the section is devoted to the verification of this condition.

All statements except the last one in Lemma~\ref{lemjump:property of l}(a) below have been established in~\cite{BudhirajaDupuisGanguly2015moderate} (see Lemma $3.1$ therein).
The last statement in Lemma~\ref{lemjump:property of l}(a) is crucially used in our proofs and is a key ingredient in overcoming the lack of regularity of $G$ (see proof of Proposition~\ref{propjump:verifying condition part b}).

\begin{lemma} \label{lemjump:property of l}
	(a) For each $\beta > 0$, there exist $\gammatil_1(\beta), \gammatil_1'(\beta) \in (0,\infty)$ such that
	\begin{equation*}
		|x-1| \le \gammatil_1(\beta) \ell(x) \text{ for } |x-1| \ge \beta, x \ge 0, \text{ and } x \le \gammatil'_1(\beta) \ell(x) \text{ for } x \ge \beta > 1.
	\end{equation*}
	Furthermore, $\gammatil_1$ can be selected to be such that for $\beta \in (0,\frac{1}{2})$, $\gammatil_1(\beta) \le \frac{4}{\beta}$.
	
	\noi (b) For each $\beta > 0$, there exist $\gammatil_2(\beta) \in (0,\infty)$ such that
	\begin{equation*}
		|x-1|^2 \le \gammatil_2(\beta) \ell(x) \text{ for } |x-1| \le \beta, x \ge 0.
	\end{equation*}
\end{lemma}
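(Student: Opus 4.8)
The plan is to reduce the whole statement to elementary one–variable calculus for $\ell(r)=r\log r-r+1$, using $\ell(1)=0$, $\ell'(r)=\log r$, $\ell''(r)=1/r>0$ on $(0,\infty)$, $\ell(0)=1$ (with the convention $0\log 0=0$), and $\ell\ge 0$ with equality only at $r=1$. The single workhorse is Taylor's theorem around $r=1$: for every $x>0$ with $x\ne 1$ there is $\xi_x$ strictly between $x$ and $1$ such that $\ell(x)=\tfrac{(x-1)^2}{2\xi_x}$. As noted above the statement, every assertion except the last one in part (a) is already in \cite{BudhirajaDupuisGanguly2015moderate} (the existence of $\gammatil_1(\beta),\gammatil_1'(\beta),\gammatil_2(\beta)$ follows from continuity of the relevant ratios on the compact pieces together with the growth $\ell(x)\sim x\log x$ as $x\to\infty$), so the real content to prove is the quantitative bound $\gammatil_1(\beta)\le 4/\beta$ for $\beta\in(0,\tfrac12)$; for completeness I would also re-derive (b) since it costs nothing.

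For part (b): if $|x-1|\le\beta$ and $x>0$ then $\xi_x<1+\beta$, so $\ell(x)\ge\tfrac{(x-1)^2}{2(1+\beta)}$; in the case $\beta\ge 1$ the extra point $x=0$ is covered directly since $\ell(0)=1\ge\tfrac1{2(1+\beta)}$. Hence $\gammatil_2(\beta):=2(1+\beta)$ works, and in particular $\gammatil_2(1)=4$, consistent with $C_M=\sqrt{M\gammatil_2(1)}$.

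For the new bound in (a), fix $\beta\in(0,\tfrac12)$. Since $\gammatil_1(\beta)$ may be taken to be $\sup_{x\ge0,\,|x-1|\ge\beta}|x-1|/\ell(x)$, it suffices to show $\ell(x)\ge\tfrac{\beta}{4}|x-1|$ whenever $x\ge0$ and $|x-1|\ge\beta$. I would study $\phi(x)\doteq\ell(x)-\tfrac{\beta}{4}|x-1|$ separately on $[1+\beta,\infty)$ and on $[0,1-\beta]$, arguing by monotonicity. On $[1+\beta,\infty)$ one has $\phi'(x)=\log x-\tfrac{\beta}{4}\ge\log(1+\beta)-\tfrac{\beta}{4}>0$, using the elementary estimate $\log(1+\beta)\ge\beta-\tfrac{\beta^2}{2}>\tfrac34\beta$ for $\beta<\tfrac12$; so $\phi$ is nondecreasing there and $\phi(1+\beta)=\ell(1+\beta)-\tfrac{\beta^2}{4}\ge\tfrac{\beta^2}{2(1+\beta)}-\tfrac{\beta^2}{4}>0$ (the Taylor bound plus $1+\beta<\tfrac32$). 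On $[0,1-\beta]$ one has $\phi'(x)=\log x+\tfrac{\beta}{4}\le\log(1-\beta)+\tfrac{\beta}{4}<0$, using $\log(1-\beta)\le-\beta<-\tfrac{\beta}{4}$; so $\phi$ is nonincreasing there (its value at $x=0$ being $1-\tfrac{\beta}{4}>0$), and $\phi(1-\beta)=\ell(1-\beta)-\tfrac{\beta^2}{4}\ge\tfrac{\beta^2}{2}-\tfrac{\beta^2}{4}>0$ (Taylor bound with $\xi_{1-\beta}<1$). Thus $\phi\ge0$ on $\{x\ge0:|x-1|\ge\beta\}$, i.e.\ $|x-1|\le\tfrac{4}{\beta}\ell(x)$ there, giving $\gammatil_1(\beta)\le4/\beta$.

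No substantive obstacle is anticipated: the only points needing care are the endpoint $x=0$ (present for (b) when $\beta\ge1$, harmless for (a) since $\phi(0)=1-\tfrac\beta4>0$) and keeping the elementary inequalities $\log(1+\beta)>\tfrac34\beta$ and $\log(1-\beta)<-\beta$ on $(0,\tfrac12)$ rigorous via the standard logarithmic series (both are alternating/monotone tail estimates). The conceptual content is merely to replace a single global comparison by the two-interval monotonicity argument above, which yields the explicit constant $4/\beta$ needed later in the proof of Proposition~\ref{propjump:verifying condition part b}.
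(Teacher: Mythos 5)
Your proof is correct. The paper also starts from the minimal choice $\gammatil_1(\beta)=\sup_{|x-1|\ge\beta,\,x\ge0}|x-1|/\ell(x)$, but it works directly with $f(x)=(x-1)/\ell(x)$: it shows $f'<0$ away from $x=1$, concludes the sup is $\max\{|f(1+\beta)|,|f(1-\beta)|\}$, and then bounds $h_i(\beta)\doteq\beta|f(1\pm\beta)|$ by $4$ via Taylor estimates for $\log$. You instead show $\phi(x)\doteq\ell(x)-\tfrac{\beta}{4}|x-1|\ge0$ by monotonicity of $\phi$ on each of $[0,1-\beta]$ and $[1+\beta,\infty)$ plus an endpoint check using the Lagrange form $\ell(x)=(x-1)^2/(2\xi_x)$. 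The two are dual ways of packaging the same calculus: your $\phi'$ sign analysis is essentially the same derivative information the paper extracts from $f'<0$, and the endpoint checks are comparable. Where your version is genuinely cleaner is the $x<1$ branch: the Lagrange bound gives $\ell(1-\beta)\ge\beta^2/2$ immediately (hence $h_2\le2$), whereas the paper goes through $\log(1-\beta)=-\log(1+\tfrac{\beta}{1-\beta})$ and a third-order expansion of $\log(1+u)$. Your treatment of part (b), with the explicit constant $\gammatil_2(\beta)=2(1+\beta)$ and the separate check at $x=0$ when $\beta\ge1$, is also fine and consistent with the paper's use of $\gammatil_2(1)$.
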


\begin{proof}
	We only need to prove the last statement in part (a).
	Note that we can set
	\begin{equation*}
		\gammatil_1(\beta) = \sup_{|x-1| \ge \beta, x \ge 0} \frac{|x-1|}{\ell(x)}.
	\end{equation*}	
	For $\beta \in (0,\frac{1}{2})$, $x \ge 0$ and $|x-1| \ge \beta$, consider function $f(x) \doteq \frac{x-1}{\ell(x)}$.
	Since $\log (1+u) \le u$ for $u \ge -1$, we have
	\begin{equation*}
		f'(x) = \frac{\log x - (x-1)}{\ell^2(x)} < 0 \text{ for } x \ge 0, |x-1| \ge \beta.
	\end{equation*}
	Since $f(0)=-1$, $\lim_{x \to 1-} f(x) = -\infty$, $\lim_{x \to 1+} f(x) = \infty$ and $\lim_{x \to \infty} f(x)=0$, we have
	\begin{equation*}
		\gammatil_1(\beta) = \sup_{|x-1| \ge \beta, x \ge 0} \frac{|x-1|}{\ell(x)} = \sup_{|x-1| \ge \beta, x \ge 0} |f(x)| = \max \{|f(1+\beta)|, |f(1-\beta)|\}.
	\end{equation*}
	For $\beta \in (0,\frac{1}{2})$, let $h_1(\beta) \doteq \beta |f(1+\beta)|$ and $h_2(\beta) \doteq \beta |f(1-\beta)|$.
	It suffices to show $h_i(\beta) \le 4$ for $i=1,2$.
	Since $\log (1+u) \ge u - \frac{u^2}{2}$ for $u \ge 0$, we have
	\begin{equation*}
		h_1(\beta) = \frac{\beta^2}{(1+\beta) \log (1+\beta) - \beta} \le \frac{\beta^2}{(1+\beta)(\beta-\frac{\beta^2}{2}) - \beta} = \frac{1}{\frac{1}{2} - \frac{\beta}{2}} \le 4.
	\end{equation*}
	Also, since $\log (1+u) \le u - \frac{u^2}{2} + \frac{u^3}{3}$ for $u \ge 0$, we have
	\begin{align*}
		h_2(\beta) & = \frac{\beta^2}{(1-\beta) \log (1-\beta) + \beta}  = \frac{\beta^2}{-(1-\beta) \log (1 + \frac{\beta}{1-\beta}) + \beta} \\
		& \le \frac{\beta^2}{-(1-\beta)(\frac{\beta}{1-\beta} - \frac{1}{2}(\frac{\beta}{1-\beta})^2 + \frac{1}{3}(\frac{\beta}{1-\beta})^3) + \beta} = \frac{6(1-\beta)^2}{3 - 5\beta} \le 4.
	\end{align*}
	Thus we have shown that $h_i(\beta) \le 4$ for $i=1,2$, $\forall \beta \in (0,\half)$.
	The result follows.
\end{proof}

The following lemma is taken from~\cite{BudhirajaDupuisGanguly2015moderate} (see Lemma $3.2$ therein).

\begin{lemma} \label{lemjump:moment bounds of varphi and psi}
	Suppose $g \in \Smc^M_{+,m}$ for some $M \in (0,\infty)$.
	Let $f \doteq a(m)\sqrt{m} (g-1) \in \Smc^M_m$.
	Then
	\begin{flalign*}
		& (a) \intXT |f| \one_{\{ |f| \ge \beta a(m)\sqrt{m} \}} \, d\lambda \le \frac{M\gammatil_1(\beta)}{a(m)\sqrt{m}} \text{ for } \beta > 0 &\\
		& (b) \intXT g \one_{\{ g \ge \beta \}} \, d\lambda \le \frac{M\gammatil_1'(\beta)}{a^2(m) m} \text{ for } \beta > 1 \\
		& (c) \intXT |f|^2 \one_{\{ |f| \le \beta a(m)\sqrt{m} \}} \, d\lambda \le M\gammatil_2(\beta) \text{ for } \beta > 0,
	\end{flalign*}
	where $\gammatil_1, \gammatil'_1$ and $\gammatil_2$ are as in Lemma~\ref{lemjump:property of l}.
\end{lemma}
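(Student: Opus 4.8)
The plan is to obtain all three estimates as immediate consequences of the defining constraint
\begin{equation*}
	L_T(g) = \intXT \ell(g(s,y)) \, \lambda(ds\,dy) \le \frac{M}{a^2(m)m},
\end{equation*}
combined with the pointwise inequalities of Lemma~\ref{lemjump:property of l} and the fact that $\ell \ge 0$. The only bookkeeping needed is to translate each event from $f$ to $g$ via the relation $g = 1 + \frac{1}{a(m)\sqrt{m}} f$, equivalently $|f| = a(m)\sqrt{m}\,|g-1|$ and $g \ge 0$.

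For part (a), note that $\{|f| \ge \beta a(m)\sqrt{m}\} = \{|g-1| \ge \beta\}$, and on this set Lemma~\ref{lemjump:property of l}(a) gives $|g-1| \le \gammatil_1(\beta)\,\ell(g)$. Hence, using $\ell \ge 0$ to drop the indicator in the last step,
\begin{equation*}
	\intXT |f| \one_{\{ |f| \ge \beta a(m)\sqrt{m} \}} \, d\lambda = a(m)\sqrt{m} \intXT |g-1| \one_{\{ |g-1| \ge \beta \}} \, d\lambda \le a(m)\sqrt{m}\,\gammatil_1(\beta)\, L_T(g) \le \frac{M\gammatil_1(\beta)}{a(m)\sqrt{m}},
\end{equation*}
since $a(m)\sqrt{m}\cdot\frac{1}{a^2(m)m} = \frac{1}{a(m)\sqrt{m}}$. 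Part (b) is the same computation using the second inequality in Lemma~\ref{lemjump:property of l}(a): for $\beta > 1$, on $\{g \ge \beta\}$ one has $g \le \gammatil_1'(\beta)\,\ell(g)$, so $\intXT g \one_{\{g \ge \beta\}}\,d\lambda \le \gammatil_1'(\beta)\, L_T(g) \le \frac{M\gammatil_1'(\beta)}{a^2(m)m}$. For part (c), $\{|f| \le \beta a(m)\sqrt{m}\} = \{|g-1| \le \beta\}$ and $|f|^2 = a^2(m)m\,|g-1|^2$; Lemma~\ref{lemjump:property of l}(b) gives $|g-1|^2 \le \gammatil_2(\beta)\,\ell(g)$ on that set, whence
\begin{equation*}
	\intXT |f|^2 \one_{\{ |f| \le \beta a(m)\sqrt{m} \}} \, d\lambda = a^2(m)m \intXT |g-1|^2 \one_{\{ |g-1| \le \beta \}} \, d\lambda \le a^2(m)m\,\gammatil_2(\beta)\, L_T(g) \le M\gammatil_2(\beta).
\end{equation*}

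I do not anticipate a genuine obstacle: the lemma is essentially a reformulation of Lemma~\ref{lemjump:property of l} after the change of variables from $f$ to $g$ and insertion of the cost bound. The only points that require (minor) attention are the correct identification of the three truncation events in terms of $g$, the use of $\ell \ge 0$ to bound the indicator-restricted integral by the full quantity $L_T(g)$, and checking that the powers of $a(m)\sqrt{m}$ coming from $|f|$, $|f|^2$ cancel against those in $L_T(g) \le M/(a^2(m)m)$ to leave precisely the stated right-hand sides.
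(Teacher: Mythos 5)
Your proof is correct. The paper does not actually prove this lemma — it is imported verbatim from Lemma~3.2 of~\cite{BudhirajaDupuisGanguly2015moderate} — but your argument is exactly the standard one: translate each truncation event from $f$ to $g$ via $|f| = a(m)\sqrt{m}\,|g-1|$, apply the corresponding pointwise bound from Lemma~\ref{lemjump:property of l} on that event, drop the indicator using $\ell \ge 0$, and invoke the cost constraint $L_T(g) \le M/(a^2(m)m)$ that defines $\Smc^M_{+,m}$; the powers of $a(m)\sqrt{m}$ cancel as you claim. This matches the proof in the cited reference.
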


We will now proceed to the verification of Condition~\ref{condjump:cond3 to be checked}.
We begin with verifying part (a) of the condition.
The following moment bounds on $G$ will be useful.

\begin{lemma} \label{lemjump:moment bounds on G}
	For each $k \in \Nmb$, we have
	\begin{equation*}
		\sup_{q \in \Smchat} \int_\Xmb \| G(q,y) \|^k \, \lambda_\Xmb(dy) \le 2^{k/2} \Gammainf.
	\end{equation*}	
%	hence for each measurable $I \subset [0,T]$ and measurable $\Smchat$-valued process $\{ q(t), t \in [0,T] \}$, we have
%	\begin{equation*}
%		\int_{I \times \Xmb} \| G(q(s),y) \|^k \, \lambda(ds \, dy) \le 2^{k/2} \Gammainf |I|,
%	\end{equation*}
%	where $|I| = \lambda_T(I)$ is the Lebesgue measure of $I$.
\end{lemma}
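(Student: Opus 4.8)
The plan is to reduce the integral to the Lebesgue measure of a disjoint union of rectangles and then invoke the row-sum bound for the rate matrix $\Gamma$. The crucial structural fact is that the sets $\{A_{ij}(q)\}_{i \ne j}$ are pairwise disjoint for each fixed $q \in \Smchat$, as recorded in \eqref{eqjump:Aijdisjoint}. Because of this, for every $y \in \Xmb$ at most one term in the double sum defining $G(q,y)$ in \eqref{eqjump:G} can be nonzero; since $\| \ebd_j - \ebd_i \| = \sqrt{2}$ for $i \ne j$, this gives the pointwise identity $\| G(q,y) \|^k = 2^{k/2} \one_{\bigcup_{i \ne j} A_{ij}(q)}(y)$, so the seemingly $k$-dependent quantity is just a constant multiple of an indicator function.

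First I would compute $\lambda_\Xmb(A_{ij}(q))$ directly from the definition of $A_{ij}(q)$: it is the product of an interval of length $1$ in the $y_1$ variable and an interval of length $q_i \Gamma_{ij}(q)$ in the $y_2$ variable, hence $\lambda_\Xmb(A_{ij}(q)) = q_i \Gamma_{ij}(q)$. Then, using disjointness and countable additivity of Lebesgue measure (all summands are nonnegative, so the interchange of $\sum$ and $\int$ is harmless),
\begin{equation*}
	\int_\Xmb \| G(q,y) \|^k \, \lambda_\Xmb(dy) = 2^{k/2} \sum_{i=1}^\infty \sum_{j=1, j \ne i}^\infty q_i \Gamma_{ij}(q) = 2^{k/2} \sum_{i=1}^\infty q_i \Big( \sum_{j=1, j \ne i}^\infty \Gamma_{ij}(q) \Big).
\end{equation*}

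To finish, I would use that $\Gamma(q)$ is a rate matrix, so $\sum_{j \ne i} \Gamma_{ij}(q) = - \Gamma_{ii}(q) = |\Gamma_{ii}(q)| \le \Gammainf$ for every $i$, together with $\sum_i q_i = 1$ (valid since $q \in \Smchat$), to bound the right-hand side above by $2^{k/2} \Gammainf$; taking the supremum over $q \in \Smchat$ then yields the stated estimate. I do not anticipate any genuine difficulty here: the only two points needing a word of justification are the reduction of $\| G(q,y) \|$ to a scalar multiple of an indicator (which rests entirely on the disjointness \eqref{eqjump:Aijdisjoint}) and the term-by-term integration, both of which are routine.
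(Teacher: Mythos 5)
Your proof is correct and follows essentially the same route as the paper: both decompose the integral over the disjoint family $\{A_{ij}(q)\}$, use $\|\ebd_j-\ebd_i\|=\sqrt{2}$ to reduce $\|G(q,y)\|^k$ to $2^{k/2}$ on each piece, compute $\lambda_\Xmb(A_{ij}(q))=q_i\Gamma_{ij}(q)$, and bound $\sum_{j\ne i}\Gamma_{ij}(q)\le\Gammainf$ before summing against $\sum_i q_i=1$. The only cosmetic difference is that you make the pointwise identity $\|G(q,y)\|^k=2^{k/2}\one_{\bigcup A_{ij}(q)}(y)$ explicit, whereas the paper leaves the vanishing of $G$ off $\bigcup A_{ij}(q)$ implicit in the first equality.
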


\begin{proof}
%	Given $i$, $j \in \Nmb$, let
%%	\begin{equation*}
%		$B_{ij} = \{ y \in \Xmb : i-1 < y_1 \le i, \: (j-1) \Gammainf < y_2 \le j \Gammainf \}$.
%%	\end{equation*}
%	Note that for all $q \in \Smchat$ and $y \in \Xmb$, $G(q,y) \one_{B_{ii}}(y) = 0$. 
%	Also $B_{ij} \supset A_{ij}(q)$ are disjoint subsets of $\Xmb$ for different $(i,j)$ pairs.
%	So we have
	Recalling the definition of $G$ in \eqref{eqjump:G}, we have
	\begin{align*}
		\int_\Xmb \| G(q,y) \|^k \, \lambda_\Xmb(dy) & = \sum_{i=1}^\infty \sum_{j \ne i}^\infty \int_{A_{ij}(q)} \| G(q,y) \|^k \, \lambda_\Xmb(dy) = \sum_{i=1}^\infty \sum_{j \ne i}^\infty 2^{k/2} \lambda_\Xmb(A_{ij}(q)) \\
		& = \sum_{i=1}^\infty \sum_{j \ne i}^\infty 2^{k/2} q_i \Gamma_{ij}(q) \le \sum_{i=1}^\infty 2^{k/2} \Gammainf q_i = 2^{k/2} \Gammainf,
	\end{align*}
	where the first two equalities use the property \eqref{eqjump:Aijdisjoint} of the sets $\{A_{ij}(q)\}$.
	The result follows.
\end{proof}

The following lemma provides a key convergence property.

\begin{lemma} \label{lemjump:DCT}
	Fix $M \in (0,\infty)$.
	Suppose that $g^m$, $g \in B_2(M)$ and $g^m \to g$.
	Then
	\begin{equation*}
		\int_{[0,\cdot] \times \Xmb} g^m(s,y) G(p(s),y) \, \lambda(ds \, dy) \to \int_{[0,\cdot] \times \Xmb} g(s,y) G(p(s),y) \, \lambda(ds \, dy) \text{ in } \Cmb([0,T]:l_2).
	\end{equation*}
\end{lemma}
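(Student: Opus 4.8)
The plan is to prove convergence in $\Cmb([0,T]:l_2)$ by combining a coordinate-wise weak convergence argument with a uniform tail estimate coming from Lemma~\ref{lemjump:moment bounds on G}. First I would fix a truncation level $K \in \Nmb$ and split $l_2 = l_2^{\le K} \oplus l_2^{>K}$, where $l_2^{\le K}$ is the span of $\ebd_1,\dotsc,\ebd_K$. For the low-coordinate part: for each fixed $i \le K$, the scalar kernel $(s,y) \mapsto G_i(p(s),y)$ is a fixed function in $L^2(\lambda)$ (by Lemma~\ref{lemjump:moment bounds on G} with $k=2$, since $\|G(p(s),\cdot)\|_{L^2(\lambda_\Xmb)}^2 \le \sqrt2\,\Gammainf < \infty$, and integrating over $[0,T]$ gives finiteness), and moreover $(s,y) \mapsto \one_{[0,t]}(s) G_i(p(s),y)$ lies in $L^2(\lambda)$ uniformly in $t$. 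Hence by the definition of weak convergence in $L^2(\lambda)$, for each fixed $t$,
\begin{equation*}
	\int_{[0,t] \times \Xmb} g^m(s,y) G_i(p(s),y)\,\lambda(ds\,dy) \to \int_{[0,t] \times \Xmb} g(s,y) G_i(p(s),y)\,\lambda(ds\,dy).
\end{equation*}
To upgrade this pointwise-in-$t$ convergence to uniform convergence on $[0,T]$ of the $i$-th coordinate, I would check equicontinuity: for $t_1 < t_2$, the increment is $\int_{(t_1,t_2]\times\Xmb} g^m(s,y) G_i(p(s),y)\,\lambda(ds\,dy)$, which by Cauchy--Schwarz is bounded by $M \big(\int_{(t_1,t_2]\times\Xmb} G_i(p(s),y)^2\,\lambda(ds\,dy)\big)^{1/2}$, and this $\to 0$ as $t_2 - t_1 \to 0$ uniformly in $m$ by absolute continuity of the integral. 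Combined with pointwise convergence on a countable dense set of $t$'s and a standard Arzelà--Ascoli / subsequence argument, this gives uniform convergence of each of the finitely many coordinates $i = 1,\dotsc,K$, hence convergence of the $l_2^{\le K}$-projections in $\Cmb([0,T]:l_2^{\le K})$.

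For the high-coordinate tail, I would bound it uniformly in $m$ and $t$: using Cauchy--Schwarz and then Lemma~\ref{lemjump:moment bounds on G} with $k=2$,
\begin{equation*}
	\Big\| \int_{[0,t]\times\Xmb} g^m(s,y)\, \Pi_{>K} G(p(s),y)\,\lambda(ds\,dy) \Big\|^2 \le M^2 \int_{[0,T]\times\Xmb} \| \Pi_{>K} G(p(s),y) \|^2\,\lambda(ds\,dy),
\end{equation*}
where $\Pi_{>K}$ is the projection onto $l_2^{>K}$. Since $\int_{[0,T]\times\Xmb}\|G(p(s),y)\|^2\,\lambda(ds\,dy) \le \sqrt2\,T\,\Gammainf < \infty$ by Lemma~\ref{lemjump:moment bounds on G}, the dominated convergence theorem gives that $\int_{[0,T]\times\Xmb}\|\Pi_{>K}G(p(s),y)\|^2\,\lambda(ds\,dy) \to 0$ as $K \to \infty$, and the bound is uniform in $m$ and $t$; the same bound with $g$ in place of $g^m$ controls the limit's tail. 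Putting the pieces together: given $\eps > 0$, pick $K$ so the tail contributions (for both $g^m$ and $g$, uniformly in $m$, $t$) are $< \eps/3$, then use the low-coordinate uniform convergence to make the $l_2^{\le K}$-part $< \eps/3$ for $m$ large; this yields $\sup_{t \in [0,T]} \| \int_{[0,t]} g^m G(p(s),\cdot) - \int_{[0,t]} g\, G(p(s),\cdot)\| < \eps$ for large $m$, which is exactly convergence in $\Cmb([0,T]:l_2)$.

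The main obstacle I anticipate is the interplay between weak convergence and the $l_2$-norm: weak convergence of $g^m$ to $g$ in $L^2(\lambda)$ does not by itself control the $l_2$-norm of a vector-valued integral, so the argument genuinely needs the two-scale split — a finite-dimensional piece where weak convergence tested against fixed $L^2$ functions does the job, and an infinite tail piece handled by the uniform Hilbert--Schmidt-type bound from Lemma~\ref{lemjump:moment bounds on G}. A secondary technical point is justifying uniformity in $t \in [0,T]$ (rather than just pointwise convergence for each $t$); the equicontinuity estimate above, valid uniformly in $m$ because $\|g^m\|_{L^2(\lambda)} \le M$, is what makes this work, and it is worth stating carefully since the target space is $\Cmb([0,T]:l_2)$ with the uniform topology.
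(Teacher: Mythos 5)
Your proof is correct and is essentially the same as the paper's: both hinge on the Hilbert--Schmidt-type summability $\sum_i \int_{[0,T]\times\Xmb} G_i(p(s),y)^2\,\lambda(ds\,dy) < \infty$ from Lemma~\ref{lemjump:moment bounds on G} to promote coordinate-wise weak-$L^2$ convergence to convergence in $l_2$ for each fixed $t$, and both use a Cauchy--Schwarz equicontinuity estimate in $t$ to upgrade to uniform convergence on $[0,T]$; the paper simply organizes the first step as dominated convergence over the index $i$ while you do an explicit truncation $i\le K$ plus a uniform tail bound, and the paper proves equicontinuity once for the full $l_2$-valued integral rather than coordinate-by-coordinate. (Minor slip: with $k=2$ Lemma~\ref{lemjump:moment bounds on G} gives the constant $2\Gammainf$, not $\sqrt{2}\,\Gammainf$, but this does not affect the argument.)
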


\begin{proof}
	It follows from Lemma~\ref{lemjump:moment bounds on G} that $(s,y) \mapsto G_i(p(s),y)$ is in $L_2(\lambda)$ for each $i \in \Nmb$.
	Thus, since $g^m \to g$ in $B_2(M)$, we have for every $t \in [0,T]$ and $i \in \Nmb$,
	\begin{equation*} 
		\int_\Xt g^m(s,y) G_i(p(s),y) \, \lambda(ds \, dy) \to \int_\Xt g(s,y) G_i(p(s),y) \, \lambda(ds \, dy).
	\end{equation*}
	Note that for each $i \in \Nmb$,
	\begin{align*}
		\left| \int_\Xt g^m(s,y) G_i(p(s),y) \, \lambda(ds \, dy) \right| & \le \left( \int_\Xt |g^m(s,y)|^2 \, \lambda(ds \, dy) \right)^{1/2} \left( \int_\Xt G_i^2(p(s),y) \, \lambda(ds \, dy) \right)^{1/2} \\
		& \le M \left( \int_\Xt G_i^2(p(s),y) \, \lambda(ds \, dy) \right)^{1/2} \doteq \alpha_i.
	\end{align*}
	From Lemma~\ref{lemjump:moment bounds on G} we see that $\sum_{i=1}^\infty \alpha_i^2 < \infty$ and so by dominated convergence theorem for each fixed $t \in [0,T]$,
	\begin{equation} \label{eqjump:DCT}
		\int_\Xt g^m(s,y) G(p(s),y) \, \lambda(ds \, dy) \to \int_\Xt g(s,y) G(p(s),y) \, \lambda(ds \, dy).
	\end{equation}
	To argue that the convergence is in fact uniform in $t$, note that by Lemma~\ref{lemjump:moment bounds on G} once again, for $0 \le s \le t \le T$
	\begin{align*}
		& \quad \left\| \int_{[s,t] \times \Xmb} g^m(s,y) G(p(s),y) \, \lambda(ds \, dy) \right\|^2 \\
		& \le \int_{[0,T] \times \Xmb} | g^m(s,y) |^2 \, \lambda(ds \, dy) \int_{[s,t] \times \Xmb} \| G(p(s),y) \|^2 \, \lambda(ds \, dy) \\
		& \le 2 \Gammainf M^2 | t - s |.
	\end{align*}
	This implies equicontinuity, which shows that the convergence in \eqref{eqjump:DCT} is in fact uniform.	
\end{proof}

Now we are able to verify part (a) of Condition~\ref{condjump:cond3 to be checked}.

\begin{proposition} \label{propjump:verifying condition part a}
%	Suppose Condition~\ref{condjump:cond2} holds.
	Fix $M \in (0,\infty)$.
	Suppose that $g^m$, $g \in B_2(M)$ and $g^m \to g$.
	Let $\Gmc_0$ be as defined in \eqref{eqjump:G 0}.
	Then $\Gmc_0(g^m) \to \Gmc_0(g)$.
\end{proposition}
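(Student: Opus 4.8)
The plan is to exploit the fact that $\Gmc_0$ is defined through the \emph{linear} integral equation \eqref{eqjump:eta and psi}: the output depends affinely on the forcing term $\intXt G(p(s),y)\psi(s,y)\,\lambda(ds\,dy)$, so the difference of two outputs solves a homogeneous linear equation that can be closed by a Gronwall estimate, once one knows the two forcing terms converge. That convergence is exactly the content of Lemma~\ref{lemjump:DCT}, which is the only substantive ingredient.

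Concretely, I would set $\eta^m\doteq\Gmc_0(g^m)$ and $\eta\doteq\Gmc_0(g)$. By the well-posedness of \eqref{eqjump:eta and psi} recorded just above \eqref{eqjump:G 0}, these are the unique elements of $\Cmb([0,T]:l_2)$ satisfying, for all $t\in[0,T]$,
\begin{equation*}
	\eta^m(t)=\int_0^t Db(p(s))[\eta^m(s)]\,ds+h^m(t),\qquad \eta(t)=\int_0^t Db(p(s))[\eta(s)]\,ds+h(t),
\end{equation*}
where $h^m(t)\doteq\intXt G(p(s),y)g^m(s,y)\,\lambda(ds\,dy)$ and $h(t)\doteq\intXt G(p(s),y)g(s,y)\,\lambda(ds\,dy)$. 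Since $g^m,g\in B_2(M)$ and $g^m\to g$, Lemma~\ref{lemjump:DCT} gives $h^m\to h$ in $\Cmb([0,T]:l_2)$; write $\delta_m\doteq\|h^m-h\|_{*,T}\to 0$. Subtracting the two displays and setting $\xi^m\doteq\eta^m-\eta$ we obtain $\xi^m(t)=\int_0^t Db(p(s))[\xi^m(s)]\,ds+(h^m(t)-h(t))$, so Condition~\ref{condjump:cond2}(c), which provides $\|Db(q)\|_{L(l_2,l_2)}\le c_b$ for all $q\in\Smchat$, yields $\|\xi^m\|_{*,t}\le c_b\int_0^t\|\xi^m\|_{*,s}\,ds+\delta_m$. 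Gronwall's lemma then gives $\|\xi^m\|_{*,T}\le\delta_m e^{c_bT}\to 0$, so $\eta^m\to\eta$ in $\Cmb([0,T]:l_2)$, and hence in $\Dmb([0,T]:l_2)$, which is the assertion.

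The main obstacle is Lemma~\ref{lemjump:DCT} itself, i.e.\ turning the \emph{weak} convergence of $g^m$ in $L^2(\lambda)$ into convergence of the $l_2$-valued integrals $h^m$ \emph{uniformly} in $t$; this is where the moment bounds of Lemma~\ref{lemjump:moment bounds on G} are needed, both to place each coordinate map $(s,y)\mapsto G_i(p(s),y)$ in $L^2(\lambda)$ (so that weak convergence applies coordinatewise) and to supply an $\ell^2$-summable dominating sequence together with the equicontinuity giving uniformity in $t$. Note that in part (a) the function $G$ is evaluated only along the fixed deterministic trajectory $p(\cdot)$, so its lack of continuity in the first argument plays no role here; that difficulty is deferred to part (b), where $G$ is composed with the random controlled empirical measures. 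Everything past Lemma~\ref{lemjump:DCT} is a routine linear-stability argument using only the bound on $Db$.
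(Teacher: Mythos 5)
Your proposal is correct and follows essentially the same route as the paper: subtract the two defining equations, use Lemma~\ref{lemjump:DCT} to control the forcing term, and close with the bound $\|Db(q)\|_{L(l_2,l_2)}\le c_b$ from Condition~\ref{condjump:cond2}(c) together with Gronwall's lemma. The extra discussion of Lemma~\ref{lemjump:DCT} and of why the discontinuity of $G$ is harmless here is accurate but does not change the argument.
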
 

\begin{proof}
	Let $\eta^m \doteq \Gmc_0(g^m)$ and $\eta \doteq \Gmc_0(g)$.
	Then
	\begin{equation*}
		\eta^m(t) - \eta(t) = \int_0^t Db(p(s))[\eta^m(s) - \eta(s)] \, ds + \intXt \big(g^m(s,y)-g(s,y)\big) G(p(s),y) \, \lambda(ds \, dy).
	\end{equation*}
	The result now follows on applying Gronwall's lemma together with Condition~\ref{condjump:cond2}(c) and Lemma~\ref{lemjump:DCT}.
\end{proof}

In order to verify part (b) of Condition~\ref{condjump:cond3 to be checked}, we first prove some estimates.
Recall spaces $\Smc^M_{+,m}$ and $\Smc^M_{m}$ introduced in \eqref{eqjump:Smc+} and \eqref{eqjump:Smc}.

\begin{lemma} \label{lemjump:moment bounds on G square varphi}
	Let $M \in (0,\infty)$.
	Then there exists $\gammatil_3 \in (0,\infty)$ such that for all measurable maps $q \colon [0,T] \to \Smchat$,
	\begin{equation*}
		\sup_{m \in \Nmb} \sup_{g \in \Smc^M_{+,m}} \intXT \| G(q(s),y) \|^2 g(s,y) \, \lambda(ds \, dy) \le \gammatil_3.
	\end{equation*}
\end{lemma}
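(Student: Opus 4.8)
The proof will combine the two available ingredients: the pointwise bound $\|G(q,y)\| \le \sqrt{2}$ noted below \eqref{eqjump:Gi}, the integrability estimate $\int_\Xmb \|G(q,y)\|^2 \, \lambda_\Xmb(dy) \le 2\Gammainf$ (the case $k=2$ of Lemma~\ref{lemjump:moment bounds on G}), and the relative-entropy cost bound of Lemma~\ref{lemjump:moment bounds of varphi and psi}(b). Fix a threshold $\beta > 1$, say $\beta = 2$, and split the integral according to the regions $\{(s,y) : g(s,y) \le \beta\}$ and its complement. On the first region the factor $g$ is bounded and one uses only the $\|G\|^2$-integrability; on the second region $g$ is not bounded (and $\lambda(\XT) = \infty$, so the trivial bound $\|G\|^2 \le 2$ applied on all of $\XT$ is useless), and here one must use that $g$ lies in $\Smc^M_{+,m}$.

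On $\{g \le \beta\}$ I would bound $\|G(q(s),y)\|^2 g(s,y) \one_{\{g(s,y)\le\beta\}} \le \beta \|G(q(s),y)\|^2$ and integrate, using $q(s) \in \Smchat$ for a.e.\ $s$ together with Lemma~\ref{lemjump:moment bounds on G}:
\begin{equation*}
	\intXT \|G(q(s),y)\|^2 g(s,y) \one_{\{g \le \beta\}} \, \lambda(ds\,dy) \le \beta \int_0^T \Big( \int_\Xmb \|G(q(s),y)\|^2 \, \lambda_\Xmb(dy) \Big) ds \le 2\beta T \Gammainf.
\end{equation*}
On the complement, using $\|G(q(s),y)\|^2 \le 2$, then $g\one_{\{g>\beta\}} \le g\one_{\{g\ge\beta\}}$, and finally Lemma~\ref{lemjump:moment bounds of varphi and psi}(b) (valid since $\beta > 1$ and $g \in \Smc^M_{+,m}$),
\begin{equation*}
	\intXT \|G(q(s),y)\|^2 g(s,y) \one_{\{g > \beta\}} \, \lambda(ds\,dy) \le 2 \intXT g \one_{\{g \ge \beta\}} \, d\lambda \le \frac{2 M \gammatil_1'(\beta)}{a^2(m) m}.
\end{equation*}
To make this uniform in $m$, note from \eqref{eqjump:a m} that $a^2(m) m = (a(m)\sqrt{m})^2 \to \infty$, so $\kappa_0 \doteq \sup_{m \in \Nmb} (a^2(m) m)^{-1} < \infty$; hence the last display is at most $2 M \gammatil_1'(\beta) \kappa_0$, independent of $m$, of $g \in \Smc^M_{+,m}$, and of the measurable map $q$.

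Adding the two pieces and taking $\beta = 2$ gives the claim with $\gammatil_3 \doteq 4 T \Gammainf + 2 M \gammatil_1'(2) \kappa_0$. The argument is elementary once the two preceding lemmas are in hand; the only point that requires attention is that the tail region $\{g > \beta\}$ cannot be controlled by the crude bound $\|G\|_\infty^2 \lambda(\XT)$ and must instead be handled via the cost constraint defining $\Smc^M_{+,m}$, which is precisely what Lemma~\ref{lemjump:moment bounds of varphi and psi}(b) provides.
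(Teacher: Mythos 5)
Your proof is correct and is essentially the same as the paper's: both split $\XT$ at the threshold $g=2$, control the tail region $\{g\ge 2\}$ via $\|G\|^2\le 2$ and Lemma~\ref{lemjump:moment bounds of varphi and psi}(b), control the bounded region via Lemma~\ref{lemjump:moment bounds on G} with $k=2$, and conclude uniformity in $m$ from $a^2(m)m\to\infty$. The only cosmetic difference is that you make the $\sup_m (a^2(m)m)^{-1}<\infty$ step explicit, which the paper leaves implicit.
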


\begin{proof}
	Fix $g \in \Smc^M_{+,m}$.
	Then
	\begin{align*}
		& \quad \intXT \| G(q(s),y) \|^2 g(s,y) \, \lambda(ds \, dy) \\
		& = \int_{\{g \ge 2\}} \| G(q(s),y) \|^2 g(s,y) \, \lambda(ds \, dy) + \int_{\{g < 2\}} \| G(q(s),y) \|^2 g(s,y) \, \lambda(ds \, dy) \\
		& \le 2 \int_{\{g \ge 2\}} g(s,y) \, \lambda(ds \, dy) + 2 \intXT \| G(q(s),y) \|^2 \, \lambda(ds \, dy) \\
		& \le \frac{2M\gammatil_1'(2)}{a^2(m) m} + 4 \Gammainf T,
	\end{align*}
	where the last inequality follows from Lemmas~\ref{lemjump:moment bounds of varphi and psi}(b) and~\ref{lemjump:moment bounds on G}.
	Since $a^2(m)m \to \infty$ as $m \to \infty$, we have the result.
\end{proof}

The following lemma will be needed in the proof of the estimate \eqref{eqjump:Cmphi} in Lemma~\ref{lemjump:moment bounds on Z bar} and \eqref{eqjump:Emc2} in Proposition~\ref{propjump:verifying condition part b}.

\begin{lemma} \label{lemjump:moment bounds on G psi}
	Let $M \in (0,\infty)$.
	Then there exists a map $\gammatil_4 \colon (0,\infty) \to (0,\infty)$ such that for all $m \in \Nmb$, $\beta \in (0,\infty)$, measurable $I \subset [0,T]$ and measurable maps $q \colon [0,T] \to \Smchat$,
	\begin{align*}
		\sup_{f \in \Smc^M_m} & \int_{I \times \Xmb}  \left\| G(q(s),y) \right\| |f(s,y)| \one_{\{|f| \ge \beta a(m)\sqrt{m}\}} \, \lambda(ds \, dy) \le \frac{\gammatil_4(\beta)}{a(m) \sqrt{m}}, \\
		\sup_{f \in \Smc^M_m} & \Big\| \int_{I \times \Xmb}  G(q(s),y) f(s,y) \, \lambda(ds \, dy) \Big\| \le \gammatil_4(\beta) \Big( \frac{1}{a(m) \sqrt{m}} + \sqrt{|I|} \Big).
	\end{align*}
\end{lemma}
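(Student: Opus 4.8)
The plan is to fix $M$, $m\in\Nmb$, $\beta>0$, a measurable set $I\subset[0,T]$, a measurable map $q\colon[0,T]\to\Smchat$ and an arbitrary $f\in\Smc^M_m$, and to estimate the two quantities by splitting the range of $f$ at the height $\beta a(m)\sqrt m$ that already appears in the statement. On the set where $|f|\ge\beta a(m)\sqrt m$ I would discard all integrability in $f$ and use only the crude pointwise bound $\|G(q(s),y)\|\le\sqrt2$ (noted below \eqref{eqjump:Gi}) together with the $L^1$-control of $f$ supplied by Lemma~\ref{lemjump:moment bounds of varphi and psi}(a); on the complementary set I would apply Cauchy--Schwarz, bounding $G$ through the $k=2$ case of Lemma~\ref{lemjump:moment bounds on G} and $f$ through the $L^2$-control of Lemma~\ref{lemjump:moment bounds of varphi and psi}(c).

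For the first inequality only the ``large'' part is present. Since $I\subset[0,T]$, replacing $\|G(q(s),y)\|$ by $\sqrt2$ and enlarging the domain of integration to $\XT$ reduces the left side to $\sqrt2\intXT|f|\,\one_{\{|f|\ge\beta a(m)\sqrt m\}}\,d\lambda$, which by Lemma~\ref{lemjump:moment bounds of varphi and psi}(a) is at most $\sqrt2\,M\gammatil_1(\beta)/(a(m)\sqrt m)$. This already gives the first bound with constant $\sqrt2\,M\gammatil_1(\beta)$, uniformly in $I$ and $q$.

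For the second inequality I would first use the elementary estimate $\|\int h\,d\lambda\|\le\int\|h\|\,d\lambda$ for $l_2$-valued integrands and then write $f=f\one_{\{|f|\ge\beta a(m)\sqrt m\}}+f\one_{\{|f|<\beta a(m)\sqrt m\}}$. The contribution of the first summand is controlled exactly as above by $\sqrt2\,M\gammatil_1(\beta)/(a(m)\sqrt m)$. For the second summand, Cauchy--Schwarz bounds the corresponding integral by the product of $\bigl(\int_{I\times\Xmb}\|G(q(s),y)\|^2\,\lambda(ds\,dy)\bigr)^{1/2}$ and $\bigl(\intXT|f|^2\one_{\{|f|<\beta a(m)\sqrt m\}}\,d\lambda\bigr)^{1/2}$; by Fubini and Lemma~\ref{lemjump:moment bounds on G} the first factor is at most $\sqrt{2\Gammainf\,|I|}$, and by Lemma~\ref{lemjump:moment bounds of varphi and psi}(c) the second is at most $\sqrt{M\gammatil_2(\beta)}$. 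Adding the two contributions yields the claimed bound, so one may take $\gammatil_4(\beta)\doteq\max\{\sqrt2\,M\gammatil_1(\beta),\ \sqrt{2\Gammainf M\gammatil_2(\beta)}\}$, which is finite and depends only on $\beta$ (and the fixed $M$), not on $m$, $I$ or $q$.

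I do not anticipate a genuine obstacle: the only points requiring attention are performing the split at exactly $\beta a(m)\sqrt m$ so that Lemma~\ref{lemjump:moment bounds of varphi and psi} applies verbatim without a change of constants, and recognizing that the uniform bound $\|G\|\le\sqrt2$ is precisely what allows one to handle the (possibly large) part of $f$ on which no $L^2$-in-$f$ estimate is available, while the linear growth in $|I|$ of $\int_{I\times\Xmb}\|G(q(s),y)\|^2\,\lambda(ds\,dy)$ is what produces the $\sqrt{|I|}$ term in the second estimate.
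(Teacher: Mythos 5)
Your proof is correct and follows essentially the same route as the paper's: the decomposition of $f$ at the threshold $\beta a(m)\sqrt m$, the crude bound $\|G\|\le\sqrt 2$ together with Lemma~\ref{lemjump:moment bounds of varphi and psi}(a) on the large-$|f|$ set, and Cauchy--Schwarz combined with Lemma~\ref{lemjump:moment bounds on G} (with $k=2$) and Lemma~\ref{lemjump:moment bounds of varphi and psi}(c) on the small-$|f|$ set, yielding the same constant $\gammatil_4(\beta)\doteq\max\{\sqrt2\,M\gammatil_1(\beta),\sqrt{2\Gammainf M\gammatil_2(\beta)}\}$.
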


\begin{proof}
	Fix $f \in \Smc^M_m$.
	Note that
	\begin{align*}
		\left\| \int_{I \times \Xmb}  G(q(s),y) f(s,y) \, \lambda(ds \, dy) \right\|
%		& \le \Big\| \int_{I \times \Xmb}  G(q(s),y) f \one_{\{|f| \ge \beta a(m)\sqrt{m}\}} \, \lambda(ds \, dy) \Big\| + \Big\| \int_{I \times \Xmb}  G(q(s),y) f \one_{\{|f| < \beta a(m)\sqrt{m}\}} \, \lambda(ds \, dy) \Big\| \\
		& \le \int_{I \times \Xmb} \| G(q(s),y) \| |f(s,y)| \one_{\{|f(s,y)| \ge \beta a(m)\sqrt{m}\}} \, \lambda(ds \, dy) \\
		& \quad + \int_{I \times \Xmb} \| G(q(s),y) \| |f(s,y)| \one_{\{|f(s,y)| < \beta a(m)\sqrt{m}\}} \, \lambda(ds \, dy).
	\end{align*}
	It follows from Lemma~\ref{lemjump:moment bounds of varphi and psi}(a) that
	\begin{align*}
		& \int_{I \times \Xmb} \| G(q(s),y) \| |f(s,y)| \one_{\{|f(s,y)| \ge \beta a(m)\sqrt{m}\}} \, \lambda(ds \, dy) \\
		& \qquad \le \sqrt{2} \int_{I \times \Xmb} |f(s,y)| \one_{\{|f(s,y)| \ge \beta a(m)\sqrt{m}\}} \, \lambda(ds \, dy) \le \frac{\sqrt{2} M \gammatil_1(\beta)}{a(m) \sqrt{m}}.
	\end{align*}
	This proves the first inequality in the lemma.
	From Cauchy-Schwarz inequality it follows that
	\begin{align*}
		& \int_{I \times \Xmb} \| G(q(s),y) \| |f(s,y)| \one_{\{|f(s,y)| < \beta a(m)\sqrt{m}\}} \, \lambda(ds \, dy) \\
		& \qquad \le \Big( \int_{I \times \Xmb} \|G(q(s),y)\|^2 \, \lambda(ds \, dy) \Big)^{1/2} \Big( \int_{I \times \Xmb} |f(s,y)|^2 \one_{\{|f(s,y)| < \beta a(m)\sqrt{m}\}} \, \lambda(ds \, dy) \Big)^{1/2} \\
		& \qquad \le \sqrt{2 \Gammainf M \gammatil_2(\beta) |I|},
	\end{align*}
	where the last inequality follows from Lemmas~\ref{lemjump:moment bounds of varphi and psi}(c) and~\ref{lemjump:moment bounds on G}.
	The second inequality in the lemma now follows by combining the above two displays.
\end{proof}

Recall the space $\Umc^M_{+,m}$ and the map $\Gmc^m$ introduced in \eqref{eqjump:Umc} and above \eqref{eqjump:G 0}, respectively.
Let for $\varphi \in \Umc^M_{+,m}$, $\Zbar^{m,\varphi} \doteq \Gmc^m(\frac{1}{m} N^{m\varphi})$, where $N^{m\varphi}$ is as defined in \eqref{eqjump:Nphi}.
Then it follows from an application of Girsanov's theorem that (see for example the arguments above Lemma $4.4$ in~\cite{BudhirajaDupuisGanguly2015moderate})
\begin{equation} \label{eqjump:Zbar m varphi}
	\Zbar^{m,\varphi} = a(m)\sqrt{m} (\mubar^{m,\varphi} - p),
\end{equation}
where $\mubar^{m,\varphi}$ is the unique pathwise solution of 
\begin{equation*}
	\mubar^{m,\varphi}(t) = \mu^m(0) + \frac{1}{m} \intXt G(\mubar^{m,\varphi}(s-),y) \, N^{m\varphi}(ds \, dy). 
\end{equation*}
%We will like to show that for every $\varphi \in \Umc^M_{+,m}$ the integral equation
%\begin{equation}
%	\mubar^{m,\varphi}(t) = \mu^m(0) + \frac{1}{m} \intXt G(\mubar^m(s-),y) \, N^{m\varphi}(ds \, dy) 
%\end{equation}
%has a unique pathwise solution, the proof of which is going to use Girsanov's theorem and is our proposed future work.
%Define $\Zbar^{m,\varphi} \equiv \Gmc^m(\frac{1}{m} N^{m\varphi})$, and note that this is equivalent to
%\begin{equation} \label{eqjump:Z bar}
%	\Zbar^{m,\varphi} = a(m)\sqrt{m} (\mubar^{m,\varphi} - p).
%\end{equation}
The following moment bounds on $\Zbar^{m,\varphi}$ will be useful for our analysis.

\begin{lemma} \label{lemjump:moment bounds on Z bar}  For every $M \in (0, \infty)$,
	\begin{equation*}
		\sup_{m \in \Nmb} \sup_{\varphi \in \Umc^M_{+,m}} \Emb \left\| \Zbar^{m,\varphi} \right\|_{*,T}^2 < \infty .
	\end{equation*}
\end{lemma}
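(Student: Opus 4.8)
\emph{Proof strategy.} The plan is to decompose $\Zbar^{m,\varphi}$ into a deterministic initial term, a Lipschitz drift term, a controlled drift term carrying the perturbation $\psi^m$, and a martingale term, and then to close a Gronwall estimate for $\Phi^m(t) \doteq \Emb\|\Zbar^{m,\varphi}\|_{*,t}^2$ with constants uniform in $m$ and in $\varphi \in \Umc^M_{+,m}$. Let $\Ntil^{m\varphi}(ds\,dy) \doteq N^{m\varphi}(ds\,dy) - m\varphi(s,y)\,\lambda(ds\,dy)$ be the compensated controlled measure, and write $\psi^m \doteq a(m)\sqrt m\,(\varphi-1)$, so that $\varphi = 1 + (a(m)\sqrt m)^{-1}\psi^m$. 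Using $b(q) = \int_\Xmb G(q,y)\,\lambda_\Xmb(dy)$, the relation \eqref{eqjump:Zbar m varphi} together with the defining equation for $\mubar^{m,\varphi}$, and the equation \eqref{eqjump:p t} for $p$, one obtains for $t \in [0,T]$
\begin{align*}
	\Zbar^{m,\varphi}(t) &= a(m)\sqrt m\,(\mu^m(0)-p(0)) + a(m)\sqrt m \int_0^t \big(b(\mubar^{m,\varphi}(s)) - b(p(s))\big)\,ds \\
	&\quad + \intXt G(\mubar^{m,\varphi}(s-),y)\,\psi^m(s,y)\,\lambda(ds\,dy) + M^m(t),
\end{align*}
where $M^m(t) \doteq \frac{a(m)}{\sqrt m}\intXt G(\mubar^{m,\varphi}(s-),y)\,\Ntil^{m\varphi}(ds\,dy)$.

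Next I would estimate the four terms. The first is bounded uniformly in $m$ by Condition~\ref{condjump:cond2}(d); in fact it tends to $0$. For the second, $b$ is Lipschitz by Condition~\ref{condjump:cond1}(b) (which holds under Condition~\ref{condjump:cond2}(c), see Remark~\ref{rmkjump:rmk for cond2}(i)), and since $\mubar^{m,\varphi}(s)-p(s) = (a(m)\sqrt m)^{-1}\Zbar^{m,\varphi}(s)$ this term is dominated in norm by $L_b\int_0^t \|\Zbar^{m,\varphi}\|_{*,s}\,ds$. For the third term, $\mubar^{m,\varphi}(s-)$ takes values in $\Smchat$ and $\psi^m(\omega,\cdot,\cdot) \in \Smc^M_m$ a.s., so Lemma~\ref{lemjump:moment bounds on G psi} (applied pathwise with $\beta=1$, $I=[0,t]$, $q(\cdot)=\mubar^{m,\varphi}(\cdot-)$) bounds $\sup_{t\le T}\|\intXt G(\mubar^{m,\varphi}(s-),y)\psi^m(s,y)\,\lambda(ds\,dy)\|$ by a deterministic constant, uniformly in $m$ and $\varphi \in \Umc^M_{+,m}$. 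For $M^m$, Doob's $L^2$ inequality for the $l_2$-valued martingale $M^m$ together with the isometry for stochastic integrals against $\Ntil^{m\varphi}$ (whose compensator is $m\varphi(s,y)\,\lambda(ds\,dy)$) gives
\[
	\Emb\|M^m\|_{*,T}^2 \le 4a^2(m)\,\Emb\int_{\XT}\|G(\mubar^{m,\varphi}(s-),y)\|^2\,\varphi(s,y)\,\lambda(ds\,dy) \le 4a^2(m)\,\gammatil_3,
\]
where the last inequality is Lemma~\ref{lemjump:moment bounds on G square varphi} applied pathwise with $g=\varphi(\omega,\cdot,\cdot)$; this is bounded uniformly in $m$ since $a(m)\to 0$.

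Combining these bounds via $\big(\sum_{k=1}^4 x_k\big)^2 \le 4\sum_k x_k^2$ and Cauchy--Schwarz for the time integral, I obtain $\Phi^m(t) \le C + C\int_0^t \Phi^m(s)\,ds$ for a constant $C$ depending only on $M$, $T$, $\Gammainf$, $L_b$ and $\sup_{m}a^2(m)$. Since $\mubar^{m,\varphi}(s)$ and $p(s)$ both lie in $\Smchat$, hence in the closed unit ball of $l_2$, one has the crude a priori bound $\|\Zbar^{m,\varphi}(s)\| \le 2a(m)\sqrt m$, so $\Phi^m(t) < \infty$ and Gronwall's lemma applies, yielding $\Phi^m(T) \le Ce^{CT}$, which is the claimed uniform bound.

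The main obstacle is the controlled drift term $\intXt G(\mubar^{m,\varphi}(s-),y)\psi^m(s,y)\,\lambda(ds\,dy)$: the perturbation $\psi^m$ is constrained only through the relative-entropy cost $L_T(\varphi) \le M/(a^2(m)m)$ and can be of order $a(m)\sqrt m$ pointwise, so a naive estimate would diverge. The resolution, encapsulated in Lemma~\ref{lemjump:moment bounds on G psi}, is to split the integral over $\{|\psi^m| \ge a(m)\sqrt m\}$ --- where $|x-1| \le \gammatil_1(\beta)\ell(x)$ and the cost bound force an $O((a(m)\sqrt m)^{-1})$ contribution --- and over $\{|\psi^m| < a(m)\sqrt m\}$ --- where Cauchy--Schwarz, $|x-1|^2 \le \gammatil_2(\beta)\ell(x)$, and the uniform bound $\int_\Xmb \|G(q,y)\|^2\,\lambda_\Xmb(dy) \le 2\Gammainf$ from Lemma~\ref{lemjump:moment bounds on G} give an $O(1)$ contribution. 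It is precisely here that the lack of regularity of $q \mapsto G(q,y)$ is immaterial: only the uniform-in-$q$ moment bounds on $G$ (Lemmas~\ref{lemjump:moment bounds on G}, \ref{lemjump:moment bounds on G square varphi} and~\ref{lemjump:moment bounds on G psi}) are used. A minor but necessary point is to record the a priori finiteness of $\Phi^m$, via the $\Smchat$-valuedness of $\mubar^{m,\varphi}$, before invoking Gronwall's lemma.
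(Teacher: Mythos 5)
Your proof is correct and follows essentially the same route as the paper: the same decomposition of $\Zbar^{m,\varphi}$ into initial term, martingale, Lipschitz drift, and controlled drift, the martingale estimated via Doob and Lemma~\ref{lemjump:moment bounds on G square varphi}, the controlled drift via Lemma~\ref{lemjump:moment bounds on G psi} applied pathwise, and Gronwall to close. The explicit a priori finiteness of $\Phi^m$ via the $\Smchat$-valuedness of $\mubar^{m,\varphi}$ and $p$ is a sound addition that the paper leaves implicit.
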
 

\begin{proof}
	Given $\varphi \in \Umc^M_{+,m}$, let $\Ntil^{m\varphi}(ds \, dy) \doteq N^{m\varphi}(ds \, dy) - m \varphi(s,y) \lambda(ds \, dy)$ and $\psi \doteq a(m)\sqrt{m} (\varphi-1)$.
	Then recalling \eqref{eqjump:p t} and that $b(q) = \int_\Xmb G(q,y) \, \lambda_\Xmb(dy)$,
	\begin{align*}
		\mubar^{m,\varphi}(t) - p(t) & = \mu^m(0) - p(0) + \frac{1}{m} \intXt G(\mubar^{m,\varphi}(s-),y) \, \Ntil^{m\varphi}(ds \, dy) \\
		& \quad + \intXt \Big( G(\mubar^{m,\varphi}(s),y) - G(p(s),y) \Big) \lambda(ds \, dy) \\
		& \quad + \intXt G(\mubar^{m,\varphi}(s),y) \big( \varphi(s,y) - 1 \big) \lambda(ds \, dy). 
	\end{align*}
	Using this and \eqref{eqjump:Zbar m varphi}, write
	\begin{equation} \label{eqjump:decomposition of Zbar}
	\Zbar^{m,\varphi} = A^m + M^{m,\varphi} + B^{m,\varphi} + C^{m,\varphi},
	\end{equation}
	where
	\begin{align*}
		A^m & \doteq a(m)\sqrt{m} (\mu^m(0) - p(0)), \\
		M^{m,\varphi}(t) & \doteq \frac{a(m)}{\sqrt{m}} \intXt G(\mubar^{m,\varphi}(s-),y) \, \Ntil^{m\varphi}(ds \, dy), \\
		B^{m,\varphi}(t) & \doteq a(m)\sqrt{m} \intXt \Big( G(\mubar^{m,\varphi}(s),y) - G(p(s),y) \Big) \lambda(ds \, dy) \\
		& = a(m)\sqrt{m} \int_0^t \Big( b(\mubar^{m,\varphi}(s)) - b(p(s)) \Big) ds \\
		C^{m,\varphi}(t) & \doteq \intXt G(\mubar^{m,\varphi}(s),y) \psi(s,y) \, \lambda(ds \, dy).
	\end{align*}
	Noting that $M^{m,\varphi}$ is a martingale, 
	Doob's inequality gives us
	\begin{equation*}
		\Emb \left\| M^{m,\varphi}\right\|_{*,T}^2 \le \frac{4a^2(m)}{m} \Emb \intXT \| G(\mubar^{m,\varphi}(s),y) \|^2 m \varphi(s,y) \, \lambda(ds \, dy).
	\end{equation*}
	It then follows from Lemma~\ref{lemjump:moment bounds on G square varphi} that
	\begin{equation} \label{eqjump:moment bounds on M}
		\sup_{\varphi \in \Umc^M_{+,m}} \Emb \left\| M^{m,\varphi}\right\|_{*,T}^2 \le \kappa_1 a^2(m).
	\end{equation}
%	Since $X^m_i(0)$ are drawn independently from $p(0)$, we have
%	\begin{equation} \label{eqjump:moment bounds on mu m 0 - p 0}
%		\Emb \| \mu^m(0) - p(0) \|^2 = \sum_{i=1}^\infty \Emb \| \mu^m_i(0) - p_i(0) \|^2 = \sum_{i=1}^\infty \frac{1}{m} p_i(0) \big( 1 - p_i(0) \big) \le \frac{1}{m} \, .
%	\end{equation}
%	So
%	\begin{equation} \label{eqjump:moment bounds on A m}
%		\sup_{\varphi \in \Umc^M_{+,m}} \Emb \Big[ \sup_{t \le T} \| A^{m,\varphi}(t) \|^2 \Big] \le a^2(m).
%	\end{equation}
	Using Cauchy-Schwarz inequality and Condition~\ref{condjump:cond1}(b) we have for all $\varphi \in \Umc_{+,m}^M$,
	\begin{equation*}
		\left\| B^{m,\varphi} \right\|_{*,t}^2 \le a^2(m) m T \int_0^t \| b(\mubar^{m,\varphi}(s)) - b(p(s)) \|^2 \, ds \le TL_b^2 \int_0^t \left\| \Zbar^{m,\varphi} \right\|_{*,s}^2 \, ds.
	\end{equation*}
	Since $\psi \in \Smc_m^M$ a.s., it follows from Lemma~\ref{lemjump:moment bounds on G psi} that
	\begin{equation} \label{eqjump:Cmphi}
		\sup_{\varphi \in \Umc^M_{+,m}} \left\| C^{m,\varphi}\right\|_{*,T}^2 \le \kappa_2 \Big( \frac{1}{a(m) \sqrt{m}} + \sqrt{T} \Big)^2 \le 2\kappa_2 \Big( \frac{1}{a^2(m) m} + T \Big).
	\end{equation}
	Collecting these estimates we have for some $\kappa_3 \in (0,\infty)$ and all $\varphi \in \Umc^M_{+,m}$, $t \in [0,T]$,
	\begin{equation*}
		\Emb \left\| \Zbar^{m,\varphi} \right\|_{*,t}^2 \le \kappa_3 \left( \|A^m\|^2 + a^2(m) + \frac{1}{a^2(m) m} + 1 + \int_0^t \Emb \left\| \Zbar^{m,\varphi} \right\|_{*,s}^2 \, ds \right).
	\end{equation*}
	The result now follows from Gronwall's inequality, \eqref{eqjump:a m} and Condition~\ref{condjump:cond2}(d).
\end{proof}

Although $G(q,y)$ is not a continuous map, using the specific form of $G$ and properties of $\Gamma$, we can establish the following Lipschitz property.

\begin{lemma} \label{lemjump:Lipschitz of G}
	There exits $\gammatil_5 \in (0,\infty)$ such that for all $g \in \Mmb_b(\Xmb)$ and all $q$, $\qtil \in \Smchat$,
	\begin{equation*}
		\left\| \int_\Xmb \Big( G(\qtil,y) - G(q,y) \Big) g(y) \, \lambda_\Xmb(dy) \right\| \le \gammatil_5 \|g\|_\infty \|\qtil - q\|.
	\end{equation*}
\end{lemma}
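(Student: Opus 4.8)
The idea is to unfold the definition of $G$ in \eqref{eqjump:G}, interchange sum and integral (justified by the disjointness property \eqref{eqjump:Aijdisjoint} and nonnegativity), and reduce the bound to two differences: one coming from the change in the sets $A_{ij}$ when $q$ is replaced by $\qtil$, and one where we can also pretend the boundary of the relevant intervals did not move. More precisely, for $q \in \Smchat$ write $\lambda_\Xmb(A_{ij}(q)) = q_i \Gamma_{ij}(q)$, so that
\begin{equation*}
	\int_\Xmb G(q,y) g(y) \, \lambda_\Xmb(dy) = \sum_{i=1}^\infty \sum_{j \ne i}^\infty (\ebd_j - \ebd_i) \int_{A_{ij}(q)} g(y) \, \lambda_\Xmb(dy).
\end{equation*}
The first step is the crude bound $\big| \int_{A_{ij}(q)} g \, d\lambda_\Xmb - \int_{A_{ij}(\qtil)} g \, d\lambda_\Xmb \big| \le \|g\|_\infty \big( |q_i\Gamma_{ij}(q) - \qtil_i\Gamma_{ij}(\qtil)| + (\text{measure of the symmetric difference coming from the shift of the $y_2$-window}) \big)$; however, a cleaner route is to split $A_{ij}(q)$ into its $y_1$-slab $\{i-1<y_1\le i\}$ and its $y_2$-window and note that $g$ is bounded, so $\big|\int_{A_{ij}(q)}g\,d\lambda_\Xmb\big| \le \|g\|_\infty\, q_i\Gamma_{ij}(q)$ and the difference of the two integrals is at most $\|g\|_\infty\,|q_i\Gamma_{ij}(q) - \qtil_i\Gamma_{ij}(\qtil)|$, because for fixed $i$ the windows $\{(j-1)\Gammainf < y_2 \le (j-1)\Gammainf + q_i\Gamma_{ij}(q)\}$ and the corresponding one for $\qtil$ share the left endpoint $(j-1)\Gammainf$. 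Thus
\begin{equation*}
	\Big\| \int_\Xmb \big( G(\qtil,y) - G(q,y) \big) g(y) \, \lambda_\Xmb(dy) \Big\| \le \sqrt{2}\,\|g\|_\infty \sum_{i=1}^\infty \sum_{j \ne i}^\infty \big| \qtil_i\Gamma_{ij}(\qtil) - q_i\Gamma_{ij}(q) \big|,
\end{equation*}
using $\|\ebd_j - \ebd_i\| = \sqrt{2}$ and summing the $\ell_1$ bound over $(i,j)$ (which dominates the $\ell_2$ norm).

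The second step is to estimate $\sum_{i}\sum_{j\ne i} |\qtil_i\Gamma_{ij}(\qtil) - q_i\Gamma_{ij}(q)|$ by adding and subtracting $q_i\Gamma_{ij}(\qtil)$: it is bounded by
\begin{equation*}
	\sum_{i=1}^\infty \sum_{j \ne i}^\infty |\qtil_i - q_i|\,|\Gamma_{ij}(\qtil)| + \sum_{i=1}^\infty q_i \sum_{j \ne i}^\infty |\Gamma_{ij}(\qtil) - \Gamma_{ij}(q)|.
\end{equation*}
For the first sum, interchange the order of summation and apply the column bound $\sup_j \sum_i |\Gamma_{ij}(\qtil)| \le c_\Gamma$ from Condition~\ref{condjump:cond2}(b)... except the sum is over $i$ for fixed... — here I would instead use $\sum_j |\Gamma_{ij}(\qtil)| = |\Gamma_{ii}(\qtil)| + \sum_{j\ne i}\Gamma_{ij}(\qtil) \le 2\Gammainf$ (valid since $\Gamma$ is a rate matrix), so $\sum_i\sum_{j\ne i}|\qtil_i-q_i||\Gamma_{ij}(\qtil)| \le 2\Gammainf \sum_i |\qtil_i - q_i| \le 2\Gammainf \sqrt{\|\qtil-q\|} $... no: $\sum_i|\qtil_i - q_i|$ is an $\ell_1$ norm and is not controlled by $\|\qtil - q\|_{\ell_2}$ in general. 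The fix is to use that $q,\qtil$ lie in the simplex $\Smchat$: then $\sum_i |\qtil_i - q_i| \le \sum_i (\qtil_i + q_i) = 2$, but this only gives a constant, not Lipschitz. The genuinely useful route is the Lipschitz hypothesis on $\Gamma$ itself: by Condition~\ref{condjump:cond2}(b), $\sup_i \sum_{j\ne i}|\Gamma_{ij}(\qtil) - \Gamma_{ij}(q)| \le L_\Gamma \|\qtil - q\|$, so the second sum is $\le L_\Gamma\|\qtil-q\|\sum_i q_i = L_\Gamma\|\qtil - q\|$. For the first sum I would instead group as $\sum_{j\ne i}\Gamma_{ij}(\qtil)\le \Gammainf$ for each $i$, giving $\sum_i |\qtil_i-q_i|\,\Gammainf$; to turn $\sum_i|\qtil_i-q_i|$ into $\|\qtil-q\|$ one observes it cannot be done termwise, so the cleanest is actually to note $b(q)=\int_\Xmb G(q,y)\lambda_\Xmb(dy)$ and split off the drift part separately, or — better — to bound the first sum by $2\Gammainf$ times the $\ell_1$-to-$\ell_2$...

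The main obstacle is exactly this $\ell_1$-versus-$\ell_2$ issue: a naive splitting produces $\sum_i|\qtil_i - q_i|$, which is \emph{not} Lipschitz in the $\ell_2$ norm on $l_2$. I expect the correct organization is to write $\Gamma_{ij}(\qtil) = \Gamma_{ij}(q) + (\Gamma_{ij}(\qtil)-\Gamma_{ij}(q))$ and combine, so that the ``bad'' first sum becomes $\sum_i\sum_{j\ne i}|\qtil_i - q_i|\,\Gamma_{ij}(q)$, and then use that the $j$-sum of $\Gamma_{ij}(q)$ is $\le\Gammainf$, together with Cauchy--Schwarz on $\sum_i |\qtil_i-q_i|\cdot 1$... which fails for an infinite index set. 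The resolution must exploit structure I have not yet used, most plausibly Condition~\ref{condjump:cond2}(c) (differentiability of $b$ with bounded derivative): since $b(q) = \sum_i(\sum_j q_j\Gamma_{ji}(q))\ebd_i$, the ``diagonal transport'' part of $\sum_i\sum_{j\ne i}(q_i\Gamma_{ij}(q)-\qtil_i\Gamma_{ij}(\qtil))\ebd_i$ is controlled via $\|b(\qtil)-b(q)\| \le c_b\|\qtil-q\|$, and only the genuinely off-diagonal $(\ebd_j-\ebd_i)$ contributions need the $\Gamma$-Lipschitz bound, where the surviving sum is $\sum_i q_i \sum_{j\ne i}|\Gamma_{ij}(\qtil)-\Gamma_{ij}(q)| \le L_\Gamma\|\qtil-q\|$. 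I would therefore structure the proof around this decomposition: $G = \text{(off-diagonal increments)} - \text{(diagonal, i.e.\ $b$)}$, bound the $b$-part by Condition~\ref{condjump:cond2}(c), and bound the off-diagonal part using $\|\ebd_j-\ebd_i\|=\sqrt 2$, the column/row sum bounds and the Lipschitz estimate in Condition~\ref{condjump:cond2}(b), with $\|g\|_\infty$ factoring out throughout.
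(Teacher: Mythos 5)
You have correctly diagnosed the obstruction: after expanding $G$ as $\sum_{i,j}(\ebd_j-\ebd_i)\one_{A_{ij}}$ and applying the triangle inequality termwise, one is left with $\sum_i |\qtil_i-q_i|\cdot\Gammainf$, and the $\ell_1$ sum $\sum_i|\qtil_i-q_i|$ is not controlled by $\|\qtil-q\|$ in $l_2$. But the fix you then guess --- splitting off a ``drift'' contribution and invoking Condition~\ref{condjump:cond2}(c) about $b$ and $Db$ --- is not the one that works, and is not needed. The correct move is simpler: never leave $\ell_2$. Using the orthonormal basis decomposition $G(q,y)=\sum_i G_i(q,y)\ebd_i$ from \eqref{eqjump:G}--\eqref{eqjump:Gi}, compute the squared norm componentwise:
\begin{equation*}
\Big\|\int_\Xmb\bigl(G(\qtil,y)-G(q,y)\bigr)g(y)\,\lambda_\Xmb(dy)\Big\|^2
\le \|g\|_\infty^2\sum_{i=1}^\infty\Big(\int_\Xmb\bigl|G_i(\qtil,y)-G_i(q,y)\bigr|\,\lambda_\Xmb(dy)\Big)^2.
\end{equation*}
By \eqref{eqjump:Gi}, \eqref{eqjump:Aijdisjoint} and the observation $\lambda_\Xmb(A_{ij}(\qtil)\bigtriangleup A_{ij}(q))=|\qtil_i\Gamma_{ij}(\qtil)-q_i\Gamma_{ij}(q)|$, the inner integral is $\sum_{j\ne i}|\qtil_i\Gamma_{ij}(\qtil)-q_i\Gamma_{ij}(q)|+\sum_{j\ne i}|\qtil_j\Gamma_{ji}(\qtil)-q_j\Gamma_{ji}(q)|$. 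Adding and subtracting $q_i\Gamma_{ij}(\qtil)$ and $q_j\Gamma_{ji}(\qtil)$ and using $(a+b+c+d)^2\le 4(a^2+b^2+c^2+d^2)$ yields $4\|g\|_\infty^2(\Tmc_1+\Tmc_2+\Tmc_3+\Tmc_4)$ where each $\Tmc_k$ is a sum over $i$ of squared scalars. The term that broke your $\ell_1$ argument is now $\Tmc_1=\sum_i(\qtil_i-q_i)^2\bigl(\sum_{j\ne i}\Gamma_{ij}(\qtil)\bigr)^2\le\Gammainf^2\|\qtil-q\|^2$: the factor $(\qtil_i-q_i)^2$ stays under the $i$-sum and no $\ell_1$ norm ever appears. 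The row-Lipschitz bound in Condition~\ref{condjump:cond2}(b) gives $\Tmc_2\le\sum_i q_i^2L_\Gamma^2\|\qtil-q\|^2\le L_\Gamma^2\|\qtil-q\|^2$ and $\Tmc_4\le\bigl(\sum_j q_jL_\Gamma\|\qtil-q\|\bigr)^2=L_\Gamma^2\|\qtil-q\|^2$. Finally $\Tmc_3=\sum_i\bigl(\sum_{j\ne i}|\qtil_j-q_j|\Gamma_{ji}(\qtil)\bigr)^2$ is handled by Cauchy--Schwarz on the $j$-sum with weights $\Gamma_{ji}(\qtil)$ followed by the column-sum bound $c_\Gamma$ --- precisely the half of Condition~\ref{condjump:cond2}(b) that your argument never uses --- giving $\Tmc_3\le c_\Gamma\sum_i\sum_{j\ne i}|\qtil_j-q_j|^2\Gamma_{ji}(\qtil)\le c_\Gamma\Gammainf\|\qtil-q\|^2$. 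Condition~\ref{condjump:cond2}(c) plays no role in this lemma.
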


\begin{proof}
	Observing that $\lambda_\Xmb(A_{ij}(\qtil) \bigtriangleup A_{ij}(q)) = |\qtil_i \Gamma_{ij}(\qtil) - q_i \Gamma_{ij}(q)|$ for $i \ne j$, where ``$\bigtriangleup$" denotes the symmetric difference, we see
	\begin{align*}
		& \Big\| \int_\Xmb \Big( G(\qtil,y) - G(q,y) \Big) g(y) \, \lambda_\Xmb(dy) \Big\|^2 \\
%		& \quad = \sum_{i=1}^\infty \Big[ \int_\Xmb \Big( G_i(\qtil,y) - G_i(q,y) \Big) g(y) \, \lambda_\Xmb(dy) \Big]^2 \\
		& \quad \le \|g\|_\infty^2 \sum_{i=1}^\infty \Big( \int_\Xmb \Big| G_i(\qtil,y) - G_i(q,y) \Big| \, \lambda_\Xmb(dy) \Big)^2 \\
		& \quad \le \|g\|_\infty^2 \sum_{i=1}^\infty \Big( \sum_{j \ne i}^\infty | \qtil_i \Gamma_{ij}(\qtil) - q_i \Gamma_{ij}(q) | + \sum_{j \ne i}^\infty | \qtil_j \Gamma_{ji}(\qtil) - q_j \Gamma_{ji}(q) | \Big)^2 \\
%		& \quad \le 2 \|g\|_\infty^2 \Big[ \sum_{i=1}^\infty \Big( \sum_{j \ne i}^\infty | \qtil_i \Gamma_{ij}(\qtil) - q_i \Gamma_{ij}(q) | \Big)^2 + \sum_{i=1}^\infty \Big( \sum_{j \ne i}^\infty | \qtil_j \Gamma_{ji}(\qtil) - q_j \Gamma_{ji}(q) | \Big)^2 \Big] \\
		& \quad \le 4 \|g\|_\infty^2 \Big[ \sum_{i=1}^\infty \Big( \sum_{j \ne i}^\infty | \qtil_i \Gamma_{ij}(\qtil) - q_i \Gamma_{ij}(\qtil) | \Big)^2 + \sum_{i=1}^\infty \Big( \sum_{j \ne i}^\infty | q_i \Gamma_{ij}(\qtil) - q_i \Gamma_{ij}(q) | \Big)^2 \\
		& \qquad + \sum_{i=1}^\infty \Big( \sum_{j \ne i}^\infty | \qtil_j \Gamma_{ji}(\qtil) - q_j \Gamma_{ji}(\qtil) | \Big)^2 + \sum_{i=1}^\infty \Big( \sum_{j \ne i}^\infty | q_j \Gamma_{ji}(\qtil) - q_j \Gamma_{ji}(q) | \Big)^2 \Big] \\
		& \quad \equiv 4 \|g\|_\infty^2 \sum_{k=1}^4 \Tmc_k.
	\end{align*}
	The terms $\Tmc_k$ for $k = 1,2,3,4$, can be estimated as follows.
	\begin{align*}
		\Tmc_1 = \sum_{i=1}^\infty \Big[ (\qtil_i - q_i)^2 \Big( \sum_{j \ne i}^\infty \Gamma_{ij}(\qtil) \Big)^2 \Big] \le \|\Gamma\|_\infty^2 \|\qtil - q\|^2.
	\end{align*}
	Also, from Condition~\ref{condjump:cond2}(b),
%	\begin{align*}
%		\Tmc_2 \le \sum_{i=1}^\infty q_i^2 \Big\| \Gammabar(\qtil)[\ebd_i] - \Gammabar(q)[\ebd_i] \Big\|_1^2 \le L_\Gammabar^2 \|\qtil - q\|^2 \sum_{i=1}^\infty q_i^2 \le L_\Gammabar^2 \|\qtil - q\|^2.
%	\end{align*}
	\begin{align*}
		\Tmc_2 & \le \sum_{i=1}^\infty q_i^2 L_\Gamma^2 \|\qtil - q\|^2 \le L_\Gamma^2 \|\qtil - q\|^2, \\
		\Tmc_3 & = \sum_{i=1}^\infty \Big( \sum_{j \ne i}^\infty | \qtil_j - q_j | \Gamma_{ji}(\qtil) \Big)^2 \le \sum_{i=1}^\infty \Big[ \sum_{j \ne i}^\infty \Big( | \qtil_j - q_j |^2 \Gamma_{ji}(\qtil) \Big) \sum_{j \ne i}^\infty \Gamma_{ji}(\qtil) \Big] \\
		& \le c_\Gamma \sum_{i=1}^\infty \sum_{j \ne i}^\infty | \qtil_j - q_j |^2 \Gamma_{ji}(\qtil) \le c_\Gamma \Gammainf \|\qtil - q\|^2, \\
		\Tmc_4 & \le \Big( \sum_{i=1}^\infty \sum_{j \ne i}^\infty q_j | \Gamma_{ji}(\qtil) - \Gamma_{ji}(q) | \Big)^2 \le \Big( \sum_{j=1}^\infty q_j L_\Gamma \|\qtil - q\| \Big)^2 \le L_\Gamma^2 \|\qtil - q\|^2.
	\end{align*}
%	It follows from Cauchy-Schwarz inequality and Condition~\ref{condjump:cond2}(b) that
%	\begin{align*}
%		\Tmc_4 & \le \sum_{i=1}^\infty \sum_{j \ne i}^\infty q_j | \Gamma_{ji}(\qtil) - \Gamma_{ji}(q) |^2 \le 2 \Gammainf \sum_{j=1}^\infty \Big( q_j \sum_{i \ne j}^\infty | \Gamma_{ji}(\qtil) - \Gamma_{ji}(q) | \Big) \\
%		& \le 2 \Gammainf \sum_{j=1}^\infty \Big\|\Gammabar(\qtil)[q_j \ebd_j] - \Gammabar(q)[q_j \ebd_j] \Big\|_1 \le 2 \Gammainf L_\Gammabar \|\qtil - q\| \sum_{j=1}^\infty q_j = 2 \Gammainf L_\Gammabar \|\qtil - q\|.
%	\end{align*}
	The result follows by combining above estimates.
\end{proof}

The following lemma will allow us to apply the continuous mapping theorem to deduce the key weak convergence property in the proof of Proposition~\ref{propjump:verifying condition part b}.

\begin{lemma} \label{lemjump:h}
	Let $M \in (0,\infty)$.
	Given $\varepsilon \in \Dmb([0,T] : l_2)$ and $f \in B_2(M)$, there exists a unique $z \in \Dmb([0,T] : l_2)$ solving the following equation:
	\begin{equation} \label{eqjump:h}
		z(t) = \varepsilon(t) + \int_0^t Db(p(s))[z(s)] \, ds + \intXt G(p(s),y) f(s,y) \, \lambda(ds \, dy), \quad t \in [0,T], 
	\end{equation}
	namely there exists a measurable map $h \colon \Dmb([0,T] : l_2) \times B_2(M) \to \Dmb([0,T] : l_2)$ such that the solution to \eqref{eqjump:h} can be written as $z = h(\varepsilon,f)$.
	Moreover, $h$ is continuous at $(0,f)$ for every $f \in B_2(M)$.
\end{lemma}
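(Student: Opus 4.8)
The plan is to recognize \eqref{eqjump:h} as an affine Volterra equation with a uniformly bounded, operator-valued kernel, to solve it by a weighted-norm contraction (as in the proof of Theorem~\ref{thmjump:LLN}(b)), and then to read off measurability and the required continuity of $h$ from the structure of the solution map.

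First I would set $w \doteq \varepsilon + \Psi(f)$ with $\Psi(f)(t) \doteq \intXt G(p(s),y) f(s,y) \, \lambda(ds\,dy)$. Using $\|G(p(s),y)\| \le \sqrt{2}$ together with Lemma~\ref{lemjump:moment bounds on G} (with $k=2$) and the Cauchy--Schwarz inequality in $s$, the Bochner integral defining $\Psi(f)(t)$ is well defined in $l_2$, and for $0 \le s \le t \le T$ one gets $\|\Psi(f)(t)-\Psi(f)(s)\| \le (2\Gammainf)^{1/2}\bigl(\int_{[s,t]\times\Xmb}|f|^2\,d\lambda\bigr)^{1/2}$, so $\Psi(f) \in \Cmb([0,T]:l_2)$ and $w \in \Dmb([0,T]:l_2)$. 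Equation \eqref{eqjump:h} then reads $z(t) = w(t) + \int_0^t Db(p(s))[z(s)]\,ds$; since $\|Db(p(s))\|_{L(l_2,l_2)} \le c_b$ by Condition~\ref{condjump:cond2}(c), on $\Dmb([0,T]:l_2)$ equipped with the weighted norm $\|z\|_\beta \doteq \sup_{t\le T} e^{-\beta t}\|z(t)\|$ (equivalent to the sup norm) the map $z \mapsto w + \int_0^\cdot Db(p(s))[z(s)]\,ds$ is a strict contraction for $\beta > c_b$, so Banach's fixed point theorem gives a unique solution $z \in \Dmb([0,T]:l_2)$, and uniqueness of solutions of \eqref{eqjump:h} follows from Gronwall's lemma applied to the (bounded) difference of two solutions. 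Writing $\Phi \colon \Dmb([0,T]:l_2) \to \Dmb([0,T]:l_2)$ for the resulting linear solution map $w \mapsto z$, Gronwall also yields $\|\Phi(w_1)-\Phi(w_2)\|_{*,T} \le e^{c_b T}\|w_1-w_2\|_{*,T}$. Then $h(\varepsilon,f) \doteq \Phi(\varepsilon+\Psi(f))$.

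For measurability of $h$, I would note that $\Phi$ is the pointwise limit of its Picard iterates $z^{(0)}(w) \doteq w$, $z^{(k+1)}(w) \doteq w + \int_0^\cdot Db(p(s))[z^{(k)}(w)(s)]\,ds$, each of which is continuous on $\Dmb([0,T]:l_2)$ with the Skorokhod topology: inductively, if $w_n \to w$ in Skorokhod then $z^{(k)}(w_n) \to z^{(k)}(w)$ in Skorokhod and these sequences are bounded in the sup norm, hence $z^{(k)}(w_n)(s) \to z^{(k)}(w)(s)$ at every continuity point of $z^{(k)}(w)$; dominated convergence and equi-Lipschitzness of the integral terms upgrade this to uniform convergence of the integrals, and adding back $w_n$ (a uniformly convergent continuous perturbation of a Skorokhod-convergent sequence) gives Skorokhod convergence of $z^{(k+1)}(w_n)$. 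Thus $\Phi$ is Borel measurable; since $\Psi$ is continuous from $B_2(M)$ with the weak topology (metrizable on the ball, as $L^2(\lambda)$ is separable) into $\Cmb([0,T]:l_2)$ by Lemma~\ref{lemjump:DCT}, the map $(\varepsilon,f) \mapsto \varepsilon+\Psi(f)$ is continuous into $\Dmb([0,T]:l_2)$, and composing with $\Phi$ shows $h$ is measurable.

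Finally, for continuity of $h$ at $(0,f)$: if $\varepsilon_n \to 0$ in $\Dmb([0,T]:l_2)$, then, the limit being continuous, $\|\varepsilon_n\|_{*,T} \to 0$; if moreover $f_n \to f$ in $B_2(M)$ then Lemma~\ref{lemjump:DCT} gives $\Psi(f_n) \to \Psi(f)$ in $\Cmb([0,T]:l_2)$, so $w_n \doteq \varepsilon_n+\Psi(f_n) \to \Psi(f) \doteq w$ in the sup norm; by the Lipschitz bound on $\Phi$, $h(\varepsilon_n,f_n) = \Phi(w_n) \to \Phi(w) = h(0,f)$ in the sup norm, and since $h(0,f)=\Phi(\Psi(f))$ is continuous this convergence holds a fortiori in the Skorokhod topology. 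The one genuinely delicate point is the bookkeeping for $\Phi$ on the Skorokhod space — in particular that Skorokhod convergence of the data, which only yields a.e.\ pointwise convergence of the solutions, still forces uniform convergence of the driving integral terms — whereas solvability and continuity at $(0,\cdot)$ (where Skorokhod convergence of $\varepsilon_n$ collapses to uniform convergence and the weak-to-uniform continuity of $\Psi$ takes over) are routine.
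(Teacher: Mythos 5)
Your proof is correct and follows essentially the same strategy as the paper: existence, uniqueness and measurability come from a Picard/Gronwall argument (the paper just references the construction in the proof of Theorem~\ref{thmjump:LLN}(b)), and continuity at $(0,f)$ comes from combining the sup-norm Lipschitz estimate on the solution map with Lemma~\ref{lemjump:DCT}. The one organizational difference is in the continuity step: the paper first rewrites the equation in terms of $z - \varepsilon$, so that Gronwall is applied to the continuous quantity $z^m - \varepsilon^m$ and one translates back to Skorokhod convergence only at the very end; you instead work directly with $\varepsilon^m$ by invoking the fact that Skorokhod convergence to the continuous limit $0$ forces $\|\varepsilon^m\|_{*,T}\to 0$. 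For this particular statement the two are interchangeable, and the paper's $z-\varepsilon$ substitution only becomes strictly necessary if one wanted continuity at a general (possibly discontinuous) $(\varepsilon,f)$. A minor slip worth fixing: your modulus-of-continuity bound for $\Psi(f)$ should carry a factor of $(t-s)^{1/2}$ — Cauchy--Schwarz together with Lemma~\ref{lemjump:moment bounds on G} gives $\|\Psi(f)(t)-\Psi(f)(s)\| \le \bigl(2\Gammainf(t-s)\bigr)^{1/2}\bigl(\int_{[s,t]\times\Xmb}|f|^2\,d\lambda\bigr)^{1/2}$ — but this does not affect the conclusion that $\Psi(f)\in\Cmb([0,T]:l_2)$, which already follows from absolute continuity of the $L^2$ integral.
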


\begin{proof}
	The existence and uniqueness of solutions of \eqref{eqjump:h} and the measurability of the solution map are easy to check using Condition~\ref{condjump:cond2}(c) in a manner similar to the proof of Theorem~\ref{thmjump:LLN}(b).
	To see the continuity at $(0,f)$ for $f \in B_2(M)$, first note that \eqref{eqjump:h} can be written as
	\begin{equation*}
		z(t) - \varepsilon(t) = \int_0^t Db(p(s))[z(s)-\varepsilon(s)] \, ds + \int_0^t Db(p(s))[\varepsilon(s)] \, ds + \intXt G(p(s),y) f(s,y) \, \lambda(ds \, dy).
	\end{equation*}
	Suppose $(\varepsilon^m,f^m) \to (0,f)$ in $\Dmb([0,T] : l_2) \times B_2(M)$ as $m \to \infty$.
	Let $z^m \doteq h(\varepsilon^m,f^m)$ and $z \doteq h(0,f)$.
	Since $\varepsilon^m \to 0$ in $\Dmb([0,T] : l_2)$ we see using Condition~\ref{condjump:cond2}(c) that
%	$\sup_{t \le T} \|\varepsilon^m(t)\| \to 0$ and hence
	\begin{equation*}
		\int_0^\cdot Db(p(s))[\varepsilon^m(s)] \, ds \to 0 \text{ in } \Cmb([0,T] : l_2).
	\end{equation*}
	It follows from Lemma~\ref{lemjump:DCT} that
	\begin{equation*}
		\int_{[0,\cdot] \times \Xmb} G(p(s),y) f^m(s,y) \, \lambda(ds \, dy) \to \int_{[0,\cdot] \times \Xmb} G(p(s),y) f(s,y) \, \lambda(ds \, dy) \text{ in } \Cmb([0,T] : l_2).
	\end{equation*}
	Combining above results and applying Gronwall's lemma gives us $z^m - \varepsilon^m \to z - 0$ in $\Cmb([0,T] : l_2)$.
	Since $\varepsilon^m \to 0$, we have that $z^m \to z$ in $\Dmb([0,T] : l_2)$ and the result follows.
\end{proof}

We can now verify part (b) of Condition~\ref{condjump:cond3 to be checked}.
Recall that for $M \in (0,\infty)$, $C_M = \sqrt{M\gammatil_2(1)}$.

\begin{proposition} \label{propjump:verifying condition part b}
	Fix $M \in (0,\infty)$.
	Let $\{ \varphi^m \}_{m \in \Nmb}$ be such that for every $m \in \Nmb$, $\varphi^m \in \Umc^M_{+,m}$.
	Let $\psi^m \one_{\{ | \psi^m | \le \beta a(m)\sqrt{m} \}} \Rightarrow \psi$ in $B_2(C_M)$ for some $\beta \in (0,1]$, where $\psi^m \doteq a(m)\sqrt{m}(\varphi^m-1)$.
	Let $\Gmc_0$ and $\Gmc^m$ be as defined in and above \eqref{eqjump:G 0}, respectively.
	Then $\Gmc^m(\frac{1}{m} N^{m\varphi^m}) \Rightarrow \Gmc_0(\psi)$.
\end{proposition}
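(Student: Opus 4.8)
The plan is to combine the decomposition of $\Zbar^{m,\varphi^m}$ from the proof of Lemma~\ref{lemjump:moment bounds on Z bar} with the continuous mapping argument of Lemma~\ref{lemjump:h}, the new ingredient being a two-parameter limit that absorbs the discontinuity of $G$. Write $\psi^m \doteq a(m)\sqrt m(\varphi^m-1)$ and $\psi^m_1 \doteq \psi^m \one_{\{|\psi^m| \le \beta a(m)\sqrt m\}}$, so by hypothesis $\psi^m_1 \Rightarrow \psi$ and, by Lemma~\ref{lemjump:moment bounds of varphi and psi}(c), $\psi^m_1 \in B_2(C_M)$. By \eqref{eqjump:decomposition of Zbar}, $\Zbar^{m,\varphi^m} = A^m + M^{m,\varphi^m} + B^{m,\varphi^m} + C^{m,\varphi^m}$. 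By Condition~\ref{condjump:cond2}(d), $\|A^m\|_{*,T} \to 0$; by Doob's inequality and Lemma~\ref{lemjump:moment bounds on G square varphi}, $\Emb\|M^{m,\varphi^m}\|_{*,T}^2 \le \kappa a^2(m) \to 0$; and, linearizing $b$ via Condition~\ref{condjump:cond2}(c), $B^{m,\varphi^m}(t) = \int_0^t Db(p(s))[\Zbar^{m,\varphi^m}(s)]\,ds + R_1^m(t)$ with $\|R_1^m\|_{*,T} \le \frac{c_b T}{a(m)\sqrt m}\|\Zbar^{m,\varphi^m}\|_{*,T}^2 \to 0$ in $L^1$ by Lemma~\ref{lemjump:moment bounds on Z bar}.

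The genuinely delicate term is $C^{m,\varphi^m}(t) = \intXt G(\mubar^{m,\varphi^m}(s),y)\psi^m(s,y)\,\lambda(ds\,dy)$, whose natural limiting form is $\intXt G(p(s),y)\psi^m_1(s,y)\,\lambda(ds\,dy)$; their difference contains $\intXt(G(\mubar^{m,\varphi^m}(s),y) - G(p(s),y))\psi^m(s,y)\,\lambda$, and here the lack of continuity of $q \mapsto G(q,y)$ blocks any argument that this vanishes merely because $\mubar^{m,\varphi^m}\to p$. To deal with it I would fix an auxiliary level $\delta \in (0,\beta]$ and split $\psi^m$ at $\delta a(m)\sqrt m$. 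The contributions to $C^{m,\varphi^m}$ of $\psi^m\one_{\{|\psi^m| > \delta a(m)\sqrt m\}}$, together with $\intXt G(p(s),y)(\psi^m-\psi^m_1)(s,y)\,\lambda$, are bounded in $\|\cdot\|_{*,T}$ by a constant times $1/(a(m)\sqrt m)$, using $\|G\| \le \sqrt 2$ and Lemma~\ref{lemjump:moment bounds of varphi and psi}(a); while the surviving term $E^{m,\delta}(t) \doteq \intXt(G(\mubar^{m,\varphi^m}(s),y) - G(p(s),y))\psi^m(s,y)\one_{\{|\psi^m|\le\delta a(m)\sqrt m\}}\,\lambda$ satisfies, by Lemma~\ref{lemjump:Lipschitz of G} (with $\|g\|_\infty \le \delta a(m)\sqrt m$) and $\|\mubar^{m,\varphi^m}(s)-p(s)\| = \frac1{a(m)\sqrt m}\|\Zbar^{m,\varphi^m}(s)\|$, the bound $\|E^{m,\delta}(t)\| \le \gammatil_5\delta\int_0^t\|\Zbar^{m,\varphi^m}(s)\|\,ds$, whence $\Emb\|E^{m,\delta}\|_{*,T} \le \gammatil_5\delta T(\Emb\|\Zbar^{m,\varphi^m}\|_{*,T}^2)^{1/2} \le C\delta$ uniformly in $m$ by Lemma~\ref{lemjump:moment bounds on Z bar}. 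Assembling the pieces, $\Zbar^{m,\varphi^m}$ solves $\Zbar^{m,\varphi^m}(t) = \eps^m_\delta(t) + \int_0^t Db(p(s))[\Zbar^{m,\varphi^m}(s)]\,ds + \intXt G(p(s),y)\psi^m_1(s,y)\,\lambda + E^{m,\delta}(t)$, where $\eps^m_\delta$ collects $A^m$, $M^{m,\varphi^m}$, $R_1^m$ and the remainders of order $1/(a(m)\sqrt m)$ identified above, and hence tends to $0$ in probability in $\Dmb([0,T]:l_2)$ for each fixed $\delta$.

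With $h$ the solution map of Lemma~\ref{lemjump:h}, set $\eta^m_\delta \doteq h(\eps^m_\delta, \psi^m_1) \in \Cmb([0,T]:l_2)$. Subtracting the equation defining $\eta^m_\delta$ from the displayed equation for $\Zbar^{m,\varphi^m}$ and applying Gronwall's lemma with Condition~\ref{condjump:cond2}(c) gives $\|\Zbar^{m,\varphi^m} - \eta^m_\delta\|_{*,T} \le e^{c_b T}\|E^{m,\delta}\|_{*,T}$, so that $\Emb\|\Zbar^{m,\varphi^m} - \eta^m_\delta\|_{*,T} \le e^{c_b T}C\delta$ for all $m$. Since $\eps^m_\delta \Rightarrow 0$ and $\psi^m_1 \Rightarrow \psi$ we get $(\eps^m_\delta, \psi^m_1) \Rightarrow (0,\psi)$, and then the continuity of $h$ at $(0,\psi)$ (Lemma~\ref{lemjump:h}) and the continuous mapping theorem yield $\eta^m_\delta \Rightarrow h(0,\psi) = \Gmc_0(\psi)$ as $m\to\infty$, for each fixed $\delta$. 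Consequently, for every bounded Lipschitz $F$ on $\Dmb([0,T]:l_2)$, using $d_{Sk} \le \|\cdot\|_{*,T}$, one obtains $\limsup_{m\to\infty}|\Emb F(\Zbar^{m,\varphi^m}) - \Emb F(\Gmc_0(\psi))| \le \|F\|_L e^{c_b T}C\delta$, and letting $\delta\downarrow 0$ gives $\Gmc^m(\tfrac1m N^{m\varphi^m}) = \Zbar^{m,\varphi^m} \Rightarrow \Gmc_0(\psi)$, which is the assertion. The main obstacle is exactly the control of $\intXt(G(\mubar^{m,\varphi^m}(s),y) - G(p(s),y))\psi^m(s,y)\,\lambda$ in the presence of the discontinuity of $G$; the device that makes it work is this two-parameter scheme — truncating the control at level $\delta$ so that, via Lemma~\ref{lemjump:Lipschitz of G} and the uniform second-moment bound of Lemma~\ref{lemjump:moment bounds on Z bar}, the non-Lipschitz contribution is $O(\delta)$ uniformly in $m$, sending $m\to\infty$ first and only afterwards $\delta\to 0$.
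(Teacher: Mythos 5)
Your argument is correct, and it takes a genuinely different route from the paper's proof. The paper uses a \emph{divergent} threshold $\delta_m = (a(m)\sqrt m)^{1/2} \to \infty$ in the splitting of the non-Lipschitz term $\intXt\bigl(G(\mubar^{m,\varphi^m}(s),y)-G(p(s),y)\bigr)\psi^m\,\lambda$: the part with $|\psi^m| > \delta_m$ is controlled via Lemma~\ref{lemjump:property of l}(a), specifically the refined bound $\gammatil_1(\beta) \le 4/\beta$ for $\beta \in (0,\tfrac12)$ applied at $\beta_m = \delta_m/(a(m)\sqrt m) \to 0$ (the paper explicitly singles out this refinement as a key new ingredient over~\cite{BudhirajaDupuisGanguly2015moderate}), while the part with $|\psi^m| \le \delta_m$ vanishes via Lemma~\ref{lemjump:Lipschitz of G} because $\delta_m/(a(m)\sqrt m) \to 0$. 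Every remainder then goes to zero in a single $m\to\infty$ limit and the continuous mapping theorem is applied directly. You instead truncate at a \emph{fixed} small level $\delta$, invoke Lemma~\ref{lemjump:moment bounds of varphi and psi}(a) only at the fixed $\beta=\delta$ (so the refined $4/\beta$ estimate is never needed), accept that $E^{m,\delta}$ is only $O(\delta)$ uniformly in $m$ rather than $o(1)$, and pay for it with an auxiliary process $\eta^m_\delta = h(\eps^m_\delta,\psi^m_1)$, a Gronwall comparison $\|\Zbar^{m,\varphi^m}-\eta^m_\delta\|_{*,T} \le e^{c_b T}\|E^{m,\delta}\|_{*,T}$, and an iterated limit $m\to\infty$ followed by $\delta\downarrow 0$ on bounded Lipschitz test functions. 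The trade-off is interesting: your route is slightly heavier on the weak-convergence bookkeeping but shows that the paper's sharpened inequality for $\ell$, though elegant and sufficient, is not strictly necessary for the conclusion; the paper's route is cleaner (one limit, direct continuous mapping) once that inequality is available.
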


\begin{proof}
	We will use the notation from the proof of Lemma~\ref{lemjump:moment bounds on Z bar}.
	From \eqref{eqjump:a m}, \eqref{eqjump:moment bounds on M} and Condition~\ref{condjump:cond2}(d) we have that
	\begin{equation*}
		\Emb \left\| M^{m,\varphi^m} \right\|_{*,T}^2 \to 0 \text{ and } \| A^m \|^2 \to 0
	\end{equation*}
	as $m \to \infty$.
	It follows from Condition~\ref{condjump:cond2}(c) that
	\begin{equation*}
		b(\mubar^{m,\varphi^m}(s)) - b(p(s)) = \frac{1}{a(m)\sqrt{m}} Db(p(s))[\Zbar^{m,\varphi^m}(s)] + R^{m,\varphi^m}(s),
	\end{equation*}
	where
	\begin{equation} \label{eqjump:R m}
		\| R^{m,\varphi^m}(s) \| \doteq \| \theta_b(p(s),\mubar^{m,\varphi^m}(s)) \| \le \frac{c_b}{a^2(m) m} \| \Zbar^{m,\varphi^m}(s) \|^2.
	\end{equation}
	Hence $B^{m,\varphi^m} = \Btil^{m,\varphi^m} + \Emc_1^{m,\varphi^m}$, where
	\begin{align}
		\Btil^{m,\varphi^m}(t) & \doteq \int_0^t Db(p(s))[\Zbar^{m,\varphi^m}(s)] \, ds, \label{eqjump:B til} \\
		\Emc_1^{m,\varphi^m}(t) & \doteq a(m)\sqrt{m} \int_0^t R^{m,\varphi^m}(s) \, ds. \notag
	\end{align}
	From \eqref{eqjump:R m} and Lemma~\ref{lemjump:moment bounds on Z bar} we see that
	\begin{equation*}
		\Emb \left\| \Emc_1^{m,\varphi^m}\right\|_{*,T} \le a(m)\sqrt{m} \Emb \int_0^T \| R^{m,\varphi^m}(s) \| \, ds \le \frac{c_b T}{a(m)\sqrt{m}} \Emb \left\| \Zbar^{m,\varphi^m}\right\|_{*,T}^2 \to 0
	\end{equation*}
	as $m \to \infty$.
	Write $C^{m,\varphi^m} = \Ctil^{m,\varphi^m} + \Emc_2^{m,\varphi^m} + \Emc_3^{m,\varphi^m} + \Emc_4^{m,\varphi^m}$, where
	\begin{align}
		\Ctil^{m,\varphi^m}(t) & \doteq \intXt G(p(s),y) \psi^m(s,y) \one_{\{|\psi^m(s,y)| \le \beta a(m) \sqrt{m} \}} \, \lambda(ds \, dy), \label{eqjump:C til} \\
		\Emc_2^{m,\varphi^m}(t) & \doteq \intXt G(p(s),y) \psi^m(s,y) \one_{\{|\psi^m(s,y)| > \beta a(m) \sqrt{m} \}} \, \lambda(ds \, dy), \notag \\
		\Emc_3^{m,\varphi^m}(t) & \doteq \intXt \Big( G(\mubar^{m,\varphi^m}(s),y) - G(p(s),y) \Big) \psi^m(s,y) \one_{\{|\psi^m(s,y)| > \delta_m \}} \, \lambda(ds \, dy), \notag \\
		\Emc_4^{m,\varphi^m}(t) & \doteq \intXt \Big( G(\mubar^{m,\varphi^m}(s),y) - G(p(s),y) \Big) \psi^m(s,y) \one_{\{|\psi^m(s,y)| \le \delta_m \}} \, \lambda(ds \, dy), \notag
	\end{align}
	and $\delta_m \doteq (a(m)\sqrt{m})^{1/2} \to \infty$ as $m \to \infty$.
	Then using Lemma~\ref{lemjump:moment bounds on G psi} we see that
	\begin{equation} \label{eqjump:Emc2}
		\left\| \Emc_2^{m,\varphi^m}\right\|_{*,T} \le \frac{\gammatil_4(\beta)}{a(m) \sqrt{m}} \to 0
	\end{equation}
	as $m \to \infty$.	
	Also applying Lemma~\ref{lemjump:moment bounds of varphi and psi}(a) with $\beta = \frac{\delta_m}{a(m)\sqrt{m}}$, we see that as $m \to \infty$,
	\begin{align*}
		\left\| \Emc_3^{m,\varphi^m}\right\|_{*,T} & \le \intXT \left\| G(\mubar^{m,\varphi^m}(s),y) - G(p(s),y) \right\| |\psi^m(s,y)| \one_{\{|\psi^m(s,y)| > \delta_m\}} \, \lambda(ds \, dy) \\
		& \le 2\sqrt{2} \intXT |\psi^m(s,y)| \one_{\{|\psi^m(s,y)| > \delta_m\}} \, \lambda(ds \, dy) \\
		& \le \frac{2 \sqrt{2} M \gammatil_1(\frac{\delta_m}{a(m) \sqrt{m}})}{a(m) \sqrt{m}} \le \frac{8 \sqrt{2} M}{\delta_m} \to 0,
	\end{align*}
	where the last inequality is a consequence of the last statement in Lemma~\ref{lemjump:property of l}(a).
	Next, it follows from Lemma~\ref{lemjump:Lipschitz of G} that
	\begin{align*} 
		\left\| \Emc_4^{m,\varphi^m}\right\|_{*,T} & \le \int_0^T \Big\| \int_\Xmb \Big( G(\mubar^{m,\varphi^m}(s),y) - G(p(s),y) \Big) \psi^m(s,y) \one_{\{|\psi^m(s,y)| \le \delta_m \}} \, \lambda_\Xmb(dy) \Big\| \, ds \\
		& \le \gammatil_5 \delta_m \int_0^T \| \mubar^{m,\varphi^m}(s) - p(s)\| \, ds \le \gammatil_5 T \frac{\delta_m}{a(m) \sqrt{m}} \left\| \Zbar^{m,\varphi^m} \right\|_{*,T}.
	\end{align*}
	Since $\frac{\delta_m}{a(m) \sqrt{m}} = (a(m) \sqrt{m})^{-\frac{1}{2}} \to 0$, it follows from Lemma~\ref{lemjump:moment bounds on Z bar} that $\Emb \left\| \Emc_4^{m,\varphi^m}\right\|_{*,T} \to 0$ as $m \to \infty$.
	Putting above estimates together we have from \eqref{eqjump:decomposition of Zbar}
	\begin{align} \label{eqjump:Zbar m varphi m}
		\begin{aligned}
			\Zbar^{m,\varphi^m}(t) & = \Emc^{m,\varphi^m}(t) + \Btil^{m,\varphi^m}(t) + \Ctil^{m,\varphi^m}(t) \\
			& = \Emc^{m,\varphi^m}(t) + \int_0^t Db(p(s))[\Zbar^{m,\varphi^m}(s)] \, ds \\
			& \qquad + \intXt G(p(s),y) \psi^m(s,y) \one_{\{|\psi^m| \le \beta a(m) \sqrt{m} \}} \, \lambda(ds \, dy),
		\end{aligned}
	\end{align}
	where $\Emc^{m,\varphi^m} \doteq M^{m,\varphi^m} + A^m + \Emc_1^{m,\varphi^m} + \Emc_2^{m,\varphi^m} + \Emc_3^{m,\varphi^m} + \Emc_4^{m,\varphi^m} \Rightarrow 0$ in $\Dmb([0,T]:l_2)$.
	Thus we have
	\begin{equation*}
		\Gmc^m\left(\frac{1}{m} N^{m\varphi^m}\right) = \Zbar^{m,\varphi^m} = h(\Emc^{m,\varphi^m}, \psi^m \one_{\{|\psi^m| \le \beta a(m) \sqrt{m} \}}),
	\end{equation*}
	where $h$ is as introduced in Lemma~\ref{lemjump:h}.
	It follows from Lemma~\ref{lemjump:moment bounds of varphi and psi}(c) that $\psi^m \one_{\{|\psi^m| \le \beta a(m) \sqrt{m} \}}$ takes values in $B_2(C_M)$ for all $m \in \Nmb$.
	Finally note that $\Gmc_0(\psi) = h(0,\psi)$ and
	\begin{equation*}
		(\Emc^{m,\varphi^m}, \psi^m \one_{\{|\psi^m| \le \beta a(m) \sqrt{m} \}}) \Rightarrow (0,\psi).
	\end{equation*}
	The result now follows by combining above observations and applying continuous mapping theorem together with Lemma~\ref{lemjump:h}.
\end{proof}

Now we can complete the proof of Theorem~\ref{thmjump:MDP}.

\noindent \textbf{Proof of Theorem~\ref{thmjump:MDP}:}
As noted earlier, it suffices to show that Condition~\ref{condjump:cond3 to be checked} holds with $\Gmc^m$ and $\Gmc_0$ above and in \eqref{eqjump:G 0}, respectively.
Part (a) of the condition was verified in Proposition~\ref{propjump:verifying condition part a}, while part (b) was verified in Proposition~\ref{propjump:verifying condition part b}.
\qed

%----------------------------------------------------

\subsection{Proof of Theorem~\ref{thmjump:alternative expression}} \label{secjump:pfalter}

Fix $\eta \in \Dmb([0,T]:l_2)$.
We first argue that $\Ibar(\eta) \le I(\eta)$. 
Let $\delta>0$ be arbitrary.
Let $u \doteq \{u_{ij}\}_{i,j=1}^\infty$ be such that $u_{ij} \in L^2([0,T]:\Rmb)$,
\begin{equation*}
	\half \int_0^T \sum_{i=1}^\infty \sum_{j=1,j \ne i}^\infty u_{ij}^2(s) \, ds \le I(\eta) + \delta,
\end{equation*}
and $(\eta,u)$ satisfies \eqref{eqjump:eta and u}.
Define $\psi \colon \XT \to \Rmb$ by
\begin{equation*} 
%\label{eqjump:psi}
	\psi(s,y) \doteq \sum_{i=1}^\infty \sum_{j=1,j \ne i}^\infty \one_{A_{ij}(p(s))}(y) \frac{u_{ij}(s)}{\sqrt{p_i(s)\Gamma_{ij}(p(s))}} \one_{\{p_i(s)\Gamma_{ij}(p(s)) \ne 0\}}, \quad (s,y) \in [0,T] \times \Xmb.
\end{equation*}
Then we have
\begin{equation*}
	\int_{\XT} \psi^2(s,y) \, \lambda(ds\,dy) = \int_0^T \sum_{i=1}^\infty \sum_{j=1,j \ne i}^\infty u_{ij}^2(s) \one_{\{p_i(s) \Gamma_{ij}(p(s)) \ne 0\}} \, ds < \infty
\end{equation*}
and hence $\psi \in L^2(\lambda)$.
Also note that for $q \in \Smchat$ and $i \ne j$,
\begin{equation*}
	\int_\Xmb G(q,y) \one_{A_{ij}(q)}(y) \, \lambda(dy) = (\ebd_j - \ebd_i) q_i \Gamma_{ij}(q).
\end{equation*}
From this it follows that $(\eta,\psi)$ satisfies \eqref{eqjump:eta and psi}.
Thus
\begin{equation*}
	\Ibar(\eta) \le \half \int_{\XT} \psi^2(s,y) \, \lambda(ds\,dy) = \half \int_0^T \sum_{i=1}^\infty \sum_{j=1,j \ne i}^\infty u_{ij}^2(s) \one_{\{p_i(s) \Gamma_{ij}(p(s)) \ne 0\}} \, ds \le I(\eta) + \delta.
\end{equation*}
Since $\delta > 0$ is arbitrary, we have that $\Ibar(\eta) \le I(\eta)$.

Conversely, suppose $\psi \in L^2(\lambda)$ is such that
\begin{equation*}
	\frac{1}{2} \intXT \psi^2(s,y) \, \lambda(ds\,dy) \le \Ibar(\eta) + \delta
\end{equation*}
and \eqref{eqjump:eta and psi} holds.
%We can further suppose without loss of generality that $\psi(s,y) = 0$ if $p_i(s)\Gamma_{ij}(p(s)) = 0$ and $y \in (i-1,i]\times((j-1)\Gammainf,j\Gammainf]$.
For $i,j \in \Nmb$ with $i \ne j$ and $s \in [0,T]$, define $u_{ij} \colon [0,T] \to \Rmb$ by
\begin{equation*}
	u_{ij}(s) \doteq \frac{\int_\Xmb \one_{A_{ij}(p(s))}(y) \psi(s,y) \, \lambda_\Xmb(dy)}{\sqrt{p_i(s)\Gamma_{ij}(p(s))}} \one_{\{p_i(s) \Gamma_{ij}(p(s)) \ne 0\}}.
\end{equation*}
An application of Cauchy-Schwarz inequality shows that 
\begin{equation*}
	u_{ij}^2(s) \le \int_\Xmb \one_{A_{ij}(p(s))}(y) \psi^2(s,y) \, \lambda_\Xmb(dy),
\end{equation*}
and hence $u_{ij} \in L^2([0,T]:\Rmb)$ for all $i \ne j$.
We set $u_{ii} \doteq 0$ for $i \in \Nmb$ and let $u \doteq \{u_{ij}\}_{i,j=1}^\infty$.
It is easy to check that $(\eta,u)$ satisfies \eqref{eqjump:eta and u}, and hence
\begin{align*}
	I(\eta) & \le \half \int_0^T \sum_{i=1}^\infty \sum_{j=1,j \ne i}^\infty u_{ij}^2(s) \, ds \le \half \int_{\XT} \sum_{i=1}^\infty \sum_{j=1,j \ne i}^\infty \one_{A_{ij}(p(s))}(y) \psi^2(s,y) \, \lambda(ds\,dy) \\
	& \le \half \intXT \psi^2(s,y) \, \lambda(ds\,dy) \le \Ibar(\eta) + \delta.
\end{align*}
Since $\delta > 0$ is arbitrary we have $I(\eta) \le \Ibar(\eta)$.
The result follows.
\qed

\appendix

\section{Proof of Theorem~\ref{thmjump:LLN}} \label{Appendix:proof of theorem of LLN}

Part (a) can be established using a recursive construction of the solution from one jump to the next.
Note that although $\ti\Emb \nbd_m(\Xt) = \infty$ for all $t>0$, the property that
\begin{equation*}
	\lambda_\Xmb\left(\bigcup_{i=1}^\infty \bigcup_{j=1,j \ne i}^\infty A_{ij}(q)\right) \le \sum_{i=1}^\infty \sum_{j=1,j \ne i}^\infty q_i \Gamma_{ij}(q) \le \|\Gamma\|_\infty < \infty,\quad \forall q \in \Smchat,
\end{equation*} 
allows one to enumerate the jump instants $t$ at which the state of $\mutil^m(t)$ changes.
At any such jump instant we define $\mutil^m(t) \doteq \mutil^m(t-) + \frac{1}{m} G(\mutil^m(t-),y)$ if the jump corresponds to the point $(t,y)$ of the point process $\nbd_m$.
We omit the details.

For part (b), uniqueness of solution of \eqref{eqjump:p t} follows from an application of Gronwall's lemma along with Condition~\ref{condjump:cond1}(b).
For existence of solution we follow a standard iteration scheme.
Define $p^0(t) \doteq p(0)$ and for $n \in \Nmb$,
\begin{equation*}
	p^{n+1}(t) \doteq p(0) + \int_0^t b(p^n(s)) \, ds, \quad t \in [0,T].
\end{equation*}
From Condition~\ref{condjump:cond1}(b) we see that 
\begin{equation*}
	\|p^{n+1}-p^n\|_{*,t} = \left\|\int_0^\cdot (b(p^n(s)) - b(p^{n-1}(s))) \, ds \right\|_{*,t} \le L_b \int_0^t \|p^n-p^{n-1}\|_{*,s} \, ds,
\end{equation*}
which implies $\{p^n\}_{n=0}^\infty$ is a Cauchy sequence in $\Cmb([0,T]:l_2)$.
Hence there exists some $\ptil \in \Cmb([0,T]:l_2)$ such that $p^n \to \ptil$ and it is easy to see that $\ptil$ is a solution to \eqref{eqjump:p t}.

We now argue that $\mu^m \Rightarrow p$ as $m \to \infty$. 
For $t \in [0,T]$,
\begin{align*}
	\Emb \sup_{s \le t} \| \mu^m(s) - p(s) \|^2 & \le 3 \| \mu^m(0) - p(0) \|^2 + 3 \Emb \sup_{s \le t} \Big\| \int_0^s \big( b(\mu^m(u)) - b(p(u)) \big) \, du \Big\|^2 \\
	& \qquad + 3 \Emb \sup_{s \le t} \Big\| \frac{1}{m} \int_{\Xmb_s} G(\mu^m(u-),y) \, \Ntil^m(du\,dy) \Big\|^2 \\
	& \le 3 \| \mu^m(0) - p(0) \|^2 + 3T \int_0^t \Emb \| b(\mums) - b(p(s)) \|^2 \, ds \\
	& \qquad + \frac{12}{m} \intXt \Emb \| G(\mums,y) \|^2 \, \lambda(ds \, dy) \\
%	& \le \kappa_2 \| \mu^m(0) - p(0) \|^2 + \kappa_2 \int_0^t \Emb \| \mums - p(s) \|^2 \, ds + \frac{\kappa_2}{m} \\
	& \le \kappa \| \mu^m(0) - p(0) \|^2 + \frac{\kappa}{m} + \kappa \int_0^t \Emb \sup_{u \le s} \| \mu^m(u) - p(u) \|^2 \, ds,
\end{align*}
where the second inequality follows from Doob's inequality and the third inequality follows from Lemma~\ref{lemjump:moment bounds on G}.
The result now follows from Gronwall's inequality and Condition~\ref{condjump:cond1}(c). \qed
%We get from Gronwall's inequality and Condition~\ref{condjump:cond1}(c) that
%\begin{equation*}
%	\pushQED{\qed} \Emb \sup_{s \le T} \| \mu^m(s) - p(s) \|^2 \le \kappa_3 \| \mu^m(0) - p(0) \|^2 + \frac{\kappa_3}{m} \to 0. \qedhere \popQED
%\end{equation*}

\section{Proof of Remark~\ref{rmkjump:rmk for cond2}(iv)} \label{Appendix:proof of rmk}

Suppose that for some $n \in \Nmb$, $\sum_{m=1}^\infty [a(m)]^{2n} < \infty$.
We need to show that $a(m) \sqrt{m} \| \mu^m(0)-p(0) \| \to 0$ almost surely.
To simplify the notation, we will abbreviate $\mu^m(0), p(0), \mu^m_i(0), p_i(0)$ as $\mu^m, p, \mu^m_i, p_i$.
It follows from Markov's inequality that for $\eps > 0$,
\begin{equation*}
	\Pmb(a(m) \sqrt{m} \| \mu^m-p \| > \eps) \le \left(\frac{a(m) \sqrt{m}}{\eps}\right)^{2n} \Emb \| \mu^m-p \|^{2n} = \frac{[a(m)]^{2n}}{\eps^{2n}} m^n \Emb \left[\sum_{i=1}^\infty (\mu_i^m-p_i)^2\right]^n.
\end{equation*}
Since $\sum_{m=1}^\infty [a(m)]^{2n} < \infty$, by Borel-Cantelli lemma, it suffices to show that for every $n \in \Nmb$ there exists some $\hat\gamma_n \in (0,\infty)$ such that
\begin{equation} \label{eqApp:key estimate}
	\Emb \left[\sum_{i=1}^\infty (\mu_i^m-p_i)^2\right]^n \le \frac{\hat\gamma_n}{m^n}.
\end{equation}
We will prove \eqref{eqApp:key estimate} when $n=2$ in detail and then sketch the argument for $n > 2$.
Write
\begin{equation*}
	\mu_i^m-p_i = \frac{1}{m} \sum_{j=1}^m \one_{\{\xi_j=i\}} - p_i \equiv \frac{1}{m} \sum_{j=1}^m Y_{ij},
\end{equation*}
where $Y_{ij} \doteq \one_{\{\xi_j=i\}} - p_i$ is such that for $\alpha, \beta \in \Nmb$,
\begin{equation*}
	|Y_{ij}| \le 1, \quad \Emb Y_{ij} = 0, \quad \Emb Y_{ij}^{2\alpha} \le \Emb Y_{ij}^2 \le p_i, \quad \Emb Y_{ij}^{2\alpha} Y_{kl}^{2\beta} \le \Emb Y_{ij}^2 Y_{kl}^2 \le p_i p_k \text{ for } i \ne k.
\end{equation*}
Now write 
\begin{align}
	\Emb \left[\sum_{i=1}^\infty (\mu_i^m-p_i)^2\right]^2 & = \Emb \left[\sum_{i=1}^\infty \left(\frac{1}{m} \sum_{j=1}^m Y_{ij}\right)^2\right]^2 = \frac{1}{m^4} \Emb \left[ \sum_{i=1}^\infty \sum_{j,j'=1}^m Y_{ij} Y_{ij'} \right]^2 \notag \\
	& = \frac{1}{m^4} \Emb \sum_{i,k=1}^\infty \sum_{j,j',l,l'=1}^m Y_{ij} Y_{ij'} Y_{kl} Y_{kl'}. \label{eqApp:key decomp}
\end{align}
From independence of $\{\xi_j\}$ it follows that $Y_{ij}$ and $Y_{kl}$ are independent for $j \ne l$.
Hence $\Emb Y_{ij} Y_{ij'} Y_{kl} Y_{kl'} \ne 0$ only if $j,j',l,l'$ are matched in pairs (e.g.\ $j=j'$ and $l=l'$).
Using this observation, \eqref{eqApp:key decomp} can be written as
\begin{equation*}
	\frac{1}{m^4} \Emb \sum_{r=1}^5 \sum_{(i,k,j,j',l,l') \in \Hmc_r} Y_{ij} Y_{ij'} Y_{kl} Y_{kl'} \equiv \sum_{r=1}^5 \Tmc_r^m,
\end{equation*}
where $\Hmc_1$, $\Hmc_2$, $\Hmc_3$, $\Hmc_4$ and $\Hmc_5$ are collections of $(i,k) \in \Nmb^2$ and $(j,j',l,l') \in \{1,\dotsc,m\}^4$ such that $\{ j = j' \ne l = l' \}$, $\{ j = l \ne j' = l' \}$, $\{ j = l' \ne j' = l \}$, $\{ j = j' = l = l', i = k \}$ and $\{ j = j' = l = l', i \ne k \}$, respectively.
For $\Tmc_1^m$, it follows from independence of $\{\xi_j\}$ that
\begin{equation*}
	\Tmc_1^m = \frac{1}{m^4} \Emb \sum_{i,k=1}^\infty \sum_{\substack{j,l=1 \\ j \ne l}}^m Y_{ij}^2 Y_{kl}^2 = \frac{1}{m^4} \sum_{i,k=1}^\infty \sum_{\substack{j,l=1 \\ j \ne l}}^m \Emb Y_{ij}^2 \Emb Y_{kl}^2 \le \frac{1}{m^2} \sum_{i,k=1}^\infty p_i p_k = \frac{1}{m^2}.
\end{equation*}
For $\Tmc_2^m$, using independence of $\{\xi_j\}$ and Cauchy-Schwarz inequality we have
\begin{align*}
	\Tmc_2^m & = \frac{1}{m^4} \Emb \sum_{i,k=1}^\infty \sum_{\substack{j,l'=1 \\ j \ne l'}}^m Y_{ij} Y_{il'} Y_{kj} Y_{kl'} = \frac{1}{m^4} \sum_{i,k=1}^\infty \sum_{\substack{j,l'=1 \\ j \ne l'}}^m \Emb \left( Y_{ij} Y_{kj} \right) \Emb \left( Y_{il'} Y_{kl'} \right) \\
	& \le \frac{1}{m^4} \sum_{i,k=1}^\infty \sum_{\substack{j,l'=1 \\ j \ne l'}}^m \sqrt{\Emb Y_{ij}^2 \Emb Y_{kj}^2 \Emb Y_{il'}^2 \Emb Y_{kl'}^2} \le \frac{1}{m^2} \sum_{i,k=1}^\infty p_i p_k = \frac{1}{m^2}.
\end{align*}
Similarly, $\Tmc_3^m \le \frac{1}{m^2}$.
For $\Tmc_4^m$,
\begin{equation*}
	\Tmc_4^m = \frac{1}{m^4} \Emb \sum_{i=1}^\infty \sum_{j=1}^m Y_{ij}^4 \le \frac{1}{m^3} \sum_{i=1}^\infty p_i = \frac{1}{m^3}.	
\end{equation*}
Finally for $\Tmc_5^m$, we have
\begin{equation*}
	\Tmc_5^m = \frac{1}{m^4} \Emb \sum_{\substack{i,k=1 \\ i \ne k}}^\infty \sum_{j=1}^m Y_{ij}^2 Y_{kj}^2 \le \frac{1}{m^3} \sum_{\substack{i,k=1 \\ i \ne k}}^\infty p_i p_k \le \frac{1}{m^3}.
\end{equation*}
Combining above estimates we see that \eqref{eqApp:key decomp} is bounded by $\frac{3}{m^2}+\frac{2}{m^3}$.
This proves \eqref{eqApp:key estimate} when $n=2$.

For the case $n>2$, write 
\begin{align}
	\Emb \left[\sum_{i=1}^\infty (\mu_i^m-p_i)^2\right]^n & = \Emb \left[\sum_{i=1}^\infty \left(\frac{1}{m} \sum_{j=1}^m Y_{ij}\right)^2 \right]^n = \frac{1}{m^{2n}} \Emb \left[ \sum_{i=1}^\infty \sum_{j=1}^m \sum_{k=1}^m Y_{ij} Y_{ik} \right]^n \notag \\
	& = \frac{1}{m^{2n}} \Emb \sum_{i_1,\dotsc,i_n=1}^\infty \sum_{j_1,k_1,\dotsc,j_n,k_n=1}^m Y_{i_1j_1} Y_{i_1k_1} \dotsm Y_{i_nj_n} Y_{i_nk_n}. \label{eqApp:key decomp general}
\end{align}
Once again $\Emb \left( Y_{i_1j_1} Y_{i_1k_1} \dotsm Y_{i_nj_n} Y_{i_nk_n} \right) \ne 0$ only if $j_1,k_1,\dotsc,j_n,k_n$ are matched in pairs.
Hence the $2n$-fold summation over $j_1,k_1,\dotsc,j_n,k_n$ in \eqref{eqApp:key decomp general} can be reduced to no more than an $n$-fold sum.
We can break up the outer sum into $n$ terms where the $M$-th term, $M=1,\dotsc,n$, corresponds to indices $(i_1,\dotsc,i_n)$ of which exactly $M$ indices are distinct.
Similarly the inner sum can be split into $n$ terms where the $N$-th term, $N=1,\dotsc,n$, corresponds to indices $(j_1,k_1,\dotsc,j_n,k_n)$ matched in pairs with exactly $N$ distinct pairs.
Furthermore each such $(M,N)$-term can be split into a finite number of terms, each of which corresponds to a collection $\{c_{\alpha\beta}, \alpha=1,\dotsc,M,\beta=1,\dotsc,N\}$ of non-negative integers representing how $\{Y_{ij}\}$ is paired up, with $\sum_{\alpha=1}^M c_{\alpha\beta} \ge 1$, $\sum_{\beta=1}^N c_{\alpha\beta} \ge 1$ and $\sum_{\alpha=1}^M \sum_{\beta=1}^N c_{\alpha\beta} = 2n$.
By independence of $\{\xi_j\}$, the contribution of each such $(M,N,\{c_{\alpha\beta}\})$-term to \eqref{eqApp:key decomp general} is at most
\begin{equation} \label{eqApp:key decomp general example}
	\frac{\kappa_n}{m^{2n}} \sum_{\substack{i_1,\dotsc,i_M=1\\i_1,\dotsc,i_M \text{ distinct}}}^\infty \sum_{\substack{j_1,\dotsc,j_N=1\\j_1,\dotsc,j_N \text{ distinct}}}^m \Emb \left( Y_{i_1j_1}^{c_{11}} Y_{i_2j_1}^{c_{21}} \dotsm Y_{i_Mj_1}^{c_{M1}} \right) \dotsm \Emb \left( Y_{i_1j_N}^{c_{1N}} Y_{i_2j_N}^{c_{2N}} \dotsm Y_{i_Mj_N}^{c_{MN}} \right),
\end{equation}
where $\kappa_n \in (0,\infty)$ only depends on $n$.
A simple calculation gives that for all $\beta = 1,\dotsc,N$,
\begin{equation*}
	\left|\Emb \left( Y_{i_1j_\beta}^{c_{1\beta}} Y_{i_2j_\beta}^{c_{2\beta}} \dotsm Y_{i_Mj_\beta}^{c_{M\beta}} \right)\right| \le \ti \kappa_n p_{i_1}^{c_{1\beta} \wedge 1} \dotsm p_{i_M}^{c_{M\beta} \wedge 1},
\end{equation*}
where $\ti \kappa_n \in (0,\infty)$ only depends on $n$.
Hence \eqref{eqApp:key decomp general example} is bounded by $\frac{\kappa_n \ti \kappa_n^N}{m^{2n-N}} \le \frac{\kappa_n \ti \kappa_n^n}{m^n}$.
So \eqref{eqApp:key decomp general} is bounded by $\frac{\gammatil_n}{m^n}$ for some $\gammatil_n \in (0,\infty)$, which gives \eqref{eqApp:key estimate} and completes the proof. \qed

\def\bibfont{\footnotesize}

% %\bigskip
% 
% %\setcounter{equation}{0}
% %\appendix
% %\numberwithin{equation}{section}
% %\section{Auxiliary Results}
% %\subsection{Proof of Lemma~\ref{lemdif:lem916}.}
% %\AB{Put In.}
% 
% %%%%%%%%%%%%%%%%%%%%%%%%%%%%%%%%%%%%%%%%%%%%%%%%%
% 
% 
% \bibliographystyle{plain}
% \bibliography{reference}
% %\bibliography{F:/Study/UNC/Research/latex_template/reference}

\begin{thebibliography}{9}

\bibitem{BaladronFaugeras2012}
J.~Baladron, D.~Fasoli, O.~Faugeras, and J.~Touboul.
\newblock Mean-field description and propagation of chaos in networks of
  {H}odgkin-{H}uxley and {F}itzhugh-{N}agumo neurons.
\newblock {\em The Journal of Mathematical Neuroscience}, 2(1):10, 2012.

\bibitem{BorkarSundaresan2012asymptotics}
V.~S. Borkar and R.~Sundaresan.
\newblock Asymptotics of the invariant measure in mean field models with jumps.
\newblock {\em Stochastic Systems}, 2(2):322--380, 2012.

\bibitem{BorovkovMogulskii1980probabilities}
A.~A. Borovkov and A.~A. Mogul'skii.
\newblock Probabilities of large deviations in topological spaces. {II}.
\newblock {\em Siberian Mathematical Journal}, 21(5):653--664, 1980.

\bibitem{BoueDupuis1998variational}
M.~Bou{\'e} and P.~Dupuis.
\newblock A variational representation for certain functionals of {B}rownian
  motion.
\newblock {\em The Annals of Probability}, 26(4):1641--1659, 1998.

\bibitem{BraunHepp1977vlasov}
W.~Braun and K.~Hepp.
\newblock The {V}lasov dynamics and its fluctuations in the 1/n limit of
  interacting classical particles.
\newblock {\em Communications in Mathematical Physics}, 56(2):101--113, 1977.

\bibitem{BudhirajaDupuis2000variational}
A.~Budhiraja and P.~Dupuis.
\newblock A variational representation for positive functionals of infinite
  dimensional {B}rownian motion.
\newblock {\em Probability and Mathematical Statistics}, 20(1):39--61, 2000.

\bibitem{BudhirajaDupuisFischer2012}
A.~Budhiraja, P.~Dupuis, and M.~Fischer.
\newblock Large deviation properties of weakly interacting processes via weak
  convergence methods.
\newblock {\em The Annals of Probability}, 40(1):74--102, 2012.

\bibitem{BudhirajaDupuisFischerRamanan2015limits}
A.~Budhiraja, P.~Dupuis, M.~Fischer, and K.~Ramanan.
\newblock Limits of relative entropies associated with weakly interacting
  particle systems.
\newblock {\em Electronic Journal of Probability}, 20(80):1--22, 2015.

\bibitem{BudhirajaDupuisGanguly2015moderate}
A.~Budhiraja, P.~Dupuis, and A.~Ganguly.
\newblock Moderate deviation principles for stochastic differential equations
  with jumps.
\newblock {\em The Annals of Probability, to appear}, 2015.

\bibitem{BudhirajaDupuisMaroulas2008large}
A.~Budhiraja, P.~Dupuis, and V.~Maroulas.
\newblock Large deviations for infinite dimensional stochastic dynamical
  systems.
\newblock {\em The Annals of Probability}, pages 1390--1420, 2008.

\bibitem{BudhirajaDupuisMaroulas2011variational}
A.~Budhiraja, P.~Dupuis, and V.~Maroulas.
\newblock Variational representations for continuous time processes.
\newblock {\em Annales de l'Institut Henri Poincar{\'e}(B), Probabilit{\'e}s et
  Statistiques}, 47(3):725--747, 2011.

\bibitem{BudhirajaSaha2014}
A.~Budhiraja, E.~Kira, and S.~Saha.
\newblock Central limit results for jump-diffusions with mean field interaction
  and a common factor.
\newblock {\em math arXiv:1405.7682}, 2014.

\bibitem{BudhirajaWu2015}
A.~Budhiraja and R.~Wu.
\newblock Some fluctuation results for weakly interacting multi-type particle
  systems.
\newblock {\em arXiv preprint arXiv:1505.00856}, 2015.

\bibitem{ChongKluppelberg2015partial}
C.~Chong and C.~Kl{\"u}ppelberg.
\newblock Partial mean field limits in heterogeneous networks.
\newblock {\em arXiv preprint arXiv:1507.01905}, 2015.

\bibitem{Collet2014macroscopic}
F.~Collet.
\newblock Macroscopic limit of a bipartite {C}urie-{W}eiss model: {A} dynamical
  approach.
\newblock {\em Journal of Statistical Physics}, 157(6):1301--1319, 2014.

\bibitem{Dawson1983critical}
D.~A. Dawson.
\newblock Critical dynamics and fluctuations for a mean-field model of
  cooperative behavior.
\newblock {\em Journal of Statistical Physics}, 31(1):29--85, 1983.

\bibitem{DawsonGartner1987large}
D.~A. Dawson and J.~G{\"a}rtner.
\newblock Large deviations from the {M}ckean-{V}lasov limit for weakly
  interacting diffusions.
\newblock {\em Stochastics: An International Journal of Probability and
  Stochastic Processes}, 20(4):247--308, 1987.

\bibitem{DelHuWu2015moderate}
P.~Del~Moral, S.~Hu, and L.~Wu.
\newblock Moderate deviations for interacting processes.
\newblock {\em Statistica Sinica}, 25(3):921--951, 2015.

\bibitem{DupuisEllis2011weak}
P.~Dupuis and R.~S. Ellis.
\newblock {\em A Weak Convergence Approach to the Theory of Large Deviations},
  volume 902.
\newblock John Wiley \& Sons, 2011.

\bibitem{DupuisRamananWu2012sample}
P.~Dupuis, K.~Ramanan, and W.~Wu.
\newblock Sample path large deviation principle for mean field weakly
  interacting jump processes.
\newblock {\em Preprint}, 2012.

\bibitem{GelfandVilenkin1964}
I.~M. Gel'fand and N.~Y. Vilenkin.
\newblock {\em Generalized functions}, volume~4.
\newblock New York, 1964.

\bibitem{GrahamMeleard1997}
C.~Graham and S.~M{\'e}l{\'e}ard.
\newblock Stochastic particle approximations for generalized {B}oltzmann models
  and convergence estimates.
\newblock {\em The Annals of Probability}, 25(1):115--132, 1997.

\bibitem{HitsudaMitoma1986}
M.~Hitsuda and I.~Mitoma.
\newblock Tightness problem and stochastic evolution equation arising from
  fluctuation phenomena for interacting diffusions.
\newblock {\em Journal of Multivariate Analysis}, 19(2):311--328, 1986.

\bibitem{IkedaWatanabe1990SDE}
N.~Ikeda and S.~Watanabe.
\newblock {\em Stochastic Differential Equations and Diffusion Processes}.
\newblock Elsevier, 1981.

\bibitem{KallianpurXiong1995stochastic}
G.~Kallianpur and J.~Xiong.
\newblock Stochastic differential equations in infinite dimensional spaces.
\newblock {\em Lecture Notes-Monograph Series}, 26:iii--342, 1995.

\bibitem{Kolokoltsov2010}
V.~N. Kolokoltsov.
\newblock {\em Nonlinear {M}arkov Processes and Kinetic Equations}, volume 182.
\newblock Cambridge University Press, 2010.

\bibitem{KurtzXiong1999}
T.~G. Kurtz and J.~Xiong.
\newblock Particle representations for a class of nonlinear {SPDE}s.
\newblock {\em Stochastic Processes and their Applications}, 83(1):103--126,
  1999.

\bibitem{KurtzXiong2004}
T.~G. Kurtz and J.~Xiong.
\newblock A stochastic evolution equation arising from the fluctuations of a
  class of interacting particle systems.
\newblock {\em Communications in Mathematical Sciences}, 2(3):325--358, 2004.

\bibitem{Leonard1995large}
C.~Leonard.
\newblock Large deviations for long range interacting particle systems with
  jumps.
\newblock In {\em Annales de l'IHP Probabilit{\'e}s et statistiques},
  volume~31, pages 289--323, 1995.

\bibitem{McKean1966class}
H.~P. McKean.
\newblock A class of {M}arkov processes associated with nonlinear parabolic
  equations.
\newblock {\em Proceedings of the National Academy of Sciences of the United
  States of America}, 56(6):1907--1911, 1966.

\bibitem{McKean1967propagation}
H.~P. McKean.
\newblock {Propagation of chaos for a class of non-linear parabolic equations.}
\newblock In {\em Stochastic Differential Equations (Lecture Series in
  Differential Equations, Session 7, Catholic Univ., 1967)}, pages 41--57. Air
  Force Office Sci. Res., Arlington, Va., 1967.

\bibitem{Meleard1998}
S.~M{\'e}l{\'e}ard.
\newblock Convergence of the fluctuations for interacting diffusions with jumps
  associated with {B}oltzmann equations.
\newblock {\em Stochastics: An International Journal of Probability and
  Stochastic Processes}, 63(3-4):195--225, 1998.

\bibitem{Mitoma1983}
I.~Mitoma.
\newblock Tightness of probabilities on {C}([0, 1]; $\mathcal{S}'$) and {D}([0,
  1]; $\mathcal{S}'$).
\newblock {\em The Annals of Probability}, 11(4):989--999, 1983.

\bibitem{Oelschlager1984martingale}
K.~Oelschlager.
\newblock A martingale approach to the law of large numbers for weakly
  interacting stochastic processes.
\newblock {\em The Annals of Probability}, pages 458--479, 1984.

\bibitem{ShigaTanaka1985}
T.~Shiga and H.~Tanaka.
\newblock Central limit theorem for a system of {M}arkovian particles with mean
  field interactions.
\newblock {\em Probability Theory and Related Fields}, 69(3):439--459, 1985.

\bibitem{Sznitman1984}
A-S. Sznitman.
\newblock Nonlinear reflecting diffusion process, and the propagation of chaos
  and fluctuations associated.
\newblock {\em Journal of Functional Analysis}, 56(3):311--336, 1984.

\bibitem{Sznitman1991}
A-S. Sznitman.
\newblock Topics in propagation of chaos.
\newblock In {\em Ecole d'Et{\'e} de Probabilit{\'e}s de Saint-Flour
  XIX---1989}, pages 165--251. Springer, 1991.

\bibitem{Tanaka1984limit}
H.~Tanaka.
\newblock Limit theorems for certain diffusion processes with interaction.
\newblock {\em North-Holland Mathematical Library}, 32:469--488, 1984.

\end{thebibliography}

\footnotesize{{\sc
\bigskip
\noi
A. Budhiraja (email: budhiraj@email.unc.edu)\\ 
R. Wu (email: wuruoyu@live.unc.edu)\\
Department of Statistics and Operations Research\\
University of North Carolina\\
Chapel Hill, NC 27599, USA\\
%email: budhiraj@email.unc.edu
% \skp
% 
% \noi
% R. Wu\\
% Department of Statistics and Operations Research\\
% University of North Carolina\\
% Chapel Hill, NC 27599, USA\\
% email: wuruoyu@live.unc.edu

}}

\end{document}